
\documentclass[a4paper]{amsart}
\pdfoutput 1
\usepackage{xcolor}
\usepackage{microtype}
\usepackage{verbatim}
\usepackage{amssymb}
\usepackage[a4paper,margin=20ex]{geometry}
\usepackage{url}

\usepackage[colorlinks,pdfpagelabels,pdfstartview = FitH,bookmarksopen = true,bookmarksnumbered = true,linkcolor = blue,plainpages = false,hypertexnames = false,citecolor = red] {hyperref}


\allowdisplaybreaks


\setcounter{tocdepth}{1}




\title[Nonlocal gradient potential estimates]{Gradient regularity and first-order potential estimates for a class of nonlocal equations}


\author{Tuomo Kuusi}
\address{Department of Mathematics and Statistics, University of Helsinki}
\email{tuomo.kuusi@helsinki.fi}
\author{Simon Nowak}
\address{Fakult\"at f\"ur Mathematik, Universit\"at Bielefeld, Postfach 100131, D-33501 Bielefeld, Germany}
\email{simon.nowak@uni-bielefeld.de}
 \author{Yannick Sire}
 \address{Department of Mathematics, Johns Hopkins University, Baltimore, MD, 21218}
\email{ysire1@jhu.edu}
\date{}
\keywords{Nonlocal equations, measure data, gradient regularity, potential estimates}
\subjclass[2020]{Primary 35R09, 35B65; Secondary 35D30, 47G20}



\newtheorem{theorem}{Theorem}[section]

\newtheorem{proposition}[theorem]{Proposition}
\newtheorem{lemma}[theorem]{Lemma}

\newtheorem{corollary}[theorem]{Corollary}
\theoremstyle{definition}
\newtheorem{definition}[theorem]{Definition}
\newtheorem{remark}[theorem]{Remark}

\numberwithin{equation}{section}


\def\eqn#1$$#2$${\begin{equation}\label#1#2\end{equation}}
\def\charfn_#1{{\raise1.2pt\hbox{$\chi_{\kern-1pt\lower3pt\hbox{{$\scriptstyle#1$}}}$}}}

\newcommand\blfootnote[1]{%
	\begingroup
	\renewcommand\thefootnote{}\footnote{#1}%
	\addtocounter{footnote}{-1}%
	\endgroup
}


 \DeclareMathOperator*{\osc}{osc}



\newcommand{\ern}{\mathbb{R}^n}

\def\loc{\operatorname{loc}}



\def\mean#1{\mathchoice%
          {\mathop{\kern 0.2em\vrule width 0.6em height 0.69678ex depth -0.58065ex
                  \kern -0.8em \intop}\nolimits_{\kern -0.4em#1}}%
          {\mathop{\kern 0.1em\vrule width 0.5em height 0.69678ex depth -0.60387ex
                  \kern -0.6em \intop}\nolimits_{#1}}%
          {\mathop{\kern 0.1em\vrule width 0.5em height 0.69678ex
              depth -0.60387ex
                  \kern -0.6em \intop}\nolimits_{#1}}%
          {\mathop{\kern 0.1em\vrule width 0.5em height 0.69678ex depth -0.60387ex
                  \kern -0.6em \intop}\nolimits_{#1}}}

\def\vintslides_#1{\mathchoice%
          {\mathop{\kern 0.1em\vrule width 0.5em height 0.697ex depth -0.581ex
                  \kern -0.6em \intop}\nolimits_{\kern -0.4em#1}}%
          {\mathop{\kern 0.1em\vrule width 0.3em height 0.697ex depth -0.604ex
                  \kern -0.4em \intop}\nolimits_{#1}}%
          {\mathop{\kern 0.1em\vrule width 0.3em height 0.697ex depth -0.604ex
                  \kern -0.4em \intop}\nolimits_{#1}}%
          {\mathop{\kern 0.1em\vrule width 0.3em height 0.697ex depth -0.604ex
                  \kern -0.4em \intop}\nolimits_{#1}}}

\newcommand{\aveint}[2]{\mathchoice%
          {\mathop{\kern 0.2em\vrule width 0.6em height 0.69678ex depth -0.58065ex
                  \kern -0.8em \intop}\nolimits_{\kern -0.45em#1}^{#2}}%
          {\mathop{\kern 0.1em\vrule width 0.5em height 0.69678ex depth -0.60387ex
                  \kern -0.6em \intop}\nolimits_{#1}^{#2}}%
          {\mathop{\kern 0.1em\vrule width 0.5em height 0.69678ex depth -0.60387ex
                  \kern -0.6em \intop}\nolimits_{#1}^{#2}}%
          {\mathop{\kern 0.1em\vrule width 0.5em height 0.69678ex depth -0.60387ex
                  \kern -0.6em \intop}\nolimits_{#1}^{#2}}}
                  

\newtoks\by
\newtoks\paper
\newtoks\book
\newtoks\jour
\newtoks\yr
\newtoks\pages
\newtoks\vol
\newtoks\publ

\def\name[#1, #2]{#1 #2}
\def\ota{{\hbox{\bf ???}}}
\def\cLear{\by=\ota\paper=\ota\book=\ota\jour=\ota\yr=\ota
\pages=\ota\vol=\ota\publ=\ota}
\def\endpaper{\the\by, \textit{\the\paper},
{\the\jour} \textbf{\the\vol} (\the\yr), \the\pages.\cLear}
\def\endbook{\the\by, \textit{\the\book},
\the\publ, \the\yr.\cLear}
\def\endpap{\the\by, \textit{\the\paper}, \the\jour.\cLear}
\def\endproc{\the\by, \textit{\the\paper}, \the\book, \the\publ,
\the\yr, \the\pages.\cLear}


\begin{document}
\begin{abstract}
We consider nonlocal equations of order larger than one with measure data and prove gradient regularity in Sobolev and H\"older spaces as well as pointwise bounds of the gradient in terms of Riesz potentials, leading to fine regularity results in many commonly used function spaces. The kernel of the integral operators involves a H\"older dependence in the variables and is not assumed to be translation invariant.
\end{abstract}
\maketitle

\tableofcontents

\section{Introduction}
\subsection{Overview}
This paper is devoted to the regularity and fine pointwise properties of the gradient of solutions to nonlocal equations of the type 
\begin{equation} \label{nonlocaleq} 
			L_{A} u = \mu  \text{ in } \Omega \subset \ern,
\end{equation}
	where the nonlocal operator $L_A$ is formally defined by the formula 
\begin{equation} \label{nonlocalop}
L_A u(x) = p.v. \int_{\mathbb R^n} \frac{A(x,y)}{|x-y|^{n+2s}}  (u(x)-u(y))  dy,
\end{equation}
while $\mu$ is a signed Radon measure on $\ern$ with finite total mass and $A$ is a coefficient. \blfootnote{The first author has been supported by the Academy of Finland and the European Research Council (ERC) under the European Union's Horizon 2020 research and innovation program (grant agreement No 818437). The second author gratefully acknowledges the support of the German Research Foundation - SFB 1283/2 2021 - 317210226. The third author is partially supported by the Simons foundation and the NSF grant DMS-2154219.}

We assume throughout the paper that $s \in (1/2,1)$, so that the order of the operator $L_A$ can be given by any number strictly between one and two. The aim of the present contribution is two-fold:

\begin{itemize}
\item We prove gradient regularity of solutions in the scales of Sobolev and H\"older spaces. 
\item We derive so-called potential estimates for the gradient of solutions, leading to fine regularity estimates in a large variety of function spaces. 
\end{itemize} 

Integral operators and therefore nonlocal equations are ubiquitous in several areas of pure and applied mathematics such as for instance stochastic processes and Levy flights (see \cite{bertoin}), classical harmonic analysis (see \cite{landkof}), phase transitions (see \cite{fife}), models of diffusion and their analysis (see \cite{zaslavsky}), conformal geometry (see \cite{CG,Case-Chang,GZ}), physics of materials and relativistic models (see \cite{LiebYau1,LiebYau2}) and image processing (see \cite{GOsh}). 

Starting with seminal contributions by Caffarelli and his school (see e.g.\ \cite{Silvestre,CafSil,CSCPAM,CSARMA,CSannals,CaffVass,CCV,BCF,CStinga}), the regularity of solutions to nonlocal equations in divergence and nondivergence form turned out to be crucial to understand qualitative properties of the solutions in question. Our contribution pertains to this by now very active and wide area of PDE theory.
In fact, without being exhaustive at all, further results concerning H\"older regularity of solutions to \eqref{nonlocaleq} can for example be found in \cite{KassCalcVar,DKP,DKP2,CabS1,RSJMPA,Serra,RSDuke,RSJDE,KMJEMS,BFVCalcVar,BLS,CCDuke,DFPJDE,CKCPDE,FallCalcVar,MeH,CKWCalcVar,BKJMA}, while various results concerning Sobolev regularity were for instance proved in \cite{DongKim,KMS1,SchikorraMA,Grubb,CozziSob,BL,NowakNA,MSY,FMSYPDEA,DongLiu,MeV,MeI}. For a more detailed discussion of related previous regularity results, we refer to section \ref{prev}.


In addition, several years ago together with Mingione two of the authors proved in \cite{KMS2} the existence of so-called SOLA solutions to \eqref{nonlocaleq} in the spirit of \cite{BGJFA} as well as the associated zero-order potential estimates, i.e.\ pointwise bounds for the solution itself in terms of a truncated version of the Riesz potential of the data, showing that the zero-order potential estimates known for local second-order elliptic equations (see e.g.\ \cite{LSW,KM,TW}) have analogues in the nonlocal setting.
In the setting of possibly very general local second-order elliptic and parabolic equations and even systems, it is by now well-established that such potential estimates remain true for the gradient of solutions if the coefficients are sufficiently regular, see e.g.\ \cite{GWG,Min,DM1,DM2,DKMF,KuMiG,KuMi,KuMiV,BCDKS,BYP,DZ22,DFJMPA}.

A main achievement of the present paper consists of proving that similar pointwise \emph{gradient potential estimates} are also valid for a large class of nonlocal equations of any order between one and two. This will in particular imply that the first-order regularity properties of such general nonlocal operators coincide with the sharp ones of the fractional Laplacian $(-\Delta)^s$, since at least formally, the inverse of $(-\Delta)^s$ on the whole space $\ern$ is given by the classical Riesz potential of order $2s$, so that the gradient corresponds to Riesz potentials of order $2s-1$, which are also the ones appearing in our gradient potential estimates below.

Notably, we manage to establish such first-order potential estimates for a rather broad class of coefficients $A$, including H\"older continuous ones with \emph{arbitrarily small} H\"older exponent, which from the perspective of local elliptic equations can be considered to be somewhat surprising, see section \ref{prev}. In particular, although our gradient potential estimates seem to be new already in that case, $A$ is not assumed to be translation invariant. \par In addition to dealing with the nonlocality of the equation, another severe obstacle to overcome in order to prove such a gradient potential estimate in our nonlocal setting is that due to the lower order of the equation, in contrast to local second-order elliptic operators the gradient of solutions to \eqref{nonlocaleq} is not naturally associated with the nonlocal operator \eqref{nonlocalop}. In fact, as will be evident soon, a priori the gradient of weak solutions or SOLA to \eqref{nonlocaleq} is not even known to be locally integrable. \par 
For this and various other reasons, along the way of proving the gradient potential estimate we additionally establish regularity results on the gradient $\nabla u$ of solutions $u$ to \eqref{nonlocaleq}. These regularity estimates can be considered to be interesting also for their own sake, in the sense that they also provide control on the \emph{oscillations} of $\nabla u$, while the gradient potential estimate we prove yields precise control of the \emph{size} of $\nabla u$ in terms of the data, leading to sharp estimates in virtually any reasonable function space.

\vspace{0.2mm}

\subsection{Basic setup}

We consider nonlocal equations posed in domains (=\hspace{0.1mm} open subsets) $\Omega \subset \mathbb R^n$, where for the sake of simplicity we assume that $n \geq 2$. For $\Lambda \geq 1,$ we define $\mathcal{L}_0(\Lambda)$ as the class of all measurable coefficients $A:\mathbb{R}^n \times \mathbb{R}^n \to \mathbb{R}$ that satisfy the following conditions:
\begin{itemize}
	\item There exists a constant  $\Lambda \geq 1$ such that 
	$$\Lambda^{-1} \leq A(x,y) \leq \Lambda \text{ for almost all } x,y \in \mathbb{R}^n .$$
	\item  $
	A(x,y)=A(y,x) \text{ for almost all } x,y \in \mathbb{R}^n.
	$
\end{itemize}


For $s \in (0,1)$ and $A \in \mathcal{L}_0(\Lambda)$, we recall the definition of our nonlocal operator:

$$L_A u(x) = p.v. \int_{\mathbb  R^n} \frac{A(x,y)}{|x-y|^{n+2s}} (u(x)-u(y))  dy.$$


We are going to consider solutions which are a priori assumed to belong to the standard fractional Sobolev spaces $W^{s,p}$ for certain values of $p$, cf.\ section \ref{fracSob1} for a precise definition of these spaces. In addition, in order to control the growth of solutions at infinity, for $s \in (0,1)$ we also consider the tail spaces
$$L^1_{2s}(\mathbb{R}^n):= \left \{u \in L^1_{loc}(\mathbb{R}^n) \mathrel{\Big|} \int_{\mathbb{R}^n} \frac{|u(y)|}{1+|y|^{n+2s}}dy < \infty \right \}$$
that were introduced in \cite{existence}. We note that a function  $u \in L^1_{loc}(\mathbb{R}^n)$ belongs to $L^1_{2s}(\mathbb{R}^n)$ if and only if the \emph{nonlocal tails} of $u$ given by
$$ \int_{\mathbb{R}^n \setminus B_{R}(x_0)} \frac{|u(y)|}{|x_0-y|^{n+2s}}dy$$
are finite for all $R>0$, $x_0 \in \mathbb{R}^n$.
Moreover, we denote by $\mathcal{M}(\ern)$ the set of all signed Radon measures $\mu$ on $\ern$ that satisfy $|\mu|(\ern)<\infty$. \par
Concerning our precise concept of solution, we adopt the following definition of SOLA (=\hspace{0.1mm} solutions obtained by limiting approximations) from \cite{KMS2}. 
\begin{definition} \label{SOLA}
	Consider a bounded domain $\Omega \subset \mathbb{R}^n$. Moreover, let $\mu \in \mathcal{M}(\mathbb{R}^n)$ and $g \in W^{s,2}_{loc}(\mathbb{R}^n) \cap L^1_{2s}(\mathbb{R}^n)$. We say that a function $u$ is a SOLA of the Dirichlet problem 
	\begin{equation} \label{NonlocalDir}
		\begin{cases} \normalfont
			L_{A} u = \mu & \text{ in } \Omega \\
			u=g & \text{ a.e. in } \mathbb{R}^n \setminus \Omega,
		\end{cases}
	\end{equation}
	if the following conditions are satisfied:
	\begin{itemize}
		\item $u \in W^{h,q}(\Omega)$ for any $h \in (0,s)$ and any $q \in \big [1,\frac{n}{n-s} \big )$.
		\item $u$ is a distributional solution of $L_A u=\mu$ in $\Omega$, i.e.\
		\begin{equation} \label{weaksolx5}
			\int_{\mathbb{R}^n} \int_{\mathbb{R}^n} \frac{A(x,y)}{|x-y|^{n+2s}} (u(x)-u(y))(\varphi(x)-\varphi(y))dydx = \int_{\Omega} \varphi d\mu \quad \forall \varphi \in C_0^\infty(\Omega).
		\end{equation}
		\item $u=g$ a.e.\ in $\mathbb{R}^n \setminus \Omega$.
		\item There exist sequences of functions $\{u_j\}_{j=1}^\infty \subset W^{s,2}(\mathbb{R}^n)$, $\{\mu_j\}_{j=1}^\infty \subset C_0^\infty(\mathbb{R}^n)$, $\{g_j\}_{j=1}^\infty \subset C_0^\infty(\mathbb{R}^n)$ such that each $u_j$ weakly solves the Dirichlet problem
		$$
		\begin{cases} \normalfont
			L_A u_j = \mu_j & \text{ in } \Omega \\
			u_j=g_j & \text{ a.e. in } \mathbb{R}^n \setminus \Omega.
		\end{cases}
		$$
	\item $u_j$ converges to $u$ a.e.\ in $\mathbb{R}^n$ and locally in $L^q(\mathbb{R}^n)$.
	\item The sequence $\{g_j\}_{j=1}^\infty$ converges to $g$ in the following sense: For any $z \in \mathbb{R}^n$ and any $r>0$, we have
	\begin{equation} \label{gapp}
		g_j \to g \text{ in } W^{s,2}(B_r(z)), \quad \lim_{j \to \infty} \int_{B_r(z)} \frac{|g_j(y)-g(y)|}{|z-y|^{n+2s}} dy=0.
	\end{equation}
	\item The sequence $\{\mu_j\}_{j=1}^\infty \subset C_0^\infty(\mathbb{R}^n)$ converges weakly to $\mu$ in the sense of measures in $\Omega$ and additionally satisfies
	\begin{equation} \label{measineq}
		\limsup_{j \to \infty} |\mu_j|(B) \leq |\mu|(\overline B) \text{ for any ball } B.
	\end{equation}
	\end{itemize}
\end{definition}
\begin{remark} \normalfont
	We note that by \cite[Theorem 1.1]{KMS2}, for any $\mu \in \mathcal{M}(\mathbb{R}^n)$ and any $g \in W^{s,2}_{loc}(\mathbb{R}^n) \cap L^1_{2s}(\mathbb{R}^n)$, there indeed \emph{always exists} a SOLA of (\ref{NonlocalDir}). 
\end{remark}
If $\mu$ possesses more regularity, that is, if $\mu \in L^\frac{2n}{n+2s}(\Omega)$ and thus belongs to the dual of $W^{s,2}$, then it is possible to prove the existence of weak solutions belonging to the energy space $W^{s,2}$, see e.g.\ \cite[Theorem 2.3]{DKP}, \cite[Remark 3]{existence} or \cite[Proposition 4.1]{MeN}. \par 
Motivated by this observation, we also have the following definition.
\begin{definition}
Given $\mu \in L^\frac{2n}{n+2s}_{loc}(\Omega)$, we say that $u \in W^{s,2}_{loc}(\Omega) \cap L^1_{2s}(\mathbb{R}^n)$ is a weak solution to the equation $L_A u=\mu$ in $\Omega$, if $u$ satisfies \eqref{weaksolx5}. 
\end{definition}
Note that in this case by standard density arguments the class of admissible test functions can actually be extended to the space $W_c^{s,2}(\Omega)$ containing all functions that belong to $W^{s,2}(\Omega)$ and are compactly supported in $\Omega$.

Since we aim to obtain gradient estimates, we need an additional regularity assumption on the coefficient.

\begin{definition} \label{Calpha}
	For $\alpha \in (0,1)$ and $\Gamma>0$, we say that $A$ is $C^\alpha$ in $\Omega$ with respect to $\Gamma$, if for all $x,y \in \Omega$ and any $h \in \mathbb{R}^n$ with $|h|< \min \{\textnormal{dist}(x,\partial \Omega),\textnormal{dist}(y,\partial \Omega)\}$, we have
	\begin{equation} \label{holdkernel}
	|A(x+h,y+h)-A(x,y)| \leq \Gamma |h|^\alpha.
	\end{equation}
\end{definition}
In particular, this condition is satisfied if $A \in C^\alpha(\overline \Omega \times \overline \Omega)$ or if $A$ is translation invariant in $\Omega$ in the sense of the following definition.
\begin{definition} \label{def:TI}
	Let $\Omega \subset \ern$ be a domain and $\Lambda\geq 1$. We say that $A \in \mathcal{L}_0(\Lambda)$ is translation invariant in $\Omega$, if there exists a measurable function $a:\mathbb{R}^n \to \mathbb{R}$ such that $A(x,y)=a(x-y)$ for all $x,y \in \Omega$. We denote the class of all such translation invariant coefficients in $\Omega$ that belong to $\mathcal{L}_0(\Lambda)$ by $\mathcal{L}_1(\Lambda,\Omega)$. Moreover, we set $\mathcal{L}_1(\Lambda):=\mathcal{L}_1(\Lambda,\ern)$.
\end{definition}

We remark that the condition (\ref{holdkernel}) is also satisfied by some more general choices of coefficients besides H\"older continuous or translation invariant ones. For example, if $A^\prime \in \mathcal{L}_0(\Lambda)$ belongs to $C^\alpha(\overline \Omega \times \overline \Omega)$ and $A_0$ belongs to the class $\mathcal{L}_1(\Lambda,\Omega)$, then the coefficient
\begin{equation} \label{mixedcoeff}
A(x,y)=A^\prime(x,y)A_0(x,y)
\end{equation}
belongs to $\mathcal{L}_0(\Lambda^2,\Omega)$ and is $C^\alpha$ in $\Omega$ with respect to some $\Gamma>0$. Therefore, since $A_0$ is not required to satisfy any smoothness assumption, the coefficient $A$ in \eqref{mixedcoeff} might not belong to $C^\alpha(\overline \Omega \times \overline \Omega)$, but nevertheless satisfies the condition (\ref{holdkernel}).

\vspace{-2mm}
\subsection{Main results}

The first difficulty one faces when trying to prove gradient estimates in the nonlocal setting is that the gradient of a SOLA or even weak solution is a priori not even known to be integrable. This is in sharp contrast with the setting of local second-order elliptic equations, where the gradient of any SOLA as defined e.g.\ in \cite{DM2} is locally integrable by definition. In our case, proving the gradient integrability is instead already highly nontrivial and follows from our first main result, which is concerned with higher differentiability.

\begin{theorem}[Higher differentiability under measure data] \label{HD}
	Let $s \in (1/2,1)$, $\mu \in \mathcal{M}(\mathbb{R}^n)$, $g \in W^{s,2}_{loc}(\mathbb{R}^n) \cap L^1_{2s}(\mathbb{R}^n)$ and let $u$ be a SOLA of (\ref{NonlocalDir}) in a bounded domain $\Omega \subset \ern$. In addition, assume that $A \in \mathcal{L}_0(\Lambda)$ is $C^\alpha$ in $\Omega$ with respect to some $\alpha \in (0,1)$ and some $\Gamma>0$. Then for all 
	\begin{equation} \label{rangesm}
	0 < \gamma<\min \{\alpha,2s-1\}, \quad q \in \bigg [1,\frac{n}{n+1+\gamma-2s} \bigg ),
	\end{equation}
	we have $u \in W^{1+\gamma,q}_{loc}(\Omega)$. Moreover, for any $R >0$ and any $x_0 \in \Omega$ such that $B_R(x_0) \Subset \Omega$, we have
	\begin{equation} \label{HDE}
		\begin{aligned}
		& \left (\mean{B_{R/2}(x_0)} |\nabla u|^q dx \right )^{1/q} + R^{\gamma }\left (\mean{B_{R/2}(x_0)} \int_{B_{R/2}(x_0)} \frac{|\nabla u(x)-\nabla u(y)|^q}{|x-y|^{n+\gamma q}} dydx \right )^{1/q} \\ & \leq \frac{C}{R} \left (\left (\mean{B_{R}(x_0)} |u|^q dx \right )^{1/q} + R^{2s}\int_{\mathbb{R}^n \setminus B_R(x_0)} \frac{|u(y)|}{|x_0-y|^{n+2s}}dy + R^{2s-n} |\mu|(B_R(x_0)) \right ),
		\end{aligned}
	\end{equation}
	where $C$ depends only on $n,s,\Lambda,\alpha,\Gamma,\gamma,q$ and $\max\{\textnormal{diam}(\Omega),1\}$.
\end{theorem}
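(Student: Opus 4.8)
The plan is to reduce the measure-data statement to an a priori estimate for the regular approximations $u_j$ (which are genuine weak solutions in $W^{s,2}$ with smooth data $\mu_j$, $g_j$), prove the estimate \eqref{HDE} for those, and then pass to the limit using the SOLA structure. So the first step is to establish, for weak solutions $v$ of $L_A v = f$ in a ball with $f$ smooth, a finite-difference / fractional-Sobolev estimate: one differentiates the equation discretely. Writing $\tau_h v(x) = v(x+h) - v(x)$, applying $\tau_{-h}$ to both sides and using the translation structure, the commutator of $\tau_h$ with $L_A$ is controlled by the H\"older condition \eqref{holdkernel} on $A$, producing an error term of size $\Gamma|h|^\alpha$ times lower-order norms of $v$. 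Testing the differentiated equation with $\tau_h v$ (suitably cut off) gives an energy inequality of Caccioppoli type for $\tau_h v$, which after summing over a dyadic family of increments $|h| \sim 2^{-k}R$ and using the standard characterization of $W^{s+\beta,2}$ via finite differences upgrades $v \in W^{s,2}$ to $v \in W^{s+\beta,2}_{\mathrm{loc}}$ for $\beta < \min\{\alpha, 2s-1\}$, with the gain coming exactly because the equation has order $2s > 1$. Iterating/bootstrapping this differentiability gain, or running it directly to the threshold, pushes the regularity past order $1$, i.e. $v \in W^{1+\gamma,2}_{\mathrm{loc}}$ once the right-hand side is in the dual space; in particular $\nabla v$ exists and lies in a fractional Sobolev space.

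**Second**, one must accommodate merely-$L^1$ (measure) data, which kills the $W^{s,2}$ energy estimate. Here the standard device is a real-interpolation / truncation argument à la Boccardo--Gallou\"et, adapted to the nonlocal setting as in \cite{KMS2}: the zero-order potential estimates there already give $u \in W^{h,q}_{\mathrm{loc}}$ for $h<s$, $q<n/(n-s)$, so one works at the subcritical integrability exponent $q<1$-flavoured $q \in [1, n/(n+1+\gamma-2s))$ throughout. Concretely, for the approximating solutions $u_j$ one combines the differentiated Caccioppoli inequality with a level-set decomposition of $\nabla u_j$, exploiting that $|\mu_j|(B_R) \leq |\mu|(\overline{B_R}) + o(1)$ via \eqref{measineq}, to obtain a weak-type bound on $\nabla u_j$ and its fractional difference quotients, and then extract the strong $L^q$ and $W^{\gamma,q}$ bounds on $\nabla u_j$ for the stated range of $q$. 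The tail term $R^{2s}\int_{\mathbb{R}^n\setminus B_R(x_0)} |u(y)|/|x_0-y|^{n+2s}\,dy$ appears because the equation only holds inside $\Omega$ and the nonlocal operator sees all of $\ern$; one splits $L_A$ into a local part on $B_R$ and a tail contribution, the latter being harmless precisely because it is a bounded (indeed smooth) function on $B_{R/2}$ controlled by the displayed tail.

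**Third**, the scaling and the passage to the limit. Both sides of \eqref{HDE} are scaled so that the inequality is invariant under $x \mapsto x_0 + Rx$ (modulo the $C^\alpha$ constant of $A$, which only improves under rescaling to unit scale — this is where the dependence on $\max\{\operatorname{diam}(\Omega),1\}$ enters, since after rescaling the domain may be large), so it suffices to prove the $j$-level estimate on $B_1$ and then rescale. Finally, since $u_j \to u$ a.e. and in $L^q_{\mathrm{loc}}$, $g_j \to g$ in the sense of \eqref{gapp}, and $\mu_j \rightharpoonup \mu$ with \eqref{measineq}, the right-hand side of the $u_j$-estimate converges to the right-hand side of \eqref{HDE}; the uniform bound on $\|\nabla u_j\|_{L^q(B_{R/2})}$ and on the Gagliardo seminorm then gives, by weak compactness and lower semicontinuity of the norms, that $\nabla u \in L^q \cap W^{\gamma,q}$ locally with the claimed bound, and one checks the weak limit of $\nabla u_j$ is indeed $\nabla u$ using the $L^q_{\mathrm{loc}}$ convergence of $u_j$.

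**The main obstacle** I expect is the commutator estimate: controlling $[\tau_h, L_A]v$ sharply enough that the error is genuinely lower order than the gain, uniformly as $|h|\to 0$, in the presence of a kernel that is only H\"older and not translation invariant. Splitting $A(x+h,y+h)-A(x,y)$ off the translated kernel is clean, but the resulting error involves $\int |A(x+h,y+h)-A(x,y)|\,|u(x+h)-u(y+h)|/|x-y|^{n+2s}\,dy$, and making this integrable against the test function $\tau_h v$ and then summable over dyadic $|h|$ — without losing the factor $|h|^\alpha$ that one needs — requires care with the tail and with the fact that $2s$ can be close to $1$, so the room between the order of the equation and order $1$ is small. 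A secondary difficulty is bookkeeping the fractional-Sobolev embeddings at the low exponent $q$ so that each bootstrap step stays in an admissible range; this is routine but must be done honestly to land exactly at the stated range \eqref{rangesm}.
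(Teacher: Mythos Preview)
Your approach diverges substantially from the paper's, and the measure-data step contains a genuine gap.

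The paper does \emph{not} proceed via a differentiated Caccioppoli inequality for $\tau_h u$. Its engine is the $C^{1,\gamma}$ oscillation decay for solutions of the \emph{homogeneous} equation (Corollary~\ref{thm:GradOscDec1}), which is itself obtained in Section~\ref{C1,gamma} by freezing $A$ at a point, comparing with the resulting translation-invariant operator (for which affine functions are annihilated, Remark~\ref{rem:affine}), and running a Campanato iteration in affine functions. The a~priori estimate (Lemma~\ref{thm:higherdiffscalemeasdata}) then works as follows: for each increment $h$ one covers $B_{1/2}$ by balls $B_{|h|^\beta}(z_j)$, on each ball compares $u$ with the weak solution $v_z$ of $L_A v_z=0$ in $B_{18|h|^\beta}(z)$ with $v_z=u$ outside, and splits
\[
\|\tau_h^2 u\|_{L^q(B_{|h|^\beta}(z))}\;\lesssim\; \|u-v_z\|_{L^q(B_{18|h|^\beta}(z))}\;+\;|h|\,\|\tau_h\nabla v_z\|_{L^q(B_{2|h|^\beta}(z))}.
\]
The first term is controlled by the \emph{zero-order} comparison estimate from \cite{KMS2} (Proposition~\ref{meascomp}), which already carries $|\mu|(B)$ at the correct integrability $q<\tfrac{n}{n-2s}$; the second term uses the pointwise H\"older bound on $\nabla v_z$. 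Summing over the covering (with careful bookkeeping of tail overlaps) and iterating in differentiability yields the Nikolskii bound; the passage from $u_j$ to $u$ is then a compactness argument (Corollary~\ref{upgrade}).

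The gap in your scheme is your second paragraph. Your first step --- testing the $\tau_h$-differentiated equation with $\tau_h v$, commutator bounded by $\Gamma|h|^\alpha$ --- is a legitimate route to higher differentiability in $L^2$ when the datum lies in the dual of $W^{s,2}$, though the single-step gain is $\alpha$ (not $\min\{\alpha,2s-1\}$) and must be iterated, which is delicate when $\alpha$ is tiny. But your proposal to ``combine the differentiated Caccioppoli inequality with a level-set decomposition of $\nabla u_j$'' is not a technique that exists: Boccardo--Gallou\"et truncation acts on $u$, not on $\nabla u$, and produces zero-order $L^q$ bounds, not gradient bounds. Converting those zero-order bounds into an estimate on $\nabla u$ with $|\mu|(B_R)$ on the right, uniformly in $j$, is precisely the difficulty the paper overcomes by inserting the $C^{1,\gamma}$-regular homogeneous comparison function $v_z$ between $u$ and its second differences. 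Without that device (or an equivalent), your outline contains no mechanism that downgrades the $L^2$-scale energy estimate to an $L^q$ estimate with $q<\tfrac{n}{n+1+\gamma-2s}$ while keeping only $|\mu|(B_R)$ on the right-hand side.
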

\begin{remark}
We note that in Theorem \ref{HD}, the limit case $\gamma=2s-1$, $q=1$ is unattainable already in the case when $A$ is constant. In fact, the fundamental solution $E_s$ of the fractional Laplacian $(-\Delta)^s$ solves the equation $$(-\Delta)^s E_s=\delta \text{ in } \ern,$$ where $\delta$ is the Dirac measure charging the origin, and is known to satisfy $$E_s(x) \approx |x|^{2s-n},$$ see e.g.\ \cite[Theorem 8.4]{Garofalo}. Since we thus have $|\nabla E_s(x)| \approx |x|^{2s-1-n},$ $\nabla E_s$ clearly does not belong to $L^\frac{n}{n-2s+1}_{loc}(\Omega)$ whenever $\Omega \subset \ern$ contains the origin. Thus, since by the fractional Sobolev embedding we have $W^{2s-1,1}_{loc}(\Omega) \hookrightarrow L^\frac{n}{n-2s+1}_{loc}(\Omega)$, we observe that $\nabla u \notin W^{2s-1,1}_{loc}(\Omega)$.
\end{remark}

For $\mu \in \mathcal{M}(\ern)$, recall that the classical Riesz potential of order $\beta \in (0,n)$ of $|\mu|$ is defined by
$$
	I^{|\mu|}_{\beta}(x_0):= \int_{\ern} \frac{d|\mu|(y)}{|x_0-y|^{n-\beta}}, \quad x_0 \in \ern.
$$
Since we are concerned with equations posed in bounded domains, for our purposes it is convenient to also define a truncated version of the classical Riesz potentials.
\begin{definition} \label{Riesz}
Let $\mu \in \mathcal{M}(\ern)$. For any $x_0 \in \mathbb{R}^n$ and any $R>0$, we define the truncated Riesz potential of order $\beta \in (0,n)$ of $|\mu|$ by
$$
I^{|\mu|}_{\beta}(x_0,R):= \int_0^R \frac{|\mu|(B_t(x_0))}{t^{n-\beta}}\frac{dt}{t}.
$$
\end{definition}
It is easy to see that this truncated version of the Riesz potential is consistent with the classical one in the sense that for any $R>0$ and any $x_0 \in \ern$, we have
\begin{equation} \label{truncated}
I^{|\mu|}_{\beta}(x_0,R) \lesssim I^{|\mu|}_{\beta}(x_0).
\end{equation}

\vspace{1mm}
Under similar assumptions as in our higher differentiability result Theorem \ref{HD}, we then have the following pointwise \emph{potential estimate} for the gradient of SOLA to \eqref{NonlocalDir}.
\begin{theorem}[Gradient potential estimate] \label{GPE}
Let $s \in (1/2,1)$, $\mu \in \mathcal{M}(\mathbb{R}^n)$, $g \in W^{s,2}_{loc}(\mathbb{R}^n) \cap L^1_{2s}(\mathbb{R}^n)$ and let $u$ be a SOLA of (\ref{NonlocalDir}) in a bounded domain $\Omega \subset \ern$. In addition, assume that $A \in \mathcal{L}_0(\Lambda)$ is $C^\alpha$ in $\Omega$ with respect to some $\alpha \in (0,1)$ and some $\Gamma>0$. Then for almost every $x_0 \in \Omega$ and any $R >0$ such that $B_R(x_0) \subset \Omega$, we have the estimate
\begin{equation} \label{NGPE}
|\nabla u(x_0)| \leq C \left ( R^{-1} \mean{B_R(x_0)}|u|dx + R^{2s-1} \int_{\mathbb{R}^n \setminus B_R(x_0)} \frac{|u(y)|}{|x_0-y|^{n+2s}}dy + I^{|\mu|}_{2s-1}(x_0,R) \right ),
\end{equation}
where $C$ depends only on $n,s,\Lambda,\alpha,\Gamma$ and $\max\{\textnormal{diam}(\Omega),1\}$.
\end{theorem}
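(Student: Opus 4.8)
The plan is to run a Campanato-type excess-decay iteration at a Lebesgue point of $\nabla u$, comparing $u$ at each dyadic scale with the solution of the corresponding homogeneous Dirichlet problem and summing up the resulting perturbation and measure contributions. First I would reduce to the generic case: by the approximation scheme built into Definition~\ref{SOLA} together with the a priori bound \eqref{HDE}, it suffices to prove \eqref{NGPE} for the smooth approximating weak solutions $u_j$ with a constant independent of $j$ and then to pass to the limit using the uniform estimates of Theorem~\ref{HD} and the convergences in Definition~\ref{SOLA}; hence one may assume that $u$ is a weak solution with data as regular as needed, and after a translation that $x_0=0$.

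\emph{Comparison step.} For a ball $B_r=B_r(0)\subset\Omega$ I would let $v$ solve $L_Av=0$ in $B_r$ with $v=u$ on $\ern\setminus B_r$, so that $w:=u-v$ satisfies $L_Aw=\mu$ in $B_r$ with $w\equiv 0$ outside $B_r$. Applying \eqref{HDE} to $w$ on a slightly smaller ball (its nonlocal tail then being supported in $B_r\setminus B_{3r/4}$, hence controlled by $r^{-2s}\mean{B_r}|w|\,dx$) gives, for every $q$ as in \eqref{rangesm},
\[
\Big(\mean{B_{r/2}}|\nabla u-\nabla v|^q\,dx\Big)^{1/q}\leq \frac{C}{r}\mean{B_r}|w|\,dx+C\,\frac{|\mu|(B_r)}{r^{n-2s+1}}.
\]
Inserting the sharp $L^1$-estimate $\mean{B_r}|w|\,dx\lesssim r^{2s-n}|\mu|(B_r)$ for solutions with vanishing exterior datum (a consequence of the zero-order theory of \cite{KMS2}, or of the bound $|G_{B_r}(x,z)|\lesssim|x-z|^{2s-n}$ for the Green function of $L_A$ in $B_r$), the first term is absorbed into the second, so the comparison estimate becomes $\big(\mean{B_{r/2}}|\nabla u-\nabla v|^q\,dx\big)^{1/q}\leq C\,|\mu|(B_r)\,r^{2s-1-n}$, which is dimensionally just the scale-invariant $L^q$-gradient bound $\|\nabla w\|_{L^q(B_{1/2})}\lesssim\|\mu\|$ for zero-data solutions on the unit ball.

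\emph{Decay step and iteration.} For the homogeneous equation I would invoke the interior gradient regularity for $L_Av=0$ with $C^\alpha$ coefficients established en route to Theorem~\ref{HD}, which yields for any $\gamma<\min\{\alpha,2s-1\}$ an excess-decay inequality of the form $\mathbf{E}_v(0,\rho)\leq C(\rho/r)^{\gamma}\mathbf{E}_v(0,r)+C\Gamma r^{\alpha}\mathcal{D}_v(0,r)$ for $0<\rho\leq r/2$, where $\mathbf{E}_v(0,\cdot)$ is the excess functional combining the mean oscillation of $\nabla v$ with an affine-corrected nonlocal tail and $\mathcal{D}_v$ is the associated zero-order quantity (an averaged $L^1$-norm together with a zero-order tail). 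Combining this with the comparison estimate and the triangle inequality for the means of $\nabla u$ produces, with $r_i=\sigma^iR$, the iteration $\mathbf{E}_u(0,r_{i+1})\leq C\sigma^{\gamma}\mathbf{E}_u(0,r_i)+C\sigma^{-n}|\mu|(B_{r_i})\,r_i^{2s-1-n}+C\Gamma r_i^{\alpha}\mathcal{D}_u(0,r_i)$. I would then fix $\sigma=\sigma(n,s,\Lambda,\alpha,\Gamma,\gamma,q)$ so small that $C\sigma^{\gamma}\leq\tfrac12$ and sum over $i\geq 0$: the dyadic measure densities sum to a multiple of $I^{|\mu|}_{2s-1}(0,R)$ by comparison with the integral in Definition~\ref{Riesz}; the coefficient errors $\Gamma r_i^{\alpha}$ form a summable lower-order tail since $\alpha>0$ and $r_i^{\alpha}\leq(\diam\Omega)^{\alpha}$; and the affine-corrected tails in $\mathbf{E}_u$ and $\mathcal{D}_u$ telescope — in each case the leftover is absorbed into the right-hand side of \eqref{NGPE} after inserting \eqref{HDE} at the top scale $R$, which is where the dependence on $\max\{\diam\Omega,1\}$ enters. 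Since $|\nabla u(0)|=\lim_i|(\nabla u)_{B_{r_i}}|\leq|(\nabla u)_{B_R}|+C\sum_i\mathbf{E}_u(0,r_i)$ at the Lebesgue point $0$, this yields \eqref{NGPE} for $u_j$, and the limiting argument of the first paragraph then gives it for $u$.

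The hard part will be the decay step: establishing the gradient excess decay for the homogeneous equation with merely $C^\alpha$ — and, crucially, \emph{not} translation invariant — coefficients, with a gain $\gamma$ robustly below the threshold $\min\{\alpha,2s-1\}$, and arranging the nonlocal tail so that it both survives the freezing/perturbation argument and telescopes cleanly under summation. The fact that an arbitrarily small $\alpha$ suffices is explained here: in the decay step the gain only needs to beat a fixed regularity threshold of the frozen operator, while the coefficient error $\Gamma r_i^{\alpha}$ enters merely as a summable perturbation. The comparison estimate is itself nontrivial, since even the local integrability of $\nabla u$ — and hence the very fact that the quantities in the iteration are finite — is only available through Theorem~\ref{HD}.
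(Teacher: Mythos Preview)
Your proposal is correct and follows essentially the same route as the paper: the gradient comparison estimate obtained by applying \eqref{HDE} to $w=u-v$ is exactly Corollary~\ref{cor:gradcomp}, the excess decay for the homogeneous equation is Corollary~\ref{thm:GradOscDec1}, and the paper packages the two together into Lemma~\ref{thm:GradOscDec2} before running precisely the dyadic iteration, reabsorption, and Lebesgue-point argument you describe, with the passage to the SOLA limit handled via Corollary~\ref{upgrade}. The one place where your sketch is looser than the paper is the coefficient error: after Poincar\'e it is not a single zero-order quantity $\mathcal{D}_u(0,r_i)$ that one can bound once and for all, but rather a weighted sum $2^{-\widetilde\gamma N}\sum_{k=0}^{N}2^{k(1+\widetilde\gamma-2s)}|(\nabla u)_{B_{2^{k-N}R}}|$ of gradient averages at \emph{all} intermediate scales, so ``summable since $\alpha>0$'' is not enough on its own --- the paper closes this circularity by a short induction on $l$ showing that every $|(\nabla u)_{B_{2^{-lm}R}}|$ is already bounded by $2C_7M$, after which the extra smallness $2^{-\gamma m/2}$ in front of that sum lets the induction step go through.
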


In addition to giving precise pointwise information in the presence of general measure data, potential estimates such as (\ref{NGPE}) also have strong implications concerning regularity estimates of Calder\'on-Zygmund-type. In fact, a main strength of potential estimates is that "passing through potentials" enables us to detect finer scales that are difficult to reach by more traditional methods. In order to make this point precise, recall that the Lorentz spaces $L^{p,\sigma}(\Omega)$ are defined as follows.

\begin{definition}
For any domain $\Omega \subset \mathbb{R}^n$, any $p \in [1,\infty)$ and any $\sigma \in (0,\infty]$ we define the Lorentz space $L^{p,\sigma}(\Omega)$ as the space of measurable functions $f: \Omega \to \mathbb{R}^n$ such that the quasinorm
$$ ||f||_{L^{p,\sigma}(\Omega)}:=
	\begin{cases} \normalfont
	 p^\frac{1}{\sigma} \left (\int_0^\infty t^\sigma |\{x \in \Omega \mid |f(x)| \geq t \}|^\frac{\sigma}{p} \frac{dt}{t} \right )^\frac{1}{\sigma}  & \text{ if } \sigma<\infty\\
	\sup_{t>0} t \hspace{0.2mm} | \{x \in \Omega \mid |f(x)| \geq t \} |^\frac{1}{p} & \text{ if } \sigma=\infty
	\end{cases} $$
is finite.
\end{definition}

In particular, we note that for any $p \in [1,\infty)$, by Cavalieri's principle we have $L^{p,p}(\Omega)=L^p(\Omega)$, so that the Lorentz spaces refine the scale of $L^p$ spaces. Moreover, it is easy to see that $L^{p,\sigma}(\Omega) \hookrightarrow L^{p,\varrho}(\Omega)$ for all $p \in [1,\infty)$ and all $\sigma,\varrho \in (0,\infty]$ with $\sigma<\varrho$. If in addition $\Omega$ is a bounded domain, then we also have $L^{p_1,\sigma_1}(\Omega) \hookrightarrow L^{p_0,\sigma_0}(\Omega)$ for all $1 \leq p_0 < p_1< \infty$ and all $\sigma_0,\sigma_1 \in (0,\infty]$.

The following fine regularity results then follow directly from \eqref{NGPE} by taking into account \eqref{truncated} and the mapping properties of the Riesz potential $I^{|\mu|}_{2s-1}$ given by \cite[Proposition 2.8]{Cianchi}.

\begin{corollary} \label{Lorentzreg}
Let $s \in (1/2,1)$ and let $u$ be a SOLA of (\ref{NonlocalDir}) in a bounded domain $\Omega \subset \ern$. In addition, assume that $A \in \mathcal{L}_0(\Lambda)$ is $C^\alpha$ in $\Omega$ with respect to some $\alpha \in (0,1)$ and some $\Gamma>0$. 
\begin{itemize}
	\item We have the implication 
	\begin{equation} \label{Lorentz1}
	\mu \in \mathcal{M}(\ern) \implies \nabla u \in L^{\frac{n}{n-2s+1},\infty}_{loc}(\Omega).
	\end{equation}
	\item For any $p \in \left (1,\frac{n}{2s-1} \right )$ and any $\sigma \in (0,\infty]$, we have the implication 
	\begin{equation} \label{Lorentz2}
	\mu \in L^{p,\sigma}(\Omega) \implies \nabla u \in L^{\frac{np}{n-(2s-1)p},\sigma}_{loc}(\Omega).
	\end{equation}
	\item We have the Lipschitz criterion
	\begin{equation} \label{Lorentz3}
	\mu \in L^{\frac{n}{2s-1},1}(\Omega) \implies \nabla u \in L^{\infty}_{loc}(\Omega).
	\end{equation}
\end{itemize}
\end{corollary}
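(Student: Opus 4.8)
The plan is to read off Corollary \ref{Lorentzreg} directly from the pointwise gradient potential estimate \eqref{NGPE} of Theorem \ref{GPE}, combined with the boundedness properties of the Riesz potential of order $\beta:=2s-1\in(0,1)$. Fix an arbitrary subdomain $\Omega'\Subset\Omega$ and set $R:=\tfrac{1}{2}\dist(\Omega',\partial\Omega)>0$, so that $B_R(x_0)\subset\Omega$ for every $x_0\in\Omega'$; since by Theorem \ref{HD} the gradient $\nabla u$ exists a.e.\ as a function, \eqref{NGPE} applies at a.e.\ $x_0\in\Omega'$. First I would show that, for this fixed $R$, the two solution-dependent terms on the right-hand side of \eqref{NGPE} are bounded uniformly in $x_0\in\Omega'$ by a single constant $C_1$ depending only on $n,s,R$ and the norms $\|u\|_{L^1(\Omega'')}$, $\|u\|_{L^1_{2s}(\ern)}$, where $\Omega''\Subset\Omega$ denotes the $R$-neighbourhood of $\Omega'$: the local average is at most $|B_R|^{-1}\|u\|_{L^1(\Omega'')}$, while the nonlocal tail is split into the part over $\Omega''\setminus B_R(x_0)$, where $|x_0-y|\ge R$ gives the bound $R^{-n-2s}\|u\|_{L^1(\Omega'')}$, and the part over $\ern\setminus\Omega''$, where $|x_0-y|\gtrsim 1+|y|$ uniformly for $x_0\in\Omega'$ gives a multiple of $\|u\|_{L^1_{2s}(\ern)}$; all of these are finite because the SOLA $u$ satisfies $u\in L^1_{\loc}(\Omega)\cap L^1_{2s}(\ern)$. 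Replacing the truncated potential by the full one via \eqref{truncated} then yields
\begin{equation*}
|\nabla u(x_0)|\le C_1+C_2\,I^{|\mu|}_{\beta}(x_0)\qquad\text{for a.e.\ }x_0\in\Omega',
\end{equation*}
with $C_2$ depending only on $n,s,\Lambda,\alpha,\Gamma,\max\{\dia(\Omega),1\}$.

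With this reduction in hand, each of the three claims follows by invoking the relevant mapping property of $I^{|\mu|}_{\beta}$ from \cite[Proposition 2.8]{Cianchi} and noting that, since $\Omega'$ is bounded, the additive constant $C_1$ already lies in every target Lorentz space over $\Omega'$ by the embeddings $L^\infty(\Omega')\hookrightarrow L^{q}(\Omega')\hookrightarrow L^{q_0,\sigma}(\Omega')$ (valid for $q_0<q$) recalled before the statement. For \eqref{Lorentz1} one uses the weak-type bound $I^{|\mu|}_{\beta}\in L^{n/(n-\beta),\infty}(\ern)$ for $\mu\in\mathcal{M}(\ern)$, so that $\nabla u\in L^{n/(n-2s+1),\infty}(\Omega')$. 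For \eqref{Lorentz2}, since $\Omega$ is bounded and $p>1$ one has $L^{p,\sigma}(\Omega)\hookrightarrow L^1(\Omega)$, hence a datum $\mu\in L^{p,\sigma}(\Omega)$ extends by zero to an element of $\mathcal{M}(\ern)\cap L^{p,\sigma}(\ern)$, and \cite[Proposition 2.8]{Cianchi} gives $I^{|\mu|}_{\beta}\in L^{\frac{np}{n-\beta p},\sigma}(\ern)$, which for $\beta=2s-1$ is precisely the claimed target. For \eqref{Lorentz3}, the borderline statement in \cite[Proposition 2.8]{Cianchi} is that the Riesz potential of order $\beta$ maps $L^{n/\beta,1}(\ern)$ into $L^\infty(\ern)$, so $\mu\in L^{n/\beta,1}(\Omega)$ (which again lies in $L^1(\Omega)$ and hence extends by zero) forces $I^{|\mu|}_{\beta}\in L^\infty(\ern)$ and thus $\nabla u\in L^\infty(\Omega')$. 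Since $\Omega'\Subset\Omega$ was arbitrary, each conclusion upgrades to the corresponding $\loc$ space.

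Since essentially all of the substance is already contained in Theorem \ref{GPE}, I do not expect a serious obstacle here; the two points needing a little care are (i) the uniform-in-$x_0$ control of the nonlocal tail $\int_{\ern\setminus B_R(x_0)}|u(y)|\,|x_0-y|^{-n-2s}\,dy$ over the compact set $\Omega'$, which is exactly where the tail-space membership $u\in L^1_{2s}(\ern)$ from Definition \ref{SOLA} is used, and (ii) matching the three ranges of $\mu$ with the precise statements in \cite[Proposition 2.8]{Cianchi}, along with the elementary observation that on a bounded domain the harmless constant $C_1$ is absorbed into the target space.
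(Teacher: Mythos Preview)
Your proposal is correct and follows precisely the route the paper indicates: the paper does not give a detailed proof of Corollary \ref{Lorentzreg} but states that it ``follows directly from \eqref{NGPE} by taking into account \eqref{truncated} and the mapping properties of the Riesz potential $I^{|\mu|}_{2s-1}$ given by \cite[Proposition 2.8]{Cianchi},'' which is exactly the reduction you carry out. Your additional care in uniformly bounding the solution-dependent terms on compact subsets and in handling the additive constant via Lorentz embeddings on bounded domains fills in the details the paper leaves implicit.
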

For instance, the implication \eqref{Lorentz1} sharpens Theorem \ref{HD} in the case when $\gamma=0$, which only yields $\nabla u \in L^p_{loc}(\Omega)$ for any $p <\frac{n}{n-2s+1}$.

Moreover, in the setting of Corollary \ref{Lorentzreg}, the implication \eqref{Lorentz2} applied with $p=\sigma$ together with the embedding $L^{\frac{np}{n-(2s-1)p},p}_{loc}(\Omega) \hookrightarrow L^{\frac{np}{n-(2s-1)p}}_{loc}(\Omega)$ in particular yields the following slightly coarser implication in the standard $L^p$ spaces: For any $p \in \left (1,\frac{n}{2s-1} \right )$, we have
\begin{equation} \label{Lpest}
\mu \in L^{p}(\Omega) \implies \nabla u \in L^{\frac{np}{n-(2s-1)p}}_{loc}(\Omega).
\end{equation}

More generally, the potential estimate \eqref{NGPE} implies estimates on $\nabla u$ in any function space in which the mapping properties of the Riesz potential are known, which is the case also in many other commonly used rearrangement invariant function spaces such as Orlicz spaces, see \cite{Cianchi}.

If $\mu$ is slightly more regular than assumed in the Lipschitz criterion \eqref{Lorentz3}, then we obtain H\"older regularity for $\nabla u$, as our final main result shows.

\begin{theorem}[$C^{1,\gamma}$ regularity] \label{Holdgrad}
	Let $\Omega \subset \mathbb{R}^n$ be a bounded domain. Let $s \in (1/2,1)$ and suppose that $A \in \mathcal{L}_0(\Lambda)$ is $C^\alpha$ in $\Omega$ with respect to some $\alpha \in (0,1)$ and some $\Gamma>0$ and that $\mu \in L^p(\Omega)$ for some $p > \frac{n}{2s-1}$. Moreover, let $u \in W^{s,2}_{loc}(\Omega) \cap L^1_{2s}(\ern)$ be a weak solution to $L_{A} u = \mu \text{ in } \Omega.$
	Then for any $$0<\gamma < \min\{\alpha,2s-n/p-1\},$$ we have $u \in C^{1,\gamma}_{loc}(\Omega)$. In addition, for any $R>0$ and any $x_0 \in \Omega$ with $B_{R}(x_0) \Subset \Omega$ and any $\rho \in (0,1)$, we have the estimate
	\begin{equation} \label{C1gest}
		\begin{aligned}
			& [\nabla u]_{C^\gamma(B_{\rho R}(x_0))} \\ & \leq \frac{C}{R^{1+\gamma}} \Bigg ( \mean{B_{R}(x_0)} |u| dx + R^{2s} \int_{\mathbb{R}^n \setminus B_{R}(x_0)} \frac{|u(y)| }{|x_0-y|^{n+2s}}dy + R^{2s-\frac{n}{p}} ||\mu||_{L^p(B_R(x_0))} \Bigg ),
		\end{aligned}
	\end{equation}
	where $C$ depends only on $n,s,\Lambda,\alpha,\Gamma,p,\gamma,\rho$ and $\max\{\textnormal{diam}(\Omega),1\}$. 
\end{theorem}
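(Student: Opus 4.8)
The plan is to derive the $C^{1,\gamma}$ estimate \eqref{C1gest} by combining the higher-differentiability estimate of Theorem~\ref{HD} with a Campanato-type iteration on the excess of $\nabla u$. The starting point is the observation that since $\mu \in L^p(\Omega)$ with $p > n/(2s-1)$, the data term decays at a favorable rate: for $B_r(x_0) \subset \Omega$,
$$
r^{2s-n}|\mu|(B_r(x_0)) \le \omega_n^{1-1/p} r^{2s-1-n/p} \|\mu\|_{L^p(B_r(x_0))},
$$
with exponent $2s-1-n/p > 0$. Choosing $\gamma < \min\{\alpha, 2s-1-n/p\}$ (note $2s-n/p-1 = 2s-1-n/p$), this term is controlled by $r^\gamma$ times a quantity nonincreasing in $r$. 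Thus the measure on the right-hand side of \eqref{HDE} will not obstruct the iteration.

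First I would set up the excess functional
$$
E(x_0,r) := \left(\mean{B_r(x_0)} |\nabla u - (\nabla u)_{B_r(x_0)}|^q\, dx\right)^{1/q}
$$
for a fixed $q \in [1, n/(n+1+\gamma-2s))$ (which is admissible by Theorem~\ref{HD} since $\gamma < 2s-1$), and prove a decay estimate of the form $E(x_0, \tau r) \le C\tau^\gamma E(x_0,r) + C\tau^{-n/q} r^\gamma H(x_0,r)$, where $H(x_0,r)$ collects the $u$-average, tail, and $L^p$-data terms appearing on the right of \eqref{C1gest}, all normalized to be essentially nonincreasing in $r$. The mechanism for this is a \emph{comparison/freezing} argument: on a ball $B_r$, compare $u$ with the weak solution $v$ of $L_{\tilde A} v = 0$ in $B_{r/2}$, $v = u$ outside, where $\tilde A(x,y) := A(x_0 + \frac{x-y}{2}, x_0 - \frac{x-y}{2})$ is a \emph{translation-invariant} frozen coefficient; the $C^\alpha$ assumption \eqref{holdkernel} controls $A - \tilde A$ by $\Gamma r^\alpha$ on the relevant scales, and the error $\nabla u - \nabla v$ is estimated via the energy/Caccioppoli-type bounds and the higher differentiability of Theorem~\ref{HD} applied to $u - v$ (which solves an equation with measure data $\mu$ plus a remainder from the coefficient difference and the tail). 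For the homogeneous, translation-invariant comparison function $v$, the decay $E_v(x_0,\tau r) \lesssim \tau^{\min\{\alpha, 2s-1\}} E_v(x_0, r/4)$ follows from the known interior regularity for $L_{\tilde A} v = 0$ — concretely from a rescaled version of \eqref{HDE} applied to first-order difference quotients of $v$, exploiting that translation invariance makes $\partial_i v$ again a (sub)solution.

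Once the excess-decay inequality is in hand, a standard iteration lemma (summing the geometric series in the scales $\tau^k R$) yields $E(x_0, r) \le C (r/R)^\gamma\big(E(x_0,R) + H(x_0,R)\big)$, and since $H(x_0,R) \lesssim R^{-1-\gamma}\big(\text{RHS of \eqref{C1gest} without the }[\cdot]_{C^\gamma}\big)$ by the very form of \eqref{HDE}, Campanato's characterization of Hölder continuity gives $\nabla u \in C^{0,\gamma}_{loc}$ together with the quantitative bound \eqref{C1gest}; the restriction to $B_{\rho R}(x_0)$ with $\rho \in (0,1)$ arises exactly because the iteration must stay inside $B_R(x_0)$. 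I would also need a preliminary step recording that $\nabla u$ exists and is locally in $L^q$ — but this is precisely the content of Theorem~\ref{HD}, so it is free.

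**The main obstacle** I expect is the comparison estimate controlling $\nabla(u-v)$: because the equation is nonlocal and of order below two, the gradient is not naturally tied to the operator, so one cannot simply test the equation for $u-v$ with $u-v$ and read off a gradient bound. Instead the comparison error must itself be fed through Theorem~\ref{HD} (or a localized variant), which means carefully accounting for three sources of right-hand side — the measure $\mu$, the coefficient discrepancy $(A-\tilde A)$ acting on $u$, and the nonlocal tail contributions from the difference of the ``outside'' data — and checking that each produces at most an $r^\gamma$-type gain after normalization. Keeping the tail terms from spoiling the scaling (they must be absorbed into $H(x_0,r)$ with the correct power $r^{2s-1}$, matching \eqref{HDE}) is the delicate bookkeeping, and ensuring the frozen-coefficient homogeneous estimate has a decay exponent strictly above $\gamma$ — which forces $\gamma < \min\{\alpha, 2s-1\}$ and, through the data term, $\gamma < 2s-1-n/p$ — is where the hypotheses on $p$ and $\gamma$ are consumed.
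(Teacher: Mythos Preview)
Your approach differs substantially from the paper's, and the step you flag as ``the main obstacle'' is a genuine gap that you do not close.

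First, a structural point: in this paper Theorem~\ref{Holdgrad} is proved in Section~\ref{C1,gamma} \emph{before} and independently of Theorem~\ref{HD}; in fact the proof of Theorem~\ref{HD} in Section~4 relies on Corollary~\ref{thm:GradOscDec1}, which is the core ingredient behind Theorem~\ref{Holdgrad}. So within the paper's logic, invoking Theorem~\ref{HD} to prove Theorem~\ref{Holdgrad} is circular.

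Setting that aside and granting Theorem~\ref{HD} as a black box, the real gap is in your frozen-coefficient comparison. If $v$ solves $L_{\tilde A}v=0$ with $v=u$ outside, then $w=u-v$ satisfies $L_{\tilde A}w=\mu+(L_{\tilde A}-L_A)u$. You propose to ``feed this through Theorem~\ref{HD}'', but Theorem~\ref{HD} is stated for measure data, and $(L_{\tilde A}-L_A)u$ is not a measure: with only the a~priori H\"older regularity $u\in C^\beta$ ($\beta<1$) from Proposition~\ref{homreg1}, the integral $\int(\tilde A-A)(x,y)\frac{u(x)-u(y)}{|x-y|^{n+2s}}\,dy$ is not absolutely convergent near the diagonal. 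So you cannot read off a bound on $\nabla w$ from the statement of Theorem~\ref{HD}; you would have to reopen its proof and redo the energy estimates with this singular extra term. That is essentially what the paper does---but at the level of $u$, not $\nabla u$.

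Concretely, the paper's comparison (Lemma~\ref{compest}, Corollary~\ref{compestR}) works at the $L^2$ level of $u$: one tests the equation for $w=u-\ell-v$ against $w$ itself and estimates the pairing with $(L_{\tilde A}-L_A)u$ directly, using the $C^\alpha$ decay of $A-\tilde A$ together with the $C^\beta$ regularity of $u$, $\beta\in(s,1)$. This gives $\|u-\ell-v\|_{L^2}\lesssim r^{\alpha+\beta}(\cdots)$. The geometric iteration (Lemma~\ref{thm:GradOscDec}) then runs on $\mean{B_r(z)}|u-\ell_{r,z}|\,dx$ for a sequence of affine $\ell_{r,z}$, and Campanato's characterization of $C^{1,\gamma}$ via affine approximation yields the result. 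The iteration lives at order zero, so only an $L^2$ comparison is needed---which one \emph{can} obtain by testing---whereas your gradient-level excess iteration would require a gradient comparison that is not available without already knowing the result.

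One minor correction: for the translation-invariant $v$, the gradient excess decays with exponent $2s-1$ (Proposition~\ref{TIreg}, Corollary~\ref{cor:locTIreg}), not $\min\{\alpha,2s-1\}$; the exponent $\alpha$ enters only through the coefficient error term, not through the homogeneous regularity.
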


\subsection{Previous results and differential stability effect} \label{prev}
Let us compare our main results with existing related results in the literature, discussing the strength of our assumption \eqref{holdkernel} on the coefficient $A$ and some remaining open questions along the way.

First of all, we stress that our gradient potential estimates from Theorem \ref{GPE} as well as the Lorentz regularity results from Corollary \ref{Lorentzreg} are completely new and nontrivial even in the case when $A$ is very regular, since to the best of our knowledge all previous estimates concerning nonlocal measure data problems of the type \eqref{nonlocaleq} were obtained below the gradient level. In the case when the measure $\mu$ does not belong to the dual of $W^{s,2}$, the same is true for our higher differentiability result given by Theorem \ref{HD}.

Concerning such estimates on the solution itself instead of its gradient, recently similar estimates to the ones proved in \cite{KMS2} were obtained in \cite{KLL}, where it was also demonstrated how such zero-order potential estimates can be used to prove an analogue of Wiener's test concerning the regularity of boundary points. More zero-order pointwise estimates for solutions to various types of nonlocal equations were e.g.\ established in the papers \cite{GVPisa,KKL21,Verbitsky}. Furthermore, some existence and regularity results below the gradient level in the case when the coefficient $A$ in \eqref{nonlocalop} is constant or translation invariant were obtained in e.g.\ \cite{KPU,KRJFA,CVJDE,Petitta}.

Next, let us discuss some related results in the case when $\mu$ is given by some function that at least belongs to $L^\frac{2n}{n+2s}$, so that it is natural to consider weak solutions instead of SOLA. First regularity results in this case were of De Giorgi-Nash-Moser-type, showing that weak solutions to $L_A u=0$ are locally H\"older continuous whenever $A \in \mathcal{L}_0(\Lambda)$, see e.g.\ \cite{KassCalcVar,CCV,DKP}.

Concerning Sobolev regularity, in \cite{CozziSob} it was proved that if $\mu \in L^2$ and $A$ is $C^s$, then $u$ belongs to $W^{2s-\varepsilon,2}_{\loc}$ for any $\varepsilon>0$. While the assumption that A is $C^s$ might appear natural from the perspective of local second-order equations in divergence form, where the maximal differentiability gain is precisely dictated by the differentiability prescribed on the coefficients, our Theorem \ref{HD} in particular yields a similar differentiability gain almost up to order $2s$ under the weaker assumption that $A$ is $C^{2s-1}$. Since at least formally, nonlocal operators of the type \eqref{nonlocalop} converge to local second-order operators of the type $\textnormal{div}(B \nabla u)$ as $s \to 1$ (see \cite{FKV} for some rigorous results in this direction), this weaker assumption can be considered to be somewhat surprising.

The at first sight unexpected weak requirement on $A$ can be explained as follows. In fact, in \cite{KMS1} again two of the authors together with Mingione discovered a purely nonlocal phenomenon, namely that weak solutions $u \in W^{s,2}$ of \eqref{nonlocaleq} belong to $W^{s+\varepsilon,2+\varepsilon}_{loc}$ for some small $\varepsilon=\varepsilon(n,s,\Lambda)>0$ whenever $A \in \mathcal{L}_0(\Lambda)$ and $\mu$ is integrable enough. In other words, weak solutions self-improve automatically both at the integrability and differentiability scales already in the case of merely bounded measurable coefficients, which is in sharp contrast to the case of local second-order elliptic equations of the type $\textnormal{div}(B \nabla u)=\mu$. Namely, in the case of such local equations such an improvement in general occurs only along the integrability scale (see \cite{meyers}), but not along the differentiability scale (see \cite{KMS1} for a simple counterexample) under merely measurable coefficients. Therefore, the differentiability gain under such weak assumptions on $A$ is indeed a \emph{purely nonlocal phenomenon}. We also want to mention that an alternative proof of this self-improvement  of weak solutions to \eqref{nonlocaleq} was given by Schikorra in \cite{SchikorraMA}.

If one imposes more regularity on the coefficient $A$, then this differential stability effect of nonlocal operators of the type \eqref{nonlocalop} becomes even more visible. In fact, in \cite{MSY} it was observed that if $A$ is H\"older continuous, then in our case when $s \in (1/2,1)$ weak solutions $u \in W^{s,2}$ of \eqref{nonlocaleq} belong to $W^{t,p}_{loc}$ for any $s \leq t < 1$ and $2 \leq p < \infty$ whenever $\mu$ is sufficiently integrable, gaining a substantial amount of differentiability. In \cite{MeI,MeV}, the second author then proved that this Sobolev regularity result of purely nonlocal type remains valid if $A$ has vanishing mean oscillation, allowing in particular for discontinuous coefficients. Again, this is in sharp contrast with the regularity properties for local second-order equations in divergence form, for which still no differentiability gain at all is available under merely VMO or continuous coefficients.

As hinted at above, the mentioned differential stability effect is intrinsically also contained in our main results. In fact, in the case of local second-order equations in divergence form it is easy to see that under a $C^\alpha$ assumption, a differentiability gain of at most order $\alpha$ is attainable. On the other hand, in all our main results we almost gain $1-s+\alpha$ derivatives whenever $A$ is $C^\alpha$, so that the gain of the additional almost $1-s$ derivatives is attained by taking advantage of the special structure of the operator \eqref{nonlocaleq} and not by exploiting the decay of the coefficient, enabling us to prove gradient estimates while assuming only an arbitrary small amount of decay of the coefficient, which in particular is \emph{independent} of $s$.

Nonetheless, we remark that since in the case of local equations gradient potential estimates can be proved under the slightly weaker assumption that the coefficients are Dini instead of H\"older continuous (see e.g.\ \cite{GWG,DM2}), an interesting question is if the estimate \eqref{NGPE} remains valid if the $C^\alpha$ assumption \eqref{holdkernel} is relaxed to a Dini-type condition, which would essentially correspond to taking the intrinsic additional regularization properties discussed above to the borderline case. Another interesting question is if our main results remain valid if our pointwise $C^\alpha$ assumption \eqref{holdkernel} is relaxed to an integral one as assumed in the context of obtaining $C^{1,\gamma}$ regularity in the paper \cite{FeRo23}, which appeared after the completion of the present work.

Next, let us compare also our $C^{1,\gamma}$ regularity result Theorem \ref{Holdgrad} with the previous literature.
Namely, the conclusion of Theorem \ref{Holdgrad} was previously obtained in the papers \cite{FallCalcVar,FMSYPDEA} under somewhat different assumptions on the coefficient $A$. More precisely, in these papers it is assumed that $A$ is of the form
\begin{equation} \label{altform}
	A(x,y):=\widehat A \left (x,|x-y|,\frac{x-y}{|x-y|} \right ),
\end{equation}
where $\widehat A$ is a function defined on $\ern \times [0,\infty) \times \mathbb{S}^{n-1}$ that is required to be $C^\alpha$ in the first variable, continuous in the second variable, and measurable in the third variable. Under these assumptions for $A$ of the form \eqref{altform} locally we have
$$ |A(x+h,y+h)-A(x,y)| = \left |\widehat A \left (x+h,|x-y|,\frac{x-y}{|x-y|} \right ) -\widehat A \left (x,|x-y|,\frac{x-y}{|x-y|} \right ) \right |  \lesssim |h|^\alpha ,$$
so that the smoothness assumptions from \cite{FallCalcVar,FMSYPDEA} imply our $C^\alpha$ assumption \eqref{holdkernel}. However, the converse is not true: For instance, if $A$ is of the form \eqref{mixedcoeff} with $A^\prime \in C^\alpha(\overline \Omega \times \overline \Omega)$ and $A_0 \in \mathcal{L}_1(\Lambda)$, then $A$ in general does not fit into the framework from \cite{FallCalcVar,FMSYPDEA} unless $A_0(x,y)=a(x-y)$ satisfies additional assumptions such as continuity or radial symmetry of $a$. \par In other words, our $C^\alpha$ assumption \eqref{holdkernel} is less restrictive than the smoothness assumptions in \cite{FallCalcVar,FMSYPDEA}, so that also Theorem \ref{Holdgrad} is a new result. Nevertheless, it is noteworthy that in the aforementioned papers certain degeneracies of the coefficient are allowed, so that the results in \cite{FallCalcVar,FMSYPDEA} are valid for certain coefficients that might not belong to the class $\mathcal{L}_0(\Lambda)$. For this reason, an interesting question for future investigation is if our results remain valid if $A$ is allowed to exhibit similar degeneracies. \par In addition, we remark that in \cite{FMSYPDEA} also implications of the type \eqref{Lpest} in $L^p$ spaces were obtained under the assumptions mentioned above, so that in the case of non-degenerate coefficients, our implication \eqref{Lorentz2} with $p=\sigma$ sharpens this $L^p$ regularity result by capturing the optimal regularity on the finer scale of Lorentz spaces. Moreover, it follows from our results that the implication \eqref{Lpest} remains valid under the more general smoothness assumption (\ref{holdkernel}) on $A$.

Since in the context of local equations all of our main results have counterparts for nonlinear equations of $p$-Laplacian-type (see e.g.\ \cite{MinCZMD,AKMARMA} for higher differentiability estimates under measure data, \cite{KuMiARMA1,KuMi} for gradient estimates in terms of Riesz potentials and for instance \cite{DiBen,Manfredi} for the classical $C^{1,\gamma}$ estimates), an interesting question is if our main results have counterparts for nonlocal equations of fractional $p$-Laplacian-type as considered in e.g.\ \cite{DKP,KMS2,BL,BLS,KLL}. However, while concerning higher differentiability some results are available in the case when the right-hand side $\mu$ is sufficiently regular (see \cite{BL}), at this point establishing or disproving Lipschitz or even $C^{1,\gamma}$ estimates for such equations seems to be an open problem even in the homogeneous case when $\mu=0$, where so far solutions are only known to be almost Lipschitz for appropriate values of $s$ and $p$, see \cite{BLS}.

Finally, we note that in the case when $s \in (0,1/2]$ no gradient potential estimates of the precise type \eqref{NGPE} are valid, since the Riesz potential is not defined for negative orders. Nevertheless, we want to mention that $C^{1,\gamma}$ estimates remain valid for nonlocal equations if the coefficient and the data satisfy a H\"older assumption, see \cite{FeRo23}. Therefore, an interesting question is if gradient regularity results in other function spaces such as Sobolev spaces can also be established in the case $s \in (0,1/2]$ provided that the data is assumed to be more regular than in the present paper.



\vspace{0.2cm}

\subsection{Approach}
Let us give a brief heuristic explanation of the philosophy of our approach, highlighting the similarities and differences in comparison to known approaches. \par 
Broadly speaking, the main difficulties arising in addition to the ones already present in the case of local second-order elliptic equations in divergence form originate from the following two sources:
\begin{itemize}
	\item The nonlocality of the equation, leading to nonlocal tail terms.
	\item The lower order of the equation, resulting in a lack of obvious energy estimates on $\nabla u$.
\end{itemize}
For the sake of simplicity, let us first focus on the case when $\mu=0$, as proving precise estimates in this homogeneous case lays the foundation of being able to prove sharp results under general measure data later on. In addition, in this section we shall focus on weak solutions instead of SOLA, since the latter case can in all relevant cases be recovered by approximation. \par
Our starting point is the observation that due to the assumption that the coefficient $A$ is $C^\alpha$ in $\Omega$, around any fixed point $z \in \Omega$, $A$ is locally close to the "frozen" translation invariant coefficient
$$A_z(x,y):=(A(x-y+z,z)+A(y-x+z,z))/2$$
in a $C^\alpha$ sense. In the context of local second-order elliptic equations in divergence form, such a closeness to the translation invariant case can usually be combined with standard energy methods in order to prove that at small scales, the gradient of any solution is close to the gradient of a corresponding approximate solution to an equation with constant coefficients. However, in our lower-order nonlocal setting, no similar obvious comparison estimate is available for the gradient of $u$, since $\nabla u$ is not naturally associated with the nonlocal operator \eqref{nonlocalop}. Instead, we only have energy estimates up to order $s$ at our disposal. \par 
We circumvent this issue by invoking methods which are more typical for proving gradient estimates for fully nonlinear equations as done in e.g.\ \cite{CaffFNAnnals,CSARMA}. In fact, in these papers the lack of a comparison estimate on the gradient level is compensated by exploiting the fact that for any solution $u$ to a fully nonlinear equation of order larger than one and any affine function $\ell$, the function $u-\ell$ solves the same equation. In view of approximation and geometric iteration, this fact can then be used in order to prove the desired H\"older estimates for $\nabla u$ in the alternative form
$$||u-\ell_{r,z}||_{L^\infty(B_r(z))} \lesssim r^{1+\gamma}$$
for some affine function $\ell_{r,z}$, which is equivalent to $C^{1,\gamma}$ regularity of $u$. \par 
Unfortunately, in our case for a solution to $L_A u =0$ and some affine function $\ell$, $u-\ell$ might in general not be a solution of the same equation unless $A$ is translation invariant, so that geometric iteration in terms of affine functions is not immediately applicable. Nevertheless, such approaches have already been successfully adapted also to second-order elliptic equations in divergence form, for instance to obtain large-scale regularity results in the context of stochastic homogenization for such local equations, see for example \cite{AKMBook}.
In order to carry out such an iteration in terms of affine functions successfully in our nonlocal setting, we apply a somewhat similar strategy as in the local divergence form case. First of all, we locally rewrite the equation in the form 
\begin{equation} \label{rewrite}
L_{A_z} u=L_{A_z-A} u,
\end{equation}
conceptualizing it as an equation driven by a translation invariant kernel at the cost of introducing the error term $L_{A_z-A} u$. Due to the translation invariance of $A_z$, the operator $L_{A_z}$ kills affine functions as in the fully nonlinear case. Therefore, for $r>0$ small and any affine function $\ell$, for the weak solution of
$$
	\begin{cases} \normalfont
		L_{A_z} v = 0 & \text{ in } B_{r}(z) \\
		v = u-\ell & \text{ a.e. in } \mathbb{R}^n \setminus B_{r}(z),
	\end{cases}
$$
locally we have
\begin{equation} \label{rewrite1}
	L_{A_z} (u-\ell-v)=L_{A_z-A} u.
\end{equation}

Testing this equation with $w:=u-\ell-v$ itself then shows in particular that the $L^2$ norm of $w$ can be controlled by the error term $L_{A_z-A} u$. \par 
Since due to the translation invariance of $A_z$, $\nabla v$ is already known to satisfy H\"older estimates, all the quantities in the estimates arising in the homogeneous case have one of the following two origins: Either they originate from the H\"older estimates on $\nabla v$, in which case they can be iterated in a similar way as in the fully nonlinear case, or from estimating the error term.
While the terms arising due to the presence of the error term cannot be iterated as in the fully nonlinear case, they are nevertheless stable under the iteration, which is because the error term can be shown to be essentially of lower order. At small scales, this is achieved by exploiting the $C^\alpha$ assumption on $A$, while at large scales sufficient decay is guaranteed by the fact that far away from the diagonal, the kernel of the operator ceases to be singular. \par Roughly speaking, implementing these ideas then leads to decay estimates of the type
$$\mean{B_r(z)} |u-\ell_{r,z}|dx \lesssim r^{1+\gamma}$$
for some affine function $\ell_{r,z}$.

In view of Campanato's characterization of $C^{1,\gamma}$ regularity in terms of affine functions (see \cite{Campanato}), this leads to gradient oscillation decay estimates of the form
\begin{equation} \label{gradoscdecayx}
		\begin{aligned}
		& \osc_{B_{\rho 2^{-N}R}(x_0)} \nabla u \\ & \lesssim \rho^\gamma (2^{-N}R)^{-1} \Bigg ( \mean{B_{2^{-N} R}(x_0)} |u-\ell| dx + (2^{-N} R)^{2s} \int_{\mathbb{R}^n \setminus B_{2^{-N} R}(x_0)} \frac{|u(y)-\ell(y)| }{|x_0-y|^{n+2s}}dy \\
		& \quad + 2^{-\alpha N} \sum_{k=0}^{N} 2^{k(\alpha -2s)} \mean{B_{2^{k-N}R}(x_0)} |u-c|dx+ (2^{-N} R)^{2s} \int_{\mathbb{R}^n \setminus B_{R}(x_0)} \frac{|u(y)-c|}{|x_0-y|^{n+2s}}dy \Bigg )
	\end{aligned}
\end{equation}
whenever the equation holds in $B_{2^{-N}R}(x_0)$ and $A$ is $C^\alpha$ in $B_{8R}(x_0)$. Here $\rho$ and $R$ are sufficiently small, while $N \geq 0$ is an arbitrary integer, $\ell$ is an arbitrary affine function and $c$ is an arbitrary real number. In the special case when $N=\ell=c=0$, this estimate implies Theorem \ref{Holdgrad} for $\mu=0$ by standard covering arguments. \par However, the exact form of the estimate (\ref{gradoscdecayx}) turns out to be crucial in order to prove the higher differentiability result given by Theorem \ref{HD} and the potential estimate in Theorem \ref{GPE} in later sections, as it precisely encodes the different types of information given by the left-hand side and right-hand side of \eqref{rewrite}. More concretely, the presence of the affine function $\ell$ in the first two terms on the right-hand side of (\ref{gradoscdecayx}) again allows to iterate these terms in essentially any first-order setting, while the last two terms are stable in the context of any such iteration, since the $C^\alpha$ decay of the coefficient and the lack of singularity of the kernel in the off-diagonal regime are encoded in them. \par 
Let us now proceed to the case of nonlocal equations of the form \eqref{nonlocaleq} with general measure data $\mu$. In order to prove the higher differentiability estimate from Theorem \ref{HD}, we adapt certain covering methods commonly used in the context of local second-order elliptic equations (see e.g.\ \cite{KrMin,KrMin1,MinCZMD,Min,AKMARMA}) to our nonlocal setting, enabling us to prove estimates in Nikolskii spaces $N^{t,q}$. These Nikolskii estimates then imply the desired Sobolev regularity estimates in view of well-known embeddings. \par 
More concretely, we fix some increment $h \in \ern \setminus \{0\}$ with $|h|$ small. Moreover, we fix some $\beta \in (0,1)$ to be chosen and cover our domain by finitely many balls $B_{|h|^\beta}(z_j)$ with controlled overlap. For some $r \approx |h|^\beta$ and any $j$, we then consider the weak solution $v_j$ of the associated homogeneous equation $L_A v_j=0$ in $B_{r}(z_j)$ satisfying $v_j=u$ in $\ern \setminus B_{r}(z_j)$ and estimate the $L^q$ norm of the second-order quotients of $u$ over $B_{|h|^\beta}(z_j)$ as follows
\begin{equation} \label{triv}
	\left (\int_{B_{|h|^\beta}(z_j)} |\tau_h^2 u|^qdx \right )^{1/q} \lesssim \left (\int_{B_{r}(z_j)} |u-v_j|^qdx \right )^{1/q} + |h| \left (\int_{B_{2|h|^\beta}(z_j)} |\tau_h \nabla v_j|^qdx \right )^{1/q}.
\end{equation}
In view of known comparison estimates from \cite{KMS2}, the first term on the right-hand side always decays sufficiently if $\beta$ is chosen close enough to $1$. On the other hand, the second term on the right-hand side of \eqref{triv} can be estimated by means of our H\"older estimate \eqref{gradoscdecayx}, leading to an incremental decay gain, which can be iterated due to the presence of the constant $c$ and the affine function $\ell$ in the estimate \eqref{gradoscdecayx}. \par
Since the balls in \eqref{triv} depend on $|h|$, this iteration needs to be preceded by a covering argument, which is particularly delicate in our nonlocal setting. This is because due to the presence of the tail terms in \eqref{gradoscdecayx}, exploiting the finite overlap of the balls in the covering inflated by only a fixed constant factor as done in the local case does not suffice to conclude. Instead, we additionally need to ensure that the increasing overlap of the balls coming from the tail terms is compensated by the increasing lack of singularity of the kernel of the nonlocal operator far from the diagonal. \par 
Implementing these iteration and covering arguments then leads to higher differentiability in the range \eqref{rangesm} as desired, proving Theorem \ref{HD}. Furthermore, the obtained higher differentiability estimate allows to upgrade the mentioned comparison estimate from \cite{KMS2} to the gradient level, which serves as an important tool in our proof of the gradient potential estimate \eqref{NGPE}. \par 
In fact, combining this gradient comparison estimate with \eqref{gradoscdecayx} for appropriate choices of $\ell$ and $c$ enables us to deduce \emph{gradient excess decay estimates} which capture both the nonlocality and the measure data present in our setting. More precisely, these estimates take the form
\begin{equation} \label{gradoscdecayIntro}
	\begin{aligned}
		E_R(\nabla u,x_0,2^{-(N+m)}) & \lesssim 2^{-\gamma m} E_R(\nabla u,x_0,2^{-N}) \\
		& \quad + 2^{-\gamma m} 2^{-\alpha N} \sum_{k=0}^{N} 2^{k(1+\alpha -2s)} |(\nabla u)_{B_{2^{k-N}R}(x_0)}| \\
		& \quad + 2^{-\gamma m} (2^{-N}R)^{2s-1} \int_{\mathbb{R}^n \setminus B_{R}(x_0)} \frac{|u  - \left ( u \right )_{B_{R}(x_0)}| }{|x_0-y|^{n+2s}}dy \\
		& \quad +(2^{-N}R)^{2s-1-n} |\mu|(B_{2^{-N} R}(x_0)),
	\end{aligned}
\end{equation}
where $m \geq 1,$ $N \geq 0$ are arbitrary integers and for any integer $i \geq 0$, the nonlocal gradient excess decay functional $E_R(\nabla u,x_0,2^{-i})$ is defined by
\begin{align*}
	E_R(\nabla u,x_0,2^{-i}):& = \sum_{k=0}^{i} 2^{(1-2s)k} \mean{B_{2^{k-i}R}(x_0)} |\nabla u - \left ( \nabla u \right )_{B_{2^{k-i}R}(x_0)} | dx .
\end{align*}
With this excess decay estimate at our disposal, the gradient potential estimates from Theorem \ref{GPE} can then be proved by adapting iteration arguments commonly used in the context of local equations (see e.g.\ \cite{DM1,DM2}) to our nonlocal setting. Roughly speaking, also this iteration follows a similar philosophy as the ones discussed above, in the sense that the first term on the right-hand side of \eqref{gradoscdecayIntro} can be iterated and eventually reabsorbed into the left-hand side, while the second and third terms sum geometrically under such an iteration. Finally, iterating the last term leads to the appearance of the Riesz potential of $|\mu|$ of order $2s-1$ as expected, so that the proof can be concluded by passing to the limit and applying the Lebesgue differentiation theorem.

\section{Preliminaries and basic regularity results} \label{prelim}

\subsection{Notation} \label{notation}
For convenience, let us fix some notation which we use throughout the paper. By $C$ we denote a general positive constant which possibly varies from line to line and only depends on the parameters indicated in the statement to be proved. 
In some proofs, in order to avoid confusion we may also indicate different constants by using subscripts, i.e.\ $C_i$, $i \in \mathbb{N}_0$, while dependences on parameters of the constants will often be shown in parentheses. \par As usual, by
$$ B_r(x_0):= \{x \in \mathbb{R}^n \mid |x-x_0|<r \}$$
we denote the open euclidean ball with center $x_0 \in \mathbb{R}^n$ and radius $r>0$. \par 
Moreover, if $E \subset \mathbb{R}^n$ is measurable, then by $|E|$ we denote the $n$-dimensional Lebesgue-measure of $E$. If $0<|E|<\infty$, then for any $u \in L^1(E)$ we define
$$ (u)_E:= \mean{E} u(x)dx := \frac{1}{|E|} \int_{E} u(x)dx.$$
Furthermore, for any measurable function $\psi:\mathbb{R}^n \to \mathbb{R}$ and any $h \in \mathbb{R}^n$, we define
$$ \psi_h(x):=\psi(x+h), \quad \tau_h \psi(x):=\psi_h(x)-\psi(x), \quad \tau_h^2 \psi(x):= \tau_h (\tau_h \psi(x))=\psi_{2h}(x)+\psi(x)-2\psi_h(x).$$
In addition, given a signed Radon measure $\mu$ on $\ern$, as usual we define the variation of $\mu$ as the measure defined by
$$ |\mu|(E):=\mu^+(E) + \mu^-(E), \quad E \subset \ern \text{ measurable},$$
where $\mu^+$ and $\mu^-$ are the positive and negative parts of $\mu$, respectively. If $|\mu|(\ern)<\infty$, then we say that $\mu$ has finite total variation or finite total mass.

Finally, given a domain $\Omega \subset \ern$, throughout the paper we conceptualize functions $g \in L^1(\Omega)$ as signed Radon measures on $\ern$ by extending $g$ by $0$ to $\ern$ if necessary and denoting
$$ g(E):=\int_E g dx, \quad  E \subset \ern \text{ measurable},$$
note that in this case for any measurable set $E \subset \ern$ we have $$|g|(E)=\int_E |g| dx.$$

\subsection{Fractional Sobolev spaces}\label{fracSob}
This section is devoted to the definitions and several known useful properties of various types of fractional Sobolev spaces.
\subsubsection{$W^{s,p}$ spaces} \label{fracSob1}
\begin{definition}
	Let $\Omega \subset \mathbb{R}^n$ be a domain. For $p \in [1,\infty)$ and $s \in (0,1)$, we define the fractional Sobolev space
	$$W^{s,p}(\Omega):=\left \{u \in L^p(\Omega) \mid [u]_{W^{s,p}(\Omega)}<\infty \right \}$$
	with norm
	$$ ||u||_{W^{s,p}(\Omega)} := \left (||u||_{L^p(\Omega)}^p + [u]_{W^{s,p}(\Omega)}^p \right )^{1/p} ,$$
	where
	$$ [u]_{W^{s,p}(\Omega)}:=\left (\int_{\Omega} \int_{\Omega} \frac{|u(x)-u(y)|^p}{|x-y|^{n+sp}}dydx \right )^{1/p} .$$
	In addition, for $p \in [1,\infty)$ and $s \in (1,2)$ we define
	$$W^{s,p}(\Omega):=\left \{u \in W^{1,p}(\Omega) \mid [\nabla u]_{W^{s-1,p}(\Omega)}<\infty \right \}$$
	with norm
	$$ ||u||_{W^{s,p}(\Omega)} := \left (||u||_{L^p(\Omega)}^p + ||\nabla u||_{L^p(\Omega)}^p + [\nabla u]_{W^{s-1,p}(\Omega)}^p \right )^{1/p} ,$$
	where
	$$ [\nabla u]_{W^{s,p}(\Omega)}:=\left (\int_{\Omega} \int_{\Omega} \frac{|\nabla u(x)-\nabla u(y)|^p}{|x-y|^{n+sp}}dydx \right )^{1/p} .$$
	Moreover, we define the corresponding local fractional Sobolev spaces by
	$$ W^{s,p}_{loc}(\Omega):= \left \{ u \in L^p_{loc}(\Omega) \mid u \in W^{s,p}(\Omega^\prime) \text{ for any domain } \Omega^\prime \Subset \Omega \right \}.$$
	Also, we define the space 
	$$W^{s,p}_0(\Omega):= \left \{u \in W^{s,p}(\mathbb{R}^n) \mid u = 0 \text{ in } \mathbb{R}^n \setminus \Omega \right \}.$$
\end{definition}

We use the following fractional Poincar\'e inequality, see \cite[Section 4]{Mingione}.
\begin{lemma} \label{Poincare} (fractional Poincar\'e inequality)
	Let $s \in (0,1)$, $p \in [1,\infty)$, $r>0$ and $x_0 \in \mathbb{R}^n$. For any $u \in W^{s,p}(B_r(x_0))$, we have
	$$ \int_{B_r(x_0)} \left | u(x)- (u)_{B_r(x_0)} \right |^p dx \leq C r^{sp} \int_{B_r(x_0)} \int_{B_r(x_0)} \frac{|u(x)-u(y)|^p}{|x-y|^{n+sp}}dydx,$$
	where $C=C(s,p)>0$.
\end{lemma}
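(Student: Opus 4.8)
The plan is to reduce the estimate to Jensen's inequality together with the crude observation that $|x-y|\le 2r$ whenever $x,y\in B_r(x_0)$.

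First I would write, for every $x\in B_r(x_0)$,
$$ u(x)-(u)_{B_r(x_0)}=\mean{B_r(x_0)}\big(u(x)-u(y)\big)\,dy, $$
which makes sense since $u\in L^p(B_r(x_0))$, and then apply Jensen's inequality (convexity of $t\mapsto|t|^p$) to the normalized average in $y$ to obtain
$$ \big|u(x)-(u)_{B_r(x_0)}\big|^p\le \mean{B_r(x_0)}|u(x)-u(y)|^p\,dy. $$
Integrating this over $x\in B_r(x_0)$ and writing out the average yields
$$ \int_{B_r(x_0)}\big|u(x)-(u)_{B_r(x_0)}\big|^p\,dx\le \frac{1}{|B_r(x_0)|}\int_{B_r(x_0)}\int_{B_r(x_0)}|u(x)-u(y)|^p\,dy\,dx. $$

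Next I would insert the Gagliardo kernel by hand: since $|x-y|\le 2r$ for all $x,y\in B_r(x_0)$, we have
$$ |u(x)-u(y)|^p=\frac{|u(x)-u(y)|^p}{|x-y|^{n+sp}}\,|x-y|^{n+sp}\le (2r)^{n+sp}\,\frac{|u(x)-u(y)|^p}{|x-y|^{n+sp}}. $$
Combining this with the previous display and using $|B_r(x_0)|=|B_1|\,r^n$ gives
$$ \int_{B_r(x_0)}\big|u(x)-(u)_{B_r(x_0)}\big|^p\,dx\le \frac{2^{n+sp}}{|B_1|}\,r^{sp}\int_{B_r(x_0)}\int_{B_r(x_0)}\frac{|u(x)-u(y)|^p}{|x-y|^{n+sp}}\,dy\,dx, $$
which is the claimed inequality with $C=2^{n+sp}/|B_1|$; the right-hand side is finite precisely because $u\in W^{s,p}(B_r(x_0))$. (Strictly speaking this $C$ depends also on $n$, though this plays no role in what follows.)

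There is no genuine obstacle in this argument: it is entirely elementary, the only mildly delicate point being the legitimacy of the Jensen step, which is guaranteed by $u\in L^p(B_r(x_0))$ so that $(u)_{B_r(x_0)}$ is well defined. A sharper constant could be obtained by a more careful estimate that does not discard the factor $|x-y|^{-(n+sp)}$ so wastefully, but the crude bound above already suffices for our purposes, and indeed this is exactly the statement in \cite[Section 4]{Mingione}.
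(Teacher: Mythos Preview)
Your argument is correct. The paper does not actually prove this lemma; it merely cites \cite[Section 4]{Mingione} and states the result. Your elementary proof via Jensen's inequality and the crude bound $|x-y|\le 2r$ is the standard one and is exactly what the cited reference contains. Your parenthetical remark is apt: the constant you obtain, $C=2^{n+sp}/|B_1|$, depends also on $n$, so the dependence $C=C(s,p)$ stated in the lemma is a slight imprecision in the paper; this is harmless for all subsequent applications.
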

We also use another Poincar\'e-type inequality concerning functions that belong to the space $W^{s,2}_0(\Omega)$, see \cite[Lemma 2.3]{MeH}. 
\begin{lemma} \label{Friedrichs} (fractional Friedrichs-Poincar\'e inequality)
	Let $s \in(0,1)$ and consider a bounded domain $\Omega \subset \mathbb{R}^n$. For any $u \in W^{s,2}_0(\Omega)$, we have
	$$
		\int_{\mathbb{R}^n} |u(x)|^2 dx \leq C |\Omega|^{\frac{2s}{n}} \int_{\mathbb{R}^n} \int_{\mathbb{R}^n} \frac{|u(x)-u(y)|^2}{|x-y|^{n+2s}}dydx,
	$$
	where $C=C(n,s)>0$.
\end{lemma}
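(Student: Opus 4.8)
The plan is to reduce the Friedrichs--Poincar\'e inequality to a single pointwise fact: a lower bound, uniform over $x \in \Omega$, on the off-diagonal weight $\int_{\ern \setminus \Omega} |x-y|^{-n-2s}\,dy$. The point is that, since $u$ vanishes a.e.\ outside $\Omega$, for a.e.\ $x \in \Omega$ already the contribution of the region $y\notin\Omega$ to the Gagliardo integrand controls $|u(x)|^2$:
$$\int_{\ern} \frac{|u(x)-u(y)|^2}{|x-y|^{n+2s}}\,dy \geq |u(x)|^2 \int_{\ern \setminus \Omega} \frac{dy}{|x-y|^{n+2s}}.$$
First I would extend $u$ by zero to $\ern$, multiply this inequality by nothing, integrate it over $x \in \Omega$, and use nonnegativity of the integrand to enlarge the outer integral to $\ern$; the asserted estimate then follows immediately, with $C$ the reciprocal of the constant in the pointwise bound
$$\int_{\ern \setminus \Omega} \frac{dy}{|x-y|^{n+2s}} \geq c(n,s)\, |\Omega|^{-\frac{2s}{n}}, \qquad x \in \Omega.$$

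To prove this pointwise bound I would invoke the bathtub principle for the radially decreasing weight $y \mapsto |x-y|^{-n-2s}$: among all sets of a given finite measure, the complement of a ball centered at $x$ minimizes the integral of this weight over the complement. Concretely, set $\rho := (|\Omega|/\omega_n)^{1/n}$, so that $|B_\rho(x)| = |\Omega|$, and compare $\ern\setminus\Omega$ with $\ern\setminus B_\rho(x)$: the difference of the two complement-integrals equals the integral of the weight over $B_\rho(x)\setminus\Omega$ minus the integral over $\Omega\setminus B_\rho(x)$; these two sets have equal measure, while the weight is $\geq \rho^{-(n+2s)}$ on the former and $\leq \rho^{-(n+2s)}$ on the latter, so the difference is $\geq 0$. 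Hence the integral over $\ern\setminus\Omega$ dominates $\int_{\ern\setminus B_\rho(x)}|x-y|^{-n-2s}\,dy = \frac{n\omega_n}{2s}\,\rho^{-2s}$, and $\rho^{-2s}$ is a dimensional constant times $|\Omega|^{-2s/n}$, which gives the claim.

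The only mildly delicate point is making the rearrangement comparison rigorous, but as just indicated it reduces to an elementary symmetric-difference computation, so there is no genuine obstacle; the remaining steps are bookkeeping. As an alternative route I would mention the following: after extending $u$ by zero, apply the fractional Sobolev inequality $\|u\|_{L^{2n/(n-2s)}(\ern)} \leq C(n,s)\,[u]_{W^{s,2}(\ern)}$, which is available here since $2s < 2 \leq n$, and then use H\"older's inequality on $\Omega$, where $u$ is supported; the exponent identity $1 - \frac{n-2s}{n} = \frac{2s}{n}$ produces exactly the factor $|\Omega|^{2s/n}$. In either approach the resulting constant depends only on $n$ and $s$, as required.
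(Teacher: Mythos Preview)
Your argument is correct. The paper does not actually prove this lemma; it merely cites \cite[Lemma 2.3]{MeH}, so there is no in-paper proof to compare against. Your first approach (the bathtub/rearrangement bound $\int_{\ern\setminus\Omega}|x-y|^{-n-2s}\,dy \geq \int_{\ern\setminus B_\rho(x)}|x-y|^{-n-2s}\,dy$ followed by integration in $x$) is the standard elementary proof and works for all $n\geq 1$; your alternative via the fractional Sobolev inequality and H\"older is also valid under the paper's standing assumption $n\geq 2$, which guarantees $n>2s$.
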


\subsubsection{Nikolskii spaces}
When proving higher differentiability, it is often more convenient to work with another type of fractional Sobolev spaces, namely the Nikolskii spaces $N^{s,p}$ which are defined by means of difference quotients as follows.

\begin{definition}
	Let $\Omega \subset \mathbb{R}^n$ be a domain. Given $h \in \mathbb{R}^n$ and $k \in \mathbb{N}$, we then define
	$$ \Omega_{h,k}:= \{x \in \Omega \mid x+ih \in \Omega \text{ for any } i \in \{1,...,k\}\}.$$
	For $p \in [1,\infty)$ and $s \in (0,2)$, we define the Nikolskii space
	$$N^{s,p}(\Omega):=\left \{u \in L^p(\Omega) \mathrel{\Big|} [u]_{N^{s,p}(\Omega)} < \infty \right \}$$
	with norm
	$$ ||u||_{N^{s,p}(\Omega)} := ||u||_{L^p(\Omega)} + [u]_{N^{s,p}(\Omega)} ,$$
	where
	$$ [u]_{N^{s,p}(\Omega)}:= \sup_{|h|>0} |h|^{-s} ||\tau^2_h u||_{L^p(\Omega_{h,2})}.$$
	\end{definition}
	\begin{remark} \label{Niksmall} 
	For any $\delta>0$, define 
	$$ [u]_{N^{s,p}_\delta(\Omega)} :=
		\sup_{0<|h|<\delta} |h|^{-s} ||\tau^2_h u||_{L^p(\Omega_{h,2})}.$$
	It is easy to see that for any $\delta>0$, the norm given by
	$$ ||u||_{N^{s,p}_\delta(\Omega)} := ||u||_{L^p(\Omega)} + [u]_{N^{s,p}_\delta(\Omega)} $$ is equivalent to the standard norm $||u||_{N^{s,p}(\Omega)}$ in the sense that there exists a constant $C>0$ depending only on $s$ and $\delta$, such that for any $s \in (0,2)$ we have
	$$||u||_{N^{s,p}_\delta(\Omega)} \leq ||u||_{N^{s,p}(\Omega)} \leq C ||u||_{N^{s,p}_\delta(\Omega)},$$
	see e.g.\ \cite[Remark 2]{CozziSob} or \cite[Lemma 2.2]{BL}.
	\end{remark}
	
	Although the spaces $W^{s,p}$ and $N^{s,p}$ are in general not identical, they are nevertheless closely related. This is a common theme of different types of Besov spaces in general, of which both the spaces $W^{s,p}$ and $N^{s,p}$ are special cases of. \par 
	More precisely, on the one hand for any smooth domain $\Omega$, and $s \in (0,2)$ and any $p \in [1,\infty)$, we have the inclusion
	$$ W^{s,p}(\Omega) \subset N^{s,p}(\Omega),$$
	see \cite[Proposition 3]{CozziSob}. While the opposite inclusion is in general not true, it is almost true, as the following Proposition shows, see \cite[Proposition 4]{CozziSob}.
	\begin{proposition} \label{WNrel}
		Let $\Omega \subset \ern$ be a smooth domain. For all $0<t<s<2$ and any $p \in [1,\infty)$, we have the inclusion
		$$ N^{s,p}(\Omega) \subset W^{t,p}(\Omega)$$
		and there exists a constant $C>0$ which depends only on $n,t,s$ and $p$ such that
		$$ ||u||_{W^{t,p}(\Omega)} \leq C ||u||_{N^{s,p}(\Omega)}.$$
	\end{proposition}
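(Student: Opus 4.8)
The plan is to deduce the embedding from the elementary dyadic identity relating second-order and first-order difference quotients, after two reductions: passing to the whole space $\ern$, where there is no boundary, and reducing to the range $0<t<s<1$, where a decisive geometric series converges.

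\textbf{Reductions.} First, since $\Omega$ is smooth it is an extension domain for the Besov scale, so there is a bounded linear operator $N^{s,p}(\Omega)\to N^{s,p}(\ern)$; composing with the trivial restriction $W^{t,p}(\ern)\to W^{t,p}(\Omega)$ (which only uses $\Omega\times\Omega\subset\ern\times\ern$), it suffices to prove $N^{s,p}(\ern)\subset W^{t,p}(\ern)$ with the corresponding quantitative estimate. Second, on bounded sets the Nikolskii seminorm is, up to a constant, monotone in the smoothness index: for $t<s'\le s$ one has $[u]_{N^{s',p}_\delta(\Omega)}\le(\diam\Omega)^{s-s'}[u]_{N^{s,p}(\Omega)}$ with $\delta=\diam\Omega$, and $N^{s',p}_\delta\simeq N^{s',p}$ by Remark \ref{Niksmall}; hence when $t<1$ we may lower $s$ and assume $t<s<1$. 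When $t\ge 1$ (which forces $s>1$) we instead use that $N^{s,p}\hookrightarrow W^{1,p}$ for $s>1$ and that differentiation lowers the Nikolskii index by one, namely $\|\nabla u\|_{N^{s-1,p}}\lesssim\|u\|_{N^{s,p}}$ — both being by-products of the telescoping estimate below — and then apply the already-established sub-unit case to $\nabla u$ with exponents $t-1<s-1<1$.

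\textbf{Core estimate ($0<t<s<1$ on $\ern$).} Writing $\tau_h\psi(x)=\psi(x+h)-\psi(x)$ and changing variables $(x,y)\mapsto(x,h)$, one has $[u]_{W^{t,p}(\ern)}^p=\int_{\ern}|h|^{-n-tp}\|\tau_h u\|_{L^p(\ern)}^p\,dh$. For $|h|\ge 1$ we bound $\|\tau_h u\|_{L^p(\ern)}\le 2\|u\|_{L^p(\ern)}$ and use $\int_{|h|\ge 1}|h|^{-n-tp}\,dh<\infty$. For $|h|<1$ we iterate the identity $\tau_h u=\tfrac12\tau_{2h}u-\tfrac12\tau_h^2 u$ exactly $N=N(h)$ times, with $N\asymp\log_2(1/|h|)$ so that $2^N|h|\asymp 1$, obtaining
$$\tau_h u(x)=2^{-N}\tau_{2^N h}u(x)-\sum_{j=0}^{N-1}2^{-(j+1)}\tau_{2^j h}^2 u(x).$$
Taking $L^p(\ern)$ norms, using $\|\tau_{2^N h}u\|_{L^p}\le 2\|u\|_{L^p}$ and $\|\tau_{2^j h}^2u\|_{L^p}\le[u]_{N^{s,p}(\ern)}(2^j|h|)^s$, and noting that $s<1$ makes $\sum_{j\ge 0}2^{j(s-1)}$ convergent and $2^{-N}\asymp|h|$, we get
$$\|\tau_h u\|_{L^p(\ern)}\le C\big(|h|\,\|u\|_{L^p(\ern)}+|h|^s[u]_{N^{s,p}(\ern)}\big).$$
Raising to the power $p$ and integrating against $|h|^{-n-tp}\,dh$ over $\{|h|<1\}$, the first term contributes $\int_{|h|<1}|h|^{p(1-t)-n}\,dh<\infty$ because $t<1$, and the second contributes $\int_{|h|<1}|h|^{p(s-t)-n}\,dh<\infty$ because $t<s$. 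Combining the two regimes gives $[u]_{W^{t,p}(\ern)}\le C(n,t,s,p)\big(\|u\|_{L^p(\ern)}+[u]_{N^{s,p}(\ern)}\big)$, which after the reductions yields the claim.

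\textbf{Main obstacle.} The only non-formal point is that the telescoping identity at a point $x$ requires all iterated shifts $x+2^jh$, $0\le j\le N$, to lie in the domain, whereas $[u]_{W^{t,p}(\Omega)}^p$ involves $\|\tau_h u\|_{L^p(\Omega_{h,1})}$, i.e.\ only $x,x+h\in\Omega$. On $\ern$ this gap vanishes, which is exactly why I reduce there, at the cost of invoking the extension property of smooth domains for the $N^{s,p}$ scale. If one insists on arguing directly on $\Omega$, the substitute is to control $\|\tau_h u\|_{L^p}$ on the boundary layer $\Omega_{h,1}\setminus\{x:\,x+2^jh\in\Omega,\ 0\le j\le N\}$, a set of width $\asymp 2^N|h|\asymp 1$ around $\partial\Omega$; here the smoothness of $\partial\Omega$ — via a uniform interior cone property — permits choosing, locally near each boundary point, admissible increment directions along which all iterated shifts remain inside $\Omega$, after which the same computation carries through with constants depending on the boundary charts. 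Either way this boundary/extension step is the crux, while the change of variables and the geometric summation are routine.
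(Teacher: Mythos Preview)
The paper does not prove this proposition; it is quoted from \cite[Proposition 4]{CozziSob}, so there is no proof in the paper to compare against. Your argument follows the standard route (reduction to $\ern$ via extension, then the dyadic telescoping identity for differences) and is essentially correct.

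Two minor points deserve tightening. First, your lowering-of-$s$ step invokes $\diam\Omega$, but $\Omega$ is not assumed bounded, and in any case this would contaminate the constant; moreover you have already passed to $\ern$ before this step. The fix is immediate: on $\ern$ the inhomogeneous embedding $N^{s,p}(\ern)\hookrightarrow N^{s',p}(\ern)$ for $s'<s$ follows from $|h|^{-s'}\|\tau_h^2 u\|_{L^p}\le \min\big(|h|^{s-s'}[u]_{N^{s,p}},\,4|h|^{-s'}\|u\|_{L^p}\big)$, splitting at $|h|=1$. (Note also that the extension step itself makes the constant depend on $\Omega$; the statement that $C=C(n,t,s,p)$ should be read with this caveat. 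In the paper the proposition is only applied on balls, where the extension constant depends on $n$ alone.)

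Second, the claim $\|\nabla u\|_{N^{s-1,p}(\ern)}\lesssim\|u\|_{N^{s,p}(\ern)}$ for $s\in(1,2)$ is correct but is not quite an immediate by-product of the single telescoping you display. One further application is needed: run the same telescoping on $v:=\tau_h^2 u$, truncating at level $2^M|k|\asymp|h|$ and using $\|\tau_{2^jk}^2 v\|_{L^p}\le 4(2^j|k|)^s[u]_{N^{s,p}}$ together with $\|v\|_{L^p}\le|h|^s[u]_{N^{s,p}}$, to obtain $\|\tau_k(\tau_h^2 u)\|_{L^p}\lesssim|k|\,|h|^{s-1}[u]_{N^{s,p}}$ for $|k|\le|h|$. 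Letting $k\to0$ along coordinate directions yields $\|\tau_h^2\nabla u\|_{L^p}=\|\nabla(\tau_h^2 u)\|_{L^p}\lesssim|h|^{s-1}[u]_{N^{s,p}}$, which is precisely what you need. With these small fixes your proof is complete.
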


\subsection{The nonlocal tail}
In this section, we gather some technical results concerning the nonlocal tails which appear naturally when studying nonlocal operators of the type \eqref{nonlocalop}. \par 
The following two result relate nonlocal tails with different centers on possibly different scales and will be used frequently throughout the paper, sometimes without explicit reference. For proofs of these two results, we refer to \cite[Lemma 2.2 and Lemma 2.3]{BLS}.
\begin{lemma} \label{tailestz}
	Let $s \in (0,1)$. Then for any $u \in L^1_{2s}(\ern)$, all $0<r<R$ and any $x_0 \in \ern$ we have
	\begin{align*}
		\sup_{x \in B_r(x_0)} \int_{\ern \setminus B_R(x_0)} \frac{|u(y)|}{|x-y|^{n+2s}}dy \leq \left (\frac{R}{R-r} \right )^{n+2s} \int_{\ern \setminus B_R(x_0)} \frac{|u(y)|}{|x_0-y|^{n+2s}}dy.
	\end{align*}
\end{lemma}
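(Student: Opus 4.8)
The plan is to reduce the statement to a pointwise comparison between $|x-y|$ and $|x_0-y|$ obtained from the triangle inequality, and then to integrate. Fix $x \in B_r(x_0)$ and $y \in \ern \setminus B_R(x_0)$. First I would write $|x_0-y| \leq |x_0-x| + |x-y| < r + |x-y|$, so that $|x-y| > |x_0-y| - r$. Since $y \notin B_R(x_0)$ we have $|x_0-y| > R$, hence $r < \tfrac{r}{R}\,|x_0-y|$, and therefore
\[ |x-y| \;>\; |x_0-y| - \tfrac{r}{R}\,|x_0-y| \;=\; \tfrac{R-r}{R}\,|x_0-y| \;>\; 0 . \]
This bound is uniform over all $x \in B_r(x_0)$ and all $y \in \ern \setminus B_R(x_0)$.

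Raising the last inequality to the power $n+2s$ and taking reciprocals yields
\[ \frac{1}{|x-y|^{n+2s}} \;\leq\; \left(\frac{R}{R-r}\right)^{n+2s}\frac{1}{|x_0-y|^{n+2s}} \qquad \text{for all } x \in B_r(x_0),\ y \in \ern \setminus B_R(x_0). \]
Multiplying by $|u(y)|$ and integrating over $y \in \ern \setminus B_R(x_0)$ gives, for every fixed $x \in B_r(x_0)$,
\[ \int_{\ern \setminus B_R(x_0)} \frac{|u(y)|}{|x-y|^{n+2s}}\,dy \;\leq\; \left(\frac{R}{R-r}\right)^{n+2s} \int_{\ern \setminus B_R(x_0)} \frac{|u(y)|}{|x_0-y|^{n+2s}}\,dy , \]
where the right-hand side is finite precisely because $u \in L^1_{2s}(\ern)$, i.e.\ its nonlocal tails are finite. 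Since the right-hand side does not depend on $x$, taking the supremum over $x \in B_r(x_0)$ on the left completes the proof.

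There is no real obstacle here: the only point requiring a moment's attention is to use $|x_0-y| > R$ to convert the additive error $r$ in the triangle inequality into the multiplicative factor $\tfrac{R-r}{R}$, which is exactly what produces the constant $\bigl(R/(R-r)\bigr)^{n+2s}$; the remainder is monotonicity of $t \mapsto t^{-(n+2s)}$ and Fubini/monotonicity of the integral. If one prefers, the computation can be packaged as $|x-y| \geq |x_0-y|\bigl(1 - r/|x_0-y|\bigr) \geq |x_0-y|\bigl(1 - r/R\bigr)$ for $y \in \ern \setminus B_R(x_0)$, which makes the uniformity in $x$ and $y$ fully transparent.
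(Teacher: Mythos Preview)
Your proof is correct and is exactly the standard triangle-inequality argument; the paper does not give its own proof but simply refers to \cite[Lemma~2.2]{BLS}, where the same computation appears. The only cosmetic point is that $y \in \ern \setminus B_R(x_0)$ gives $|x_0-y| \ge R$ rather than strict inequality, but your chain of inequalities goes through unchanged since $|x_0-x|<r$ already makes $|x-y| > |x_0-y|-r \ge \tfrac{R-r}{R}|x_0-y|$.
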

\begin{lemma} \label{tr}
Let $s \in (0,1)$. Then for any $u \in L^1_{2s}(\ern)$, all $0<r<R$ and all $x_0,x_1 \in \ern$ such that $B_r(x_0) \subset B_R(x_1)$ we have
\begin{align*}
& r^{2s} \int_{\ern \setminus B_r(x_0)} \frac{|u(y)|}{|x_0-y|^{n+2s}}dy \\ & \leq r^{2s} \left (\frac{R}{R-|x_1-x_0|} \right )^{n+2s} \int_{\ern \setminus B_R(x_1)} \frac{|u(y)|}{|x_1-y|^{n+2s}}dy + r^{-n} \int_{B_R(x_1)} |u|dx.
\end{align*}
\end{lemma}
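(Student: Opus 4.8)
The plan is to decompose the domain of integration $\ern \setminus B_r(x_0)$ into the bounded annular piece $B_R(x_1) \setminus B_r(x_0)$ and the far piece $\ern \setminus B_R(x_1)$, and to estimate the two resulting contributions separately — the first one crudely using only that we stay outside $B_r(x_0)$, the second one by comparing the two singular kernels $|x_0-y|^{-(n+2s)}$ and $|x_1-y|^{-(n+2s)}$ in the off-diagonal regime. All integrals involved are finite: $\int_{B_R(x_1)}|u|\,dx<\infty$ since $u\in L^1_{loc}(\ern)$, and the nonlocal tail of $u$ over $\ern\setminus B_R(x_1)$ is finite because $u\in L^1_{2s}(\ern)$.

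For the inner piece, every $y \in B_R(x_1) \setminus B_r(x_0)$ satisfies $|x_0-y|\geq r$, hence $|x_0-y|^{-(n+2s)}\leq r^{-(n+2s)}$, and therefore
$$ r^{2s}\int_{B_R(x_1)\setminus B_r(x_0)}\frac{|u(y)|}{|x_0-y|^{n+2s}}\,dy \;\leq\; r^{2s}\,r^{-(n+2s)}\int_{B_R(x_1)}|u(y)|\,dy \;=\; r^{-n}\int_{B_R(x_1)}|u|\,dx, $$
which is exactly the second term on the right-hand side. For the outer piece I would record the elementary geometric inequality: since $B_r(x_0)\subset B_R(x_1)$ forces $|x_1-x_0|\leq R-r<R$, for any $y$ with $|x_1-y|\geq R$ one has
$$ |x_0-y| \;\geq\; |x_1-y|-|x_1-x_0| \;=\; |x_1-y|\Bigl(1-\tfrac{|x_1-x_0|}{|x_1-y|}\Bigr) \;\geq\; |x_1-y|\,\frac{R-|x_1-x_0|}{R}, $$
where the last step uses $|x_1-y|\geq R$ so that $|x_1-x_0|/|x_1-y|\leq |x_1-x_0|/R$. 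Raising this to the power $n+2s$ and inverting gives $|x_0-y|^{-(n+2s)}\leq \bigl(R/(R-|x_1-x_0|)\bigr)^{n+2s}|x_1-y|^{-(n+2s)}$ on $\ern\setminus B_R(x_1)$, hence
$$ r^{2s}\int_{\ern\setminus B_R(x_1)}\frac{|u(y)|}{|x_0-y|^{n+2s}}\,dy \;\leq\; r^{2s}\Bigl(\frac{R}{R-|x_1-x_0|}\Bigr)^{n+2s}\int_{\ern\setminus B_R(x_1)}\frac{|u(y)|}{|x_1-y|^{n+2s}}\,dy. $$

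Adding the two estimates, and using $\ern\setminus B_r(x_0) = \bigl(B_R(x_1)\setminus B_r(x_0)\bigr)\cup\bigl(\ern\setminus B_R(x_1)\bigr)$ (a disjoint union up to a null set, valid because $B_r(x_0)\subset B_R(x_1)$), yields precisely the claimed inequality. There is no substantial obstacle here; the only point demanding a little care is the off-diagonal kernel comparison, where one must genuinely exploit $|x_1-y|\geq R$ to turn $|x_1-y|-|x_1-x_0|$ into a fixed positive fraction of $|x_1-y|$ — without this the triangle inequality alone would not produce a bound with the explicit constant $\bigl(R/(R-|x_1-x_0|)\bigr)^{n+2s}$.
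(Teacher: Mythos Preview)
Your proof is correct and is exactly the standard argument for this lemma; the paper does not prove it but refers to \cite[Lemma 2.3]{BLS}, whose proof proceeds by the same decomposition and the same kernel comparison you give. There is nothing to add.
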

The following Lemma allows to move between nonlocal tails on different scales in a more precise way and will also be used frequently throughout the paper.
\begin{lemma} \label{tailest}
Let $s \in (0,1)$, $x_0 \in \mathbb{R}^n$, $R>0$ and $N \in \mathbb{N}$. For any function $u \in L^1_{2s}(\mathbb{R}^n)$, we have
\begin{align*}
& (2^{-N}R)^{2s} \int_{\mathbb{R}^n \setminus B_{2^{-N}R}(x_0)} \frac{|u(y)-(u)_{B_{2^{-N}R}(x_0)}|} {|x_0-y|^{n+2s}} dy \\ & \leq C \left (\sum_{k=1}^{N} 2^{-2sk} \mean{B_{2^{k-N}R}(x_0)} |u-(u)_{B_{2^{k-N}R}(x_0)}|dx + (2^{-N}R)^{2s} \int_{\mathbb{R}^n \setminus B_R(x_0)} \frac{|u(y)-(u)_{B_R(x_0)}|} {|x_0-y|^{n+2s}} dy \right ),
\end{align*}
where $C$ depends only on $n$ and $s$.
\end{lemma}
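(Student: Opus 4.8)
The plan is to telescope the tail integral over $\ern \setminus B_{2^{-N}R}(x_0)$ across the dyadic annuli $B_{2^{k-N}R}(x_0) \setminus B_{2^{k-1-N}R}(x_0)$, $k = 1, \dots, N$, together with the outer region $\ern \setminus B_R(x_0)$, and to control each piece by the relevant oscillation term. Write $r_k := 2^{k-N}R$ for $0 \le k \le N$, so $r_0 = 2^{-N}R$ and $r_N = R$, and abbreviate $(u)_k := (u)_{B_{r_k}(x_0)}$. Splitting the domain of integration,
\begin{align*}
\int_{\ern \setminus B_{r_0}(x_0)} \frac{|u(y) - (u)_0|}{|x_0 - y|^{n+2s}} dy &= \sum_{k=1}^{N} \int_{B_{r_k}(x_0) \setminus B_{r_{k-1}}(x_0)} \frac{|u(y) - (u)_0|}{|x_0 - y|^{n+2s}} dy \\ &\quad + \int_{\ern \setminus B_{r_N}(x_0)} \frac{|u(y) - (u)_0|}{|x_0 - y|^{n+2s}} dy.
\end{align*}
On the annulus $B_{r_k}(x_0) \setminus B_{r_{k-1}}(x_0)$ we have $|x_0 - y| \ge r_{k-1} = r_k/2$, so the kernel is bounded by $2^{n+2s} r_k^{-n-2s}$; hence this term is at most $C r_k^{-n-2s} \int_{B_{r_k}(x_0)} |u(y) - (u)_0| dy = C r_k^{-2s} \mean{B_{r_k}(x_0)} |u - (u)_0| dx$. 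Thus, after multiplying through by $r_0^{2s}$, the left-hand side is controlled by
\[
C \sum_{k=1}^{N} \Big(\frac{r_0}{r_k}\Big)^{2s} \mean{B_{r_k}(x_0)} |u - (u)_0| dx + r_0^{2s} \int_{\ern \setminus B_{r_N}(x_0)} \frac{|u(y) - (u)_0|}{|x_0 - y|^{n+2s}} dy,
\]
and $(r_0/r_k)^{2s} = 2^{-2sk}$, which produces exactly the weights in the claimed sum.

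It remains to replace the fixed mean $(u)_0$ by the scale-adapted means $(u)_k$. For the annular terms, I would use the triangle inequality $|u - (u)_0| \le |u - (u)_k| + |(u)_k - (u)_0|$, and then estimate the chain $|(u)_k - (u)_0| \le \sum_{j=1}^{k} |(u)_j - (u)_{j-1}|$ by the standard telescoping bound $|(u)_j - (u)_{j-1}| \le C \mean{B_{r_j}(x_0)} |u - (u)_j| dx$ (this uses $B_{r_{j-1}}(x_0) \subset B_{r_j}(x_0)$ and $|B_{r_j}| \approx |B_{r_{j-1}}|$). Inserting this and switching the order of summation, each term $\mean{B_{r_j}(x_0)} |u - (u)_j| dx$ picks up the geometric factor $\sum_{k \ge j} 2^{-2sk} \le C 2^{-2sj}$, which is summable since $s > 0$, so the constants stay bounded independently of $N$. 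For the outer term $\ern \setminus B_{r_N}(x_0)$, I split $|u - (u)_0| \le |u - (u)_N| + |(u)_N - (u)_0|$; the first piece is precisely the last term on the right-hand side of the claim (recall $r_N = R$), and the second piece is again bounded by the telescoping chain $\sum_{j=1}^N |(u)_j - (u)_{j-1}|$ times $r_0^{2s} \int_{\ern \setminus B_R(x_0)} |x_0-y|^{-n-2s} dy = C r_0^{2s} R^{-2s} = C 2^{-2sN} \le C 2^{-2sj}$ for each $j$, so these contributions are absorbed into the sum with bounded constants.

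The only mildly delicate point is bookkeeping the constants so they depend on $n$ and $s$ alone and not on $N$; this hinges entirely on the fact that $\sum_{k} 2^{-2sk}$ converges because $s > 0$, so every rearrangement of the double sum costs only a fixed multiplicative constant. Everything else is elementary: dyadic decomposition, the crude kernel bound on each annulus, the triangle inequality, and the classical comparison of averages on nested balls of comparable radius. I expect no real obstacle — the lemma is a routine but useful packaging of these standard tail manipulations.
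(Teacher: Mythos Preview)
Your proposal is correct and follows essentially the same approach as the paper's proof: both decompose the tail into dyadic annuli, bound the kernel on each annulus, telescope the chain of means $|(u)_k-(u)_0|\le\sum_{j\le k}|(u)_j-(u)_{j-1}|\le C\sum_{j\le k}\mean{B_{r_j}(x_0)}|u-(u)_j|\,dx$, switch the order of summation using $\sum_{k\ge j}2^{-2sk}\le C2^{-2sj}$, and handle the outer region $\ern\setminus B_R(x_0)$ in the same way by splitting off $(u)_N-(u)_0$ and absorbing it via $2^{-2sN}\le 2^{-2sj}$.
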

\begin{proof}
Splitting the tail into annuli leads to
	\begin{equation} \label{First}
\begin{aligned}
	& (2^{-N}R)^{2s} \int_{\mathbb{R}^n \setminus B_{2^{-N} R}(x_0)} \frac{|u(y)-(u)_{B_{2^{-N}R}(x_0)}|} {|x_0-y|^{n+2s}} dy \\
	& = \sum_{k=1}^{N} \int_{B_{2^{k-N}R}(x_0) \setminus B_{2^{k-1-N}R}(x_0)} \frac{|u(y)-(u)_{B_{2^{-N}R}(x_0)}|}{|x_0-y|^{n+2s}} dy \\ & \quad + (2^{-N}R)^{2s} \int_{\mathbb{R}^n \setminus B_{R}(x_0)} \frac{|u(y)-(u)_{B_{2^{-N} R}(x_0)}|} {|x_0-y|^{n+2s}} dy \Bigg ) \\
	& \leq C \Bigg (\sum_{k=1}^{N} 2^{-2sk} \mean{B_{2^{k-N}R}(x_0)} |u-(u)_{B_{2^{-N} R}(x_0)}|dx \\ & \quad + (2^{-N}R)^{2s} \int_{\mathbb{R}^n \setminus B_{R}(x_0)} \frac{|u(y)-(u)_{B_{2^{-N} R}(x_0)}|} {|x_0-y|^{n+2s}} dy \Bigg ).
\end{aligned}
\end{equation}
For $k\in \{1,...,N\}$, we further estimate
\begin{align*}
	& \mean{B_{2^{k-N}R}(x_0)} |u-(u)_{B_{2^{-N} R}(x_0)}|dx \\ 
	& \leq \mean{B_{2^{k-N}R}(x_0)} |u-(u)_{B_{2^{k-N}R}(x_0)}|dx + \sum_{j=1}^k |(u)_{B_{2^{j-N}R}(x_0)}-(u)_{B_{2^{j-1-N}R}(x_0)}| \\
	& \leq \mean{B_{2^{k-N}R}(x_0)} |u-(u)_{B_{2^{k-N}R}(x_0)}|dx + 2^n \sum_{j=1}^k \mean{B_{2^{j-N}R}(x_0)} |u-(u)_{B_{2^{j-N}R}(x_0)}|dx \\
	& \leq 2^{n+1} \sum_{j=1}^k \mean{B_{2^{j-N}R}(x_0)} |u-(u)_{B_{2^{j-N}R}(x_0)}|dx.
\end{align*}
Reverting the order of summation and summing the geometric series now leads to
\begin{align*}
	& \sum_{k=1}^{N} 2^{-2sk} \mean{B_{2^{k-N}R}(x_0)} |u-(u)_{B_{2^{-N}R}(x_0)}|dx \\
	& \leq 2^{n+1} \sum_{k=1}^{N} \sum_{j=1}^k 2^{-2sk} \mean{B_{2^{j-N}R}(x_0)} |u-(u)_{B_{2^{j-N}R}(x_0)}|dx \\
	& \leq 2^{n+1} \sum_{j=1}^N 2^{-2sj} \mean{B_{2^{j-N}R}(x_0)} |u-(u)_{B_{2^{j-N}R}(x_0)}|dx \sum_{k=0}^\infty 2^{-2sk} \\
	& = C \sum_{k=1}^N 2^{-2sk} \mean{B_{2^{k-N}R}(x_0)} |u-(u)_{B_{2^{k-N}R}(x_0)}|dx.
\end{align*}
Moreover, we estimate
\begin{align*}
& (2^{-N}R)^{2s} \int_{\mathbb{R}^n \setminus B_{R}(x_0)} \frac{|u(y)-(u)_{B_{2^{-N} R}(x_0)}|}{|x_0-y|^{n+2s}} dy \\
& \leq (2^{-N}R)^{2s} \int_{\mathbb{R}^n \setminus B_{R}(x_0)} \frac{|u(y)-(u)_{B_{R}(x_0)}|}{|x_0-y|^{n+2s}} dy + C 2^{-2sN} \sum_{k=1}^N |(u)_{B_{2^{k-N} R}(x_0)} - (u)_{B_{2^{k-1-N} R}(x_0)}| \\
& \leq (2^{-N}R)^{2s} \int_{\mathbb{R}^n \setminus B_{R}(x_0)} \frac{|u(y)-(u)_{B_{R}(x_0)}|}{|x_0-y|^{n+2s}} dy + C \sum_{k=1}^N 2^{-2sk} \mean{B_{2^{k-N}R}(x_0)} |u-(u)_{B_{2^{k-N}R}(x_0)}|dx.
\end{align*}
Combining the previous two displays with \eqref{First} now yields the claim, finishing the proof.
\end{proof}

The next Lemma is a higher-order version of the previous one.
\begin{lemma} \label{tailestaffine}
	Let $s \in (0,1)$, $x_0 \in \mathbb{R}^n$, $R>0$ fix some integer $N \geq 0$. For any function $u \in W^{1,1}(B_R(x_0)) \cap L^1_{2s}(\mathbb{R}^n)$, we have
	\begin{align*}
		& \mean{B_{2^{-N}R}(x_0)} |u(x)-(u)_{B_{2^{-N}R}(x_0)}-(\nabla u)_{B_{2^{-N}R}(x_0)} \cdot (x-x_0)| dx \\
		& \quad + (2^{-N}R)^{2s} \int_{\mathbb{R}^n \setminus B_{2^{-N}R}(x_0)} \frac{|u(y)-(u)_{B_{2^{-N}R}(x_0)}-(\nabla u)_{B_{2^{-N}R}(x_0)} \cdot (y-x_0)|}{|x_0-y|^{n+2s}} dy \\ 
		& \leq C 2^{-N} R \Bigg ( \sum_{k=0}^{N} 2^{(1-2s)k} \mean{B_{2^{k-N}R}(x_0)} |\nabla u-(\nabla u)_{B_{2^{k-N}R}(x_0)}|dx + 2^{(1-2s)N} \sum_{k=0}^{N} \mean{B_{2^{k-N}R}(x_0)} |\nabla u|dx \\ & \quad + (2^{-N} R)^{2s-1}  \int_{\mathbb{R}^n \setminus B_R(x_0)} \frac{|u(y)-(u)_{B_R(x_0)}|} {|x_0-y|^{n+2s}} dy \Bigg ),
	\end{align*}
	where $C$ depends only on $n$ and $s$.
\end{lemma}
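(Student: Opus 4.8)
First I would introduce notation: for $0\le j\le N$ set $B_j:=B_{2^{j-N}R}(x_0)$ (so $B_0=B_{2^{-N}R}(x_0)$ and $B_N=B_R(x_0)$), let $\ell_j(x):=(u)_{B_j}+(\nabla u)_{B_j}\cdot(x-x_0)$ be the affine approximation of $u$ associated to $B_j$, and put $v:=u-\ell_0$. Since every $B_j$ is centered at $x_0$ we have $\mean{B_j}(x-x_0)\,dx=0$, hence $(v)_{B_j}=(u)_{B_j}-(u)_{B_0}$; in particular $(v)_{B_0}=0$ and $v-(v)_{B_0}=v=u-\ell_0$. The plan is to bound the two quantities on the left-hand side separately: the average over $B_0$ by a direct application of the classical Poincar\'e--Wirtinger inequality, and the nonlocal tail of $v$ by invoking Lemma~\ref{tailest} with $u$ replaced by $v$, which is legitimate because $v\in L^1_{2s}(\ern)$ --- here we use $2s>1$ to ensure the affine function $\ell_0$ belongs to $L^1_{2s}(\ern)$.

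For the average over $B_0$, the function $w:=u-\ell_0$ has vanishing average on $B_0$ and gradient $\nabla u-(\nabla u)_{B_0}$, so Poincar\'e gives $\mean{B_0}|u-\ell_0|\,dx\le C(2^{-N}R)\mean{B_0}|\nabla u-(\nabla u)_{B_0}|\,dx$, which is the $k=0$ summand in the first sum on the right-hand side. For the tail, Lemma~\ref{tailest} applied to $v$ (using $(v)_{B_0}=0$) reduces the estimate to bounding the sum $\sum_{k=1}^{N}2^{-2sk}\mean{B_k}|v-(v)_{B_k}|\,dx$ and the outer tail $(2^{-N}R)^{2s}\int_{\ern\setminus B_R(x_0)}|x_0-y|^{-n-2s}|v(y)-(v)_{B_N}|\,dy$. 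A short computation with the centering identities shows that on $B_k$ (where $|x-x_0|\le 2^{k-N}R$)
\[
v-(v)_{B_k}=(u-\ell_k)+\big((\nabla u)_{B_k}-(\nabla u)_{B_0}\big)\cdot(x-x_0),
\]
so Poincar\'e applied to $u-\ell_k$ together with the telescoping bound $|(\nabla u)_{B_k}-(\nabla u)_{B_0}|\le 2^n\sum_{j=1}^{k}\mean{B_j}|\nabla u-(\nabla u)_{B_j}|\,dx$ gives $\mean{B_k}|v-(v)_{B_k}|\,dx\le C(2^{k-N}R)\sum_{j=1}^{k}\mean{B_j}|\nabla u-(\nabla u)_{B_j}|\,dx$. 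Multiplying by $2^{-2sk}$, summing in $k$, exchanging the order of summation and using $\sum_{k\ge j}2^{(1-2s)k}\le C\,2^{(1-2s)j}$ (valid because $2s>1$) bounds the first sum by $C\,2^{-N}R\sum_{j=1}^{N}2^{(1-2s)j}\mean{B_j}|\nabla u-(\nabla u)_{B_j}|\,dx$. For the outer tail one has $v-(v)_{B_N}=u-(u)_{B_N}-(\nabla u)_{B_0}\cdot(x-x_0)$ and $\int_{\ern\setminus B_R(x_0)}|x_0-y|^{1-n-2s}\,dy=C\,R^{1-2s}$ (finite since $2s>1$); splitting the integral accordingly produces the term $2^{-N}R\,(2^{-N}R)^{2s-1}\int_{\ern\setminus B_R(x_0)}|x_0-y|^{-n-2s}|u(y)-(u)_{B_R(x_0)}|\,dy$ plus $C\,2^{-N}R\,2^{(1-2s)N}|(\nabla u)_{B_0}|$, and $|(\nabla u)_{B_0}|\le\mean{B_0}|\nabla u|\,dx\le\sum_{k=0}^{N}\mean{B_k}|\nabla u|\,dx$. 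Collecting these estimates yields the claim.

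The main --- and essentially the only --- subtlety lies in the estimate of $\mean{B_k}|v-(v)_{B_k}|$: comparing $v$ with its averages across dyadic scales forces one to control the scale-by-scale jumps $(u)_{B_j}-(u)_{B_{j-1}}$ of the averages of $u$, and if these were estimated via the crude Poincar\'e bound $\mean{B_j}|u-(u)_{B_j}|\lesssim(2^{j-N}R)\mean{B_j}|\nabla u|$ one would produce a contribution of order $\sum_j 2^{(1-2s)j}\mean{B_j}|\nabla u|$, which is \emph{not} controlled by the term $2^{(1-2s)N}\sum_k\mean{B_k}|\nabla u|$ on the right-hand side since $1-2s<0$. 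This is avoided precisely by the centering identity $\mean{B_j}(x-x_0)=0$, which annihilates the linear part of $\ell_0$ on each $B_j$ and thereby reduces the jumps of the averages of $v$ (equivalently, of $u$) to the gradient \emph{oscillations} $\mean{B_j}|\nabla u-(\nabla u)_{B_j}|$; these carry the correct weights $2^{(1-2s)j}$ and make the geometric sums collapse. The single ``size'' term $2^{(1-2s)N}\sum_k\mean{B_k}|\nabla u|$ on the right-hand side is then needed only to absorb $|(\nabla u)_{B_0}|$, which arises from the linear part of $\ell_0$ in the far tail.
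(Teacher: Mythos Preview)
Your proof is correct and follows essentially the same route as the paper's: both rely on the centering identity $\mean{B_j}(x-x_0)\,dx=0$, Poincar\'e--Wirtinger, a dyadic telescoping of gradient averages, and the geometric sum $\sum_{k\ge j}2^{(1-2s)k}\lesssim 2^{(1-2s)j}$ (which, as you note, needs $s>1/2$; the paper's proof implicitly uses this too despite the statement saying $s\in(0,1)$). Your organization via Lemma~\ref{tailest} is slightly more streamlined than the paper's direct annulus splitting---in particular your ``size'' term $2^{(1-2s)N}\sum_k\mean{B_k}|\nabla u|$ arises only from $|(\nabla u)_{B_0}|$ in the far tail, whereas the paper also picks up such contributions from telescoping the $(u)$-averages there---but the substance is the same.
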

\begin{proof}
	First of all, a splitting into annuli yields
	\begin{equation} \label{annuli1}
		\begin{aligned}
			& \mean{B_{2^{-N}R}(x_0)} |u(x)-(u)_{B_{2^{-N}R}(x_0)}-(\nabla u)_{B_{2^{-N}R}(x_0)} \cdot (x-x_0)| dx \\
			& \quad + (2^{-N}R)^{2s} \int_{\mathbb{R}^n \setminus B_{2^{-N}R}(x_0)} \frac{|u(y)-(u)_{B_{2^{-N}R}(x_0)}-(\nabla u)_{B_{2^{-N}R}(x_0)} \cdot (y-x_0)|}{|x_0-y|^{n+2s}} dy \\
			& \leq C \sum_{k=0}^N 2^{-2sk} \mean{B_{2^{k-N} R}(x_0)} |u(x)- \left ( u \right )_{B_{2^{-N}R}(x_0)} - \left (\nabla u \right )_{B_{2^{-N}R}(x_0)} \cdot (x-x_0)| dx \\& \quad +(2^{-N} R)^{2s} \int_{\mathbb{R}^n \setminus B_{R}(x_0)} \frac{|u(y)- \left ( u \right )_{B_{2^{-N}R}(x_0)} - \left (\nabla u \right )_{B_{2^{-N}R}(x_0)} \cdot (y-x_0)| }{|x_0-y|^{n+2s}}dy.
		\end{aligned}
	\end{equation}
	
	Let us estimate the first term on the right-hand side of (\ref{annuli1}). Observing that the affine function $\ell_0(x):= (\nabla u )_{B_{2^{-N}R}(x_0)} \cdot (x-x_0)$ satisfies
	\begin{equation} \label{zeromean}
		(\ell_0)_{B_{\rho}(x_0)} = \mean{B_{\rho}} \underbrace{(\nabla u )_{B_{2^{-N}R}(x_0)} \cdot y}_{\text{odd function}} dy 
		=0 \quad \text{for all } \rho>0,
	\end{equation}
	together with Poincar\'e's inequality for any $j \in \{ 0,...,N \}$ we obtain that
	\begin{align*}
		& \mean{B_{2^{k-N} R}(x_0)} |u- \left ( u \right )_{B_{2^{-N}R}(x_0)} - \left (\nabla u \right )_{B_{2^{-N}R}(x_0)} \cdot (x-x_0)| dx \\
		& = \mean{B_{2^{k-N} R}(x_0)} |u-\ell_0- \left ( u-\ell_0 \right )_{B_{2^{-N}R}(x_0)} | dx \\
		& \leq 2^n \sum_{j=0}^k \mean{B_{2^{j-N} R}(x_0)} |u-\ell_0- \left ( u -\ell_0 \right )_{B_{2^{j-N}R}(x_0)} | dx \\
		& \leq 2^n \sum_{j=0}^k 2^{j-N} R \mean{B_{2^{j-N} R}(x_0)} |\nabla (u-\ell_0) | dx \\
		& = 2^n 2^{-N} R \sum_{j=0}^k 2^{j} \mean{B_{2^{j-N} R}(x_0)} |\nabla u - (\nabla u )_{B_{2^{-N}R}(x_0)}| dx.
	\end{align*}
	Along with reverting the order of summation and summing the geometric series we deduce that
	\begin{align*}
		& \sum_{k=0}^N 2^{-2sk} \mean{B_{2^{k-N} R}(x_0)} |u(x)- \left ( u \right )_{B_{2^{-N}R}(x_0)} - \left (\nabla u \right )_{B_{2^{-N}R}(x_0)} \cdot (x-x_0)| dx \\
		& \leq C 2^{-N} R \sum_{k=0}^N 2^{-2sk} \sum_{j=0}^k 2^{j} \mean{B_{2^{j-N} R}(x_0)} |\nabla u - (\nabla u )_{B_{2^{-N}R}(x_0)}| dx \\
		& \leq C 2^{-N} R \sum_{j=0}^N 2^{(1-2s)j} \mean{B_{2^{j-N} R}(x_0)} |\nabla u - (\nabla u )_{B_{2^{-N}R}(x_0)}| dx \\
		& \leq C 2^{-N} R \sum_{j=0}^N 2^{(1-2s)j} \sum_{k=0}^j \mean{B_{2^{k-N} R}(x_0)} |\nabla u - (\nabla u )_{B_{2^{k-N}R}(x_0)}| dx \\ 
		& \leq C 2^{-N} R \sum_{k=0}^N 2^{(1-2s)k} \mean{B_{2^{k-N} R}(x_0)} |\nabla u - (\nabla u )_{B_{2^{k-N}R}(x_0)}| dx.
	\end{align*}
	For the second term on the right-hand side of (\ref{annuli1}), we use the classical Poincar\'e inequality to obtain
	\begin{align*}
		& (2^{-N} R)^{2s} \int_{\mathbb{R}^n \setminus B_{R}(x_0)} \frac{|u(y)- \left ( u \right )_{B_{2^{-N}R}(x_0)} - \left (\nabla u \right )_{B_{2^{-N}R}(x_0)} \cdot (y-x_0)| }{|x_0-y|^{n+2s}}dy \\
		& \leq (2^{-N} R)^{2s} \int_{\mathbb{R}^n \setminus B_{R}(x_0)} \frac{|u(y)- \left ( u \right )_{B_{R}(x_0)}|}{|x_0-y|^{n+2s}}dy \\
		& \quad + C 2^{-2sN} \sum_{k=1}^N |\left (u \right )_{B_{2^{k-N}R}(x_0)} -\left (u \right )_{B_{2^{k-1-N}R}(x_0)}|+ C 2^{-2sN} R |\left (\nabla u \right )_{B_{2^{-N}R}(x_0)}| \\
		& \leq (2^{-N} R)^{2s} \int_{\mathbb{R}^n \setminus B_{R}(x_0)} \frac{|u(y)- \left ( u \right )_{B_{R}(x_0)}|}{|x_0-y|^{n+2s}}dy \\
		& \quad + C 2^{-2s N} \sum_{k=1}^N \mean{B_{2^{k-N}R}(x_0)}|u -\left (u \right )_{B_{2^{k-N}R}(x_0)}|dx+ C 2^{-N} R 2^{(1-2s)N} |\left (\nabla u \right )_{B_{2^{-N} R}(x_0)}| \\
		& \leq (2^{-N} R)^{2s} \int_{\mathbb{R}^n \setminus B_{R}(x_0)} \frac{|u(y)- \left ( u \right )_{B_{R}(x_0)}|}{|x_0-y|^{n+2s}}dy + C 2^{-N} R 2^{-2sN} \sum_{k=1}^N 2^k \mean{B_{2^{k-N}R}(x_0)}|\nabla u|dx \\
		& \quad + C 2^{-N} R 2^{(1-2s) N} \mean{B_{2^{-N}R}(x_0)}|\nabla u|dx \\
		& \leq (2^{-N} R)^{2s} \int_{\mathbb{R}^n \setminus B_{R}(x_0)} \frac{|u(y)- \left ( u \right )_{B_{R}(x_0)}|}{|x_0-y|^{n+2s}}dy + C 2^{-N} R 2^{(1-2s) N} \sum_{k=0}^N \mean{B_{2^{k-N}R}(x_0)}|\nabla u|dx .
	\end{align*}
Combining the previous two displays with \eqref{annuli1} now proves the claim.
\end{proof}

\subsection{Some known regularity results}
In this section, we mention some regularity results which can easily be derived by taking into account known results and techniques from the literature.

\begin{proposition}[Local boundedness]\label{thm:bnd}
Let $s \in (0,1)$, $A \in \mathcal{L}_0(\Lambda)$, $r>0$, $x_0 \in \ern$, consider some $\mu \in L^p(B_r(x_0))$ for some $p>\frac{n}{2s}$ and assume that $u \in W^{s,2}(B_r(x_0)) \cap L^1_{2s}(\ern)$ is a weak solution of $L_A u = \mu$ in $B_r(x_0)$. Then the following estimate holds true
$$
\sup_{B_{r/2}(x_0)} |u| \leq C \left (  \mean{B_r(x_0)} |u|  dx + r^{2s} \int_{\ern \setminus B_r(x_0)} \frac{|u(y)|}{|x_0-y|^{n+2s}}dy + r^{2s-\frac{n}{p}} ||\mu||_{L^p(B_r(x_0))} \right) ,
$$
where the constant $C$ depends only on $n,s,\Lambda,p$.
\end{proposition}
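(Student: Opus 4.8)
The plan is to prove the local boundedness estimate for weak solutions by a standard De Giorgi iteration adapted to the nonlocal setting, treating separately the local contributions, the nonlocal tail contributions, and the forcing term $\mu$. First I would reduce to the case $x_0=0$, $r=1$ by translation and scaling: if $u$ solves $L_A u=\mu$ in $B_r(x_0)$, then $\tilde u(x):=u(x_0+rx)$ solves $L_{\tilde A}\tilde u=\tilde\mu$ in $B_1$ with $\tilde A(x,y)=A(x_0+rx,x_0+ry)\in\mathcal L_0(\Lambda)$ and $\tilde\mu(x)=r^{2s}\mu(x_0+rx)$, so $\|\tilde\mu\|_{L^p(B_1)}=r^{2s-n/p}\|\mu\|_{L^p(B_r(x_0))}$; the tail and average terms transform compatibly, so it suffices to bound $\sup_{B_{1/2}}|u|$ by the right-hand side with $r=1$. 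By replacing $u$ with $\pm u$ it is enough to estimate $\sup_{B_{1/2}} u^+$.

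Next I would set up the Caccioppoli inequality on level sets. For $k\ge 0$ let $w_k:=(u-k)_+$ and choose radii $1/2\le\rho_j<\tau_j\le 3/4$ and a cutoff $\varphi_j\in C_0^\infty(B_{\tau_j})$ with $\varphi_j\equiv1$ on $B_{\rho_j}$, $|\nabla\varphi_j|\lesssim(\tau_j-\rho_j)^{-1}$. Testing \eqref{weaksolx5} with $\varphi_j^2 w_k$ (after truncation; this is admissible since $\varphi_j^2 w_k\in W^{s,2}_c(B_1)$ when $u\in W^{s,2}$) and using $\Lambda^{-1}\le A\le\Lambda$ together with the elementary algebraic inequalities relating $(u(x)-u(y))(\varphi^2(x)w_k(x)-\varphi^2(y)w_k(y))$ to $|\varphi(x)w_k(x)-\varphi(y)w_k(y)|^2$ minus error terms controlled by $(w_k(x)+w_k(y))^2|\varphi(x)-\varphi(y)|^2$ and a nonlocal tail term (this is the standard nonlocal Caccioppoli estimate, e.g.\ as in \cite{DKP,DKP2}), one obtains, writing $A(k,\rho):=\{x\in B_\rho\mid u(x)>k\}$,
\begin{equation*}
[\varphi_j w_k]_{W^{s,2}(\er^n)}^2 \lesssim \frac{1}{(\tau_j-\rho_j)^{2s}}\int_{B_{\tau_j}} w_k^2\,dx + \Big(\sup_{x\in B_{\tau_j}}\int_{\er^n\setminus B_{\tau_j}}\frac{w_k(y)}{|x-y|^{n+2s}}dy\Big)\int_{B_{\tau_j}} w_k\,dx + \int_{\Omega}\varphi_j^2 w_k\,d\mu.
\end{equation*}
The tail here is bounded, using Lemma \ref{tailestz} and $w_k\le u^+$, by $C\big(\int_{B_1}(u^+)dy + \text{tail of }|u|\big)$, which after absorbing constants contributes a fixed additive tail term $\mathrm{Tail}:=\int_{\er^n\setminus B_1}\frac{|u(y)|}{|y|^{n+2s}}dy$ times $|A(k,\tau_j)|$. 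The forcing term is estimated by Hölder: $\int\varphi_j^2 w_k\,d\mu\le \|\mu\|_{L^p(B_1)}\|w_k\|_{L^{p'}(A(k,\tau_j))}$, and since $p>n/(2s)$ we have $p'<\frac{n}{n-2s}=2^*_s$, so by the fractional Sobolev embedding $W^{s,2}\hookrightarrow L^{2^*_s}$ and Hölder on the level set this is $\le \varepsilon[\varphi_j w_k]_{W^{s,2}}^2 + C_\varepsilon\|\mu\|_{L^p}^2|A(k,\tau_j)|^{1+\delta}$ for some $\delta=\delta(n,s,p)>0$; the first piece is reabsorbed.

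Then comes the iteration proper. Using the Sobolev embedding $\|\varphi_j w_k\|_{L^{2^*_s}(\er^n)}^2\lesssim[\varphi_j w_k]_{W^{s,2}}^2$ and Hölder to pass from $L^{2^*_s}$ to $L^2$ on $A(h,\rho_j)$ for $h>k$, one derives a recursive inequality of the form
\begin{equation*}
\Big(\int_{B_{\rho_{j+1}}}(u-h)_+^2\,dx\Big)\ \le\ \frac{C\,b^j}{(h-k)^{2\chi-2}}\,\Big(M + \int_{B_{\rho_j}}(u-k)_+^2\,dx\Big)^{\chi}
\end{equation*}
for suitable $\chi>1$, $b>1$, where $M:=\mathrm{Tail}^2+\|\mu\|_{L^p(B_1)}^2$ absorbs the inhomogeneous pieces — here I would be slightly careful that the tail and $\mu$ terms, which are affine rather than superlinear in $|A(k,\rho)|$, still fit the scheme; this is handled by the standard trick of folding the additive constant into the iterated quantity. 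Choosing the levels $k_j = d(1-2^{-j})$ with $d$ comparable to $\big(\int_{B_{3/4}}(u^+)^2\big)^{1/2} + M^{1/2}$ and applying the classical fast-geometric-convergence lemma (De Giorgi's lemma, e.g.\ \cite[Lemma 7.1]{Mingione}-type) forces $\int_{B_{1/2}}(u-d)_+^2\,dx=0$, i.e.\ $\sup_{B_{1/2}}u\le d$. Finally one replaces the $L^2$ average by the $L^1$ average via the standard interpolation/iteration-of-radii argument (running the whole scheme on a family of shrinking balls and using Young's inequality), yielding $\sup_{B_{1/2}}|u|\lesssim \int_{B_1}|u|\,dx + \mathrm{Tail} + \|\mu\|_{L^p(B_1)}$; undoing the scaling gives the claim. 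The main obstacle is bookkeeping the nonlocal tail term through the De Giorgi iteration so that the increasing supports of the cutoffs do not blow up the tail contribution — this is controlled precisely by Lemma \ref{tailestz}, which bounds the tail over an inflated ball by the tail over the fixed ball $B_1$ with a constant depending only on the (fixed) ratio of radii, keeping everything uniform across the iteration.
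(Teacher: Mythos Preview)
Your proposal is correct and follows essentially the same route as the paper: the paper's proof is a pointer to the literature, invoking the De Giorgi-type iteration from \cite{DKP,BP} (via \cite[Theorem 2.11]{MeH}) to obtain the estimate with an $L^2$-average, and then the $L^2\to L^1$ improvement via Young's inequality and radius-iteration as in \cite[Corollary 2.1]{KMS2}. You have simply spelled out the underlying argument --- Caccioppoli on level sets, tail control via Lemma \ref{tailestz}, absorption of the forcing term through H\"older and the fractional Sobolev embedding, De Giorgi fast-geometric convergence, and finally the interpolation over shrinking balls --- rather than citing it.
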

\begin{proof}
	As discussed in \cite[Theorem 2.11]{MeH}, the techniques developed in \cite{DKP,BP} can easily be modified in order to obtain the estimate
	$$
	\sup_{B_{r/2}(x_0)} |u| \leq C \left (  \left (\mean{B_r(x_0)} |u|^2  dx \right )^{1/2} + r^{2s} \int_{\ern \setminus B_r(x_0)} \frac{|u(y)|}{|x_0-y|^{n+2s}}dy + r^{2s-\frac{n}{p}} ||\mu||_{L^p(B_r(x_0))} \right) ,
	$$
	where $C$ depends only on $n,s,\Lambda,p$. The $L^2$-average of $u$ on the right-hand side of the previous inequality can then be replaced by the $L^1$ average of $u$ in view of applying Young's inequality along with an iteration argument as done in \cite[Corollary 2.1]{KMS2}, so that the claim holds.
\end{proof}
The next result yields H\"older estimates in the case when the coefficient $A$ is locally close to being translation invariant and is essentially proved in \cite{MeH}.
\begin{proposition}[Cordes-Nirenberg-type estimate] \label{homreg1}
	Let $s \in (0,1)$, $\Lambda \geq 1$ and $p>\frac{n}{2s}$. Then for any $\beta \in \left (0,\min \left \{2s-\frac{n}{p},1 \right \} \right)$, there exists some small enough $\delta=\delta(\beta,n,s,\Lambda,p)>0$ such that the following is true. Assume that $A \in \mathcal{L}_0(\Lambda)$ and that $\mu \in L^p(B_2)$. Moreover, assume that there exists a coefficient $\widetilde A \in \mathcal{L}_1(B_1,\Lambda)$ such that
	\begin{equation} \label{boundsxzs1}
		||A-\widetilde A||_{L^\infty(\mathbb{R}^n \times \mathbb{R}^n)} \leq \delta.
	\end{equation}
	Then for any weak solution $u \in W^{s,2}(B_2) \cap L^1_{2s}(\mathbb{R}^n)$ of
	$	L_A u=\mu \text{ in } B_2$,
	we have
	\begin{equation} \label{Holdest}
	[u]_{C^{\beta}(B_{1/2})} \leq C \left ( \mean{B_2} |u|dx + \int_{\mathbb{R}^n \setminus B_2} \frac{|u(y)|}{|y|^{n+2s}}dy + ||\mu||_{L^p(B_2)} \right),
	\end{equation}
	where $C$ depends only on $n,s,\Lambda,\beta,p$.
\end{proposition}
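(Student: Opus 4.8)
The plan is to run a Cordes--Nirenberg type perturbation argument: on small balls we compare $u$ with a solution of the homogeneous equation associated with the nearby translation invariant kernel $\widetilde A$, and we iterate the resulting excess decay over dyadic scales. Since this is essentially the argument of \cite{MeH}, we only indicate the main steps. After dividing by the right-hand side of \eqref{Holdest} we may assume
$$ \mean{B_2}|u|\,dx + \int_{\ern\setminus B_2}\frac{|u(y)|}{|y|^{n+2s}}\,dy + \|\mu\|_{L^p(B_2)}\le 1 , $$
and we fix an exponent $\beta'$ with $\beta<\beta'<\min\{2s-n/p,1\}$ and put $\Phi(x_0,r):=\big(\mean{B_r(x_0)}|u-(u)_{B_r(x_0)}|^2\,dx\big)^{1/2}$. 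The goal is the Campanato-type decay estimate $\Phi(x_0,r)\le Cr^\beta$ for all $x_0\in\overline{B_{1/2}}$ and all $0<r<1/2$, which implies \eqref{Holdest} by the classical characterization of H\"older continuity in terms of $L^2$-oscillations (see \cite{Campanato}).

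\emph{Regularity for the frozen equation.} Since $\widetilde A$ is translation invariant on $B_1$, whenever $L_{\widetilde A}v=0$ on a ball $B_\rho(x_0)\subset B_1$ the increment $\tau_h v$ again solves $L_{\widetilde A}(\tau_h v)=0$ on a slightly smaller ball; iterating the difference quotient method and invoking the fractional Sobolev embedding then shows that such $v$ lies in $C^{\beta'}_{\loc}(B_\rho(x_0))$ with
$$ [v]_{C^{\beta'}(B_{\rho/2}(x_0))}\le\frac{C}{\rho^{\beta'}}\bigg(\mean{B_\rho(x_0)}|v-(v)_{B_\rho(x_0)}|\,dx+\rho^{2s}\int_{\ern\setminus B_\rho(x_0)}\frac{|v(y)-(v)_{B_\rho(x_0)}|}{|x_0-y|^{n+2s}}\,dy\bigg) , $$
and crucially this step uses only the translation invariance of $\widetilde A$ and no continuity of $\widetilde A$.

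\emph{Comparison estimate.} For a ball $B_r=B_r(x_0)\subset B_1$, let $v$ be the unique weak solution of $L_{\widetilde A}v=0$ in $B_r$ with $v=u$ in $\ern\setminus B_r$, so that $w:=u-v\in W^{s,2}_0(B_r)$ and $L_{\widetilde A}w=\mu+L_{\widetilde A-A}u$ in $B_r$. Testing this identity with $w$ itself and using the ellipticity of $\widetilde A$, the fractional Friedrichs--Poincar\'e inequality (Lemma~\ref{Friedrichs}), the embedding $W^{s,2}_0(B_r)\hookrightarrow L^{2n/(n-2s)}$ together with the assumption $p>\tfrac{n}{2s}$ (which makes $\mu$ act on $W^{s,2}_0(B_r)$ with exactly the power $r^{2s-n/p}$), the bound $|\widetilde A-A|\le\delta$, and a standard Caccioppoli inequality for $u$ to re-express $[u]_{W^{s,2}(B_r)}$ through $r^{-s}\Phi(x_0,r)$ plus a tail of $u$ (controlled by Lemmas~\ref{tailestz} and~\ref{tr}), we arrive at
$$ \Big(\mean{B_r}|w|^2\,dx\Big)^{1/2}\le C\delta\big(\Phi(x_0,r)+T(x_0,r)\big)+C\,r^{2s-n/p}\|\mu\|_{L^p(B_r)} , $$
where $T(x_0,r):=r^{2s}\int_{\ern\setminus B_r(x_0)}|u(y)-(u)_{B_r(x_0)}|\,|x_0-y|^{-n-2s}\,dy$ is the (normalized) tail of $u$ at scale $r$.

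\emph{Iteration.} Since $v=u$ outside $B_r$, the $C^{\beta'}$-bound for $v$ may be fed the local and tail data of $u$; combining it with the comparison estimate gives, for every $\rho\in(0,\tfrac14)$, a decay inequality of the schematic form
$$ \Phi(x_0,\rho r)\le C_1\rho^{\beta'}\big(\Phi(x_0,r)+T(x_0,r)\big)+C_1\rho^{-n/2}\delta\big(\Phi(x_0,r)+T(x_0,r)\big)+C_1(\rho r)^{2s-n/p}\|\mu\|_{L^p(B_r)} . $$
As $\beta'>\beta$, we first fix $\rho$ so small that $C_1\rho^{\beta'}\le\tfrac14\rho^\beta$ and then fix $\delta=\delta(\beta,n,s,\Lambda,p)>0$ so small that $C_1\rho^{-n/2}\delta\le\tfrac14\rho^\beta$; iterating over the scales $r=\rho^k r_0$ then yields $\Phi(x_0,\rho^k r_0)\le C\rho^{k\beta}$, hence the desired decay, provided the reproduced tail terms $T$ are kept under control by the tail estimates of Section~\ref{prelim} and the contributions of $\mu$ are summed geometrically using the strict inequality $\beta<2s-n/p$. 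The main obstacle throughout is precisely this bookkeeping of the nonlocal tails: the error term $L_{\widetilde A-A}u$ involves the values of $u$ on all of $\ern$, and each dyadic step forces one to rewrite the tail of $u$ over $\ern\setminus B_{\rho r}(x_0)$ in terms of tails over $\ern\setminus B_r(x_0)$ and averages over intermediate annuli, so the delicate point is to organize the iteration so that these tail contributions stay summable instead of accumulating --- which is exactly what Lemmas~\ref{tailestz}, \ref{tr} and~\ref{tailest} are tailored to achieve.
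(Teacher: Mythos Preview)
Your sketch is correct in spirit and follows the standard Cordes--Nirenberg perturbation scheme that underlies the result in \cite{MeH}. The paper, however, does \emph{not} redo this iteration at all: its proof is a three-line normalization argument. It sets
\[
M:=\sup_{B_1}|u|+\int_{\ern\setminus B_1}\frac{|u(y)|}{|y|^{n+2s}}\,dy+\frac{\|\mu\|_{L^p(B_1)}}{\delta},
\]
observes that $u_M:=u/M$ satisfies the smallness hypotheses of \cite[Proposition~4.2]{MeH} (in particular $\|\mu/M\|_{L^p(B_1)}\le\delta$), invokes that proposition as a black box to get $[u_M]_{C^\beta(B_{1/2})}\le C$, multiplies by $M$, and finally uses the local boundedness estimate (Proposition~\ref{thm:bnd}) to replace $\sup_{B_1}|u|$ by the $L^1$-average and tail over $B_2$.

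So you are essentially unpacking the content of \cite[Proposition~4.2]{MeH} where the paper simply cites it. Your write-up buys self-containment; the paper's buys brevity. One small caution in your sketch: since $\widetilde A\in\mathcal{L}_1(B_1,\Lambda)$ is translation invariant only on $B_1\times B_1$, the assertion that $\tau_h v$ again solves the same equation does not hold verbatim because the operator sees all of $\ern$; one has to localize first (cutoff, treating the far field as data) before running the difference-quotient step, or else obtain the $C^{\beta'}$ estimate for the frozen problem by the compactness/freezing route rather than by incremental differentiability. This is routine, but your sentence as written glosses over it.
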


\begin{proof}
	Fix $\beta \in \left (0,\min \left \{2s-\frac{n}{p},1 \right \} \right)$ and let $\delta=\delta(\beta,n,s,\Lambda,p)>0$ be given by \cite[Proposition 4.2]{MeH}.
	Let $$M:= \sup_{B_1} |u| + \int_{\mathbb{R}^n \setminus B_1} \frac{|u(y)|}{|y|^{n+2s}}dy + \frac{||\mu||_{L^p(B_1)}}{\delta} $$
	and observe that $u_M:=u(x)/M$ belongs to $W^{s,2}(B_2) \cap L^1_{2s}(\mathbb{R}^n)$ and is a weak solution of $L_A u=\mu_M$ in $B_2$, where $\mu_M:=\mu/M$ satisfies $$||\mu_M||_{L^q(B_1)} \leq \delta.$$ Thus, by \cite[Proposition 4.2]{MeH} we have the H\"older estimate
	$$ [u_M]_{C^{\beta}(B_{1/2})} \leq C. $$Multiplying both sides of the previous estimate by $M$ and then applying Proposition \ref{thm:bnd} now yields
	\begin{align*}
	[u]_{C^{\beta}(B_{1/2})} & \leq C \left ( \sup_{B_1} |u| + \int_{\mathbb{R}^n \setminus B_1} \frac{|u(y)|}{|y|^{n+2s}}dy + ||\mu||_{L^p(B_1)} \right) \\
	& \leq C \left ( \mean{B_2} |u|dx + \int_{\mathbb{R}^n \setminus B_2} \frac{|u(y)|}{|y|^{n+2s}}dy + ||\mu||_{L^p(B_2)} \right).
	\end{align*}
	Therefore, the proof is finished.
\end{proof}

\section{H\"older estimates for $\nabla u$} \label{C1,gamma}

\subsection{Translation invariant case}

We have the following result for nonlocal equations with globally translation invariant coefficients belonging to the class $\mathcal{L}_1(\Lambda)$ defined in Definition \ref{def:TI}, see \cite[Corollary 1.3]{DZ19} or \cite[Section 6.2]{RosOtonBounded}.
\begin{proposition} \label{TIreg}
	Let $s \in (1/2,1)$, $A_0 \in \mathcal{L}_1(\Lambda)$, $f \in L^\infty(B_1)$ and let $v \in W^{s,2}(B_1) \cap L^\infty(\mathbb{R}^n)$ be a weak solution to $L_{A_0} v = f$ in $B_1$. Then we have the estimate
	$$
		||v||_{C^{1,2s-1}(B_{1/2})} \leq C(||v||_{L^\infty(\mathbb{R}^n)} + ||f||_{L^\infty(B_1)}),
	$$
	where $C$ depends only on $n,s,\Lambda$.
\end{proposition}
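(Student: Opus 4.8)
The plan is to prove the estimate as an \emph{a priori} bound: by homogeneity we may assume $||v||_{L^\infty(\mathbb{R}^n)}+||f||_{L^\infty(B_1)}\le 1$, and it suffices to show $||v||_{C^{1,2s-1}(B_{1/2})}\le C$. The structural feature that makes the \emph{sharp} order $2s=1+(2s-1)$ accessible is that, since $A_0(x,y)=a(x-y)$ with $a$ bounded, measurable and even, the operator $L_{A_0}$ annihilates affine functions: for $\ell(x)=b+c\cdot x$ one computes $L_{A_0}\ell(x)=\mathrm{p.v.}\int_{\mathbb{R}^n}\frac{c\cdot z\,a(z)}{|z|^{n+2s}}\,dz=0$, the integral being absolutely convergent at infinity precisely because $s>1/2$ and vanishing in the principal value sense by oddness of the integrand. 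Consequently $v-\ell$ solves the same equation for every affine $\ell$, and the difference quotients $\tau_h v$ and $\tau_h^2 v$ solve $L_{A_0}(\tau_h v)=\tau_h f$ and $L_{A_0}(\tau_h^2 v)=\tau_h^2 f$ on slightly smaller balls; these are the only places translation invariance enters, and they are exactly what allows one to pass the integer threshold $1$ up to order $2s$.

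First I would record the two facts that the class $\mathcal{L}_1(\Lambda)$ supplies for free. The first is a De Giorgi--Nash--Moser oscillation estimate: any weak solution of $L_{A_0}w=g$ with $g$ bounded satisfies
\[ \osc_{B_r(x_0)} w \;\lesssim\; r^{\sigma}\Big(||w||_{L^\infty(B_{1/2})}+\int_{\mathbb{R}^n\setminus B_{1/2}}\frac{|w(y)|}{|y|^{n+2s}}\,dy+||g||_{L^\infty(B_{1/2})}\Big) \]
for $x_0\in B_{1/4}$, $r\le 1/4$, with $\sigma=\sigma(n,s,\Lambda)\in(0,1)$; this follows from the Caccioppoli and iteration techniques of \cite{DKP,CCV,KassCalcVar} together with Proposition \ref{thm:bnd}. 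The second is that $\mathcal{L}_1(\Lambda)$ is closed under weak-$*$ convergence of the defining coefficient $a$, since the two-sided bound $\Lambda^{-1}\le a\le\Lambda$ is preserved. Using these I would establish a Liouville theorem: any $\bar v$ with $L_{\bar A_0}\bar v=0$ in $\mathbb{R}^n$ for some $\bar A_0\in\mathcal{L}_1(\Lambda)$ and $|\bar v(x)|\le C(1+|x|^{2s})$ is affine. This comes from applying the oscillation estimate to $\bar v-\ell_R$ on $B_R$, with $\ell_R$ a best affine approximant of $\bar v$ at scale $R$, rescaling and letting $R\to\infty$; the growth bound (together with $2s<2$, which keeps every nonlocal tail of $\bar v$ finite) forces $\bar v$ to be a polynomial of degree $\le\lfloor 2s\rfloor=1$.

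With the Liouville theorem available I would run a compactness/blow-up contradiction in the spirit of \cite{Serra,RSJMPA}. If the a priori $C^{1,2s-1}(B_{1/2})$ estimate failed there would be sequences $v_k$, $A_{0,k}\in\mathcal{L}_1(\Lambda)$, $f_k$ obeying the normalization but with the excess of $v_k$ over affine functions, measured in the $C^{1,2s-1}$ scaling at a worst point and scale $(x_k,r_k)$ and divided by that excess $\theta_k\to\infty$, producing rescaled functions $\bar v_k$ that are locally uniformly bounded in $C^\sigma$ by the oscillation estimate, have coefficients converging weakly-$*$ to some $\bar A_0\in\mathcal{L}_1(\Lambda)$, grow at most like $|x|^{2s}$ on dyadic balls uniformly in $k$, and have rescaled right-hand sides tending to $0$ since $r_k^{2s}/\theta_k\to 0$. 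Passing to the limit in the weak formulation — the tails converge because of the uniform $|x|^{2s}$ growth — yields a nonconstant global solution $\bar v$ of $L_{\bar A_0}\bar v=0$ with $|\bar v(x)|\lesssim 1+|x|^{2s}$ whose affine part has been normalized away, contradicting the Liouville theorem. The step I expect to be genuinely delicate is reaching the \emph{endpoint} exponent $\gamma=2s-1$ rather than merely $\gamma<2s-1$: the compactness method generically loses an $\varepsilon$, and to avoid this one must use that $L_{A_0}$ kills affine functions \emph{exactly}, so that the iterated excess functional is truly scale-covariant, and one needs the Liouville statement at the borderline growth $|x|^{2s}$, which itself calls for a secondary compactness argument. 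Keeping the nonlocal tails under uniform control throughout the blow-up is the other delicate point; alternatively, one may peel off the inhomogeneous part by noting that $L_{A_0}$ is a Fourier multiplier with symbol comparable to $|\xi|^{2s}$ and using Riesz-potential mapping properties into $C^{2s-1}$.
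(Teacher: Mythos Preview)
The paper does not prove Proposition~\ref{TIreg}; it simply quotes the result from \cite[Corollary~1.3]{DZ19} and \cite[Section~6.2]{RosOtonBounded}. Your outline is therefore not being compared to a proof in the paper, but it is in fact essentially the method of the second of those references (and of \cite{Serra,RSJMPA}): Liouville theorem for entire solutions with sub-$2s$ polynomial growth, combined with a blow-up/compactness argument using that $L_{A_0}$ annihilates affine functions. So your approach is correct and standard.

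Two technical points are worth flagging. First, your Liouville statement as written assumes growth $|x|^{2s}$, but the usual argument needs growth $|x|^{\gamma}$ with $\gamma<2s$ for the rescaled oscillation estimate to yield decay; in the blow-up this is harmless because after subtracting the affine part and normalizing the worst-scale excess to $1$, the construction automatically gives $|\bar v(x)|\lesssim (1+|x|)^{1+(2s-1)}$ \emph{together with} the additional information that the supremum of the $(2s)$-rescaled excess over all larger scales is bounded, which is what actually forces the limit to be affine. Second, the endpoint $C^{1,2s-1}$ is indeed reachable by this method (no $\varepsilon$-loss), but the argument is not quite the one you sketch: one does not iterate a single-step improvement of excess, but rather chooses the blow-up point and scale at (close to) the supremum of a doubly-indexed family of normalized excesses, so that by construction all other scales are controlled by the chosen one; this is what replaces the ``scale-covariance'' you allude to. The alternative you mention via Fourier multipliers is also viable here since $a$ is even and bounded, giving a symbol comparable to $|\xi|^{2s}$, and this is closer to what \cite{DZ19} does.
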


By a cutoff argument, we obtain the corresponding regularity result for possibly unbounded weak solutions.
\begin{corollary} \label{TIreg1}
	Let $s \in (1/2,1)$, $A_0 \in \mathcal{L}_1(\Lambda)$ and let $v \in W^{s,2}(B_2) \cap L_{2s}^1 (\mathbb{R}^n)$ be a weak solution to $L_{A_0} v = 0$ in $B_4$. Then we have the estimate
	$$
		||v||_{C^{1,2s-1}(B_{1/2})} \leq C \left (\mean{B_4} |v|dx + \int_{\mathbb{R}^n \setminus B_4} \frac{|v(y)|}{|y|^{n+2s}} dy \right ),
	$$
	where $C$ depends only on $n,s,\Lambda$.
\end{corollary}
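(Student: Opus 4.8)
The plan is to reduce the statement about a possibly unbounded solution $v$ on $B_4$ to the a priori estimate of Proposition \ref{TIreg}, which applies only to bounded solutions on $B_1$. The natural device is to split $v$ using a cutoff. Fix a cutoff function $\eta \in C_0^\infty(B_3)$ with $\eta \equiv 1$ on $B_2$ and $0 \le \eta \le 1$, and write $v = v_1 + v_2$ where $v_1 := \eta v$ and $v_2 := (1-\eta) v$. Then $v_1 \in L^\infty(\mathbb{R}^n)$ (indeed $v_1$ is compactly supported and, by interior regularity for $L_{A_0}$-harmonic functions or by the local boundedness estimate, $v$ is bounded on $B_3$), so $v_1$ is an admissible function for Proposition \ref{TIreg} after suitable rescaling. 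The key point is that $v_1$ solves a nonlocal equation of the form $L_{A_0} v_1 = f$ on $B_1$, where the right-hand side $f$ collects the error terms created by the cutoff.

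The main computation is to identify $f$ and show it lies in $L^\infty(B_1)$ with the right quantitative bound. Since $L_{A_0} v = 0$ on $B_4 \supset B_1$, on $B_1$ we have
\begin{equation*}
L_{A_0} v_1 = L_{A_0} v - L_{A_0} v_2 = - L_{A_0} v_2 \quad \text{in } B_1.
\end{equation*}
Now $v_2 = (1-\eta)v$ vanishes on $B_2$, so for $x \in B_1$ the principal value in $L_{A_0} v_2(x)$ is in fact an absolutely convergent integral over $\{y : |y - x| \ge 1\}$ only (the integrand vanishes for $y \in B_2$ since both $v_2(x)=0$ and $v_2(y)=0$ there, using $x\in B_1$). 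Thus for $x \in B_1$,
\begin{equation*}
|L_{A_0} v_2(x)| = \left| \int_{\mathbb{R}^n \setminus B_2} \frac{A_0(x-y)}{|x-y|^{n+2s}} v(y)\, dy \right| \le \Lambda \int_{\mathbb{R}^n \setminus B_2} \frac{|v(y)|}{|x-y|^{n+2s}}\, dy .
\end{equation*}
Since $|x-y| \gtrsim |y|$ for $x \in B_1$ and $y \notin B_2$ (quantitatively, $|x-y| \ge |y|/4$ say, or one invokes Lemma \ref{tailestz}), this is controlled by $\Lambda \int_{\mathbb{R}^n \setminus B_2} |v(y)|/|y|^{n+2s}\, dy$, which in turn is bounded by the tail term on the right-hand side of the claimed estimate, after using Lemma \ref{tailestz} to pass from the tail over $\mathbb{R}^n \setminus B_2$ to the tail over $\mathbb{R}^n \setminus B_4$ plus an $L^1$-average of $v$ over $B_4$ (cf. Lemma \ref{tr}). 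So $\|f\|_{L^\infty(B_1)} \lesssim \mean{B_4} |v|\, dx + \int_{\mathbb{R}^n \setminus B_4} |v(y)|/|y|^{n+2s}\, dy =: K$.

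It remains to bound $\|v_1\|_{L^\infty(\mathbb{R}^n)}$ by $C K$. On $B_3$ this follows from Proposition \ref{thm:bnd} (local boundedness, with $\mu = 0$ and any $p > n/(2s)$), applied on a ball slightly larger, say on $B_{7/2}$ after a trivial rescaling to match the normalization there, which gives $\sup_{B_3}|v| \lesssim \mean{B_4}|v|\,dx + \int_{\mathbb{R}^n \setminus B_4} |v(y)|/|y|^{n+2s}\,dy = K$; outside $B_3$, $v_1$ vanishes. Hence $\|v_1\|_{L^\infty(\mathbb{R}^n)} \lesssim K$ as well. Applying Proposition \ref{TIreg} to $v_1$ (the hypothesis $v_1 \in W^{s,2}(B_1) \cap L^\infty(\mathbb{R}^n)$ being clear, as $v \in W^{s,2}(B_2)$ and $\eta$ is smooth and compactly supported) yields
\begin{equation*}
\|v_1\|_{C^{1,2s-1}(B_{1/2})} \le C\big(\|v_1\|_{L^\infty(\mathbb{R}^n)} + \|f\|_{L^\infty(B_1)}\big) \le C K .
\end{equation*}
Since $v = v_1$ on $B_2 \supset B_{1/2}$ (because $\eta \equiv 1$ there, so $v_2 \equiv 0$), we conclude $\|v\|_{C^{1,2s-1}(B_{1/2})} = \|v_1\|_{C^{1,2s-1}(B_{1/2})} \le C K$, which is precisely the asserted estimate.

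The only genuinely delicate point is the bookkeeping in the tail estimates — making sure that every intermediate tail (over $\mathbb{R}^n \setminus B_2$, centered at a point $x \in B_1$ rather than at the origin) is dominated by the final tail over $\mathbb{R}^n \setminus B_4$ centered at the origin, plus controllable local $L^1$ contributions; this is exactly what Lemmas \ref{tailestz} and \ref{tr} are designed for, so no new ideas are needed. One should also double-check the normalizations in Propositions \ref{TIreg} and \ref{thm:bnd} (they are stated on $B_1$ and $B_r(x_0)$ respectively) and insert the elementary scaling that matches them to $B_4$; this changes only the value of $C$, which is allowed to depend on $n, s, \Lambda$.
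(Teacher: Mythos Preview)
Your proof is correct and follows essentially the same approach as the paper: multiply $v$ by a cutoff, observe that the truncated function solves $L_{A_0}w=f$ in $B_1$ with $f$ given by an absolutely convergent tail integral, bound $\|f\|_{L^\infty(B_1)}$ via Lemma~\ref{tailestz}, bound $\|w\|_{L^\infty(\mathbb{R}^n)}$ via Proposition~\ref{thm:bnd}, and conclude with Proposition~\ref{TIreg} together with Lemma~\ref{tr}. The only cosmetic difference is that the paper places the cutoff between $B_{3/2}$ and $B_2$ rather than between $B_2$ and $B_3$.
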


\begin{proof}
	Fix a cutoff function $\eta \in C^\infty(\mathbb{R}^n)$ such that $\eta=1$ in $B_{3/2}$ and $\eta=0$ in $\mathbb{R}^n \setminus B_2$. Then the function $w:=v \eta$ belongs to $W^{s,2}(B_1) \cap L^\infty(\mathbb{R}^n)$ and is a weak solution of the equation $L_{A_0} w = f$ in $B_1$, where
	$$ f(x):=\int_{\mathbb{R}^n \setminus B_{3/2}}A_0(x,y) \frac{v(y)(1-\eta(y))}{|x-y|^{n+2s}}dy.$$
	In view of Lemma \ref{tailestz}, we have
	$$ ||f||_{L^\infty(B_1)} \leq C \int_{\mathbb{R}^n \setminus B_{3/2}} \frac{|v(y)|}{|y|^{n+2s}}dy.$$
	Thus, together with Proposition \ref{TIreg}, Proposition \ref{thm:bnd} and Lemma \ref{tr} we obtain
	\begin{align*}
		||v||_{C^{1,2s-1}(B_{1/2})} & = ||w||_{C^{1,2s-1}(B_{1/2})} \\ & \leq C \left (||v||_{L^\infty(B_2)} + ||f||_{L^\infty(B_1)} \right ) \\
		& \leq C \left (\mean{B_4} |v|dx + \int_{\mathbb{R}^n \setminus B_{4}} \frac{|v(y)|}{|y|^{n+2s}}dy \right ),
	\end{align*}
	finishing the proof.
\end{proof}

\begin{corollary} \label{cor:locTIreg} 
Let $s \in (1/2,1)$, $R>0$, $x_0 \in \mathbb{R}^n$, $A_0 \in \mathcal{L}_1(\Lambda)$ and let $v \in W^{s,2}(B_1) \cap L_{2s}^1 (\mathbb{R}^n)$ be a weak solution to $L_{A_0} v = 0$ in $B_{R}(x_0)$. 
Then $v \in C^{1,2s-1}(B_{R/2}(x_0))$ and
\begin{equation} \label{C2sTI}
[\nabla v]_{C^{2s-1}(B_{R/2}(x_0))} \leq C_1 R^{-2s} \left ( \mean{B_R(x_0)} |v |   dx + R^{2s} \int_{\mathbb{R}^n \setminus B_{R}(x_0)} \frac{|v(y)|}{|x_0-y|^{n+2s}}dy \right),
\end{equation}
\begin{equation} \label{LipschitzTI}
||\nabla v||_{L^\infty(B_{R/2}(x_0))} \leq C_2 R^{-1} \left ( \mean{B_R(x_0)} |v |   dx + R^{2s} \int_{\mathbb{R}^n \setminus B_{R}(x_0)} \frac{|v(y)|}{|x_0-y|^{n+2s}}dy \right),
\end{equation}
where $C_1$ and $C_2$ depend only on $n,s,\Lambda$.
In addition, for any $z \in B_{R/2}(x_0)$, any $r>0$ with $B_r(z) \subset B_{R/2}(x_0)$ and the affine function $\ell_{z}(x):= v(z)+\nabla v(z) \cdot (x-z)$, we have
\begin{equation} \label{affdecTI}
	||v-\ell_{z}||_{L^\infty(B_{r}(z))} \leq C_3 \left (\frac{r}{R} \right )^{2s} \bigg ( \mean{B_R(x_0)} |v| dx + R^{2s} \int_{\mathbb{R}^n \setminus B_{R}(x_0)} \frac{|v(y)|}{|x_0-y|^{n+2s}}dy \bigg ) ,
\end{equation}
where $C_3$ depends only on $n,s,\Lambda$.
\end{corollary}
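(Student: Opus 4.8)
The plan is to reduce the statement to the ``unit-scale'' result of Corollary~\ref{TIreg1} by rescaling, and then to pass from an estimate on a single small ball to the estimate on all of $B_{R/2}(x_0)$ by an elementary covering argument. \textbf{Step 1 (rescaling).} Fix $z\in B_{R/2}(x_0)$, put $\rho:=R/8$ and set $w(x):=v(z+\rho x)$. A standard change of variables in \eqref{weaksolx5} shows that $w$ is a weak solution of $L_{\widetilde A_0}w=0$ in $B_4$ with $\widetilde A_0(x,\xi):=A_0(z+\rho x,z+\rho\xi)$; since $A_0\in\mathcal L_1(\Lambda)$, i.e.\ $A_0(x,y)=a(x-y)$, we have $\widetilde A_0(x,\xi)=a(\rho(x-\xi))$, so $\widetilde A_0\in\mathcal L_1(\Lambda)$ with the same $\Lambda$. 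Moreover $w\in W^{s,2}(B_2)\cap L^1_{2s}(\mathbb R^n)$, because $B_{R/4}(z)\Subset B_R(x_0)$ and $v$, being a weak solution, lies in $W^{s,2}_{loc}(B_R(x_0))\cap L^1_{2s}(\mathbb R^n)$. Hence Corollary~\ref{TIreg1} applies to $w$, and using $\nabla w(x)=\rho\,\nabla v(z+\rho x)$, the identity $[\nabla w]_{C^{2s-1}(B_{1/2})}=\rho^{2s}[\nabla v]_{C^{2s-1}(B_{\rho/2}(z))}$, and the transformation rules $\mean{B_4}|w|\,dx=\mean{B_{R/2}(z)}|v|\,dx$ and $\int_{\mathbb R^n\setminus B_4}|w(y)||y|^{-n-2s}\,dy=\rho^{2s}\int_{\mathbb R^n\setminus B_{R/2}(z)}|v(y)||z-y|^{-n-2s}\,dy$, we scale back to obtain, for every $z\in B_{R/2}(x_0)$,
\[
\|\nabla v\|_{L^\infty(B_{R/16}(z))}\le C R^{-1}\mathcal B_z,\qquad [\nabla v]_{C^{2s-1}(B_{R/16}(z))}\le C R^{-2s}\mathcal B_z,
\]
where $\mathcal B_z:=\mean{B_{R/2}(z)}|v|\,dx+R^{2s}\int_{\mathbb R^n\setminus B_{R/2}(z)}\frac{|v(y)|}{|z-y|^{n+2s}}\,dy$ and $C=C(n,s,\Lambda)$.

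\textbf{Step 2 (recentering the data).} For $z\in B_{R/2}(x_0)$ one has $B_{R/2}(z)\subset B_R(x_0)$, hence $\mean{B_{R/2}(z)}|v|\,dx\le 2^n\mean{B_R(x_0)}|v|\,dx$, while Lemma~\ref{tr}, applied with the inclusion $B_{R/2}(z)\subset B_R(x_0)$ and the bound $R/(R-|z-x_0|)<2$, controls $R^{2s}\int_{\mathbb R^n\setminus B_{R/2}(z)}|v(y)||z-y|^{-n-2s}\,dy$ by a dimensional constant times
\[
\mathcal A:=\mean{B_R(x_0)}|v|\,dx+R^{2s}\int_{\mathbb R^n\setminus B_R(x_0)}\frac{|v(y)|}{|x_0-y|^{n+2s}}\,dy .
\]
Thus $\mathcal B_z\le C(n,s)\,\mathcal A$ for all $z\in B_{R/2}(x_0)$, with $\mathcal A$ independent of $z$; inserting this into the two displays of Step~1 yields the corresponding local bounds with $\mathcal B_z$ replaced by $\mathcal A$.

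\textbf{Step 3 (conclusions).} Estimate \eqref{LipschitzTI} is immediate since $z\in B_{R/16}(z)$ for each $z\in B_{R/2}(x_0)$. For \eqref{C2sTI} we take $x,y\in B_{R/2}(x_0)$ and distinguish two cases: if $|x-y|<R/32$ then $y\in B_{R/16}(x)$ and $|\nabla v(x)-\nabla v(y)|\le [\nabla v]_{C^{2s-1}(B_{R/16}(x))}|x-y|^{2s-1}\le C R^{-2s}\mathcal A\,|x-y|^{2s-1}$; if $|x-y|\ge R/32$ then, using $2s-1>0$, $|\nabla v(x)-\nabla v(y)|\le 2\|\nabla v\|_{L^\infty(B_{R/2}(x_0))}\le C R^{-1}\mathcal A\le C R^{-1}\mathcal A\,(32|x-y|/R)^{2s-1}=C R^{-2s}\mathcal A\,|x-y|^{2s-1}$, so that \eqref{C2sTI} follows. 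The local $C^{1,2s-1}$ bounds of Step~1 also show $\nabla v$ is continuous on $B_{R/2}(x_0)$, hence $v\in C^{1,2s-1}(B_{R/2}(x_0))$. Finally, for \eqref{affdecTI} fix $z\in B_{R/2}(x_0)$ and $r>0$ with $B_r(z)\subset B_{R/2}(x_0)$; for $x\in B_r(z)$ the segment $t\mapsto z+t(x-z)$ stays in $B_r(z)$, so $v(x)-\ell_z(x)=\int_0^1(\nabla v(z+t(x-z))-\nabla v(z))\cdot(x-z)\,dt$, whence $|v(x)-\ell_z(x)|\le [\nabla v]_{C^{2s-1}(B_{R/2}(x_0))}\,|x-z|^{2s}\int_0^1 t^{2s-1}\,dt\le \tfrac1{2s}[\nabla v]_{C^{2s-1}(B_{R/2}(x_0))}\,r^{2s}\le C\,(r/R)^{2s}\mathcal A$ by \eqref{C2sTI}, which is \eqref{affdecTI}.

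\textbf{Expected main difficulty.} None of the steps is deep; the argument is essentially careful bookkeeping. The two places that require attention are the exact scaling exponents of the $C^{1,2s-1}$ norm under $x\mapsto z+\rho x$ (so that the powers of $R$ in \eqref{C2sTI}--\eqref{LipschitzTI} come out precisely), and the \emph{uniform}-in-$z$ control of the $z$-centered quantity $\mathcal B_z$ by the $x_0$-centered quantity $\mathcal A$ via Lemma~\ref{tr}; the near/far dichotomy that promotes the one-ball H\"older bounds to the seminorm estimate on $B_{R/2}(x_0)$ is then routine.
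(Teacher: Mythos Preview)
Your proof is correct and follows essentially the same approach as the paper: rescale to the unit-scale result of Corollary~\ref{TIreg1}, recenter the data via Lemma~\ref{tr}, pass from the one-ball estimates to all of $B_{R/2}(x_0)$ by the near/far dichotomy (which is precisely the covering argument the paper points to in the proof of Theorem~\ref{Holdgrad}), and derive \eqref{affdecTI} from \eqref{C2sTI} by Taylor expansion with integral remainder. The only minor difference is that you extract both the gradient $L^\infty$ bound and the H\"older seminorm directly from the full $C^{1,2s-1}$ norm in Corollary~\ref{TIreg1}, whereas the paper alludes to an interpolation step (as in the proof of Theorem~\ref{Holdgrad}); your route is slightly more direct here since Corollary~\ref{TIreg1} already controls the whole norm.
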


\begin{proof}
On the one hand, the estimates \eqref{C2sTI} and \eqref{LipschitzTI} follow from Corollary \ref{TIreg1} and Proposition \ref{thm:bnd} by standard scaling, covering and interpolation arguments similar to the ones applied in the proof of Theorem \ref{Holdgrad} in section \ref{Holdsec}. \par 
On the other hand, \eqref{affdecTI} is a standard equivalent formulation of the estimate \eqref{C2sTI}, see e.g.\ \cite[Appendix A]{FRR23}.
\end{proof}

\subsection{Comparison estimate} \label{compsec}
As we discuss in the next remark, a useful property of nonlocal operators of order larger than one with translation invariant kernels is that they kill affine functions.
\begin{remark} \label{rem:affine}
Let $s \in (1/2,1)$ and observe that for any affine function $\ell=\mathcal{A} \cdot x + \mathcal{B}$ ($\mathcal{A} \in \mathbb{R}^n$, $\mathcal{B} \in \mathbb{R}$) and any bounded open set $\Omega \subset \mathbb{R}^n$, we have 
\begin{align*}
	\int_{\Omega} \int_{\Omega} \frac{|\ell(x)-\ell(y)|^2}{|x-y|^{n+2s}}dydx \leq |\mathcal{A}|^2 \int_{\Omega} \int_{\Omega} \frac{dydx}{|x-y|^{n+2s-2}}< \infty.
\end{align*}
Since $s>1/2$, for any $x_0 \in \mathbb{R}^n$ and any $R>0$ we also have
\begin{align*}
	\int_{\mathbb{R}^n \setminus B_R(x_0)} \frac{|\ell(y)|}{|x_0-y|^{n+2s}}dy \leq \int_{\mathbb{R}^n \setminus B_R} \frac{|\mathcal{A}|}{|y|^{n+2s-1}}dy + \int_{\mathbb{R}^n \setminus B_R} \frac{|\mathcal{A} \cdot x_0|+ |\mathcal{B}|}{|y|^{n+2s}}dy< \infty.
\end{align*}
Therefore, we conclude that $\ell \in W^{s,2}_{loc}(\mathbb{R}^n) \cap L^1_{2s}(\mathbb{R}^n)$. \par In addition, observe that for any translation invariant coefficient $A_0 \in \mathcal{L}_1(\Lambda)$ with $A_0(x,y)=a(x-y)$ and any $x \in \mathbb{R}^n$, we have
\begin{align*}
	\int_{\mathbb{R}^n} \frac{A_0(x,y)}{|x-y|^{n+2s}} (\ell(x)-\ell(y)) dy
	& = \int_{\mathbb{R}^n} \frac{a(x-y)}{|x-y|^{n+2s}} (\ell(x)-\ell(y)) dy \\
	& = \int_{\mathbb{R}^n} \frac{a(z)}{|z|^{n+2s}} (\ell(x)-\ell(x-z)) dz \\
	& = \int_{\mathbb{R}^n} \frac{a(z)}{|z|^{n+2s}} \mathcal{A} \cdot z dz =0,
\end{align*}
where we used that the integrand of the last integral is odd due to the fact that $a(z)=a(-z)$ for any $z \in \mathbb{R}^n$, so that the integral vanishes. \par
Thus, for any test function $\varphi \in C_0^\infty(\mathbb{R}^n)$, we have
\begin{align*}
& \int_{\mathbb{R}^n} \int_{\mathbb{R}^n} \frac{A_0(x,y)}{|x-y|^{n+2s}} (\ell(x)-\ell(y))(\varphi(x)-\varphi(y))dydx \\
& = 2 \int_{\mathbb{R}^n} \left (\int_{\mathbb{R}^n} \frac{A_0(x,y)}{|x-y|^{n+2s}} (\ell(x)-\ell(y)) dy \right ) \varphi(x)dx =0.
\end{align*}
 \par
In other words, for any translation invariant coefficient $A_0 \in \mathcal{L}_1(\Lambda)$, any affine function $\ell$ is a weak solution of $L_{A_0} \ell =0$ in $\mathbb{R}^n$.
\end{remark}

We now prove a comparison estimate suitable to deduce first-order estimates for nonlocal equations with coefficients that are possibly not translation invariant.

\begin{lemma}[Freezing the coefficient] \label{compest}
Let $s \in (1/2,1)$, fix some integer $N \geq 0$ and let $\mu \in L^p(B_{2})$ for some $p > \frac{n}{2s}$. Moreover, assume that $A \in \mathcal{L}_0(\Lambda)$ satisfies 
\begin{equation} \label{holdkernel00}
	\sup_{x,y,h \in B_{2^{N+3}}} |A(x+h,y+h)-A(x,y)| \leq \Gamma |h|^\alpha
\end{equation} for some $\alpha \in (0,1)$ and some $\Gamma>0$. In addition, for any $\beta \in (s,\min \{2s-\alpha,2s-n/p,1\})$ there exists some small enough $\delta=\delta(n,s,\Lambda,\beta,p)>0$, such that the following is true. Suppose that there exists a coefficient $\widetilde A \in \mathcal{L}_1(B_1,\Lambda)$ such that
\begin{equation} \label{boundsxzs}
	||A-\widetilde A||_{L^\infty(\mathbb{R}^n \times \mathbb{R}^n)} \leq \delta
\end{equation} and assume that $u \in W^{s,2}(B_2) \cap L^1_{2s}(\mathbb{R}^n)$ is a weak solution of $L_A u =\mu$ in $B_2$. In addition, fix some affine function $\ell=\mathcal{A} \cdot x + \mathcal{B}$, some $z \in B_{1/8}$ and some $r \in (0,1/8]$.
Furthermore, let $v \in W^{s,2}(B_2) \cap L^1_{2s}(\mathbb{R}^n)$ be the unique weak solution of 
\begin{equation} \label{constcof5}
	\begin{cases} \normalfont
		L_{A_z} v = 0 & \text{ in } B_{r}(z) \\
		v = u _\ell & \text{ a.e. in } \mathbb{R}^n \setminus B_{r}(z),
	\end{cases}
\end{equation}
where $u_\ell := u-\ell$ and $$ A_z (x,y) :=
\frac{1}{2} \left (A(x-y+z,z)+A(y-x+z,z) \right ) .$$
Then for any $c \in \mathbb{R}$ we have
\begin{align*}
	& \left (r^{-n} \int_{\mathbb{R}^n} |u_\ell -v|^2 dx \right )^\frac{1}{2} \\
	& \leq C r^{\alpha+\beta} \left ( (\Gamma+2^{-2sN}) \sum_{k=1}^{N+1} 2^{k(\alpha -2s)} \mean{B_{2^{k}}} |u-c|dx + (\Gamma+1) \int_{\mathbb{R}^n \setminus B_{2^{N+1}}} \frac{|u(y)-c|}{|y|^{n+2s}}dy + (\Gamma+1) ||\mu||_{L^p(B_2)} \right ) \\ & \quad + Cr^{2s-\frac{n}{p} }||\mu||_{L^p(B_2)} ,
\end{align*}
where $C$ depends only on $n,s,\Lambda,\alpha,\beta,p$.
\end{lemma}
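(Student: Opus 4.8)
The starting point is the subtraction trick explained in Remark \ref{rem:affine}: since $A_z\in\mathcal{L}_1(\Lambda)$ is translation invariant (in the whole space, after freezing at $z$), the affine function $\ell$ satisfies $L_{A_z}\ell=0$, so $u_\ell=u-\ell$ solves
$$
L_{A_z} u_\ell = L_{A_z} u = \mu + (L_{A_z}-L_A)u = \mu + L_{A_z-A}u \quad\text{weakly in } B_{r}(z).
$$
Subtracting the equation satisfied by $v$, the comparison function $w:=u_\ell-v\in W^{s,2}_0(B_r(z))$ weakly solves $L_{A_z}w = \mu + L_{A_z-A}u$ in $B_r(z)$. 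The natural thing is to test this equation with $w$ itself. Using the ellipticity lower bound $\Lambda^{-1}$ on $A_z$ one controls $[w]_{W^{s,2}(B_r(z))}^2$ from below, and then the fractional Friedrichs--Poincar\'e inequality (Lemma \ref{Friedrichs}) converts this into a bound on $\|w\|_{L^2}^2 \le C r^{2s}[w]_{W^{s,2}}^2$. On the right-hand side, the term coming from $\mu$ is handled by H\"older's inequality ($\mu\in L^p$, $p>n/(2s)$, so $\mu\in$ dual of $W^{s,2}_0(B_r(z))$ with the appropriate scaling producing the final term $Cr^{2s-n/p}\|\mu\|_{L^p(B_2)}$), which is routine. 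The real work is in estimating $\int\int \frac{(A_z-A)(x,y)}{|x-y|^{n+2s}}(u(x)-u(y))(w(x)-w(y))\,dy\,dx$.

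That bilinear error term splits, as usual for nonlocal operators, into a \emph{local} part (both $x,y$ near $B_r(z)$, say in $B_{2r}(z)$) and a \emph{tail} part (one variable in $B_r(z)$, the other far away). For the local part I would use the pointwise $C^\alpha$ bound \eqref{holdkernel00} on $A$ — writing $A_z(x,y)-A(x,y)$ and using that $A_z$ is the symmetrized value of $A$ frozen along the direction $x-y$ at base point $z$, the difference is $O(\Gamma(r^\alpha + |x-z|^\alpha + |y-z|^\alpha))=O(\Gamma r^\alpha)$ on $B_{2r}(z)$. One then pairs this with Cauchy--Schwarz in the Gagliardo form and absorbs $[w]_{W^{s,2}}$; the surviving factor is $r^\alpha [u-c]_{W^{s,2}(B_{2r}(z))}$ for any constant $c$ (constants may be inserted freely since the kernel annihilates them), which by a Caccioppoli-type energy estimate for $u$ — here one invokes the known interior estimates, controlling $[u-c]_{W^{s,2}}$ on $B_{2r}(z)$ by $L^1$-averages of $u-c$ on dyadically larger balls plus tails plus $\|\mu\|_{L^p}$ — produces the dyadic sum $\sum_{k=1}^{N+1} 2^{k(\alpha-2s)}\fint_{B_{2^k}}|u-c|\,dx$ together with the tail over $\mathbb{R}^n\setminus B_{2^{N+1}}$ and the $\|\mu\|_{L^p(B_2)}$ contributions. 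For the tail part, in the region where $|x-y|\gtrsim \dist(y,B_r(z))$ the kernel $|x-y|^{-n-2s}$ is no longer singular and $|A_z-A|\le 2\Lambda$ is just bounded; estimating $|u(y)-\ell(y)|\le |u(y)-c| + |c-\ell(y)|$ and using that $\ell$ has at most linear growth (so its tail is finite and, after the Poincar\'e/affine bookkeeping, controlled by the same dyadic quantities) yields, after Cauchy--Schwarz against $\|w\|_{L^2}\lesssim r^{n/2+?}$, the factor $(\Gamma+1)\big(\sum 2^{k(\alpha-2s)}\fint_{B_{2^k}}|u-c| + \int_{\mathbb{R}^n\setminus B_{2^{N+1}}}\frac{|u-c|}{|y|^{n+2s}}dy + \|\mu\|_{L^p(B_2)}\big)$, with the precise $r^{\alpha+\beta}$ power emerging from balancing the $r^{2s}$ gained from Friedrichs--Poincar\'e against the natural scaling of the tail and local contributions, which is where the constraint $\beta\in(s,\min\{2s-\alpha,2s-n/p,1\})$ and the smallness $\delta=\delta(n,s,\Lambda,\beta,p)$ of \eqref{boundsxzs} are used (the latter to guarantee the comparison/energy estimates for $L_A$ itself through Proposition \ref{homreg1} and its underlying machinery).

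Collecting: after reabsorbing $[w]_{W^{s,2}}$ and $\|w\|_{L^2}$ on the left, one is left exactly with
$$
\Big(r^{-n}\int_{\mathbb{R}^n}|u_\ell-v|^2\,dx\Big)^{1/2} \le C r^{\alpha+\beta}\big(\cdots\big) + C r^{2s-n/p}\|\mu\|_{L^p(B_2)},
$$
with the bracketed term as in the statement. \textbf{The main obstacle} I anticipate is the careful dyadic decomposition of the \emph{local} error term: one must track how the $C^\alpha$-smallness $\Gamma r^\alpha$ of $A_z-A$ near the diagonal interacts with the energy estimate for $u-c$ on $B_{2r}(z)$, which itself only closes once one re-expands that energy over all dyadic annuli up to scale $2^{N+1}$ and picks up both the $(\Gamma+2^{-2sN})$ prefactor and the tail over $B_{2^{N+1}}$ — getting the \emph{exact} powers $2^{k(\alpha-2s)}$ and the separation between the $\Gamma$-term and the $2^{-2sN}$-term (which records the off-diagonal non-singularity rather than the coefficient regularity) requires bookkeeping that must be done with some care, rather than by soft arguments. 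A secondary subtlety is ensuring $v$ is well defined: $u_\ell\in W^{s,2}(B_2)\cap L^1_{2s}(\mathbb{R}^n)$ by Remark \ref{rem:affine}, and existence/uniqueness of the weak solution to \eqref{constcof5} follows from the standard variational argument in the energy space (Lemma \ref{Friedrichs} gives coercivity), so this step is routine.
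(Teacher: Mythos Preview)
Your overall architecture is right: test $L_{A_z}w=\mu+L_{A_z-A}u$ with $w=u_\ell-v$, use ellipticity plus Lemma \ref{Friedrichs}, split the error bilinear form into local and tail pieces, and handle $\mu$ by H\"older plus Sobolev. But there is a genuine gap in how you treat the \emph{local} error term, and it is precisely the mechanism that produces the power $r^{\alpha+\beta}$ rather than just $r^{\alpha}$.

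You write that after Cauchy--Schwarz in the Gagliardo form the surviving factor is $r^{\alpha}[u-c]_{W^{s,2}(B_{2r}(z))}$, and that you then control this seminorm by a Caccioppoli-type energy estimate. A Caccioppoli estimate at scale $r$ gives $[u-c]_{W^{s,2}(B_{2r}(z))}\lesssim r^{-s+n/2}\cdot(\text{averages}+\text{tails}+\text{data})$, so after Friedrichs--Poincar\'e (which contributes $r^{s}$) and normalizing by $r^{-n/2}$ you end up with decay $r^{\alpha}$, not $r^{\alpha+\beta}$. Since a central point of the paper is that $\alpha$ may be \emph{arbitrarily small}, $r^{\alpha}$ alone is far too weak for the subsequent geometric iteration in Lemma \ref{thm:GradOscDec}, where one needs $\alpha+\beta>1+\gamma$.

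The paper's fix is not an energy estimate but a \emph{pointwise} one: it invokes Proposition \ref{homreg1} (this is exactly where the smallness $\delta$ of $\|A-\widetilde A\|_{L^\infty}$ enters) to obtain $u\in C^{\beta}_{\loc}$ with $\beta>s$, and then replaces $|u(x)-u(y)|$ by $[u]_{C^{\beta}(B_{1/2})}|x-y|^{\beta}$ inside the local double integral. This makes the kernel $|x-y|^{-n-2s+2\beta}$ integrable (since $\beta>s$) and produces the extra factor $r^{\beta-s}$ in the local piece $I_1$; combined with the $r^{s}$ from Friedrichs--Poincar\'e this yields $r^{\alpha+\beta}$. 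The same $C^{\beta}$ bound is used on the intermediate-range pieces $I_2,I_3$ (where $x\in B_r(z)$, $y\in B_{1/2}\setminus B_{2r}(z)$), via a dyadic decomposition in which $|A-A_z|\le\Gamma(2^{k}r)^{\alpha}$ and $|u(x)-u(z)|,|u(y)-u(z)|\lesssim [u]_{C^\beta}(2^{k}r)^{\beta}$; the constraint $\alpha+\beta<2s$ is what makes those sums converge. Your vague phrase ``the precise $r^{\alpha+\beta}$ power emerging from balancing\ldots'' does not capture this mechanism, and a Caccioppoli estimate will not supply it.
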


\begin{proof}
	We stress that throughout the proof, all constants $C$ will only depend on $n,s,\Lambda,\alpha,\beta,p$, but not on $\Gamma$. \par
	First of all, observe that in view of Remark \ref{rem:affine}, we have $u_\ell \in W^{s,2}(B_2) \cap L^1_{2s}(\mathbb{R}^n)$, so that by \cite[Remark 3]{existence}, the unique weak solution of (\ref{constcof5}) indeed exists. Moreover, let $\delta=\delta(n,s,\Lambda,\beta,p)>0$ be given by Proposition \ref{homreg1}. \par
	Fix some $z \in B_{1/8}$ and some $r \in (0,1/8]$. By (\ref{holdkernel00}), for all $x,y \in B_{2^{N+2}}(z)$ we have
	\begin{align*}
		|A(x-y+z,z)-A(x,y)| \leq \Gamma |z-y|^\alpha, \\ |A(z,y-x+z)-A(x,y)| \leq \Gamma |z-x|^\alpha . \end{align*}
	Thus, by additionally taking into account the symmetry of $A$, we obtain that the coefficient $ A_z$
	belongs to $\mathcal{L}_1(\Lambda)$ and satisfies 
	\begin{equation} \label{holddec}
		|A(x,y)-A_z(x,y)| \leq \Gamma \left (\frac{|z-x|^\alpha + |z-y|^\alpha}{2} \right)
	\end{equation} for all $x,y \in B_{2^{N+2}}(z)$. Therefore, for all $\rho \in (0,2^{N+2}]$ we obtain
	\begin{equation} \label{ralpha}
		||A-A_z||_{L^\infty(B_{\rho}(z) \times B_{\rho}(z))} 
		\leq \Gamma \rho^\alpha.
	\end{equation}
	
	Now observe that in view of Remark \ref{rem:affine}, $w:=u_\ell- v$ belongs to $W_0^{s,2}(B_{r}(z))$ and satisfies
	\begin{align*}
		& \int_{\mathbb{R}^n} \int_{\mathbb{R}^n} \frac{(w(x)-w(y))^2}{|x-y|^{n+2s}}dydx \\
		& \leq \Lambda \int_{\mathbb{R}^n} \int_{\mathbb{R}^n} A_z(x,y) \frac{(w(x)-w(y))^2}{|x-y|^{n+2s}}dydx \\
		& = \Lambda \bigg (\int_{\mathbb{R}^n} \int_{\mathbb{R}^n} A_z(x,y) \frac{(u(x)-u(y))(w(x)-w(y))}{|x-y|^{n+2s}}dydx \\
		& \quad - \underbrace{\int_{\mathbb{R}^n} \int_{\mathbb{R}^n} A_z(x,y) \frac{(\ell(x)-\ell (y))(w(x)-w(y))}{|x-y|^{n+2s}}dydx}_{=0} \bigg ) \\
		& \quad - \underbrace{\int_{\mathbb{R}^n} \int_{\mathbb{R}^n} A_z(x,y) \frac{(v(x)-v(y))(w(x)-w(y))}{|x-y|^{n+2s}}dydx}_{=0} \bigg ) \\
		& = \Lambda \bigg (\int_{\mathbb{R}^n} \int_{\mathbb{R}^n} (A_z(x,y)-A(x,y)) \frac{(u(x)-u(y))(w(x)-w(y))}{|x-y|^{n+2s}}dydx \\
		& \quad + \int_{\mathbb{R}^n} \int_{\mathbb{R}^n} A(x,y) \frac{(u(x)-u(y))(w(x)-w(y))}{|x-y|^{n+2s}}dydx \bigg )  \\
		& = \Lambda \underbrace{\int_{\mathbb{R}^n} \int_{\mathbb{R}^n} (A_z(x,y)-A(x,y)) \frac{(u(x)-u(y))(w(x)-w(y))}{|x-y|^{n+2s}}dydx}_{=: I} \\
		& \quad +\Lambda \underbrace{\int_{B_r(z)} \mu(x) w(x)dx}_{:=I^\prime}.
	\end{align*}
	We estimate $I$ as follows
	\begin{align*}
		I & \leq \int_{\mathbb{R}^n} \int_{\mathbb{R}^n} |A_z(x,y)-A(x,y)| \frac{|u(x)-u(y)||w(x)-w(y)|}{|x-y|^{n+2s}}dydx \\
		& \leq \underbrace{ \int_{B_{2 r}(z)} \int_{B_{2r}(z)} |A_z(x,y)-A(x,y)| \frac{|u(x)-u(y)||w(x)-w(y)|}{|x-y|^{n+2s}}dydx}_{=: I_{1}} \\
		& \quad + 2 \underbrace{ \int_{B_{r}(z)} \int_{B_{1/2} \setminus B_{2r}(z)} |A_z(x,y)-A(x,y)| \frac{|u(x)-u(z)||w(x)|}{|x-y|^{n+2s}}dydx}_{=: I_{2}} \\
		& \quad + 2 \underbrace{  \int_{B_{r}(z)} \int_{B_{1/2} \setminus B_{2r}(z)} |A_z(x,y)-A(x,y)| \frac{|u(y)-u(z)||w(x)|}{|x-y|^{n+2s}}dydx}_{=: I_{3}} \\
		& \quad + 2 \underbrace{ \int_{B_{r}(z)} \int_{\mathbb{R}^n \setminus B_{1/2}} |A_z(x,y)-A(x,y)| \frac{|u(x)-c||w(x)|}{|x-y|^{n+2s}}dydx}_{=: I_{4}} \\
		& \quad + 2 \underbrace{  \int_{B_{r}(z)} \int_{\mathbb{R}^n \setminus B_{1/2}} |A_z(x,y)-A(x,y)| \frac{|u(y)-c||w(x)|}{|x-y|^{n+2s}}dydx}_{=: I_{5}}.
	\end{align*}
	
	By using the Cauchy-Schwarz inequality, (\ref{ralpha}) and Proposition \ref{homreg1} with respect to $\beta \in (s,1)$ (which is applicable due to the smallness assumption (\ref{boundsxzs})), for $I_{1}$ we obtain
	\begin{align*}
		I_{1} & \leq ||A-A_z||_{L^\infty (B_{2r}(z) \times B_{2r}(z))} \left (\int_{B_{2r}(z)} \int_{B_{2r}(z)} \frac{(u(x)-u(y))^2}{|x-y|^{n+2s}}dydx \right )^\frac{1}{2} \\ & \quad \times \left (\int_{\mathbb{R}^n} \int_{\mathbb{R}^n} \frac{(w(x)-w(y))^2}{|x-y|^{n+2s}}dydx \right )^\frac{1}{2} \\
		& \leq \Gamma r^{\alpha} [u-c]_{C^{0,\beta}(B_{1/2})} \left ( \int_{B_{2r}(z)} \int_{B_{2r}(z)} \frac{dydx}{|x-y|^{n+2s-2\beta}} \right )^\frac{1}{2} \\ & \quad \times \left (\int_{\mathbb{R}^n} \int_{\mathbb{R}^n} \frac{(w(x)-w(y))^2}{|x-y|^{n+2s}}dydx \right )^\frac{1}{2} \\
		& \leq C \Gamma r^{\alpha + \beta -s +n/2} \left ( \mean{B_{2}} |u-c| dx + \int_{\mathbb{R}^n \setminus B_{2}} \frac{|u(y)-c|}{|y|^{n+2s}}dy + ||\mu||_{L^p(B_2)} \right) \\ & \quad \times \left (\int_{\mathbb{R}^n} \int_{\mathbb{R}^n} \frac{(w(x)-w(y))^2}{|x-y|^{n+2s}}dydx \right )^\frac{1}{2}.
	\end{align*}
	Let $m_0 \in \mathbb{N}$ be the smallest integer such that $2^{m_0} r >1/2$. 
	For $I_{2}$, by using Lemma \ref{tailestz}, (\ref{ralpha}), Proposition \ref{homreg1}, the Cauchy-Schwarz inequality and the fractional Friedrichs-Poincar\'e inequality (Lemma \ref{Friedrichs}), we have
	\begin{align*}
		I_{2} & \leq \int_{B_{r}(z)} \int_{B_{1/2} \setminus B_{2r}(z)} |A_z(x,y)-A(x,y)| \frac{|u(x)-u(z)||w(x)|}{|x-y|^{n+2s}}dydx \\
		& \leq \sum_{k=1}^{m_0-1} \int_{B_{r}(z)} \int_{B_{2^{k+1}r }(z) \setminus B_{2^{k}r}(z)} |A_z(x,y)-A(x,y)| \frac{|u(x)-u(z)||w(x)|}{|x-y|^{n+2s}}dydx \\
		& \leq C \sum_{k=1}^{m_0-1} 2^{-k(n+2s)} r^{-(n+2s)} \int_{B_{r}(z)} \int_{B_{2^{k+1}r }(z)} |A_z(x,y)-A(x,y)||u(x)-u(z)||w(x)|dydx \\
		& \leq C \sum_{k=1}^{m_0-1} 2^{-2sk} r^{2s} ||A-A_z||_{L^\infty (B_{2^{k+1}r }(z) \times B_{2^{k+1}r }(z))} [u]_{C^{0,\beta}(B_{1/2})} \\ & \quad \times \int_{B_{r}(z)}|x-z|^\beta |w(x)|dx \\
		& \leq C \Gamma r^{\alpha + \beta-2s +n/2} \sum_{k=2}^{\infty} 2^{k(\alpha-2s)} [u-c]_{C^{0,\beta}(B_{1/2})} \left (\int_{B_{r }(z)}|w(x)|^2dx \right )^\frac{1}{2} \\
		& \leq C \Gamma r^{\alpha + \beta-s+n/2} \left ( \mean{B_{2}} |u-c| dx + \int_{\mathbb{R}^n \setminus B_{2}} \frac{|u(y)-c|}{|y|^{n+2s}}dy + ||\mu||_{L^p(B_2)} \right) \\ & \quad \times \left (\int_{\mathbb{R}^n} \int_{\mathbb{R}^n} \frac{(w(x)-w(y))^2}{|x-y|^{n+2s}}dydx \right )^\frac{1}{2}.
	\end{align*}
	Similarly, by using Lemma \ref{tailestz}, (\ref{ralpha}), Proposition \ref{homreg1}, the Cauchy-Schwarz inequality and Lemma \ref{Friedrichs}, 
	for $I_{3}$ we obtain
	
	\begin{align*}
		I_{3} & \leq \int_{B_{r}(z)} \int_{B_{1/2} \setminus B_{2r}(z)} |A_z(x,y)-A(x,y)| \frac{|u(y)-u(z)||w(x)|}{|x-y|^{n+2s}}dydx \\
		& \leq \sum_{k=1}^{m_0-1} \int_{B_{r}(z)} \int_{B_{2^{k+1}r }(z) \setminus B_{2^{k}r}(z)} |A_z(x,y)-A(x,y)| \frac{|u(y)-u(z)||w(x)|}{|x-y|^{n+2s}}dydx \\
		& \leq C \sum_{k=1}^{m_0-1} 2^{-k(n+2s)} r^{-(n+2s)} \int_{B_{r}(z)} \int_{B_{2^{k+1}r }(z)} |A_z(x,y)-A(x,y)||u(y)-u(z)||w(x)|dydx \\
		& \leq C \sum_{k=1}^{m_0-1} 2^{-2sk} r^{2s} ||A-A_z||_{L^\infty (B_{2^{k+1}r}(z) \times B_{2^{k+1}r}(z))} [u-c]_{C^{0,\beta}(B_{4})} \\ & \quad \times \int_{B_{2^{k+1}r}(z)}|y-z|^\beta dy \int_{B_{r}(z)} |w(x)|dx \\
		& \leq C \Gamma r^{\alpha + \beta-2s +n/2} \sum_{k=2}^{\infty} 2^{k(\alpha + \beta -2s)} [u-c]_{C^{0,\beta}(B_{1/2})} \left (\int_{B_{r }(z)}|w(x)|^2dx \right )^\frac{1}{2} \\
		& \leq C \Gamma r^{\alpha + \beta-s+n/2} \left ( \mean{B_{2}} |u-c| dx + \int_{\mathbb{R}^n \setminus B_{2}} \frac{|u(y)-c|}{|y|^{n+2s}}dy + ||\mu||_{L^p(B_2)} \right) \\ & \quad \times \left (\int_{\mathbb{R}^n} \int_{\mathbb{R}^n} \frac{(w(x)-w(y))^2}{|x-y|^{n+2s}}dydx \right )^\frac{1}{2}.
	\end{align*}
	Similarly, along with Proposition \ref{thm:bnd} we estimate $I_4$ as follows
	\begin{align*}
		I_{4} & \leq \int_{B_{r}(z)} \int_{\mathbb{R}^n \setminus B_{1/2}} |A_z(x,y)-A(x,y)| \frac{|u(x)-c||w(x)|}{|x-y|^{n+2s}}dydx \\
		& \leq \sum_{k=0}^{N+1} \int_{B_{r}(z)} \int_{B_{2^{k}} \setminus B_{2^{k-1}}} |A_z(x,y)-A(x,y)| \frac{|u(x)-c||w(x)|}{|y|^{n+2s}}dydx \\
		& \quad + C \int_{B_{r}(z)} \int_{\mathbb{R}^n \setminus B_{2^{N+1}}} |A_z(x,y)-A(x,y)| \frac{|u(x)-c||w(x)|}{|y|^{n+2s}}dydx \\
		& \leq C \sum_{k=0}^{N+1} 2^{-k(n+2s)} \int_{B_{r}(z)} \int_{B_{2^{k}}} |A_z(x,y)-A(x,y)||u(x)-c||w(x)|dydx \\
		& \quad + C \Lambda \int_{B_{r}(z)}|u(x)-c||w(x)|dx \int_{\mathbb{R}^n \setminus B_{2^{N+1}}} \frac{dy}{|y|^{n+2s}} \\
		& \leq C \sum_{k=0}^{N+1} 2^{-2sk} ||A-A_z||_{L^\infty (B_{2^{k+1}}(z) \times B_{2^{k+1}}(z))} \int_{B_{r}(z)}|u(x)-c||w(x)|dx \\
		& \quad + C 2^{-2sN} \int_{B_{r}(z)}|u(x)-c||w(x)|dx \\
		& \leq C \Gamma r^{n/2} \sum_{k=0}^{\infty} 2^{k(\alpha-2s)} ||u-c||_{L^\infty(B_1)} \left (\int_{B_{r }(z)}|w(x)|^2dx \right )^\frac{1}{2} \\
		& \quad + C 2^{-2sN} r^{n/2} ||u-c||_{L^\infty(B_1)} \left (\int_{B_{r}(z)}|w(x)|^2dx \right )^\frac{1}{2} \\
		& \leq C (\Gamma+2^{-2sN}) r^{n/2+s} \left (\mean{B_{2}} |u-c| dx + \int_{\mathbb{R}^n \setminus B_{2}} \frac{|u(y)-c|}{|y|^{n+2s}}dy + ||\mu||_{L^p(B_2)} \right ) \\ &\quad \times \left (\int_{\mathbb{R}^n} \int_{\mathbb{R}^n} \frac{(w(x)-w(y))^2}{|x-y|^{n+2s}}dydx \right )^\frac{1}{2}.
	\end{align*}
	Finally, we also estimate $I_5$ in a similar manner:
	\begin{align*}
		I_{5} & \leq \int_{B_{r}(z)} \int_{\mathbb{R}^n \setminus B_{1/2}} |A_z(x,y)-A(x,y)| \frac{|u(y)-c||w(x)|}{|x-y|^{n+2s}}dydx \\
		& \leq \sum_{k=0}^{N+1} \int_{B_{r}(z)} \int_{B_{2^{k}} \setminus B_{2^{k-1}}} |A_z(x,y)-A(x,y)| \frac{|u(y)-c||w(x)|}{|y|^{n+2s}}dydx \\
		& \quad + C \int_{B_{r}(z)} \int_{\mathbb{R}^n \setminus B_{2^{N+1}}} |A_z(x,y)-A(x,y)| \frac{|u(y)-c||w(x)|}{|y|^{n+2s}}dydx \\
		& \leq C \sum_{k=0}^{N+1} 2^{-k(n+2s)} \int_{B_{r}(z)} \int_{B_{2^{k}}} |A_z(x,y)-A(x,y)||u(y)-c||w(x)|dydx \\
		& \quad + C \Lambda \int_{B_{r}(z)} \int_{\mathbb{R}^n \setminus B_{2^{N+1}}} \frac{|u(y)-c||w(x)|}{|y|^{n+2s}}dydx  \\
		& \leq C \sum_{k=0}^{N+1} 2^{-2sk} ||A-A_z||_{L^\infty (B_{2^{k+1}}(z) \times B_{2^{k+1}}(z))} \mean{B_{2^{k}}} |u(y)-c|dy \int_{B_{r}(z)} |w(x)|dx \\
		& \quad + C \int_{\mathbb{R}^n \setminus B_{2^{N+1}}} \frac{|u(y)-c|}{|y|^{n+2s}}dy \int_{B_{r}(z)} |w(x)|dx \\
		& \leq C \Gamma r^{n/2} \sum_{k=0}^{N+1} 2^{k(\alpha -2s)} \mean{B_{2^{k}}} |u(y)-c|dy \left (\int_{B_{r }(z)}|w(x)|^2dx \right )^\frac{1}{2} \\
		& \quad +C r^{n/2} \int_{\mathbb{R}^n \setminus B_{2^{N+1}}} \frac{|u(y)-c|}{|y|^{n+2s}}dy \left (\int_{B_{r }(z)}|w(x)|^2dx \right )^\frac{1}{2} \\
		& \leq C r^{n/2+s} \left (\Gamma \sum_{k=1}^{N+1} 2^{k(\alpha -2s)} \mean{B_{2^{k}}} |u(y)-c|dy  + \int_{\mathbb{R}^n \setminus B_{2^{N+1}}} \frac{|u(y)-c|}{|y|^{n+2s}}dy \right) \\ & \quad \times \left (\int_{\mathbb{R}^n} \int_{\mathbb{R}^n} \frac{(w(x)-w(y))^2}{|x-y|^{n+2s}}dydx \right )^\frac{1}{2} .
	\end{align*}

	Next, using that $p>\frac{n}{2s}>\frac{2n}{n+2s}$, in view of H\"older's inequality and the fractional Sobolev inequality (see \cite[Theorem 6.5]{Hitch}), for $I^\prime$ we have
	\begin{align*}
		I^\prime & \leq \left (\int_{B_r(z)} |\mu|^\frac{2n}{n+2s} dx \right )^\frac{n+2s}{2n} \left (\int_{B_r(z)} |w|^\frac{2n}{n-2s} dx \right )^\frac{n-2s}{2n} \\
		& \leq C r^{2s-\frac{n}{p}+\frac{n}{2}} \left (\int_{B_r(z)} |\mu|^p dx \right )^\frac{1}{p} \left (\int_{\mathbb{R}^n} |w|^\frac{2n}{n-2s} dx \right )^\frac{n-2s}{2n} \\
		& \leq C r^{2s-\frac{n}{p}+\frac{n}{2}} ||\mu||_{L^p(B_2)} \left (\int_{\mathbb{R}^n} \int_{\mathbb{R}^n} \frac{(w(x)-w(y))^2}{|x-y|^{n+2s}}dydx \right )^\frac{1}{2}.
	\end{align*}
	
	By combining the above estimates with the fractional Friedrichs-Poincar\'e inequality, we arrive at the comparison estimate
	\begin{align*}
		& \left (\int_{\mathbb{R}^n} |u_\ell -v|^2 dx \right )^\frac{1}{2} \\
		& \leq r^s \left (\int_{\mathbb{R}^n} \int_{\mathbb{R}^n} \frac{(w(x)-w(y))^2}{|x-y|^{n+2s}}dydx \right )^\frac{1}{2} \\
		& \leq C \Gamma r^{\alpha + \beta+n/2} \left ( \mean{B_{1}} |u-c| dx + \int_{\mathbb{R}^n \setminus B_{1}} \frac{|u(y)-c|}{|y|^{n+2s}}dy + ||\mu||_{L^p(B_2)} \right) \\
		& \quad + C (\Gamma+2^{-2sN}) r^{n/2+2s} \left (\mean{B_{2}} |u-c| dx + \int_{\mathbb{R}^n \setminus B_{2}} \frac{|u(y)-c|}{|y|^{n+2s}}dy + ||\mu||_{L^p(B_2)} \right ) \\
		& \quad + C r^{n/2+2s} \left (\Gamma \sum_{k=1}^{N+1} 2^{k(\alpha -2s)} \mean{B_{2^{k}}} |u(y)-c|dy  + \int_{\mathbb{R}^n \setminus B_{2^{N+1}}} \frac{|u(y)-c|}{|y|^{n+2s}}dy \right) \\
		& \quad + C r^{2s-\frac{n}{p}+\frac{n}{2}} ||\mu||_{L^p(B_2)} \\
		& \leq C \Gamma r^{\alpha + \beta+n/2} \left ( \sum_{k=1}^{N+1} 2^{-2sk} \mean{B_{2^{k}}} |u-c|dx + \int_{\mathbb{R}^n \setminus B_{2^{N+1}}} \frac{|u(y)-c|}{|y|^{n+2s}}dy + ||\mu||_{L^p(B_2)} \right) \\
		& \quad + C (\Gamma+2^{-2sN}) r^{n/2+2s} \left ( \sum_{k=1}^{N+1} 2^{-2sk} \mean{B_{2^{k}}} |u-c|dx + \int_{\mathbb{R}^n \setminus B_{2^{N+1}}} \frac{|u(y)-c|}{|y|^{n+2s}}dy + ||\mu||_{L^p(B_2)} \right ) \\
		& \quad + C r^{n/2+2s} \left (\Gamma \sum_{k=1}^{N+1} 2^{k(\alpha -2s)} \mean{B_{2^{k}}} |u-c|dx + \int_{\mathbb{R}^n \setminus B_{2^{N+1}}} \frac{|u(y)-c|}{|y|^{n+2s}}dy \right) \\
		& \quad + C r^{2s-\frac{n}{p}+\frac{n}{2}} ||\mu||_{L^p(B_2)} \\
		& \leq C r^{\alpha+\beta+n/2} \Bigg ( (\Gamma+2^{-2sN}) \sum_{k=1}^{N+1} 2^{k(\alpha -2s)} \mean{B_{2^{k}}} |u-c|dx + (\Gamma+1) \int_{\mathbb{R}^n \setminus B_{2^N}} \frac{|u(y)-c|}{|y|^{n+2s}}dy \\ & \quad + (\Gamma+1) ||\mu||_{L^p(B_2)} \Bigg ) + C r^{2s-\frac{n}{p}+\frac{n}{2}} ||\mu||_{L^p(B_2)} ,
	\end{align*}
where $C$ depends only on $n,s,\Lambda,\alpha,p,\beta$. Dividing by $r^{n/2}$ on both sides now finishes the proof.
\end{proof}

By a scaling argument, we obtain the following comparison estimate.
\begin{corollary}[Comparison estimate] \label{compestR} 
	Let $s \in (1/2,1)$, fix an integer $N \geq 0$ and let $p>\frac{n}{2s}$. Moreover, let $\alpha \in (0,1)$, $\Gamma>0$ and fix some $\beta \in (s,\min \{2s-\alpha,2s-n/p,1\})$. Then there exists some small enough radius $R_0=R_0(n,s,\Lambda,\alpha,\Gamma,p,\beta) \in (0,1)$ such that the following is true for any $R>0$ with $2^{-N}R \in (0,R_0]$. Let $x_0 \in \mathbb{R}^n$ and assume that $A \in \mathcal{L}_0(\Lambda)$ is $C^\alpha$ in $B_{8R}(x_0)$ with respect to $\Gamma$ and let $\mu \in L^p(B_{2^{-N}R}(x_0))$. In addition, assume that $u \in W^{s,2}(B_{2^{-N} R}(x_0)) \cap L^1_{2s}(\mathbb{R}^n)$ is a weak solution of $$L_A u =\mu \text{ in } B_{2^{-N} R}(x_0).$$ Moreover, fix some affine function $\ell=\mathcal{A} \cdot x + \mathcal{B}$, some $z \in B_{2^{-N-4}R}(x_0)$ and some $r \in (0,2^{-N-4}R]$. Furthermore, let $v \in W^{s,2}(B_{2^{-N} R}(x_0)) \cap L^1_{2s}(\mathbb{R}^n)$ be the unique weak solution of 
	$$
		\begin{cases} \normalfont
			L_{A_z} v = 0 & \text{ in } B_{r}(z) \\
			v = u_\ell & \text{ a.e. in } \mathbb{R}^n \setminus B_{r}(z),
		\end{cases}
	$$
	where $u_\ell := u-\ell$ and $$ A_z (x,y) :=
	\frac{1}{2} \left (A(x-y+x_0,x_0)+A(y-x+x_0,x_0) \right ) .$$
	Then we have
	\begin{align*}
		& \left (r^{-n} \int_{\mathbb{R}^n} |u_\ell -v|^2 dx \right )^\frac{1}{2} \\
		& \leq C (1+\Gamma) \left (\frac{r}{2^{-N}R} \right )^{\alpha + \beta} \Bigg ( 2^{-\alpha N} \sum_{k=0}^{N} 2^{k(\alpha -2s)} \mean{B_{2^{k-N}R}(x_0)} |u-c|dx \\ & \quad + (2^{-N}R)^{2s} \int_{\mathbb{R}^n \setminus B_{R}(x_0)} \frac{|u(y)-c|}{|x_0-y|^{n+2s}}dy + (2^{-N}R)^{2s} ||\mu||_{L^p(B_{2^{-N}R}(x_0))} \Bigg )\\
		& \quad + C \left (\frac{r}{2^{-N}R} \right )^{2s-\frac{n}{p}} (2^{-N}R)^{2s-\frac{n}{p}} ||\mu||_{L^p(B_{2^{-N}R}(x_0))},
	\end{align*}
	where $C$ depends only on $n,s,\Lambda,\alpha,p,\beta$.
\end{corollary}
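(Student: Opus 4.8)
The plan is to deduce Corollary~\ref{compestR} from Lemma~\ref{compest} by a change of variables that carries the ball $B_{2^{-N}R}(x_0)$, on which the equation holds, onto the unit-scale configuration of that lemma. Concretely, I would set $\rho:=2^{-N-1}R$ and pass to the rescaled data
\[
\tilde u(x):=u(x_0+\rho x),\quad \tilde A(x,y):=A(x_0+\rho x,\,x_0+\rho y),\quad \tilde\mu(x):=\rho^{2s}\mu(x_0+\rho x),
\]
together with $\tilde\ell(x):=\ell(x_0+\rho x)$ (again affine), $\tilde z:=\rho^{-1}(z-x_0)$ and $\tilde r:=\rho^{-1}r$. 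A change of variables in the weak formulation~\eqref{weaksolx5} shows that $\tilde u\in W^{s,2}(B_2)\cap L^1_{2s}(\mathbb{R}^n)$ is a weak solution of $L_{\tilde A}\tilde u=\tilde\mu$ in $B_2$; moreover $\tilde A\in\mathcal{L}_0(\Lambda)$, and since $\rho\,2^{N+3}=4R\le 8R$, the assumption that $A$ is $C^\alpha$ in $B_{8R}(x_0)$ with respect to $\Gamma$ becomes condition~\eqref{holdkernel00} for $\tilde A$ with $\Gamma$ replaced by $\tilde\Gamma:=\rho^\alpha\Gamma$. Finally, since $2^{-N-4}R/\rho=1/8$, the constraints $z\in B_{2^{-N-4}R}(x_0)$, $r\le 2^{-N-4}R$ become $\tilde z\in B_{1/8}$, $\tilde r\in(0,1/8]$.

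The only hypothesis of Lemma~\ref{compest} that is not automatic after this rescaling is the existence of a translation invariant coefficient $\widehat A\in\mathcal{L}_1(B_1,\Lambda)$ (playing the role of $\widetilde A$ there) with $\|\tilde A-\widehat A\|_{L^\infty(\mathbb{R}^n\times\mathbb{R}^n)}\le\delta$, where $\delta=\delta(n,s,\Lambda,\beta,p)$ is the threshold produced by the lemma. I would take $\widehat A$ to be given by $\widehat A(x,y)=\tfrac12\big(\tilde A(x-y,0)+\tilde A(y-x,0)\big)$ on $B_1\times B_1$ and $\widehat A=\tilde A$ on the complement: this lies in $\mathcal{L}_0(\Lambda)$, is translation invariant in $B_1$, and, by the same computation that yields~\eqref{ralpha}, satisfies $\|\tilde A-\widehat A\|_{L^\infty(\mathbb{R}^n\times\mathbb{R}^n)}=\|\tilde A-\widehat A\|_{L^\infty(B_1\times B_1)}\le C\tilde\Gamma=C\rho^\alpha\Gamma$. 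It therefore suffices to fix $R_0=R_0(n,s,\Lambda,\alpha,\Gamma,p,\beta)\in(0,1)$ so small that $C\,(R_0/2)^\alpha\,\Gamma\le\delta$; then $2^{-N}R\le R_0$ forces $\rho\le R_0/2$, hence the smallness~\eqref{boundsxzs}. This is exactly where $R_0$ must be allowed to depend on $\Gamma$.

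With these verifications in hand I would apply Lemma~\ref{compest} to $\tilde u,\tilde A,\tilde\mu,\tilde z,\tilde r,\tilde\ell$; by uniqueness the comparison solution it produces is $\tilde v(x)=v(x_0+\rho x)$, because after unscaling the frozen coefficient used there is the coefficient $A_z$ of the statement. It then remains to transport the resulting inequality back to the original scale using the scaling identities $\big(\tilde r^{-n}\!\int_{\mathbb{R}^n}|\tilde u-\tilde\ell-\tilde v|^2\,dx\big)^{1/2}=\big(r^{-n}\!\int_{\mathbb{R}^n}|u_\ell-v|^2\,dx\big)^{1/2}$; $\mean{B_{2^k}}|\tilde u-c|\,dx=\mean{B_{2^{k-N-1}R}(x_0)}|u-c|\,dx$, so that the index shift $k\mapsto k-1$ turns $\sum_{k=1}^{N+1}$ into the sum $\sum_{k=0}^{N}$ over the balls $B_{2^{k-N}R}(x_0)$; the tail term over $\mathbb{R}^n\setminus B_{2^{N+1}}$ (with weight $|y|^{-n-2s}$) becoming the corresponding tail over $\mathbb{R}^n\setminus B_R(x_0)$ (as $2^{N+1}\rho=R$); $\|\tilde\mu\|_{L^p(B_2)}=\rho^{2s-n/p}\|\mu\|_{L^p(B_{2^{-N}R}(x_0))}$ (since $2\rho=2^{-N}R$); and $\tilde r^{\alpha+\beta}$, $\tilde r^{2s-n/p}$ being, up to the dimensional factors $2^{\alpha+\beta}$, $2^{2s-n/p}$, equal to $(r/(2^{-N}R))^{\alpha+\beta}$, $(r/(2^{-N}R))^{2s-n/p}$. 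Absorbing the residual powers of $\rho$ and of $2$, and the prefactor $\tilde\Gamma+2^{-2sN}$, into the constant $C$ and the factor $1+\Gamma$ then yields the stated estimate.

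I do not anticipate any new analytic difficulty — all the substance is already contained in Lemma~\ref{compest} and its proof. The part that requires care is purely organisational: keeping track of every dyadic sum and every power of $2^{-N}R$ generated by the change of variables so that the final expression matches the stated one term by term, and verifying that the piecewise-defined coefficient $\widehat A$ above really supplies the \emph{global} smallness~\eqref{boundsxzs} while $R_0$ is still permitted to depend only on $n,s,\Lambda,\alpha,\Gamma,p,\beta$.
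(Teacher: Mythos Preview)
Your proposal is correct and follows essentially the same route as the paper: rescale by $\rho=2^{-N-1}R$ to land in the unit-scale setup of Lemma~\ref{compest}, verify that the rescaled coefficient satisfies~\eqref{holdkernel00} with $\Gamma$ replaced by $\rho^\alpha\Gamma$, build the piecewise translation-invariant comparison coefficient exactly as you do, choose $R_0$ so that $\rho^\alpha\Gamma\le\delta$, apply the lemma, and unwind the change of variables. The paper carries out the final rescaling step line by line, but there is no difference in substance.
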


\begin{proof}
	Consider the scaled functions $u_1,v_1,\ell_1 \in W^{s,2}(B_{2}) \cap L_{2s}^1(\mathbb{R}^n)$ given by
	$$ u_1(x):=u(2^{-N-1}Rx+x_0), \quad v_1(x):=v(2^{-N-1}Rx+x_0),\quad \ell_1(x) :=\ell (2^{-N-1}Rx+x_0)$$
	and note that $u_1$ is a weak solution of $L_{A_1} u_1 = \mu_1$ in $B_{2}$, where
	$$A_1(x,y):= A(2^{-N-1}R x+x_0,2^{-N-1}R y+x_0), \quad \mu_1(x):=f(2^{-N-1}Rx+x_0),$$
	while $v_1$ is the weak solution of
	$$
		\begin{cases} \normalfont
			L_{(A_z)_1} v_1 = 0 & \text{ in } B_{2^{N+1} R^{-1} r}(2^{N+1} R^{-1} z -x_0) \\
			v = u_1-\ell_1 & \text{ a.e. in } \mathbb{R}^n \setminus B_{2^{N+1} R^{-1} r}(2^{N+1} R^{-1} z -x_0),
		\end{cases}
	$$
	where $$(A_z)_1(x,y):= A_z(2^{-N-1}R x+x_0,2^{-N-1}R y+x_0).$$
	First of all, observe that $2^{N+1} R^{-1} r \leq 1/8$ and $2^{N+1} R^{-1} z -x_0 \in B_{1/8}$. \par Also, observe that in view of the assumption that $A$ is $C^\alpha$ in $B_{8R}(x_0)$ with respect to $\Gamma$, we have
	\begin{equation} \label{holdkernel0}
		\sup_{x,y \in B_{4R}(x_0)} \sup_{h \in B_{4R}} |A(x+h,y+h)-A(x,y)| \leq \Gamma |h|^\alpha.
	\end{equation}
	Next, fix $x,y,h \in B_{2^{N+3}}$ and observe that in view of \eqref{holdkernel0} with $x$ replaced by $2^{-N-1}R x + x_0 \in B_{4R}(x_0)$, with $y$ replaced by $2^{-N-1}R y + x_0 \in B_{4R}(x_0)$ and with $h$ replaced by $2^{-N-1}R h \in B_{4R}$, $A_1 \in \mathcal{L}_0(\Lambda)$ satisfies
	\begin{align*}
		& |A_1(x+h,y+h)-A_1(x,y)| \\ & = |A(2^{-N-1}R x+x_0 +2^{-N-1}R h,2^{-N-1}R y+x_0 +2^{-N-1}R h)-A(2^{-N-1}R x+x_0,2^{-N-1}R y+x_0)| \\ & \leq \Gamma (2^{-N-1}R)^\alpha |h|^\alpha .
	\end{align*}
	Thus, $A_1$ satisfies the condition (\ref{holdkernel00}) from Lemma \ref{compest} with $\Gamma$ replaced by $\Gamma (2^{-N-1}R)^\alpha$. Now let $\delta=\delta(n,s,\Lambda,\beta,p)>0$ be given by Proposition \ref{homreg1}. Defining $$R_0:=\min \left \{ \left ( \frac{\delta}{\Gamma} \right )^{1/\alpha},\frac{1}{2} \right \}$$ and $$ \widetilde A_1 (x,y) := \begin{cases} \normalfont
		\frac{1}{2}(A_1(x-y,0)+A_1(0,y-x)) & \text{if } (x,y) \in B_1 \times B_1 \\
		A(x,y) & \text{if } (x,y) \notin B_1\times B_1,
	\end{cases} $$ we observe that
	$$||A_1-\widetilde A_1||_{L^\infty(\mathbb{R}^n \times \mathbb{R}^n)} \leq \Gamma (2^{-N}R)^\alpha \leq \Gamma R_0^\alpha \leq \delta.$$
	Thus, $A_1$ also satisfies the assumption (\ref{boundsxzs}) from Lemma \ref{compest} with respect to $\widetilde A_1 \in \mathcal{L}_1(B_1,\Lambda)$.
	Therefore, using Lemma \ref{compest}, for any $0<R \leq R_0$ we obtain the estimate
	\begin{align*}
		& \left (r^{-n} \int_{\mathbb{R}^n} |u_\ell -v|^2 dx \right )^\frac{1}{2} \\ & = \left ((2^{N+1} R^{-1} r)^{-n} \int_{\mathbb{R}^n} |u_1-\ell_1 -v_1|^2 dx \right )^\frac{1}{2} \\
		& \leq C \left (\frac{r}{2^{-N}R} \right )^{\alpha+\beta} \Bigg ( (\Gamma 2^{-\alpha N} R^\alpha+2^{-2sN}) \sum_{k=1}^{N+1} 2^{k(\alpha -2s)} \mean{B_{2^{k}}} |u_1-c|dx \\ & \quad + (\Gamma 2^{-\alpha N} R^\alpha+1) \int_{\mathbb{R}^n \setminus B_{2^{N+1}}} \frac{|u_1(y)-c|}{|y|^{n+2s}}dy + (\Gamma+1) 2^{-\alpha N} R^\alpha||\mu_1||_{L^q(B_2)} \Bigg ) \\ & \quad + C\left (\frac{r}{2^{-N}R} \right )^{2s-\frac{n}{q}} ||\mu_1||_{L^p(B_2)} \\
		& = C \left (\frac{r}{2^{-N}R} \right )^{\alpha + \beta} \Bigg ( (\Gamma 2^{-\alpha N} R^\alpha + 2^{-2sN}) \sum_{k=1}^{N+1} 2^{k(\alpha -2s)} \mean{B_{2^{k-N-1}R}(x_0)} |u-c|dx \\ & \quad + (\Gamma 2^{-\alpha N} R^\alpha +1) (2^{-N}R)^{2s} \int_{\mathbb{R}^n \setminus B_{R}(x_0)} \frac{|u(y)-c|}{|x_0-y|^{n+2s}}dy + (\Gamma+1) 2^{-(2s+\alpha) N} R^{2s-\frac{n}{p}+\alpha} ||\mu||_{L^p(B_{2^{-N}R}(x_0))} \Bigg ) \\
		& \quad + C \left (\frac{r}{2^{-N}R} \right )^{2s-\frac{n}{p}} (2^{-N}R)^{2s-\frac{n}{p}} ||\mu||_{L^p(B_{2^{-N}R}(x_0))} \\
		& \leq C (1+\Gamma) \left (\frac{r}{2^{-N}R} \right )^{\alpha + \beta} \Bigg ( 2^{-\alpha N} \sum_{k=0}^{N} 2^{k(\alpha -2s)} \mean{B_{2^{k-N}R}(x_0)} |u-c|dx \\ & \quad + (2^{-N}R)^{2s} \int_{\mathbb{R}^n \setminus B_{R}(x_0)} \frac{|u(y)-c|}{|x_0-y|^{n+2s}}dy + (2^{-N}R)^{2s-\frac{n}{p}} ||\mu||_{L^p(B_{2^{-N}R}(x_0))} \Bigg )\\
		& \quad + C \left (\frac{r}{2^{-N}R} \right )^{2s-\frac{n}{p}} (2^{-N}R)^{2s-\frac{n}{p}} ||\mu||_{L^p(B_{2^{-N}R}(x_0))},
	\end{align*}
	where $C=C(n,s,\Lambda,\alpha,p,\beta)$, which finishes the proof.
\end{proof}

\subsection{$C^{1,\gamma}$ regularity under H\"older coefficients} \label{Holdsec}

\begin{lemma}(Geometric iteration) \label{thm:GradOscDec} 
	Let $s \in (1/2,1)$, $x_0 \in \mathbb{R}^n$ and $p > \frac{n}{2s-1}$. Moreover, fix an integer $N \geq 0$ and let $$0<\gamma < \min\{\alpha,2s-n/p-1\}=:\gamma_0.$$ Then there exists some small enough radius $R_0=R_0(n,s,\Lambda,\alpha,p,\gamma,\Gamma) \in (0,1)$ such that the following is true for any $R>0$ with $2^{-N}R \in (0,R_0]$. Suppose that $A \in \mathcal{L}_0(\Lambda)$ is $C^\alpha$ in $B_{8R}(x_0)$ with respect to $\alpha \in (0,2s-1]$ and $\Gamma>0$, assume that $\mu \in L^p(B_R(x_0))$ and let $u \in W^{s,2}(B_{2^{-N} R}(x_0)) \cap L^1_{2s}(\ern)$ be a weak solution to $$L_{A} u = \mu \text{ in } B_{2^{-N} R}(x_0).$$
	Then there is a large enough integer $m \geq 4$ that depends only on $n,s,\Lambda,\alpha,p,\gamma$, such that for any affine function $\ell(x)=\mathcal{A} \cdot x+\mathcal{B} $, any $z \in B_{2^{-N-4} R}(x_0)$ and all integers $j \geq 1$, there exists an affine function $\ell_j(x)=\mathcal{A}_j \cdot (x-z)+\mathcal{B}_j $ such that for any $c \in \mathbb{R}$, we have
	\begin{equation} \label{gradoscdecay}
		\begin{aligned}
			& \sum_{k=0}^{mj} 2^{-2sk} \mean{B_{2^{k-mj-N} R}(z)} |u_\ell-{\ell_j}|dx \\ & \quad + (2^{-mj-N} R)^{2s} \int_{\mathbb{R}^n \setminus B_{2^{-N} R}(z)} \frac{|u_\ell (y)-{\ell_j}(y)| }{|z-y|^{n+2s}}dy \\ & \leq 2^{(n+2+\gamma)m} 2^{-(1+\gamma) mj} \Bigg ( \mean{B_{2^{-N} R}(x_0)} |u_\ell| dx + (2^{-N} R)^{2s} \int_{\mathbb{R}^n \setminus B_{2^{-N} R}(x_0)} \frac{|u_\ell(y)| }{|x_0-y|^{n+2s}}dy \\
			& \quad + (1+\Gamma) \Bigg ( 2^{-\alpha N} \sum_{k=0}^{N} 2^{k(\alpha -2s)} \mean{B_{2^{k-N}R}(x_0)} |u-c|dx \\ & \quad + (2^{-N}R)^{2s} \int_{\mathbb{R}^n \setminus B_{R}(x_0)} \frac{|u(y)-c|}{|x_0-y|^{n+2s}}dy + (2^{-N}R)^{2s-\frac{n}{p}} ||\mu||_{L^p(B_{2^{-N}R}(x_0))} \Bigg ) \Bigg ),
		\end{aligned}
	\end{equation}
	where $u_{\ell}:=u-\ell$.
\end{lemma}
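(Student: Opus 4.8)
The plan is to run a geometric iteration in terms of affine functions, in the spirit of the Campanato-type description of $C^{1,\gamma}$ regularity used for fully nonlinear equations, but arranged so that all nonlocal tails are carried along on every dyadic scale. Write $\rho:=2^{-N}R$, $\sigma_j:=2^{-mj}\rho$, and fix once and for all the comparison radii $r_j:=2^{-4}\sigma_j=2^{-mj-N-4}R$, so that $r_j\le 2^{-N-4}R$ for every $j\ge 0$, which is exactly what Corollary~\ref{compestR} requires at each scale. Let $\mathcal E(j)$ denote the left-hand side of \eqref{gradoscdecay} (an excess functional for $u_\ell-\ell_j$ centered at $z$ at scale $\sigma_j$: a weighted sum of dyadic $L^1$-averages up to scale $\rho$ plus one nonlocal tail beyond $B_\rho(z)$), let $\mathcal D$ be the bracket on its right-hand side, and let $\mathcal D_{\mathrm{err}}$ be the part of $\mathcal D$ multiplied by $1+\Gamma$. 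The affine functions are built recursively: $\ell_0:=0$, and, given $\ell_j$, let $v_j\in W^{s,2}(B_{2^{-N}R}(x_0))\cap L^1_{2s}(\ern)$ solve $L_{\bar A}v_j=0$ in $B_{r_j}(z)$ with $v_j=u_\ell-\ell_j$ outside $B_{r_j}(z)$, where $\bar A$ is the fixed translation-invariant frozen coefficient from Corollary~\ref{compestR}; since $\bar A\in\mathcal L_1(\Lambda)$, Corollary~\ref{cor:locTIreg} gives $v_j\in C^{1,2s-1}$ near $z$, and we set $\ell_{j+1}(x):=\ell_j(x)+v_j(z)+\nabla v_j(z)\cdot(x-z)$, which is affine of the prescribed shape. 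We also fix $\beta\in(s,1)$ with $\alpha+\beta>1+\gamma$; this is possible since $\gamma<\alpha$, and since $\alpha\le 2s-1$ and $p>n/(2s-1)$ force the admissible range for $\beta$ in Corollary~\ref{compestR} to be exactly $(s,1)$. Because moreover $\gamma<\gamma_0=2s-n/p-1$, the number $\varepsilon:=\min\{\alpha+\beta,\,2s-n/p\}-(1+\gamma)$ is strictly positive.

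\textbf{The one-step estimate.} The engine of the proof is the bound
\[
\mathcal E(j+1)\;\le\; C_1\,m\,2^{-2sm}\,\mathcal E(j)\;+\;C_2\,2^{c_0 m}\,\mathrm{Err}_j,\qquad \mathrm{Err}_j:=\Big(r_j^{-n}\!\int_{\ern}|u_\ell-\ell_j-v_j|^2\,dx\Big)^{1/2},
\]
with $C_1,C_2$ and $c_0=c_0(n)<n+1$ depending only on $n,s,\Lambda,\alpha,p,\beta$ (not on $\Gamma$). It rests on three inputs. First, Corollary~\ref{compestR}, applied with the affine function $\ell+\ell_j$, the radius $r_j$ and the given constant $c$, yields $\mathrm{Err}_j\le C_3(1+\Gamma)\,2^{-mj(1+\gamma+\varepsilon)}\mathcal D_{\mathrm{err}}$ with $C_3=C_3(n,s,\Lambda,\alpha,p,\beta)$. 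Second, since $v_j=u_\ell-\ell_j$ outside $B_{r_j}(z)$, a dyadic-annulus decomposition of its tail together with Proposition~\ref{thm:bnd} gives that the natural energy $\mathcal V_j$ of $v_j$ at scale $r_j$ (the $L^1$-average over $B_{r_j}(z)$ plus $r_j^{2s}$ times the tail beyond $B_{r_j}(z)$) satisfies $\mathcal V_j\le C_\star(\mathcal E(j)+\mathrm{Err}_j)$. Third, Corollary~\ref{cor:locTIreg} — the estimates \eqref{affdecTI}, \eqref{LipschitzTI}, \eqref{C2sTI} — controls $v_j-\tilde\ell_j$, $\nabla v_j(z)$ and $[\nabla v_j]_{C^{2s-1}}$ near $z$ in terms of $\mathcal V_j$. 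One then writes $u_\ell-\ell_{j+1}=(u_\ell-\ell_j-v_j)\mathbf 1_{B_{r_j}(z)}+(v_j-\tilde\ell_j)$ and evaluates $\mathcal E(j+1)$ accordingly: the $u_\ell-\ell_j-v_j$ piece is supported in $B_{r_j}(z)$ and, via Cauchy--Schwarz summed over the small dyadic scales, produces $C_2 2^{c_0m}\mathrm{Err}_j$; on the dyadic scales below $r_j/2$ the $v_j-\tilde\ell_j$ piece is handled by \eqref{affdecTI}, where the decay $(2^k\sigma_{j+1}/r_j)^{2s}$ exactly cancels the weight $2^{-2sk}$ and leaves $\lesssim m\,2^{-2sm}\mathcal V_j$; on the dyadic scales $\ge r_j$ and in the outer tail, where $v_j-\tilde\ell_j=u_\ell-\ell_j-\tilde\ell_j$, the $u_\ell-\ell_j$ part must be \emph{reassembled} into a fraction $\lesssim 2^{-2sm}$ of $\mathcal E(j)$ (it is crucial to recognize the shifted dyadic sums and tail as honest sub-sums of $\mathcal E(j)$ rather than bounding them term by term), while the remaining affine part $\tilde\ell_j$ contributes $\lesssim 2^{-2sm}\mathcal V_j$ using $|v_j(z)|+|\nabla v_j(z)|\,2^k\sigma_{j+1}\lesssim\mathcal V_j(1+2^{k-m})$ and the convergence, since $2s>1$, of $\sum_k 2^{(1-2s)k}$ and of the tail of $\tilde\ell_j$. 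Substituting $\mathcal V_j\le C_\star(\mathcal E(j)+\mathrm{Err}_j)$ gives the displayed estimate.

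\textbf{Base case and closing the induction.} The case $j=0$ (with $\ell_0=0$) amounts to transferring the $z$-centered $L^1$-average and tail of $u_\ell$ at scale $\rho$ to $x_0$-centered ones, which Lemmas~\ref{tailestz}--\ref{tr} do at the cost of a constant depending only on $n,s$; for $m$ large this is $\le 2^{(n+2+\gamma)m}\mathcal D$. For the induction we choose $m=m(n,s,\Lambda,\alpha,p,\gamma)$ large enough — also $m\ge 5$ for the scale bookkeeping — so that $2C_1 m\le 2^{(2s-1-\gamma)m}$ and $2C_2C_3\le 2^{(n+1-c_0)m}$; this is possible because $2s-1-\gamma>0$ (indeed $>n/p$) and $c_0<n+1$. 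Then, assuming $\mathcal E(j)\le 2^{(n+2+\gamma)m}2^{-(1+\gamma)mj}\mathcal D$, the one-step estimate and $\mathrm{Err}_j\le C_3(1+\Gamma)2^{-mj(1+\gamma+\varepsilon)}\mathcal D_{\mathrm{err}}\le C_3 2^{-mj(1+\gamma+\varepsilon)}\mathcal D$ give $\mathcal E(j+1)\le\tfrac12 2^{-(1+\gamma)m}\mathcal E(j)+\tfrac12 2^{(n+2+\gamma)m}2^{-(1+\gamma)m(j+1)}\mathcal D\le 2^{(n+2+\gamma)m}2^{-(1+\gamma)m(j+1)}\mathcal D$, where $\varepsilon>0$ is precisely what is needed to absorb the error contribution into $2^{(n+2+\gamma)m}2^{-(1+\gamma)m(j+1)}\mathcal D$. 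This closes the induction and yields \eqref{gradoscdecay} for every $j\ge 1$.

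\textbf{Main obstacle.} Essentially all the work lies in the interaction between the nonlocal tails and the scale contraction $\sigma_j\mapsto 2^{-m}\sigma_j$ at each step: one must simultaneously (a) match the new small dyadic scales of $\mathcal E(j+1)$ with the $C^{1,2s-1}$-decay \eqref{affdecTI} of the frozen problem, (b) recognize the old dyadic sums and the outer tail inside $\mathcal E(j+1)$ as genuine parts of $\mathcal E(j)$ — bounding them term by term would create a factor growing in $j$ and destroy the iteration — and (c) line up all the exponents so that the frozen-problem decay $2^{-2sm}$ and the comparison-error decay $2^{-mj\min\{\alpha+\beta,\,2s-n/p\}}$ beat both the target rate $2^{-(1+\gamma)m}$ and the harmless $m$-exponential and fixed-constant losses (the $2^{mn/2}$ from Cauchy--Schwarz, volume ratios, geometric and counting factors). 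The hypotheses $\gamma<\alpha$ and $\gamma<2s-n/p-1$ are exactly what make (c) succeed, after choosing $\beta$ near $1$ and $m$ large.
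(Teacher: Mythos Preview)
Your proof is correct and follows essentially the same strategy as the paper's: induction on $j$ built on freezing the coefficient (Corollary~\ref{compestR}), the $C^{1,2s-1}$ estimates for the frozen problem (Corollary~\ref{cor:locTIreg}), and updating the affine function by the first-order Taylor polynomial of the comparison solution at $z$. The differences are organizational rather than substantive: you package the inductive step as a single recursion $\mathcal E(j+1)\le C_1 m\,2^{-2sm}\mathcal E(j)+C_2 2^{c_0 m}\mathrm{Err}_j$ and absorb the factor $m$ by taking $m$ large, whereas the paper splits $\mathcal E(j+1)$ explicitly into the ranges $k\in\{0,\dots,m-1\}$, $k\in\{m,\dots,m(j+1)\}$, and the outer tail, and avoids the $m$ factor by using the slightly weaker decay exponent $1+(\gamma_0+\gamma)/2<2s$ from \eqref{affdecTI} so that the small-scale sum converges geometrically; the paper also starts the induction at $j=1$ with $\ell_1=0$ and solves the Dirichlet problem on $B_{\sigma_j}(z)$ rather than on $B_{2^{-4}\sigma_j}(z)$.
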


\begin{proof}
	First of all, let $$\beta:=1+\frac{\gamma_0+\gamma}{2}-\alpha \in (s,\min \{2s-\alpha,2s-n/p,1\})$$ and consider the corresponding maximal radius $R_0=R_0(n,s,\Lambda,\alpha,p,\beta,\Gamma)>0$ given by Corollary \ref{compestR} and fix some $R \in (0,R_0]$. 
	Moreover, let the integer $m=m(n,s,\Lambda,\alpha,p,\gamma) \geq 4$ to be chosen large enough. \par 
	We now proceed by induction in $j$. In the case when $j=1$, we set $\ell_1:=0$ and estimate
	\begin{align*}
		& \sum_{k=0}^{m} 2^{-2sk} \mean{B_{2^{k-m-N} R}(z)} |u_\ell-{\ell_1}|dx \\ & \quad + (2^{-m-N} R)^{2s} \int_{\mathbb{R}^n \setminus B_{2^{-N}R}(z)} \frac{|u_\ell-{\ell_1}(y)| }{|z-y|^{n+2s}}dy \\ 
		& \leq \sum_{k=0}^{m} 2^{-2sk} 2^{(m-k)n} \mean{B_{2^{-N}R}(x_0)} |u_\ell|dx + (2^{-m-N} R)^{2s} \int_{\mathbb{R}^n \setminus B_{2^{-N}R}(x_0)} \frac{|u_\ell(y)| }{|x_0-y|^{n+2s}}dy \\ 
		& \leq \underbrace{\sum_{k=0}^{\infty} 2^{-(n+2s)k}}_{\leq 2} 2^{(n+1+\gamma)m} 2^{-(1+\gamma) m} \mean{B_{2^{-N}R}(x_0)} |u_\ell|dx \\ & \quad + 2^{-(1+\gamma)m} (2^{-N}R)^{2s} \int_{\mathbb{R}^n \setminus B_{2^{-N}R}(x_0)} \frac{|u_\ell(y)| }{|x_0-y|^{n+2s}}dy \\ 
		& \leq 2^{(n+2+\gamma)m} 2^{-(1+\gamma) m} \left ( \mean{B_{2^{-N}R}(x_0)} |u_\ell| dx + (2^{-N}R)^{2s} \int_{\mathbb{R}^n \setminus B_{2^{-N}R}(x_0)} \frac{|u_\ell(y)| }{|x_0-y|^{n+2s}}dy \right ) .
	\end{align*}
	
	Next, suppose that the assertion is true for some $j \geq 1$ and let us prove it for $j+1$. \par 
	In what follows, all constants $C_i$ are assumed to satisfy $C_i \geq 1$ and will only depend on $n,s,\Lambda,\alpha,p,\gamma$. Consider the frozen coefficient
	$$ \widetilde A(x,y) :=
	\frac{1}{2} \left (A(x-y+z,z)+A(y-x+z,z) \right ) $$
	
	and the weak solution $v \in W^{s,2}(B_{2^{-N}R}(x_0)) \cap L^1_{2s}(\mathbb{R}^n))$ of
	$$
		\begin{cases} \normalfont
			L_{\widetilde A} v = 0 & \text{ in } B_{2^{-mj-N} R}(z) \\
			v = u_\ell-{\ell_j} & \text{ a.e. in } \mathbb{R}^n \setminus B_{2^{-mj-N} R}(z).
		\end{cases}
	$$
	
	By Corollary \ref{compestR} with $r=2^{-mj-N} R$ and with $\ell$ replaced by the affine function $\ell+\ell_j$, we have
	\begin{equation} \label{compest0}
		\begin{aligned}
			& \left (\mean{B_{2^{-mj-N} R}(z)} |u_\ell-{\ell_j}-v|^2 dx \right )^\frac{1}{2} \\
			& \leq C_1 (1+\Gamma) 2^{-(\beta+\alpha) mj} \Bigg ( 2^{-\alpha N} \sum_{k=1}^{N} 2^{k(\alpha -2s)} \mean{B_{2^{k-N}R}(x_0)} |u-c|dx \\ & \quad + (2^{-N}R)^{2s} \int_{\mathbb{R}^n \setminus B_{R}(x_0)} \frac{|u(y)-c|}{|x_0-y|^{n+2s}}dy + (2^{-N}R)^{2s-\frac{n}{p}} ||\mu||_{L^p(B_{2^{-N}R}(x_0))} \Bigg ) \\
			& \quad + C 2^{-(2s-\frac{n}{p}) mj} (2^{-N}R)^{2s-\frac{n}{p}} ||\mu||_{L^p(B_{2^{-N}R}(x_0))} \\
			& \leq C_1 2^{-(1+\frac{\gamma_0+\gamma}{2}) mj} \textbf{R}(u),
		\end{aligned}
	\end{equation}
	where 
	\begin{align*}
	\textbf{R}(u):& =(1+\Gamma) \Bigg ( 2^{-\alpha N} \sum_{k=0}^{N} 2^{k(\alpha -2s)} \mean{B_{2^{k-N}R}(x_0)} |u-c|dx \\ & \quad + (2^{-N}R)^{2s} \int_{\mathbb{R}^n \setminus B_{R}(x_0)} \frac{|u(y)-c|}{|x_0-y|^{n+2s}}dy + (2^{-N}R)^{2s-\frac{n}{p}} ||\mu||_{L^p(B_{2^{-N}R}(x_0))} \Bigg ).
	\end{align*}
	
	Now in view of (\ref{compest0}) along with the fact that $v=u_{\ell}-\ell_j$ a.e.\ in $\mathbb{R}^n \setminus B_{2^{-mj-N} R}(z)$ and using the induction hypothesis, we deduce
	\begin{equation} \label{vest}
		\begin{aligned}
			& \mean{B_{2^{-mj-N}R}(z)}|v|dx + (2^{-mj-N} R)^{2s} \int_{\mathbb{R}^n \setminus B_{2^{-mj-N}R}(z)} \frac{|v(y)|}{|z-y|^{n+2s}}dy \\ 
			& \leq C_2 \Bigg ( \mean{B_{2^{-mj-N}R}(z)}|u_\ell-{\ell_j}-v|dx \\ & \quad + \mean{B_{2^{-mj-N}R}(z)}|u_\ell-{\ell_j}|dx+ (2^{-mj-N} R)^{2s} \int_{\mathbb{R}^n \setminus B_{2^{-mj-N}R}(z)} \frac{|u_\ell-{\ell_j}(y)|}{|z-y|^{n+2s}}dy \Bigg ) \\
			& \leq C_2 \Bigg ( \left ( \mean{B_{2^{-mj-N}R}(z)}|u_\ell-{\ell_j}-v|^2dx \right )^\frac{1}{2} \\& \quad + \mean{B_{2^{-mj-N}R}(z)}|u_\ell-{\ell_j}|dx + (2^{-mj-N} R)^{2s} \sum_{k=1}^{mj} \int_{B_{2^{k-mj-N}R}(z) \setminus B_{2^{k-1-mj-N}R}(z)} \frac{|u_\ell-{\ell_j}(y)|}{|z-y|^{n+2s}}dy  \\
			& \quad +(2^{-mj-N} R)^{2s} \int_{\mathbb{R}^n \setminus B_{2^{-N}R}(z)} \frac{|u_\ell-{\ell_j}(y)|}{|z-y|^{n+2s}}dy \Bigg ) \\
			& \leq C_3 2^{-(1+\frac{\gamma_0+\gamma}{2}) mj} \textbf{R}(u) + \sum_{k=0}^{mj} 2^{-2sk} \mean{B_{2^{k-mj-N}}(z)} |u_\ell-{\ell_j}|dx \\& \quad +(2^{-mj-N} R)^{2s} \int_{\mathbb{R}^n \setminus B_{2^{-N}R}(z)} \frac{|u_\ell-{\ell_j}(y)|}{|z-y|^{n+2s}}dy  \\
			& \leq C_4 2^{(n+2+\gamma)m} 2^{-(1+\gamma) mj} \left (\mean{B_{2^{-N}R}(x_0)} |u_\ell| dx + (2^{-N}R)^{2s} \int_{\mathbb{R}^n \setminus B_{2^{-N}R}(x_0)} \frac{|u_\ell(y)|}{|x_0-y|^{n+2s}}dy + \textbf{R}(u) \right ) .
		\end{aligned}
	\end{equation}
	
	Now combining (\ref{vest}) with Corollary \ref{cor:locTIreg}, for any $k \in \{0,...,m-1\}$ and the affine function $\widetilde \ell_{j+1}$ given by $$\widetilde \ell_{j+1}(x)=v(z)+\nabla v(z) \cdot (x-z),$$ we obtain
	\begin{equation} \label{grod2}
		\begin{aligned}
			& ||v-\widetilde \ell_{j+1}||_{L^\infty(B_{2^{k-m(j+1)-N}R}(z))} \\
			& \leq C_5 2^{(1+\frac{\gamma_0+\gamma}{2})(k-m)} \Bigg ( \mean{B_{2^{-mj-N}R}(z)}|v|dx + (2^{-mj-N} R)^{2s} \int_{\mathbb{R}^n \setminus B_{2^{-mj-N}R}(z)} \frac{|v(y)|}{|z-y|^{n+2s}}dy \Bigg )\\
			& \leq C_6 2^{(1+\frac{\gamma_0+\gamma}{2})k} 2^{(n+2+\gamma)m} 2^{\frac{\gamma-\gamma_0}{2} m} 2^{-(1+\gamma) m(j+1)} \\ & \quad \times \left (\mean{B_{2^{-N}R}(x_0)} |u_\ell| dx + (2^{-N}R)^{2s} \int_{\mathbb{R}^n \setminus B_{2^{-N}R}(x_0)} \frac{|u_\ell(y)|}{|x_0-y|^{n+2s}}dy + \textbf{R}(u) \right ) .
		\end{aligned}
	\end{equation}
	
	Next, set $$\ell_{j+1}(x):= \ell_j(x) + \widetilde \ell_{j+1}(x).$$
	In view of (\ref{compest0}) and (\ref{grod2}), for any $k \in \{0,...,m-1\}$ we obtain
	\begin{align*}
		& 2^{-2sk} \mean{B_{2^{k-m(j+1)-N} R}(z)} |u_\ell-{\ell_{j+1}}|dx \\
		& \leq 2^{-2sk} \left (\mean{B_{2^{k-m(j+1)-N} R}(z)} |u_\ell-{\ell_j}-v|dx + \mean{B_{2^{k-m(j+1)-N} R}(z)} |v-\widetilde \ell_{j+1}|dx \right ) \\
		& \leq C_7 2^{-2sk} \left (2^{(m-k)n/2} \left (\mean{B_{2^{-mj-N} R}(z)} |u_\ell-{\ell_j}-v|^2 dx \right )^\frac{1}{2} + ||v-\widetilde \ell_{j+1}||_{L^\infty(B_{2^{k-m(j+1)-N}R}(z))} \right ) \\
		& \leq C_8 2^{-2sk} 2^{mn/2} 2^{-(1+\frac{\gamma_0+\gamma}{2}) mj} \textbf{R}(u) \\ & \quad + C_8 2^{-2sk} 2^{(1+\frac{\gamma_0+\gamma}{2})k} 2^{(n+2+\gamma)m} 2^{\frac{\gamma-\gamma_0}{2} m} 2^{-(1+\gamma) m(j+1)} \\ & \quad \times \left ( \mean{B_{2^{-N}R}(x_0)} |u_\ell| dx + (2^{-N}R)^{2s} \int_{\mathbb{R}^n \setminus B_{2^{-N}R}(x_0)} \frac{|u_\ell(y)|}{|x_0-y|^{n+2s}}dy + \textbf{R}(u) \right ) \Bigg ) \\
		& \leq C_8 2^{(1+\frac{\gamma_0+\gamma}{2}-2s)k} 2^{{\frac{\gamma-\gamma_0}{2} m}} \left (2^{mn/2} 2^{-(1+\gamma) mj} + 2^{(n+2+\gamma)m} 
		2^{-(1+\gamma) m(j+1)} \right ) \\ & \quad \times \left ( \mean{B_{2^{-N}R}(x_0)} |u_\ell| dx + (2^{-N}R)^{2s} \int_{\mathbb{R}^n \setminus B_{2^{-N}R}(x_0)} \frac{|u_\ell(y)|}{|x_0-y|^{n+2s}}dy + \textbf{R}(u) \right ) \\
		& \leq C_8 2^{(1+\frac{\gamma_0+\gamma}{2}-2s)k} 2^{{\frac{\gamma-\gamma_0}{2} m}} 2^{(n+2+\gamma)m} 
		2^{-(1+\gamma) m(j+1)} \\ & \quad \times \left ( \mean{B_{2^{-N}R}(x_0)} |u_\ell| dx + (2^{-N}R)^{2s} \int_{\mathbb{R}^n \setminus B_{2^{-N}R}(x_0)} \frac{|u_\ell(y)|}{|x_0-y|^{n+2s}}dy + \textbf{R}(u) \right ).
	\end{align*}
	Summing the previous estimate over $k=0,...,m-1$ leads to
	\begin{align*}
		& \sum_{k=0}^{m-1} 2^{-2sk} \mean{B_{2^{k-m(j+1)-N} R}(z)} |u_\ell-{\ell_{j+1}}|dx \\
		& \leq C_8 2^{{\frac{\gamma-\gamma_0}{2} m}} \underbrace{\sum_{k=0}^{\infty} 2^{(1+\frac{\gamma_0+\gamma}{2}-2s)k}}_{<\infty} 2^{(n+2+\gamma)m} 
		2^{-(1+\gamma) m(j+1)} \\ & \quad \times \left ( \mean{B_{2^{-N}R}(x_0)} |u_\ell| dx + (2^{-N}R)^{2s} \int_{\mathbb{R}^n \setminus B_{2^{-N}R}(x_0)} \frac{|u_\ell(y)|}{|x_0-y|^{n+2s}}dy + \textbf{R}(u) \right ) \\
		& \leq C_9 2^{{\frac{\gamma-\gamma_0}{2} m}} 2^{(n+2+\gamma)m} 
		2^{-(1+\gamma) m(j+1)} \\ & \quad \times \left ( \mean{B_{2^{-N}R}(x_0)} |u_\ell| dx + (2^{-N}R)^{2s} \int_{\mathbb{R}^n \setminus B_{2^{-N}R}(x_0)} \frac{|u_\ell(y)|}{|x_0-y|^{n+2s}}dy + \textbf{R}(u) \right ).
	\end{align*}
	Choosing $m$ large enough such that $C_9 2^{{\frac{\gamma-\gamma_0}{2} m}} \leq 1/3$ now yields
	\begin{equation} \label{locdec}
		\begin{aligned}
			& \sum_{k=0}^{m-1} 2^{-2sk} \mean{B_{2^{k-m(j+1)-N} R}(z)} |u_\ell-{\ell_{j+1}}|dx \\
			& \leq \frac{1}{3} 2^{(n+2+\gamma)m} 
			2^{-(1+\gamma) m(j+1)} \\ & \quad \times \left ( \mean{B_{2^{-N}R}(x_0)} |u_\ell| dx + (2^{-N}R)^{2s} \int_{\mathbb{R}^n \setminus B_{2^{-N}R}(x_0)} \frac{|u_\ell(y)|}{|x_0-y|^{n+2s}}dy + \textbf{R}(u) \right ).
		\end{aligned}
	\end{equation} 
	
	By (\ref{LipschitzTI}), Proposition \ref{thm:bnd} and (\ref{vest}), for any $k \in \{0,...,mj\}$ we have 
	\begin{align*}
		& ||\ell_{j+1}-\ell_j||_{L^\infty(B_{2^{k-mj-N}R}(z))} \\
		& = ||\widetilde \ell_{j+1}||_{L^\infty(B_{2^{k-mj-N}R}(z))} \\
		& \leq 2^{k-mj-N}R |\nabla v(z)| + |v(z)| \\
		& \leq C_{10} 2^k \Bigg ( \mean{B_{2^{-mj-N}R}(z)}|v|dx + (2^{-mj-N} R)^{2s} \int_{\mathbb{R}^n \setminus B_{2^{-mj-N}R}(z)} \frac{|v(y)|}{|z-y|^{n+2s}}dy \Bigg )\\
		& \leq C_{11} 2^{k} 2^{(n+2+\gamma)m} 
		2^{-(1+\gamma) mj} \\ & \quad \times \left ( \mean{B_{2^{-N}R}(x_0)} |u_\ell| dx + (2^{-N}R)^{2s} \int_{\mathbb{R}^n \setminus B_{2^{-N}R}(x_0)} \frac{|u_\ell(y)|}{|x_0-y|^{n+2s}}dy + \textbf{R}(u) \right ).
	\end{align*}
	
	Combining the previous display with the induction hypothesis now leads to
	\begin{align*}
		& \sum_{k=m}^{m(j+1)} 2^{-2s k} \mean{B_{2^{k-(j+1)m-N} R}(z)} |(u_\ell- \ell_{j+1})|dx \\
		& = 2^{-2sm} \sum_{k=0}^{mj} 2^{-2s k} \mean{B_{2^{k-jm-N} R}(z)} |(u_\ell- \ell_{j+1})|dx \\
		& \leq 2^{-2sm} \sum_{k=0}^{mj} 2^{-2s k} \mean{B_{2^{k-jm-N} R}(z)} |(u_\ell- \ell_{j})|dx \\
		& \quad +2^{-2sm} \sum_{k=0}^{mj} 2^{-2sk} ||\ell_{j+1}-\ell_j||_{L^\infty(B_{2^{k-mj-N}R}(z))} \\
		& \leq C_{12} \underbrace{\sum_{k=0}^{\infty} 2^{(1-2s)k}}_{<\infty} 2^{(n+2+\gamma)m} 2^{-2sm} 
		2^{-(1+\gamma) mj} \\ & \quad \times \left ( \mean{B_{2^{-N}R}(x_0)} |u_\ell| dx + (2^{-N}R)^{2s} \int_{\mathbb{R}^n \setminus B_{2^{-N}R}(x_0)} \frac{|u_\ell(y)|}{|x_0-y|^{n+2s}}dy + \textbf{R}(u) \right )\\
		& = C_{13}2^{(1+\gamma-2s)m} 2^{(n+2+\gamma)m} 
		2^{-(1+\gamma) m(j+1)} \\ & \quad \times \left ( \mean{B_{2^{-N}R}(x_0)} |u_\ell| dx + (2^{-N}R)^{2s} \int_{\mathbb{R}^n \setminus B_{2^{-N}R}(x_0)} \frac{|u_\ell(y)|}{|x_0-y|^{n+2s}}dy + \textbf{R}(u) \right ).
	\end{align*}
	Therefore, requiring $m$ to be large enough such that $C_{13}2^{(1+\gamma-2s)m} \leq 1/3$ yields
	\begin{equation} \label{middec}
		\begin{aligned}
			& \sum_{k=m}^{m(j+1)} 2^{-2s k} \mean{B_{2^{k-(j+1)m-N} R}(z)} |(u_\ell- \ell_{j+1})|dx \\
			& \leq \frac{1}{3} 2^{(n+2+\gamma)m} 
			2^{-(1+\gamma) m(j+1)} \left ( \mean{B_{R}(x_0)} |u_\ell| dx + R^{2s} \int_{\mathbb{R}^n \setminus B_{R}(x_0)} \frac{|u_\ell(y)|}{|x_0-y|^{n+2s}}dy + \textbf{R}(u) \right ).
		\end{aligned}
	\end{equation}
	
	Finally, by the induction hypothesis, (\ref{LipschitzTI}), Proposition \ref{thm:bnd} and (\ref{vest}), we observe that
	
	\begin{align*}
		& (2^{-m(j+1)-N} R)^{2s} \int_{\mathbb{R}^n \setminus B_{2^{-N}R}(z)} \frac{|u_\ell-{\ell_{j+1}}(y)| }{|z-y|^{n+2s}}dy \\
		& \leq 2^{-2sm} (2^{-mj-N} R)^{2s} \left ( \int_{\mathbb{R}^n \setminus B_{2^{-N}R}(z)} \frac{|u_\ell-{\ell_j}(y)| }{|z-y|^{n+2s}}dy
		+ \int_{\mathbb{R}^n \setminus B_{2^{-N}R}(z)} \frac{|\ell_{j+1}(y)-\ell_j(y)| }{|z-y|^{n+2s}}dy \right ) \\
		& = 2^{-2sm} (2^{-mj-N} R)^{2s} \\ & \quad \times \left ( \int_{\mathbb{R}^n \setminus B_{2^{-N}R}(z)} \frac{|u_\ell-{\ell_j}(y)| }{|z-y|^{n+2s}}dy
		+ \int_{\mathbb{R}^n \setminus B_{2^{-N}R}(z)} \frac{|\nabla v(z) \cdot (y-z)+v(z)| }{|z-y|^{n+2s}}dy \right ) \\
		& \leq 2^{-2sm} 2^{(n+2+\gamma)m} 2^{-(1+\gamma) mj} \\ & \quad \times \left ( \mean{B_{2^{-N}R}(x_0)} |u_\ell| dx + (2^{-N}R)^{2s} \int_{\mathbb{R}^n \setminus B_{2^{-N}R}(x_0)} \frac{|u_\ell(y)| }{|x_0-y|^{n+2s}}dy + \textbf{R}(u) \right ) \\
		& \quad + 2^{-2sm} (2^{-mj-N} R)^{2s} |\nabla v(z)| \int_{\mathbb{R}^n \setminus B_{2^{-N}R}(z)} \frac{dy}{|z-y|^{n+2s-1}}dy \\
		& \quad + 2^{-2sm} (2^{-mj-N} R)^{2s} |v(z)| \int_{\mathbb{R}^n \setminus B_{2^{-N}R}(z)} \frac{dy}{|z-y|^{n+2s}}dy \\
		& \leq 2^{(1+\gamma-2s)m} 2^{(n+2+\gamma)m} 2^{-(1+\gamma) m(j+1)} \\ & \quad \times \left ( \mean{B_{2^{-N} R}(x_0)} |u_\ell| dx + (2^{-N} R)^{2s} \int_{\mathbb{R}^n \setminus B_{2^{-N} R}(x_0)} \frac{|u_\ell(y)| }{|x_0-y|^{n+2s}}dy + \textbf{R}(u) \right ) \\
		& \quad + C_{14} 2^{-2sm} (2^{-mj})^{2s} \left ( 2^{-N} R |\nabla v(z)| + |v(z)| \right ) \\
		& \leq 2^{(1+\gamma-2s)m} 2^{(n+2+\gamma)m} 2^{-(1+\gamma) m(j+1)} \\ & \quad \times \left ( \mean{B_{2^{-N}R}(x_0)} |u_\ell| dx + (2^{-N} R)^{2s} \int_{\mathbb{R}^n \setminus B_{2^{-N}R}(x_0)} \frac{|u_\ell(y)| }{|x_0-y|^{n+2s}}dy + \textbf{R}(u) \right ) \\
		& \quad + C_{15} 2^{-2sm} \left (\mean{B_{2^{-mj-N}R}(z)}|v|dx + (2^{-mj-N} R)^{2s} \int_{\mathbb{R}^n \setminus B_{2^{-mj-N}R}(z)} \frac{|v(y)|}{|z-y|^{n+2s}}dy \right )\\
		& \leq C_{16} 2^{(1+\gamma-2s)m} 2^{(n+2+\gamma)m} 2^{-(1+\gamma) m{j+1}} \\ & \quad \times \left ( \mean{B_{2^{-N}R}(x_0)} |u_\ell| dx + (2^{-N} R)^{2s} \int_{\mathbb{R}^n \setminus B_{2^{-N}R}(x_0)} \frac{|u_\ell(y)| }{|x_0-y|^{n+2s}}dy + \textbf{R}(u) \right ) .
	\end{align*}
	
	Thus, requiring $m$ to be large enough such that $C_{16}2^{(1+\gamma-2s)m} \leq 1/3$ yields
	\begin{equation} \label{taildec}
		\begin{aligned}
			& (2^{-m(j+1)-N} R)^{2s} \int_{\mathbb{R}^n \setminus B_{2^{-N}R}(z)} \frac{|u_\ell-{\ell_{j+1}}(y)| }{|z-y|^{n+2s}}dy \\
			& \leq \frac{1}{3} 2^{(n+2+\gamma)m} 2^{-(1+\gamma) m(j+1)} \\& \quad \times \left ( \mean{B_{2^{-N}R}(x_0)} |u_\ell| dx + (2^{-N} R)^{2s} \int_{\mathbb{R}^n \setminus B_{2^{-N}R}(x_0)} \frac{|u_\ell(y)| }{|x_0-y|^{n+2s}}dy + \textbf{R}(u) \right ) .
		\end{aligned}
	\end{equation}
	By combining the previous display with the estimates (\ref{locdec}), (\ref{middec}) and (\ref{taildec}), we arrive at
	\begin{align*}
		& \sum_{k=0}^{m(j+1)} 2^{-2sk} \mean{B_{2^{k-m(j+1)} 2^{-N}R}(z)} |u_\ell-{\ell_{j+1}}|dx \\ & \quad + (2^{-m(j+1)} 2^{-N}R)^{2s} \int_{\mathbb{R}^n \setminus B_{2^{-N}R}(z)} \frac{|u_\ell-{\ell_{j+1}}(y)| }{|z-y|^{n+2s}}dy \\ & \leq 2^{(n+2+\gamma)m} 2^{-(1+\gamma) m(j+1)} \left ( \mean{B_{2^{-N}R}(x_0)} |u_\ell| dx + (2^{-N} R)^{2s} \int_{\mathbb{R}^n \setminus B_{2^{-N}R}(x_0)} \frac{|u_\ell(y)| }{|x_0-y|^{n+2s}}dy + \textbf{R}(u) \right ) ,
	\end{align*}
	which proves the desired estimate (\ref{gradoscdecay}) for $j+1$. Thus, the induction is complete and the proof is finished.
\end{proof}

Due to its flexibility, the gradient oscillation decay estimate given by the next Corollary will be a central tool in the proofs of all our main results.
\begin{corollary}[Gradient oscillation decay] \label{thm:GradOscDec1} 
	Let $s \in (1/2,1)$, $R>0$, $x_0 \in \mathbb{R}^n$, $p > \frac{n}{2s-1}$ and assume that $\mu \in L^p(B_R(x_0))$. Moreover, fix an integer $N \geq 0$ and let $$0<\gamma < \min\{\alpha,2s-n/p-1\}=:\gamma_0.$$ Then there exists some small enough radius $R_0=R_0(n,s,\Lambda,\alpha,p,\gamma,\Gamma) \in (0,1)$ such that the following is true for any $R>0$ with $2^{-N}R \in (0,R_0]$. Suppose that $A \in \mathcal{L}_0(\Lambda)$ is $C^\alpha$ in $B_{8R}(x_0)$ with respect to some $\alpha \in (0,2s-1]$ and some $\Gamma>0$ and let $u \in W^{s,2}(B_{2^{-N}R}(x_0)) \cap L^1_{2s}(\ern)$ be a weak solution to $$L_{A} u = \mu \text{ in } B_{2^{-N} R}(x_0).$$
	Then for any affine function $\ell(x)=\mathcal{A} \cdot x+\mathcal{B} $, any $c \in \mathbb{R}$ and any $\rho \in (0,1/16]$, we have
	\begin{equation} \label{gradoscdecay1}
		\begin{aligned}
			& \osc_{B_{\rho 2^{-N}R}(x_0)} \nabla u \\ & \leq C \rho^\gamma (2^{-N}R)^{-1} \Bigg ( \mean{B_{2^{-N} R}(x_0)} |u_\ell| dx + (2^{-N} R)^{2s} \int_{\mathbb{R}^n \setminus B_{2^{-N} R}(x_0)} \frac{|u_\ell(y)| }{|x_0-y|^{n+2s}}dy \\
			& \quad + (1+\Gamma) \Bigg ( 2^{-\alpha N} \sum_{k=0}^{N} 2^{k(\alpha -2s)} \mean{B_{2^{k-N}R}(x_0)} |u-c|dx \\ & \quad + (2^{-N}R)^{2s} \int_{\mathbb{R}^n \setminus B_{R}(x_0)} \frac{|u(y)-c|}{|x_0-y|^{n+2s}}dy + (2^{-N}R)^{2s-\frac{n}{p}} ||\mu||_{L^p(B_R(x_0))} \Bigg ) \Bigg ),
		\end{aligned}
	\end{equation}
	where $u_{\ell}:=u-\ell$ and $C$ depends only on $n,s,\Lambda,\alpha,p,\gamma$.
\end{corollary}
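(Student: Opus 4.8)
The plan is to feed Lemma \ref{thm:GradOscDec} into the classical Campanato machinery. Write $r_0:=2^{-N}R$, $u_\ell:=u-\ell$, and let $\mathbf{D}$ denote the quantity in the large parentheses on the right-hand side of \eqref{gradoscdecay1}, so that the goal becomes $\osc_{B_{\rho r_0}(x_0)}\nabla u\le C\rho^\gamma r_0^{-1}\mathbf{D}$. Since $\rho\le 1/16=2^{-4}$, every point of $B_{\rho r_0}(x_0)$ lies in $B_{2^{-N-4}R}(x_0)$, so I may invoke Lemma \ref{thm:GradOscDec} with any such point as the center $z$ there, obtaining affine functions $\ell_j^{(z)}(x)=\mathcal{A}_j^{(z)}\cdot(x-z)+\mathcal{B}_j^{(z)}$ with $\ell_1^{(z)}\equiv 0$. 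Keeping only the $k=0$ summand on the left of \eqref{gradoscdecay}, and noting that its right-hand side is exactly $2^{(n+2+\gamma)m}2^{-(1+\gamma)mj}\mathbf{D}$ (with $m=m(n,s,\Lambda,\alpha,p,\gamma)$ the fixed integer from the lemma and $\|\mu\|_{L^p(B_{r_0}(x_0))}\le\|\mu\|_{L^p(B_R(x_0))}$), one gets $\mean{B_{2^{-mj}r_0}(z)}|u_\ell-\ell_j^{(z)}|\,dx\le C\,2^{-(1+\gamma)mj}\mathbf{D}$ for all $j\ge 1$. Enlarging balls between consecutive scales $2^{-mj}r_0$ (which costs only the fixed factor $2^{mn}$) upgrades this to $\mean{B_\sigma(z)}|u_\ell-\ell_{j(\sigma)}^{(z)}|\,dx\le C(\sigma/r_0)^{1+\gamma}\mathbf{D}$ for every $\sigma\in(0,r_0/16]$, where $j(\sigma)$ is fixed by $2^{-m(j(\sigma)+1)}r_0\le\sigma<2^{-mj(\sigma)}r_0$.

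Next I would telescope the affine approximations. Comparing $\ell_{j+1}^{(z)}$ with $\ell_j^{(z)}$ on $B_{2^{-m(j+1)}r_0}(z)$ via the triangle inequality and the decay above yields $\mean{B_{2^{-m(j+1)}r_0}(z)}|\ell_{j+1}^{(z)}-\ell_j^{(z)}|\,dx\le C\,2^{-(1+\gamma)mj}\mathbf{D}$; since the difference is affine, dividing by the radius gives $|\mathcal{A}_{j+1}^{(z)}-\mathcal{A}_j^{(z)}|\le C\,2^{-\gamma mj}\mathbf{D}/r_0$. Summing the geometric series shows $\mathcal{A}_j^{(z)}\to\mathcal{A}_\infty^{(z)}$ with $|\mathcal{A}_\infty^{(z)}-\mathcal{A}_j^{(z)}|\le C\,2^{-\gamma mj}\mathbf{D}/r_0$, and taking $j=1$ together with $\ell_1^{(z)}\equiv 0$ gives $|\mathcal{A}_\infty^{(z)}|\le C\mathbf{D}/r_0$. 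Likewise $\mathcal{B}_j^{(z)}=\ell_j^{(z)}(z)$ converges, and since $\mean{B_\sigma(z)}|u_\ell-\ell_{j(\sigma)}^{(z)}|\,dx\to 0$ as $\sigma\to 0$, the Lebesgue differentiation theorem shows that $u$ is differentiable at $z$ with $\nabla u(z)=\mathcal{A}+\mathcal{A}_\infty^{(z)}$; in particular $\nabla u$ has a representative defined (and in fact continuous, so that $u\in C^{1,\gamma}_{loc}$) at every point of $B_{\rho r_0}(x_0)$.

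It then remains to bound $|\nabla u(z)-\nabla u(x_0)|=|\mathcal{A}_\infty^{(z)}-\mathcal{A}_\infty^{(x_0)}|$ for $z\in B_{\rho r_0}(x_0)$, since $\osc_{B_{\rho r_0}(x_0)}\nabla u\le 2\sup_z|\nabla u(z)-\nabla u(x_0)|$. Put $d:=|z-x_0|\le\rho r_0$. In the coarse regime $d>2^{-2m}r_0$ one has $\rho>2^{-2m}$, so $\rho^\gamma$ is bounded below by a fixed constant and $|\mathcal{A}_\infty^{(z)}-\mathcal{A}_\infty^{(x_0)}|\le 2C\mathbf{D}/r_0\le C\rho^\gamma\mathbf{D}/r_0$. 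Otherwise choose $i\ge 1$ with $2^{-m(i+2)}r_0\le d<2^{-m(i+1)}r_0$; then $B_{2^{-mi}r_0/2}(x_0)\subset B_{2^{-mi}r_0}(z)$, so applying the decay at the two centers $x_0$ and $z$ on these balls gives $\mean{B_{2^{-mi}r_0/2}(x_0)}|\ell_i^{(x_0)}-\ell_i^{(z)}|\,dx\le C\,2^{-(1+\gamma)mi}\mathbf{D}$, hence $|\mathcal{A}_i^{(x_0)}-\mathcal{A}_i^{(z)}|\le C\,2^{-\gamma mi}\mathbf{D}/r_0$. Adding the two telescoping bounds $|\mathcal{A}_\infty^{(z)}-\mathcal{A}_i^{(z)}|,\ |\mathcal{A}_\infty^{(x_0)}-\mathcal{A}_i^{(x_0)}|\le C\,2^{-\gamma mi}\mathbf{D}/r_0$ and using $2^{-mi}r_0\le 2^{2m}d\le 2^{2m}\rho r_0$ (so $2^{-\gamma mi}\le 2^{2m\gamma}\rho^\gamma$) yields $|\nabla u(z)-\nabla u(x_0)|\le C\rho^\gamma\mathbf{D}/r_0$, which is precisely \eqref{gradoscdecay1}.

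I expect the main obstacle to be organizational rather than conceptual: tracking the numerous factors of the form $2^{cm}$ produced when passing from the dyadic scales of Lemma \ref{thm:GradOscDec} to arbitrary scales and when carrying out the two-centre comparison, and verifying at each step that the relevant balls are admissible (contained in $B_{2^{-N-4}R}(x_0)$, resp.\ in $B_{2^{-N}R}(x_0)$). This is harmless because $m=m(n,s,\Lambda,\alpha,p,\gamma)$ is a fixed constant, so every such factor is absorbed into $C$; the genuinely delicate estimate — the decay of the error term $L_{A_z-A}u$ across the iteration — has already been performed inside Lemma \ref{thm:GradOscDec}.
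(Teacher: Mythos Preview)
Your proposal is correct and follows essentially the same route as the paper: you feed the affine decay from Lemma~\ref{thm:GradOscDec} into the Campanato machinery to obtain a $C^{1,\gamma}$-type estimate, which you then convert into the oscillation bound. The only difference is cosmetic---the paper cites Campanato's characterization of H\"older spaces as a black box to pass from the decay of $\mean{B_r(z)}|u_\ell-\ell_{r,z}|\,dx$ to $[\nabla u]_{C^\gamma(B_{2^{-N-4}R}(x_0))}$, whereas you spell out the telescoping and two-centre comparison by hand.
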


\begin{proof}
	Let $m=m(n,s,\Lambda,\alpha) \geq 4$ be given by Lemma \ref{thm:GradOscDec} and fix some $z \in B_{2^{-N-4}R}(x_0)$ and some $r \leq 2^{-N-4}R$. First of all, assume that $r \geq 2^{-m-N} R$. In that case for $\ell_0=0$ we have
	\begin{align*}
		& \mean{B_{r}(z)} |u_\ell-{\ell_0}|dx \\ 
		& \leq \left (\frac{2^{-N}R}{r} \right)^n \mean{B_{2^{-N}R}(x_0)} |u_\ell|dx \\ 
		& \leq 2^{(n+1+\gamma)m} \left (\frac{r}{2^{-N} R} \right )^{1+\gamma} \mean{B_{2^{-N}R}(x_0)} |u_\ell|dx .
	\end{align*}
	If on the other hand $r < 2^{-m-N} R$, then there exists some $j \geq 1$ such that $2^{-(j+1)-N} R \leq r < 2^{-jm-N} R$. Then by Lemma \ref{thm:GradOscDec}, there exists some affine function $\ell_j$ such that
	\begin{align*}
		& \mean{B_{r}(z)} |u_\ell-{\ell_j}|dx \\
		& \leq 2^{mn} \mean{B_{2^{-mj-N} R}(z)} |u_\ell-{\ell_j}|dx \\ & \leq 2^{(2n+2+\gamma)m} \left (\frac{r}{2^{-N} R} \right )^{1+\gamma} \Bigg ( \mean{B_{2^{-N} R}(x_0)} |u_\ell| dx + (2^{-N} R)^{2s} \int_{\mathbb{R}^n \setminus B_{2^{-N} R}(x_0)} \frac{|u_\ell(y)| }{|x_0-y|^{n+2s}}dy \\
		& \quad + (1+\Gamma) \Bigg ( 2^{-\alpha N} \sum_{k=0}^{N} 2^{k(\alpha -2s)} \mean{B_{2^{k-N}R}(x_0)} |u-c|dx \\ & \quad + (2^{-N}R)^{2s} \int_{\mathbb{R}^n \setminus B_{R}(x_0)} \frac{|u(y)-c|}{|x_0-y|^{n+2s}}dy + (2^{-N}R)^{2s-\frac{n}{p}} ||\mu||_{L^p(B_R(x_0))} \Bigg ) \Bigg ).
	\end{align*}
In other words, for any $z \in B_{2^{-N-4}R}(x_0)$ and any $0<r \leq 2^{-N-4}R$, there exists an affine function $\ell_{r,z}$ such that
\begin{align*}
	& \mean{B_{r}(z)} |u_\ell-{\ell_{r,z}}|dx \\
	& \leq C_1 \left (\frac{r}{2^{-N} R} \right )^{1+\gamma} \Bigg ( \mean{B_{2^{-N} R}(x_0)} |u_\ell| dx + (2^{-N} R)^{2s} \int_{\mathbb{R}^n \setminus B_{2^{-N} R}(x_0)} \frac{|u_\ell(y)| }{|x_0-y|^{n+2s}}dy \\
	& \quad + (1+\Gamma) \Bigg (2^{-\alpha N} \sum_{k=0}^{N} 2^{k(\alpha -2s)} \mean{B_{2^{k-N}R}(x_0)} |u-c|dx \\ & \quad + (2^{-N}R)^{2s} \int_{\mathbb{R}^n \setminus B_{R}(x_0)} \frac{|u(y)-c|}{|x_0-y|^{n+2s}}dy + (2^{-N}R)^{2s-\frac{n}{p}} ||\mu||_{L^p(B_R(x_0))} \Bigg ),
\end{align*}
where $C_1:=2^{(2n+2+\gamma)m}$ depends only on $n,s,\Lambda,\alpha,p,\gamma$.
Therefore, by Campanato's characterization of H\"older spaces (see \cite{Campanato}), we obtain that
\begin{equation} \label{HE}
\begin{aligned}
	& [\nabla u]_{C^\gamma(B_{2^{-N-4}R}(x_0))} \\ & = [\nabla u_\ell]_{C^\gamma(B_{2^{-N-4}R}(x_0))} \\
	& \leq C_1 \left (\frac{1}{2^{-N} R} \right )^{1+\gamma} \Bigg ( \mean{B_{2^{-N} R}(x_0)} |u_\ell| dx + (2^{-N} R)^{2s} \int_{\mathbb{R}^n \setminus B_{2^{-N} R}(x_0)} \frac{|u_\ell(y)| }{|x_0-y|^{n+2s}}dy \\
	& \quad + (1+\Gamma) \Bigg ( 2^{-\alpha N} \sum_{k=0}^{N} 2^{k(\alpha -2s)} \mean{B_{2^{k-N}R}(x_0)} |u-c|dx \\ & \quad + (2^{-N}R)^{2s} \int_{\mathbb{R}^n \setminus B_{R}(x_0)} \frac{|u(y)-c|}{|x_0-y|^{n+2s}}dy + (2^{-N}R)^{2s-\frac{n}{p}} ||\mu||_{L^p(B_R(x_0))} \Bigg ) \Bigg ),
\end{aligned}
\end{equation}
where $C$ depends only on $n,s,\Lambda,\alpha,p,\gamma$. Note that for any $\rho \in (0,1/16]$ and all $x,y \in B_{\rho 2^{-N} R}(x_0)$ we have
\begin{align*}
	|\nabla u(x)-\nabla u(y)| \leq 2^\gamma \rho^\gamma (2^{-N}R)^\gamma \frac{|\nabla u(x)-\nabla u(y)|}{|x-y|^\gamma} \leq 2^\gamma \rho^\gamma (2^{-N}R)^\gamma [\nabla u]_{C^\gamma(B_{2^{-N-4}R}(x_0))}.
\end{align*}
Taking the supremum over $x,y \in B_{\rho 2^{-N} R}(x_0)$ now leads to
\begin{align*}
	\osc_{B_{\rho 2^{-N} R}(x_0)} \leq 2^\gamma \rho^\gamma (2^{-N}R)^\gamma [\nabla u]_{C^\gamma(B_{2^{-N-4}R}(x_0))},
\end{align*}
so that combining the previous display with \eqref{HE} yields the desired estimate (\ref{gradoscdecay1}). Therefore, the proof is finished.  
\end{proof}

Theorem \ref{Holdgrad} now follows in view of standard covering and interpolation arguments, which we include for the sake of completeness.

\begin{proof}[Proof of Theorem \ref{Holdgrad}]
	Note that $A$ being $C^\alpha$ with respect to $\Gamma$ in $\Omega$ clearly implies that $A$ is $C^{\widetilde \alpha}$ with respect to $\Gamma \max\{\textnormal{diam}(\Omega),1\}$ in $\Omega$ for any $\widetilde \alpha \in (0,\alpha]$, so that without loss of generality we can assume that $\alpha \in (0,2s-1]$. \par
	First, we prove the desired estimate \eqref{C1gest} in the case when $R=1$, $x_0=0$ and $\Omega=B_1$. Fix some $\rho \in (0,1)$. Moreover, let $R_0=R_0(n,s,\Lambda,\alpha,p,\gamma,\Gamma) \in (0,1)$ be given by Corollary \ref{thm:GradOscDec1}. There exists some small enough radius $r \in (0,R_0)$ such that for any point $z \in \overline B_\rho$ we have $B_{r}(z) \subset B_1$. Since $\left \{B_{r/128}(z) \right \}_{z \in \overline B_\rho}$ is an open covering of $\overline B_{\rho}$ and $\overline B_{\rho}$ is compact, there exists a finite subcover $\left \{B_{r/128}(z_i) \right \}_{i=1}^l$ of $\overline B_{\rho}$ and hence of $B_{\rho}$. 
	Fix $x,y \in B_\rho$ with $x \neq y$. Then $x \in B_{r/128}(z_i)$ for some $i=1,...,l$. 
	If $|x-y| < r/128$, then in particular $y \in B_{r/64}(z_i)$, so that by Corollary \ref{thm:GradOscDec1} (or more precisely, by the estimate \eqref{HE}) with $N=\ell=c=0$ and Lemma \ref{tr} we have
	\begin{equation} \label{locH}
	\begin{aligned}
	&\frac{|\nabla u(x)-\nabla u(y)|}{|x-y|^\gamma} \\ & \leq [\nabla u]_{C^\gamma(B_{r/64}(z_i))} \\ & \leq C_1 \left ( ||u||_{L^1(B_{r/8}(z_i))} + \int_{\mathbb{R}^n \setminus B_{r/8}(z_i)} \frac{|u(y)|}{|z_i-y|^{n+2s}}dy + ||\mu||_{L^p(B_{r/8}(z_i))} \right ) \\
	& \leq C_2 \left (||u||_{L^1(B_1)} + \int_{\mathbb{R}^n \setminus B_{1}} \frac{|u(y)|}{|y|^{n+2s}}dy + ||\mu||_{L^p(B_1)} \right ),
	\end{aligned}
	\end{equation}
	where $C_1$ and $C_2$ depend only on $n,s,\Lambda,\alpha,\Gamma,p,\gamma$ and $r$. 
	Moreover, for any $x \in B_{\rho}$ and $z_i$ as above in view of a standard interpolation inequality (see \cite[Lemma 6.32]{GT}) and Lemma \ref{tr} we have
	\begin{align*}
		|\nabla u(x)| & \leq \sup_{B_{r/64}(z_i)} |\nabla u| \\ 
		& \leq C_3 \sup_{B_{r/64}(z_i)} |u| + [\nabla u]_{C^\gamma(B_{r/64}(z_i))}
		\\ & \leq C_4 \left ( ||u||_{L^1(B_{r/8}(z_i))} + \int_{\mathbb{R}^n \setminus B_{r/8}(z_i)} \frac{|u(y)|}{|z_i-y|^{n+2s}}dy + ||\mu||_{L^p(B_{r/8}(z_i))} \right ) \\
		& \leq C_5 \left (||u||_{L^1(B_1)} + \int_{\mathbb{R}^n \setminus B_{1}} \frac{|u(y)|}{|y|^{n+2s}}dy + ||\mu||_{L^p(B_1)} \right ),
	\end{align*}
	where all constants depend only on $n,s,\Lambda,\alpha,\Gamma,p,\gamma$ and $r$. Therefore, we obtain that
	\begin{align*}
		\sup_{B_{\rho}} |\nabla u|
		\leq C_5 \left ( ||u||_{L^1(B_1)} + \int_{\mathbb{R}^n \setminus B_{1}} \frac{|u(y)|}{|y|^{n+2s}}dy + ||\mu||_{L^p(B_1)} \right ).
	\end{align*}
	
	Now if $|x-y| \geq r/128$, then by the previous display we have
	\begin{equation} \label{nonlocH}
	\begin{aligned}
		\frac{|\nabla u(x)-\nabla u(y)|}{|x-y|^{\gamma}} & \leq 2 \left (\frac{128}{r} \right )^\gamma \sup_{B_\rho} |\nabla u| \\
		& \leq C_6 \left (||u||_{L^1(B_1)} + \int_{\mathbb{R}^n \setminus B_{1}} \frac{|u(y)|}{|y|^{n+2s}}dy + ||\mu||_{L^p(B_1)} \right ),
	\end{aligned}
	\end{equation}
	where $C_6$ depends only on $n,s,\Lambda,\alpha,\Gamma,p,\gamma$ and $r$. \par 
	 Combining \eqref{locH} with \eqref{nonlocH} now leads to the estimate
	 \begin{equation} \label{gradoscdecays1}
	 	\begin{aligned}
	 		[\nabla u]_{C^\gamma(B_{\rho})} \leq C_7 \left ( \mean{B_{1}} |u| dx + \int_{\mathbb{R}^n \setminus B_{1}} \frac{|u(y)| }{|y|^{n+2s}}dy + ||\mu||_{L^p(B_1)} \right ),
	 	\end{aligned}
	 \end{equation}
	 where $C_7$ depends only on $n,s,\Lambda,\alpha,\Gamma,p,\gamma$ and $\rho$, which is because $r$ also depends only on the aforementioned quantities. \par 
	 Next, let us prove the desired estimate \eqref{C1gest} for any $R>0$ and any $x_0 \in \Omega$ such that $B_R(x_0) \Subset \Omega$, which together with Proposition \ref{thm:bnd} and interpolation then in particular yields $u \in C^{1,\gamma}_{loc}(\Omega)$. Consider the scaled functions
	 $$ u_1(x):=u(Rx+x_0), \quad \mu_1(x):=\mu(Rx+x_0)$$
	 and the scaled coefficient
	 $$ A_1(x,y):=A(Rx+x_0,Ry+x_0)$$
	 and observe that $u_1 \in W^{s,2}(B_1) \cap L^1_{2s}(\mathbb{R}^n)$ is a weak solution of $L_{A_1} u_1=\mu_1$ in $B_1$ and that $A_1$ belongs to $\mathcal{L}_0(\Lambda)$ and is $C^\alpha$ in $B_1$ with respect to $\Gamma R^{\alpha}$ and therefore also with respect to $\Gamma \max\{\text{diam}(\Omega),1\}^\alpha$. Therefore, by the first part of the proof along with rescaling we deduce
	 \begin{align*}
	 	R^{1+\gamma}[\nabla u]_{C^\gamma(B_{\rho R}(x_0))} & = [\nabla u_1]_{C^\alpha(B_{\rho})} \\
	 	& \leq C_7 \left ( \mean{B_{1}} |u_1| dx + \int_{\mathbb{R}^n \setminus B_{1}} \frac{|u_1(y)|}{|y|^{n+2s}}dy +  ||\mu_1||_{L^q(B_1)} \right ) \\ & = C \Bigg ( \mean{B_R(x_0)} |u|dx + R^{2s} \int_{\mathbb{R}^n \setminus B_{R}(x_0)} \frac{|u(y)|}{|x_0-y|^{n+2s}}dy \\ & \quad + R^{2s-\frac{n}{p}} ||\mu||_{L^p(B_R(x_0))} \Bigg ),
	 \end{align*}
 	where $C$ that depends only on $n,s,\Lambda,\alpha,\Gamma,p,\gamma$ and $\rho$, so that the proof is finished.
	\end{proof}

\section{Higher differentiability under measure data}

This section is devoted to proving Theorem \ref{HD} and some useful consequences of it.

Our starting point is the following comparison estimate, which follows from \cite[Lemma 3.3]{KMS2} applied with $p=2$.

\begin{proposition} \label{meascomp} 
	Let $s \in (1/2,1)$, $R>0$, $x_0 \in \ern$ and consider some $A \in \mathcal{L}_0(\Lambda)$.
	Moreover, let $\mu \in C_0^\infty(\mathbb{R}^n)$ and assume that $u \in W^{s,2}(B_{2R}(x_0)) \cap L^1_{2s}(\mathbb{R}^n)$ is a weak solution of $L_{A} u = \mu$ in $B_R(x_0)$. Moreover, consider the weak solution $v \in W^{s,2}(B_{2R}(x_0)) \cap L^1_{2s}(\mathbb{R}^n)$ of
	\begin{equation} \label{constcof31xy}
		\begin{cases} \normalfont
			L_{A} v = 0 & \text{ in } B_{R}(x_0) \\
			v = u & \text{ a.e. in } \mathbb{R}^n \setminus B_{R}(x_0).
		\end{cases}
	\end{equation}
	Then for any $q \in \Big [1,\frac{n}{n-2s} \Big )$, we have the comparison estimate
	$$
	\left (\mean{B_{R}(x_0)} |u-v|^qdx \right )^{1/q} \leq C R^{2s-n} |\mu|(B_R(x_0)),
	$$
	where $C$ depends only on $n,s,\Lambda$ and $q$.
\end{proposition}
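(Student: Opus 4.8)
The plan is to run the nonlocal analogue of the Boccardo--Gallou\"et truncation scheme on the difference $w := u - v$. By the construction of the Dirichlet solution in \eqref{constcof31xy}, $w$ belongs to $W^{s,2}_0(B_R(x_0))$; in particular $w = 0$ a.e.\ in $\mathbb{R}^n \setminus B_R(x_0)$ and $[w]_{W^{s,2}(\mathbb{R}^n)} < \infty$. Subtracting the weak formulations of $L_A u = \mu$ and $L_A v = 0$ in $B_R(x_0)$ tested against $\varphi \in C_0^\infty(B_R(x_0))$, and then extending the resulting identity to all test functions in $W^{s,2}_0(B_R(x_0))$ by density --- which is legitimate because, with $w$ fixed, the nonlocal bilinear form is continuous on $W^{s,2}_0(B_R(x_0))$ by Cauchy--Schwarz (using $[w]_{W^{s,2}(\mathbb{R}^n)}<\infty$) and $\mu \in L^2$ --- one obtains
\[
\int_{\mathbb{R}^n}\int_{\mathbb{R}^n} \frac{A(x,y)}{|x-y|^{n+2s}}\,\big(w(x)-w(y)\big)\big(\psi(x)-\psi(y)\big)\,dy\,dx = \int_{B_R(x_0)} \psi\,\mu\,dx \qquad \forall\,\psi \in W^{s,2}_0(B_R(x_0)).
\]

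Next I would insert the truncation $\psi = T_k(w)$, with $T_k(t) := \max\{-k,\min\{k,t\}\}$ and $k>0$; this is admissible since $T_k(w) \in W^{s,2}_0(B_R(x_0))$. Because $T_k$ is monotone and $1$-Lipschitz, one has the pointwise inequality $\big(w(x)-w(y)\big)\big(T_k(w(x))-T_k(w(y))\big) \geq |T_k(w(x))-T_k(w(y))|^2$, so using the ellipticity bound $A \geq \Lambda^{-1}$ on the left and $|T_k(w)| \leq k$ on the right gives
\[
\Lambda^{-1}\,[T_k(w)]_{W^{s,2}(\mathbb{R}^n)}^2 \leq \int_{B_R(x_0)} T_k(w)\,\mu\,dx \leq k\,|\mu|(B_R(x_0)).
\]
The fractional Sobolev inequality \cite[Theorem 6.5]{Hitch} (applicable since $n>2s$, and with a constant depending only on $n,s$) then yields $\|T_k(w)\|_{L^{2n/(n-2s)}(\mathbb{R}^n)}^2 \leq C(n,s)\,\Lambda\, k\,|\mu|(B_R(x_0))$.

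The remainder is bookkeeping with distribution functions. On the super-level set $\mathcal{A}_k := \{x \in \mathbb{R}^n : |w(x)| > k\}$ we have $|T_k(w)| = k$, so the last bound forces $k^2 |\mathcal{A}_k|^{(n-2s)/n} \leq C\Lambda k\,|\mu|(B_R(x_0))$, i.e.\ $|\mathcal{A}_k| \leq \big(C\Lambda\,|\mu|(B_R(x_0))/k\big)^{n/(n-2s)}$ for all $k$; equivalently $w \in L^{\frac{n}{n-2s},\infty}(\mathbb{R}^n)$ with $\|w\|_{L^{n/(n-2s),\infty}} \leq C\Lambda\,|\mu|(B_R(x_0))$. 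Since $w$ is supported in $\overline{B_R(x_0)}$, the standard embedding of weak $L^{p}$ into $L^{q}$ on sets of finite measure (valid for $1 \leq q < p := \tfrac{n}{n-2s}$) gives $\int_{B_R(x_0)} |w|^q\,dx \leq C(n,s,q)\,|B_R(x_0)|^{1-q\frac{n-2s}{n}}\big(\Lambda\,|\mu|(B_R(x_0))\big)^q$; dividing by $|B_R(x_0)|$ and using $|B_R(x_0)|^{-1/q}\,|B_R(x_0)|^{\frac{1}{q}-\frac{n-2s}{n}} = c_n R^{2s-n}$ produces exactly the asserted estimate, with $C=C(n,s,\Lambda,q)$.

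The only step requiring genuine care is the one flagged in the first paragraph: justifying that $T_k(w)$ --- and, more generally, an arbitrary element of $W^{s,2}_0(B_R(x_0))$ --- may be used as a test function, together with the passage from the local weak formulation (with $C_0^\infty(B_R(x_0))$ test functions) to the displayed global bilinear identity. Once one knows $w \in W^{s,2}_0(B_R(x_0))$, this is handled by density of $C_0^\infty(B_R(x_0))$ in $W^{s,2}_0(B_R(x_0))$ together with the Cauchy--Schwarz continuity of the nonlocal form; everything else is the routine truncation argument, so that the statement is precisely \cite[Lemma 3.3]{KMS2} specialized to $p = 2$.
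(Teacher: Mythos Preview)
Your proof is correct and is precisely the approach the paper takes: the paper does not prove this proposition directly but simply cites \cite[Lemma~3.3]{KMS2} with $p=2$, and your truncation argument is exactly the content of that lemma. The one point the paper adds as a remark --- that the hypothesis $u\in W^{s,2}(B_{2R}(x_0))\cap L^1_{2s}(\mathbb{R}^n)$ suffices in place of $u\in W^{s,2}(\mathbb{R}^n)$ --- is implicit in your first paragraph, since all you actually use is $w=u-v\in W^{s,2}_0(B_R(x_0))$.
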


 We remark that while in \cite{KMS2} $u$ is assumed to belong to $W^{s,2}(\ern)$, an inspection of the proof shows that it is enough to assume that $u \in W^{s,2}(B_{2R}(x_0)) \cap L^1_{2s}(\mathbb{R}^n)$ as stated above, since the existence of a unique weak solution $v \in W^{s,2}(B_{2R}(x_0)) \cap L^1_{2s}(\mathbb{R}^n)$ to \eqref{constcof31xy} is in that case guaranteed by \cite[Remark 3]{existence}.
 
\begin{lemma} \label{thm:higherdiffscalemeasdata} 
	Let $s \in (1/2,1)$ and assume that $A \in \mathcal{L}_0(\Lambda)$ is $C^\alpha$ in $B_2$ with respect to some $\alpha \in (0,1)$ and some $\Gamma>0$.
	Moreover, let $\mu \in C_0^\infty(\mathbb{R}^n)$ and assume that $u \in W^{s,2}(B_1) \cap L_{2s}^1(\mathbb{R}^n)$ is a weak solution to $L_{A} u = \mu$ in $B_1$. Then for any 
	\begin{equation} \label{ranges}
		1 < t<\min\{1+\alpha,2s\}, \quad q \in \bigg [1,\frac{n}{n+t-2s} \bigg ),
	\end{equation}
	we have
	\begin{equation} \label{hdaffinenicem}
		\begin{aligned}
			||u||_{N^{t,q}(B_{\rho})} & \leq C \left ( \left (\mean{B_1} |u|^q dx \right )^{1/q} + \int_{\mathbb{R}^n \setminus B_1} \frac{|u(y)|}{|y|^{n+2s}}dy + |\mu|(B_1) \right ),
		\end{aligned}
	\end{equation}
	for some small radius $\rho=\rho(n,s,\Lambda,\alpha,\Gamma,t,q) \in (0,1)$, while $C$ also depends only on $n,s,\Lambda,\alpha,\Gamma,t,q$.
\end{lemma}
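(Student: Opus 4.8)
The plan is to estimate the $L^q$-norm of the second-order difference quotient $\tau_h^2 u$ over a slightly smaller ball, uniformly in small $h$, using the now-available gradient oscillation decay estimate from Corollary \ref{thm:GradOscDec1} together with the measure comparison estimate from Proposition \ref{meascomp}. Since the statement involves a Nikolskii norm $N^{t,q}$ with $t>1$, I will bound $|h|^{-t}\|\tau_h^2 u\|_{L^q(B_\rho)}$; the crucial feature of the second-order quotient is that it controls one full derivative plus a gain of order $t-1$, which matches the differentiability gain delivered by the $C^{1,\gamma}$-type estimate. Throughout, I may assume $\mu$ is smooth (as in the hypothesis), and I can freely apply the bound \eqref{HE} extracted inside the proof of Corollary \ref{thm:GradOscDec1}.

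\textbf{Step 1: Local splitting.} Fix $h$ with $|h|$ small and, for a parameter $\beta\in(0,1)$ to be chosen close to $1$, cover $B_\rho$ by finitely many balls $B_{|h|^\beta}(z_j)$ with bounded overlap, where each $B_{8\cdot 2^{M}|h|^\beta}(z_j)\subset B_1$ for a fixed large $M$. On each such ball introduce the comparison function $v_j$ solving $L_A v_j=0$ in $B_r(z_j)$ with $v_j=u$ outside, where $r\approx |h|^\beta$. The triangle inequality gives, as in \eqref{triv},
\begin{equation*}
\Big(\int_{B_{|h|^\beta}(z_j)}|\tau_h^2 u|^q dx\Big)^{1/q} \lesssim \Big(\int_{B_r(z_j)}|u-v_j|^q dx\Big)^{1/q} + |h|\,\Big(\int_{B_{2|h|^\beta}(z_j)}|\tau_h \nabla v_j|^q dx\Big)^{1/q}.
\end{equation*}
Here the second term uses that $\nabla v_j$ exists and is H\"older continuous by the local regularity theory (Corollary \ref{thm:GradOscDec1} applied to the homogeneous equation $L_A v_j=0$).

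\textbf{Step 2: Estimating the two terms.} For the first term, Proposition \ref{meascomp} (with $q<\frac{n}{n-2s}$, which is compatible with the range \eqref{ranges}) gives $\big(\mean{B_r(z_j)}|u-v_j|^q\big)^{1/q}\lesssim r^{2s-n}|\mu|(B_r(z_j))$, hence $\big(\int_{B_r(z_j)}|u-v_j|^q\big)^{1/q}\lesssim r^{2s-n+n/q}|\mu|(B_r(z_j))$. Summing the $q$-th powers over $j$ using bounded overlap, and since $r\approx|h|^\beta$, this contributes something of order $|h|^{\beta(2s-n+n/q)}$ times a suitable norm of the data, which after dividing by $|h|^t$ is controlled provided $\beta(2s-n+n/q)\geq t$ — achievable by choosing $\beta$ close to $1$ precisely when $t<2s-n+n/q$, i.e.\ when $q<\frac{n}{n+t-2s}$, exactly the admissible range. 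For the second term, $|h|\,|\tau_h\nabla v_j|\lesssim |h|^{1+\gamma}[\nabla v_j]_{C^\gamma(B_{2|h|^\beta}(z_j))}$, and the estimate \eqref{HE} (with $N=\ell=c=0$, suitably scaled to the ball $B_r(z_j)$) bounds $[\nabla v_j]_{C^\gamma}$ by $r^{-1-\gamma}$ times an average of $u$ plus a tail of $u$ over $B_r(z_j)$; picking $\gamma$ with $t-1<\gamma<\min\{\alpha,2s-1\}$, the net power of $|h|$ after summing over $j$ and dividing by $|h|^t$ is $|h|^{(1+\gamma)(1-\beta)+\beta\cdot 0 \ - t + \ldots}$, and again choosing $\beta$ close to $1$ makes this nonnegative. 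Crucially, the tail terms in \eqref{HE} must be summed over the overlapping balls $B_r(z_j)$; as emphasized in the paper's introduction, the growing overlap from the tails is compensated by the decay of the kernel far from the diagonal, so the sum of the $q$-th powers of the tail contributions is still controlled by the global tail of $u$ over $\mathbb{R}^n\setminus B_1$ plus an $L^q$-average of $u$ over $B_1$ (via Lemma \ref{tr} and Lemma \ref{tailestz}).

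\textbf{Step 3: Conclusion.} Combining Steps 1--2, summing over the finitely many $j$ and taking $q$-th roots, I obtain $|h|^{-t}\|\tau_h^2 u\|_{L^q(B_\rho)}\lesssim \big(\mean{B_1}|u|^q\big)^{1/q}+\int_{\mathbb{R}^n\setminus B_1}|u(y)|/|y|^{n+2s}\,dy+|\mu|(B_1)$, uniformly for $|h|$ smaller than some threshold depending only on the stated parameters. By Remark \ref{Niksmall}, the restricted Nikolskii seminorm controls the full one, so \eqref{hdaffinenicem} follows after also bounding $\|u\|_{L^q(B_\rho)}$ trivially by $\big(\mean{B_1}|u|^q\big)^{1/q}$. \textbf{The main obstacle} I anticipate is the covering/summation bookkeeping in Step 2: one must track how the tail terms in \eqref{HE}, which have centers $z_j$ ranging over the cover, aggregate without losing the $|h|$-decay, and verify that the exponent arithmetic in $\beta$, $\gamma$, $q$, $t$ closes simultaneously for the comparison term and the gradient term — this is exactly the delicate nonlocal covering argument the authors flag, and getting the inequalities $\beta(2s-n+n/q)\geq t$ and $(1+\gamma)(1-\beta)\geq t-1$ (roughly) to hold together for $q$ in the full range is the technical heart of the proof.
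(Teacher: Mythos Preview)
Your overall architecture (cover $B_\rho$ by balls of radius $|h|^\beta$, compare with a homogeneous solution $v_j$ on each, split $\tau_h^2 u$ into a comparison piece controlled by Proposition \ref{meascomp} and a gradient-oscillation piece controlled by Corollary \ref{thm:GradOscDec1}) is exactly the paper's strategy. The gap is in Step 2, where you apply the estimate \eqref{HE} with $N=\ell=c=0$. With $\ell=0$ the right-hand side of \eqref{HE} contains $r^{-1-\gamma}\mean{B_r(z_j)}|u|\,dx$, which carries \emph{no smallness in $r$}. After multiplying by $|h|^{1+\gamma}r^{n/q}$ and summing over the $\approx |h|^{-\beta n}$ balls of the cover, the gradient piece contributes at best $|h|^{(1+\gamma)(1-\beta)}$ times the data. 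Your claim that ``choosing $\beta$ close to $1$ makes this nonnegative'' after dividing by $|h|^t$ is backwards: $(1+\gamma)(1-\beta)\to 0$ as $\beta\to 1$, so the exponent $(1+\gamma)(1-\beta)-t$ becomes strictly negative. On the other hand the comparison piece needs $\beta(2s-n+n/q)\ge t$, which forces $\beta$ close to $1$. The two constraints are incompatible, so a single application of the scheme cannot reach any $t>1$.

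What the paper actually does is \emph{iterate}. In the first pass (their Step 2) they take $\ell=(u)_{B_{18|h|^\beta}(z)}$, so that the local average becomes $\mean{B_r(z)}|u-(u)_{B_r(z)}|$, which by Poincar\'e gains a factor $r^{t'}$ whenever $u\in W^{t',q}$; feeding in $t'=0$ gives $u\in N^{\widetilde t(1-\beta),q}$, then feeding that back in gives $N^{t_1,q}$ with $t_1=t_0+(\widetilde t-t_0)(1-\beta)/2$, and so on along a sequence $t_i\nearrow \widetilde t$. Once the iteration passes order $1$ (their Step 3), they switch $\ell$ to the affine function $(u)_{B_r(z)}+(\nabla u)_{B_r(z)}\cdot(x-z)$ and invoke Lemma \ref{tailestaffine}, which converts the oscillation term into gradient oscillations and lets the bootstrap continue up to the target $t$. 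Each pass shrinks the ball by a fixed factor, which is why the final radius $\rho$ depends on the number of iterations and hence on $t,q$. Without this bootstrap through adapted choices of $\ell$ and $c$, the exponent bookkeeping simply does not close.
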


\begin{proof}
	\textbf{Step 1}: Incremental higher differentiability by approximation. \par
	Fix some $1<t<\min\{1+\alpha,2s\}$ and some $q \in \Big [1,\frac{n}{n+t-2s} \Big )$, set $$\gamma:=\frac{t+\min \{1+\alpha,2s+n/q-n\}}{2}-1 \in (0,\min \{\alpha,2s+n/q-n-1\})$$ and let $R_0=R_0(n,s,\Lambda,\alpha,\Gamma,\gamma) \in (0,1)$ be given by Corollary \ref{thm:GradOscDec1} applied with $p=0$. \par 
	Moreover, set $R_1:=R_0/36 \in (0,1/36)$ and fix some $z \in B_{1/2}$ and some $\beta \in (0,1)$ to be chosen. In addition, fix some $|h|>0$ such that $|h|^\beta<R_1$.
	
	Denote by $v_z \in W^{s,2}(B_1) \cap L_{2s}^1(\mathbb{R}^n)$ the weak solution of the problem
	\begin{equation} \label{constcof3}
		\begin{cases} \normalfont
			L_{A} v_z = 0 & \text{ in } B_{18|h|^\beta}(z) \\
			v_z = u & \text{ a.e. in } \mathbb{R}^n \setminus B_{18|h|^\beta}(z).
		\end{cases}
	\end{equation}
	Throughout the proof, unless indicated otherwise all constants $C$ will depend only on $n,s,\Lambda,\alpha,\Gamma,t,q$. \par 
	In view of Proposition \ref{meascomp}, we have
	\begin{equation} \label{mcomp5}
		\left (\int_{B_{18|h|^\beta}(z)} |u-v_z|^q dx \right )^{1/q} \leq C |h|^{n\beta /q-n\beta+ 2s\beta}|\mu|(B_{18|h|^\beta}(z)),
	\end{equation}
	where $C_1=C_1(n,s,\Lambda,q,\beta)>0$. \par
	Set 
	\begin{equation} \label{wtt}
	\widetilde t:=1+\gamma \in \left (t,\min\{1+\alpha,2s+n/q-n\} \right ),
	\end{equation}
	$$
	\widehat \gamma:=\frac{t+3 \min \{1+\alpha,2s+n/q-n\}}{4}-1 \in (\gamma,\min \{\alpha,2s+n/q-n-1\})
	$$
	and also 
	\begin{equation} \label{wtt1}
		\widehat t:=1+ \widehat \gamma \in (\widetilde t,\min \{1+\alpha,2s+n/q-n\}).
	\end{equation}
	Moreover, let $m_0 \geq 1$ be the smallest integer such that $2^{m_0} 18|h|^\beta>1/4$. Note that $A$ being $C^\alpha$ with respect to $\Gamma$ in $B_2$ clearly implies that $A$ is $C^{\widehat \gamma}$ with respect to $2 \Gamma$ in $B_2$. Thus, using (\ref{mcomp5}) and Corollary \ref{thm:GradOscDec1} applied with $c=(u)_{B_{18|h|^\beta}(z)}$, $\rho=|h|^{1-\beta}/16 \in (0,1/16)$, $N=m_0+1$, $R=2^{m_0+1} 16|h|^\beta$ (note that $2^{-N}R <R_0$) and with $\alpha$ replaced by $\gamma$, along with standard properties of difference quotients, Lemma \ref{tailestz} and Lemma \ref{tr}, for any affine function $\ell$ we deduce
	\begin{align*}
		& \left (\int_{B_{|h|^\beta}(z)} |\tau^2_h u|^q dx \right )^{1/q} \\ & \leq \left ( \int_{B_{|h|^\beta}(z)} |\tau^2_h (u- v_{z})|^q dx  \right )^{1/q} + \left ( \int_{B_{|h|^\beta}(z)} |\tau^2_h v_{z}|^q dx \right )^{1/q} \\
		& \leq \left ( \int_{B_{|h|^\beta}(z)} |\tau^2_h (u- v_{z})|^q dx \right )^{1/q} + C |h| \left ( \int_{B_{2|h|^\beta}(z)} |\tau_{h} \nabla v_{z}|^q dx \right )^{1/q} \\
		& \leq C \left ( \int_{B_{18|h|^\beta}(z)} |u- v_{z}|^q dx \right )^{1/q} + C |h|^{n\beta/q+1} \sup_{x^\prime \in B_{2|h|^\beta}(z)} |\tau_{h} \nabla v_{z}(x^\prime)| \\
		& \leq C \left ( \int_{B_{18|h|^\beta}(z)} |u- v_{z}|^q dx \right )^{1/q} + C |h|^{n\beta/q+1} \sup_{x^\prime \in B_{2|h|^\beta}(z)} \osc_{B_{|h|}(x^\prime)} \nabla v_{z} \\
		& \leq C |h|^{n\beta /q-n\beta+ 2s\beta} |\mu|(B_{18|h|^\beta}(z)) \\ & \quad + C |h|^{n \beta /q +\widetilde t (1-\beta)} \sup_{x^\prime \in B_{2|h|^\beta}(z)} \Bigg ( \mean{B_{16|h|^\beta}(x^\prime)} |v_z-\ell| dx + |h|^{2s\beta } \int_{\mathbb{R}^n \setminus B_{16|h|^\beta}(x^\prime)} \frac{|v_z(y)-\ell(y)|}{|x^\prime-y|^{n+2s}}dy \Bigg ) \\
		& \quad + C |h|^{n\beta /q +\widehat \gamma \beta +\widetilde t (1-\beta)} \sup_{x^\prime \in B_{2|h|^\beta}(z)} \Bigg ( \sum_{k=0}^{m_0+1} 2^{(\widehat \gamma-2s)k} \mean{B_{2^{k}16|h|^\beta}(x^\prime)} |v_z-(u)_{B_{18|h|^\beta}(z)}|dx \\ & \quad + |h|^{2s\beta} \sup_{x^\prime \in B_{2|h|^\beta}(z)} \int_{\mathbb{R}^n \setminus B_{2^{m_0+1}16 |h|^\beta}(x^\prime)} \frac{|v_z(y)-(u)_{B_{18|h|^\beta}(z)}|}{|x^\prime-y|^{n+2s}}dy \Bigg ) \\
		& \leq C |h|^{n\beta /q-n\beta+ 2s\beta} |\mu|(B_{18|h|^\beta}(z)) \\ & \quad + C |h|^{n \beta /q +\widetilde t (1-\beta)} \bigg ( \mean{B_{18|h|^\beta}(z)} |v_z-\ell| dx + |h|^{2s\beta } \int_{\mathbb{R}^n \setminus B_{18|h|^\beta}(z)} \frac{|v_z(y)-\ell(y)|}{|z-y|^{n+2s}}dy \bigg ) \\
		& \quad + C |h|^{n\beta /q +\gamma \beta +\widetilde t (1-\beta)} \Bigg ( \sum_{k=0}^{m_0+1} 2^{(\widehat \gamma-2s)k} \mean{B_{2^{k}18|h|^\beta}(z)} |v_z-(u)_{B_{18|h|^\beta}(z)}|dx \\ & \quad + |h|^{2s\beta} \int_{\mathbb{R}^n \setminus B_{2^{m_0+1}18 |h|^\beta}(z)} \frac{|v_z(y)-(u)_{B_{18|h|^\beta}(z)}|}{|z-y|^{n+2s}}dy \Bigg ) \\
		& \leq C |h|^{n\beta /q-n\beta+ 2s\beta} |\mu|(B_{18|h|^\beta}(z)) \\ & \quad + C |h|^{n \beta /q +\widetilde t (1-\beta)} \bigg ( \mean{B_{18|h|^\beta}(z)} |u-\ell| dx + |h|^{2s\beta } \int_{\mathbb{R}^n \setminus B_{18|h|^\beta}(z)} \frac{|u(y)-\ell(y)|}{|z-y|^{n+2s}}dy \bigg ) \\
		& \quad + C |h|^{n\beta /q +\gamma \beta +\widetilde t (1-\beta)} \Bigg ( \sum_{k=0}^{m_0+1} 2^{(\widehat \gamma-2s)k} \mean{B_{18|h|^\beta}(z)} |u-(u)_{B_{2^{k}18|h|^\beta}(z)}|dx \\ & \quad + |h|^{2s\beta} \int_{\mathbb{R}^n \setminus B_{2^{m_0+1}18 |h|^\beta}(z)} \frac{|u(y)-(u)_{B_{18|h|^\beta}(z)}|}{|z-y|^{n+2s}}dy \Bigg ).
	\end{align*}
	
	Using a similar reasoning as in the proof of Lemma \ref{tailest} along with H\"older's inequality and the fractional or classical Poincar\'e inequality, for any $t^\prime \in [0,1]$ we obtain
	\begin{equation} \label{helpest}
		\begin{aligned}
			& \sum_{k=0}^{m_0+1} 2^{(\widehat \gamma-2s)k} \mean{B_{2^{k}18|h|^\beta}(z)} |u-(u)_{B_{18|h|^\beta}(z)}|dx \\ & \quad + |h|^{2s\beta} \int_{\mathbb{R}^n \setminus B_{2^{m_0+1}18 |h|^\beta}(z)} \frac{|u(y)-(u)_{B_{18|h|^\beta}(z)}|}{|z-y|^{n+2s}}dy \\
			& \leq C \sum_{k=0}^{m_0+1} 2^{(\widehat \gamma-2s)k} \mean{B_{2^{k}18|h|^\beta}(z)} |u-(u)_{B_{2^k 18|h|^\beta}(z)}|dx \\ & \quad +C |h|^{2s\beta} \int_{\mathbb{R}^n \setminus B_{2^{m_0+1}18 |h|^\beta}(z)} \frac{|u(y)-(u)_{B_{2^{m_0+1}18|h|^\beta}(z)}|}{|z-y|^{n+2s}}dy \\
			& \leq C \sum_{k=0}^{m_0+1} 2^{(\widehat \gamma-2s)k} \left (\mean{B_{2^{k}18|h|^\beta}(z)} |u-(u)_{B_{2^k 18|h|^\beta}(z)}|^q dx \right )^{1/q} \\ & \quad + C |h|^{2s\beta} \left (\mean{B_1}|u| dx + \int_{\mathbb{R}^n \setminus B_1} \frac{|u(y)|}{|y|^{n+2s}}dy \right ) \\
			& \leq C |h|^{\beta (t^\prime -n/q)} \sum_{k=0}^{m_0+1} 2^{(t^\prime+\widehat \gamma-2s-n/q)k} M_{k,t^\prime }(z) \\ & \quad + C |h|^{2s\beta} \left (\left (\mean{B_1}|u|^q dx \right )^{1/q} + \int_{\mathbb{R}^n \setminus B_1} \frac{|u(y)|}{|y|^{n+2s}}dy \right ),
		\end{aligned}
	\end{equation}
	where 
	$$ M_{k,t^\prime }(z):= \begin{cases} \normalfont
		\left (\int_{B_{2^{k}18|h|^\beta}(z)} |u|^qdx \right )^{1/q} & \text{ if } t^\prime =0 \\
		\left ( \int_{B_{2^{k}18|h|^\beta}(z)} \int_{B_{2^{k}18|h|^\beta}(z)} \frac{|u(x)-u(y)|^q}{|x-y|^{n+t^\prime q }}dydx \right )^{1/q} & \text{ if } t^\prime \in (0,1) \\
		\left ( \int_{B_{2^{k}18|h|^\beta}(z)} |\nabla u|^q dx \right )^{1/q} & \text{ if } t^\prime =1.
	\end{cases} $$
	
	Combining the previous two displays now yields 
	\begin{equation} \label{hest}
		\begin{aligned}
			& \left (\int_{B_{|h|^\beta}(z)} |\tau^2_h u|^q dx \right )^{1/q} \\
			& \leq C |h|^{n\beta /q-n\beta+ 2s\beta} |\mu|(B_{18|h|^\beta}(z)) \\ & \quad + C |h|^{n \beta /q +\widetilde t (1-\beta)} \bigg ( \mean{B_{18|h|^\beta}(z)} |u-\ell| dx + |h|^{2\beta s} \int_{\mathbb{R}^n \setminus B_{18|h|^\beta}(z)} \frac{|u(y)-\ell(y)|}{|z-y|^{n+2s}}dy \bigg ) \\
			& \quad + C |h|^{t^\prime \beta+\gamma \beta +\widetilde t (1-\beta)} \sum_{k=0}^{m_0+1} 2^{(t^\prime+\widehat \gamma-2s-n/q)k} M_{k,t^\prime }(z) \\ & \quad + C |h|^{(n/q+2s)\beta+\widetilde t (1-\beta)} \left (\left (\mean{B_1}|u|^q dx \right )^{1/q} + \int_{\mathbb{R}^n \setminus B_1} \frac{|u(y)|}{|y|^{n+2s}}dy \right )
		\end{aligned}
	\end{equation}
	for any $t^\prime \in [0,1]$.
	
	\textbf{Step 2}: $N^{t_\star,q}$ regularity for some $t_\star>1$ by iteration. \par 
	In order to proceed, observe that in view of Lemma \ref{tailest} and H\"older's inequality, we have
	
	\begin{align*}
		& \mean{B_{18|h|^\beta}(z)} |u-(u)_{B_{18|h|^\beta}(z)}| dx + |h|^{2\beta s} \int_{\mathbb{R}^n \setminus B_{18|h|^\beta}(z)} \frac{|u(y)-(u)_{B_{18|h|^\beta}(z)}|}{|z-y|^{n+2s}}dy \\
		& \leq C \sum_{k=0}^{m_0+1} 2^{-2sk} \mean{B_{2^{k}18|h|^\beta}(z)} |u-(u)_{B_{2^k18|h|^\beta}(z)}|dx \\ & \quad + C |h|^{2s\beta} \int_{\mathbb{R}^n \setminus B_{2^{m_0+1}18 |h|^\beta}(z)} \frac{|u(y)-(u)_{B_{2^{m_0+1}18|h|^\beta}(z)}|}{|z-y|^{n+2s}}dy \\
		& \leq C \sum_{k=0}^{m_0+1} 2^{(\widehat \gamma-2s)k} \left ( \mean{B_{2^{k}18|h|^\beta}(z)} |u-(u)_{B_{2^k18|h|^\beta}(z)}|dx \right )^{1/q} \\ & \quad + C |h|^{2s\beta} \int_{\mathbb{R}^n \setminus B_{2^{m_0+1}18 |h|^\beta}(z)} \frac{|u(y)-(u)_{B_{2^{m_0+1}18|h|^\beta}(z)}|}{|z-y|^{n+2s}}dy,
	\end{align*}
	so that by the same reasoning as in \eqref{helpest} we obtain
	\begin{align*}
		& \mean{B_{18|h|^\beta}(z)} |u-(u)_{B_{18|h|^\beta}(z)}| dx + |h|^{2s\beta } \int_{\mathbb{R}^n \setminus B_{18|h|^\beta}(z)} \frac{|u(y)-(u)_{B_{18|h|^\beta}(z)}|}{|z-y|^{n+2s}}dy \\
		& \leq C |h|^{\beta (t^\prime -n/q)} \sum_{k=0}^{m_0+1} 2^{(t^\prime+\widehat \gamma-2s-n/q)k} M_{k,t^\prime }(z) \\ & \quad + C |h|^{2s\beta} \left (\left (\mean{B_1}|u|^q dx \right )^{1/q} + \int_{\mathbb{R}^n \setminus B_1} \frac{|u(y)|}{|y|^{n+2s}}dy \right ).
	\end{align*}
	Combining the previous display with \eqref{hest} applied with $\ell=(u)_{B_{18|h|^\beta}(z)}$ now yields
	
	\begin{equation} \label{hest1}
		\begin{aligned}
			& \left (\int_{B_{|h|^\beta}(z)} |\tau^2_h u|^q dx \right )^{1/q} \\
			& \leq C |h|^{n\beta /q-n\beta+ 2s\beta} |\mu|(B_{18|h|^\beta}(z))
			+ C|h|^{t^\prime \beta +\widetilde t (1-\beta)} \sum_{k=0}^{m_0+1} 2^{(t^\prime+\widehat \gamma-2s-n/q)k} M_{k,t^\prime }(z) \\ & \quad + C|h|^{(n/q+2s)\beta+\widetilde t (1-\beta)} \left (\left (\mean{B_1}|u|^q dx \right )^{1/q} + \int_{\mathbb{R}^n \setminus B_1} \frac{|u(y)|}{|y|^{n+2s}}dy \right )
		\end{aligned}
	\end{equation}
	for any $t^\prime \in [0,1]$. \par 
	Now observe that by
	some combinatorial considerations, there exists some $N \in \mathbb{N}$ with \newline $N \leq c|h|^{-\beta n}$ for some $c=c(n)$ and a finite sequence of points $\{z_j\}_{j=1}^N$ such that 
	\begin{equation} \label{Bcover}
		B_{1/2} \subset \bigcup_{j=1}^N B_{|h|^\beta}(z_j),
	\end{equation}
	and such that for any integer $k \geq 0$ we have
	\begin{equation} \label{overlap}
		\sum_{j=1}^N \chi_{B_{2^k|h|^\beta}(z_j)} \leq c^\prime 2^{nk},
	\end{equation}
	where $c^\prime=c^\prime(n)$. Summing the estimates deduced in \eqref{hest1} over $j=1,...,N$ now yields
	\begin{equation} \label{mt0}
		\begin{aligned}
			& \left ( \int_{B_{1/2}} |\tau^2_h u|^q dx \right )^{1/q} \\
			& \leq C \sum_{j=1}^N \left ( \int_{B_{|h|^\beta}(z_j)} |\tau^2_h u|^q dx \right )^{1/q} \\
			& \leq C \sum_{j=1}^N \Bigg (|h|^{n\beta /q-n\beta+ 2s\beta} |\mu|(B_{18|h|^\beta}(z_j)) \\
			& \quad + |h|^{\beta t^\prime +\widetilde t(1-\beta)} \sum_{k=0}^{m_0+1} 2^{(t^\prime+\widehat \gamma-2s-n/q)k} M_{k,t^\prime }(z_j) \\& \quad + |h|^{\beta (n/q+2s)+\widetilde t(1-\beta)} \left ( \left (\mean{B_{1}} |u|^q dx \right)^{1/q} + \int_{\mathbb{R}^n \setminus B_{1}} \frac{|u(y)|}{|y|^{n+2s}}dy \right ) \Bigg ) \\
			& \leq C \Bigg (|h|^{n\beta /q-n\beta+ 2s\beta} |\mu|(B_1) + |h|^{\beta t^\prime +\widetilde t(1-\beta)} \underbrace{\sum_{k=1}^{\infty} 2^{(\widehat t-2s-n/q+n)k}}_{< \infty} M_{t^\prime } \\& \quad + |h|^{n\beta /q-n\beta+2s\beta+\widetilde t(1-\beta)} \left ( \left ( \mean{B_{1}} |u|^q dx \right )^{1/q} + \int_{\mathbb{R}^n \setminus B_{1}} \frac{|u(y)|}{|y|^{n+2s}}dy \right ) \Bigg ) \\
			& \leq C \Bigg ( |h|^{\beta t^\prime +\widetilde t(1-\beta)} M_{t^\prime } \\ & \quad + |h|^{n\beta /q-n\beta+ 2s\beta} \left (|\mu|(B_1) + \left (\mean{B_{1}} |u|^q dx\right )^{1/q} + \int_{\mathbb{R}^n \setminus B_{1}} \frac{|u(y)|}{|y|^{n+2s}}dy \right ) \Bigg ) 
		\end{aligned}
	\end{equation}
	for any $t^\prime \in [0,1)$, where $$ M_{t^\prime }:= \begin{cases} \normalfont
		\left (\int_{B_1} |u|^qdx \right )^{1/q} & \text{ if } t^\prime =0 \\
		\left (\int_{B_1} \int_{B_1} \frac{|u(x)-u(y)|^q}{|x-y|^{n+t^\prime q }}dydx \right )^{1/q} & \text{ if } t^\prime >0
	\end{cases} $$
	and we also used \eqref{wtt1} in order to ensure the convergence of the infinite series. \par
	In the case when $t^\prime \in (0,1)$, for any $i \in \mathbb{N}$ we consider
	the scaled functions $u_i(x):=u(2^{-i}x)$, $\mu_i(x):=\mu(2^{-i}x)$, $A_i(x,y):=A(2^{-i}x,2^{-i}y)$ and note that $u_i \in W^{s,2}(B_{2^i}) \cap L_{2s}^1(\ern) \subset W^{s,2}(B_{1}) \cap L_{2s}^1(\ern)$ is a weak solution of $L_{A_i} u_i=\mu_i$ in $B_{2^i} \supset B_1$. In addition, $\mu_i$ belongs to $C_0^\infty(\ern)$ and $A_i$ is $C^\alpha$ in $B_{2^{i+1}} \supset B_2$. Therefore, the estimate \eqref{mt0} also holds with $u$ replaced by $u_i$ and with $\mu$ replaced with $\mu_i$, so that for any $t^\prime \in (0,1)$ and any $i \in \mathbb{N}$ we have
	\begin{equation} \label{mt0scaled}
		\begin{aligned}
			& \left ( \int_{B_{2^{-i-1}}} |\tau^2_h u|^q dx \right )^{1/q} \\
			& = C \left ( \int_{B_{1/2}} |\tau^2_h u_i|^q dx \right )^{1/q} \\
			& \leq C \Bigg (|h|^{\beta t^\prime +\widetilde t(1-\beta)} \left (\int_{B_1} \int_{B_1} \frac{|u_i(x)-u_i(y)|^q}{|x-y|^{n+t^\prime q }}dydx \right )^{1/q} \\& \quad + |h|^{n\beta /q-n\beta+ 2s\beta} \left (|\mu_i|(B_1)+ \left (\mean{B_{1}} |u_i|^q dx \right)^{1/q} + \int_{\mathbb{R}^n \setminus B_{1}} \frac{|u_i(y)|}{|y|^{n+2s}}dy \right ) \Bigg ) \\
			& = C \Bigg (|h|^{\beta t^\prime +\widetilde t(1-\beta)} \left (\int_{B_{2^{-i}}} \int_{B_{2^{-i}}} \frac{|u(x)-u(y)|^q}{|x-y|^{n+t^\prime q }}dydx \right )^{1/q} \\& \quad + |h|^{n\beta /q-n\beta+ 2s\beta}  \left (|\mu|(B_{2^{-i}})+ \left ( \mean{B_{2^{-i}}} |u|^q dx \right)^{1/q} + \int_{\mathbb{R}^n \setminus B_{2^{-i}}} \frac{|u(y)|}{|y|^{n+2s}}dy \right ) \Bigg ) \\
			& \leq C \Bigg (|h|^{\beta t^\prime +\widetilde t(1-\beta)} \left (\int_{B_{2^{-i}}} \int_{B_{2^{-i}}} \frac{|u(x)-u(y)|^q}{|x-y|^{n+t^\prime q }}dydx \right )^{1/q} \\& \quad + |h|^{n\beta /q-n\beta+ 2s\beta} \left (|\mu|(B_1)+ \left ( \mean{B_{1}} |u|^qdx \right)^{1/q} + \int_{\mathbb{R}^n \setminus B_{1}} \frac{|u(y)|}{|y|^{n+2s}}dy \right ) \Bigg ),
		\end{aligned}
	\end{equation}
	where $C$ additionally depends on $i$.
	
	Next, choose $$\beta:=\frac{t/2}{n/q-n+2s}+\frac{1}{2}$$ and note that by the assumption \eqref{ranges} we have $\beta \in (0,1)$ and 
	\begin{equation} \label{gammabeta}
		n\beta /q-n\beta+ 2s\beta>t.
	\end{equation}
	Now (\ref{mt0}) applied with $t^\prime =0$ along with the observation that $n\beta /q-n\beta+ 2s\beta > \widetilde t(1-\beta)$ yields 
	\begin{align*}
		& \left ( \int_{B_{1/2}} |\tau^2_h u|^q dx \right )^{1/q} \\
		& \leq C |h|^{\widetilde t(1-\beta)} \left (|\mu|(B_1) + \left ( \mean{B_{1}} |u|^qdx \right)^{1/q} + \int_{\mathbb{R}^n \setminus B_{1}} \frac{|u(y)|}{|y|^{n+2s}}dy \right )
	\end{align*}
	for any $0<|h|<R_1^{1/\beta}$ and therefore taking into account Remark \ref{Niksmall},
	\begin{equation} \label{t0}
			||u||_{N^{\widetilde t(1-\beta),q}(B_{1/2})} 
			\leq C \left (|\mu|(B_1) + \left ( \mean{B_{1}} |u|^qdx \right)^{1/q} + \int_{\mathbb{R}^n \setminus B_{1}} \frac{|u(y)|}{|y|^{n+2s}}dy \right ) .
	\end{equation}
	
	Next, define recursively the sequence $\{t_i\}_{i=0}^\infty$ by 
	\begin{equation} \label{tidef}
		\begin{aligned}
			&t_0 :=\widetilde t(1-\beta) \in (0,1), \\ & t_{i}:=\beta \left (t_{i-1}-\frac{(1-\beta)(\widetilde t-t_{i-1})}{2 \beta} \right)+\widetilde t(1-\beta)=t_{i-1}+(\widetilde t -t_{i-1})(1-\beta)/2, \text{ } i \in \mathbb{N}.
		\end{aligned}
	\end{equation} 
	Observe that the sequence $\{t_i\}_{i =0}^\infty$ is strictly monotone increasing with 
	\begin{equation} \label{limti}
		\lim_{i \to \infty} t_i=\widetilde t
	\end{equation}
	and that for any $i \in \mathbb{N}$ we have $$ t_i \in (t_{i-1},t_{i-1}+\widetilde t(1-\beta)).$$ In addition, note that we can assume that $t_i \neq 1$ for any $i$, since otherwise we can slightly reduce $\beta$ such that (\ref{gammabeta}) still remains valid. Now define $$i_\star := \max \{i \geq 0 \mid t_i<1\}.$$ In particular, since the sequence $\{t_i\}_{i=0}^\infty$ is strictly increasing and $t_i \neq 1$ for all i, we clearly have $$t_i <1 \text{ for any } i \leq i_\star \text{ and } t_{i_{\star}+1}>1.$$
	Set 
	\begin{equation} \label{tstar}
		t_\star:=\min\{t,t_{i_{\star}+1}\}>1.
	\end{equation}
	Thus, for any $i \in \{1,...,i_\star+1\}$ in view of (\ref{mt0scaled}) applied with $i$ and $t^\prime=t_{i-1} \in (0,1)$, along with \eqref{gammabeta} and Proposition \ref{WNrel} for any $0<|h|<R_1^{1/\beta}$ we obtain 
	\begin{align*}
		& \left ( \int_{B_{2^{-i-1}}} |\tau^2_h u|^q dx \right )^{1/q} \\
		& \leq C \Bigg (|h|^{\beta t_{i-1} +\widetilde t(1-\beta)} \left (\int_{B_{2^{-i}}} \int_{B_{2^{-i}}} \frac{|u(x)-u(y)|^q}{|x-y|^{n+t^\prime q }}dydx \right )^{1/q} \\& \quad + |h|^{t} \left (|\mu|(B_1)+ \left ( \mean{B_{1}} |u|^qdx \right)^{1/q} + \int_{\mathbb{R}^n \setminus B_{1}} \frac{|u(y)|}{|y|^{n+2s}}dy \right ) \Bigg ) \\
		& \leq C |h|^{\min\{t,t_i\}} \left ( ||u||_{N^{t_{i-1},q}(B_{2^{-i}})} + |\mu|(B_{1})+ \left ( \mean{B_{1}} |u|^qdx \right)^{1/q} + \int_{\mathbb{R}^n \setminus B_{1}} \frac{|u(y)|}{|y|^{n+2s}}dy \right ) .
	\end{align*}
	Therefore, taking into account Remark \ref{Niksmall}, for $i \in \{1,...,i_\star+1\}$ we obtain the iterative scheme of estimates
	
	\begin{equation} \label{t1}
		\begin{aligned}
			& ||u||_{N^{\min \{t,t_i\},q}(B_{2^{-i-1}})} \\
			& \leq C \left (||u||_{N^{t_{i-1},q}(B_{2^{-i}})} + |\mu|(B_1) + \left ( \mean{B_{1}} |u|^qdx \right)^{1/q} + \int_{\mathbb{R}^n \setminus B_{1}} \frac{|u(y)|}{|y|^{n+2s}}dy \right ) .
		\end{aligned}
	\end{equation}
	Iterating these estimates and finally applying \eqref{t0} yields
	\begin{equation} \label{hdaffinenicem2}
		\begin{aligned}
			& ||u||_{N^{t_\star,q}(B_{\rho_\star})} \\ & \leq C \bigg ( ||u||_{N^{t_{0},q}(B_{1/2})} + |\mu|(B_1) + \left ( \mean{B_{1}} |u|^qdx \right)^{1/q} + \int_{\mathbb{R}^n \setminus B_1} \frac{|u(y)|}{|y|^{n+2s}}dy \bigg ) \\
			& \leq C \bigg (\left ( \mean{B_{1}} |u|^qdx \right)^{1/q} + \int_{\mathbb{R}^n \setminus B_1} \frac{|u(y)|}{|y|^{n+2s}}dy + |\mu|(B_1) \bigg ),
		\end{aligned}
	\end{equation}
	where $\rho_\star:=\left (\frac{1}{2} \right)^{i_\star+2}$.
	
	\textbf{Step 3}: Improved differentiability. \par
	Since in general $t_\star<t$, let us improve the differentiability gain up to order $t$ in this case. \par 
	As in step 1, fix $z \in B_{1/2}$ and some $|h|>0$ such that $|h|^\beta<R_1$. Moreover, let $m_0 \geq 1$ again be the smallest integer such that $2^{m_0} 18|h|^\beta>1/4$. By Lemma \ref{tailestaffine} and H\"older's inequality, for the affine function
	\begin{equation} \label{affinechoice}
		\ell(x):=(u)_{B_{18|h|^\beta}(z)} + (\nabla u)_{B_{18|h|^\beta}(z)} \cdot (x-z)
	\end{equation}
	and any $\gamma^\prime \in (0,\gamma)$,
	we have
	\begin{equation} \label{annuli2x}
		\begin{aligned}
			& \mean{B_{18|h|^\beta}(z)} |u-\ell| dx + |h|^{2s\beta} \int_{\mathbb{R}^n \setminus B_{18|h|^\beta}(z)} \frac{|u-\ell(y)| }{|z-y|^{n+2s}}dy \\
			& \leq C |h|^\beta \Bigg ( \sum_{k=0}^{m_0+1} 2^{(1-2s)k} \mean{B_{2^k 18|h|^\beta}(z)} |\nabla u-(\nabla u)_{B_{2^k 18|h|^\beta}(z)}|dx \\ & \quad + |h|^{(2s-1)\beta} \sum_{k=0}^{m_0+1} \mean{B_{2^k 18|h|^\beta}(z)} |\nabla u|dx \\ & \quad + |h|^{(2s-1)\beta} \int_{\mathbb{R}^n \setminus B_{2^{m_0+1}18|h|^\beta}(z)} \frac{|u(y)-(u)_{B_{2^{m_0+1}18|h|^\beta}(z)}|} {|z-y|^{n+2s}} dy \Bigg ) \\
			& \leq C |h|^\beta \sum_{k=0}^{m_0+1} 2^{(1-2s)k} \left (\mean{B_{2^k 18|h|^\beta}(z)} |\nabla u-(\nabla u)_{B_{2^k 18|h|^\beta}(z)}|^q dx \right )^{1/q} \\ & \quad + C |h|^{\widetilde t \beta} \sum_{k=0}^{m_0+1} 2^{(\widehat t-2s)k} \left ( \mean{B_{2^k 18|h|^\beta}(z)} |\nabla u|^q dx \right )^{1/q} \\ & \quad + C |h|^{2s\beta} \left (\left (\mean{B_1}|u|^q dx \right )^{1/q} + \int_{\mathbb{R}^n \setminus B_1} \frac{|u(y)|} {|y|^{n+2s}} dy \right ),
		\end{aligned}
	\end{equation}
	where we also used that for any $k \in \{0,...,m_0+1\}$, $$|h|^{2s\beta} = |h|^{\widetilde t \beta} (2^{(m_0+1)}|h|)^{(2s-\widetilde t) \beta} 2^{(\widetilde t-2s)(m_0+1)} \leq |h|^{\widetilde t \beta} 2^{(\widehat t-2s)k} .$$
	
	In view of \eqref{hest} with respect to the affine function $\ell$ defined in \eqref{affinechoice} and $t^\prime=1$, along with \eqref{annuli2x} and the fractional Poincar\'e inequality we obtain
	\begin{equation} \label{hest2}
		\begin{aligned}
			& \left (\int_{B_{|h|^\beta}(z)} |\tau^2_h u|^q dx \right )^{1/q} \\
			& \leq C |h|^{n\beta /q-n\beta+ 2s\beta} |\mu|(B_{18|h|^\beta}(z)) \\ & \quad + C |h|^{n\beta /q+\widetilde t(1-\beta)+\beta} \sum_{k=0}^{m_0+1} 2^{(1-2s)k} \left (\mean{B_{2^k 18|h|^\beta}(z)} |\nabla u-(\nabla u)_{B_{2^k 18|h|^\beta}(z)}|^q dx \right )^{1/q} \\ & \quad + C |h|^{n\beta /q+\widetilde t \beta+\widetilde t(1-\beta)} \sum_{k=0}^{m_0+1} 2^{(\widehat t-2s)k} \left ( \mean{B_{2^k 18|h|^\beta}(z)} |\nabla u|^q dx \right )^{1/q} \\ & \quad + C |h|^{(n/q+2s)\beta+\widetilde t(1-\beta)} \left (\left (\mean{B_1}|u|^q dx \right )^{1/q} + \int_{\mathbb{R}^n \setminus B_1} \frac{|u(y)|} {|y|^{n+2s}} dy \right ) \\
			& \leq C |h|^{n\beta /q-n\beta+ 2s\beta} |\mu|(B_{18|h|^\beta}(z)) \\ & \quad + C |h|^{(1+\gamma^\prime)\beta+\widetilde t(1-\beta)} \sum_{k=0}^{m_0+1} 2^{(1+\gamma^\prime-2s-n/q)k} \left ( \int_{B_{2^{k}18|h|^\beta}(z)} \int_{B_{2^{k}18|h|^\beta}(z)} \frac{|\nabla u(x)-\nabla u(y)|^q}{|x-y|^{n+\gamma^\prime q }}dydx \right )^{1/q} \\ & \quad + C |h|^{\widetilde t} \sum_{k=0}^{m_0+1} 2^{(\widehat t-2s-n/q)k} \left ( \int_{B_{2^k 18|h|^\beta}(z)} |\nabla u|^q dx \right )^{1/q} \\ & \quad + C |h|^{(n/q+2s)\beta} \left (\left (\mean{B_1}|u|^q dx \right )^{1/q} + \int_{\mathbb{R}^n \setminus B_1} \frac{|u(y)|} {|y|^{n+2s}} dy \right ).
		\end{aligned}
	\end{equation}
	
	As in step 2, there exists some $N \in \mathbb{N}$ with $N \leq c|h|^{-\beta n}$ for some $c=c(n)$ and a finite sequence of points $\{z_j\}_{j=1}^N$ such that \eqref{Bcover} and \eqref{overlap} are satisfied. Summing the estimates deduced in \eqref{hest2} over $j=1,...,N$ now yields
		\begin{align*}
			& \left ( \int_{B_{1/2}} |\tau^2_h u|^q dx \right )^{1/q} \\
			& \leq C \sum_{j=1}^N \left ( \int_{B_{|h|^\beta}(z_j)} |\tau^2_h u|^q dx \right )^{1/q} \\
			& \leq C \sum_{j=1}^N \Bigg (|h|^{n\beta /q-n\beta+ 2s\beta} |\mu|(B_{18|h|^\beta}(z)) \\ & \quad + |h|^{(1+\gamma^\prime)\beta+\widetilde t(1-\beta)} \sum_{k=0}^{m_0+1} 2^{(1+\gamma^\prime-2s-n/q)k} \left ( \int_{B_{2^{k}18|h|^\beta}(z)} \int_{B_{2^{k}18|h|^\beta}(z)} \frac{|\nabla u(x)-\nabla u(y)|^q}{|x-y|^{n+\gamma^\prime q }}dydx \right )^{1/q} \\ & \quad + |h|^{\widetilde t} \sum_{k=0}^{m_0+1} 2^{(\widehat t-2s-n/q)k} \left ( \int_{B_{2^k 18|h|^\beta}(z)} |\nabla u|^q dx \right )^{1/q} \\ & \quad + |h|^{(n/q+2s)\beta} \left (\left (\mean{B_1}|u|^q dx \right )^{1/q} + \int_{\mathbb{R}^n \setminus B_1} \frac{|u(y)|} {|y|^{n+2s}} dy \right ) \Bigg ) \\
			& \leq C \Bigg (|h|^{n\beta /q-n\beta+ 2s\beta} |\mu|(B_1) + |h|^{\beta t^\prime +\widetilde t(1-\beta)} \underbrace{\sum_{k=1}^{\infty} 2^{(\widetilde t-2s-n/q+n)k}}_{< \infty} \left ( \int_{B_{1}} \int_{B_1} \frac{|\nabla u(x)-\nabla u(y)|^q}{|x-y|^{n+\gamma^\prime q }}dydx \right )^{1/q} \\
			& \quad + |h|^{\widetilde t} \underbrace{\sum_{k=1}^{\infty} 2^{(\widehat t-2s-n/q+n)k}}_{< \infty} \left ( \int_{B_{1}} |\nabla u|^q dx \right )^{1/q} \\
			& \quad + |h|^{n\beta /q-n\beta+2s\beta} \left ( \left ( \mean{B_{1}} |u|^q dx \right )^{1/q} + \int_{\mathbb{R}^n \setminus B_{1}} \frac{|u(y)|}{|y|^{n+2s}}dy \right ) \Bigg ) \\
			& \leq C \Bigg ( |h|^{(1+\gamma^\prime)\beta +\widetilde t(1-\beta)} \left ( \int_{B_{1}} \int_{B_1} \frac{|\nabla u(x)-\nabla u(y)|^q}{|x-y|^{n+\gamma^\prime q }}dydx \right )^{1/q} + |h|^{\widetilde t} \left ( \int_{B_{1}} |\nabla u|^q dx \right )^{1/q} \\
			& \quad + |h|^{n\beta /q-n\beta+ 2s\beta} \left (|\mu|(B_1) + \left (\mean{B_{1}} |u|^q dx\right )^{1/q} + \int_{\mathbb{R}^n \setminus B_{1}} \frac{|u(y)|}{|y|^{n+2s}}dy \right ) \Bigg ) 
		\end{align*}
	for any $\gamma^\prime \in (0,\gamma)$, where we used \eqref{wtt} and \eqref{wtt1} in order to ensure the convergence of the infinite sums. \par
	In view of a scaling argument similar to the one we carried out in step 2, for any $\gamma^\prime \in (0,\gamma)$ and any $i \in \mathbb{N}$ we have
	\begin{equation} \label{mt01scaled}
		\begin{aligned}
			& \left ( \int_{B_{2^{-i-1}}} |\tau^2_h u|^q dx \right )^{1/q} \\
			& \leq C \Bigg ( |h|^{(1+\gamma^\prime)\beta  +\widetilde t(1-\beta)} \left ( \int_{B_{2^{-i}}} \int_{B_{2^{-i}}} \frac{|\nabla u(x)-\nabla u(y)|^q}{|x-y|^{n+\gamma^\prime q }}dydx \right )^{1/q} \\
			& \quad + |h|^{t} \left (|\mu|(B_1) + \left ( \int_{B_{2^{-i}}} |\nabla u|^q dx \right )^{1/q} + \left ( \mean{B_{1}} |u|^q dx\right )^{1/q} + \int_{\mathbb{R}^n \setminus B_{1}} \frac{|u(y)|}{|y|^{n+2s}}dy \right ) \Bigg )  ,
		\end{aligned}
	\end{equation}
	where $C$ additionally depends on $i$ and we also used that in view of \eqref{gammabeta}, we have $$t<\min \{\widetilde t,n\beta /q-n\beta+ 2s\beta\}.$$ \par 
	We now revisit the sequence $\{t_i\}_{i=0}^\infty$ defined in \eqref{tidef}. Since in this step we assume that $t_\star<t$, taking also into consideration \eqref{limti}, we obtain that there clearly exists some integer $\widetilde i_\star >i_\star+1$ such that $t_{\widetilde i_\star} \geq t$. \par 
	Hence, for any $i \in \{i_\star+2,...,t_{\widetilde i_\star}\}$ in view of (\ref{mt01scaled}) applied with $i$ and $\gamma^\prime=t_{i-1}-1 \in (0,\gamma)$, along with Proposition \ref{WNrel} for any $0<|h|<R_1^{1/\beta}$ we obtain 
	\begin{align*}
		& \left ( \int_{B_{2^{-i-1}}} |\tau^2_h u|^q dx \right )^{1/q} \\
		& \leq C \Bigg ( |h|^{t_{i-1}\beta  +\widetilde t(1-\beta)} \left ( \int_{B_{2^{-i}}} \int_{B_{2^{-i}}} \frac{|\nabla u(x)-\nabla u(y)|^q}{|x-y|^{n+\gamma^\prime q }}dydx \right )^{1/q} \\
		& \quad + |h|^{t} \left (|\mu|(B_1) + \left ( \int_{B_{2^{-i}}} |\nabla u|^q dx \right )^{1/q} + \left ( \mean{B_{1}} |u|^q dx\right )^{1/q} + \int_{\mathbb{R}^n \setminus B_{1}} \frac{|u(y)|}{|y|^{n+2s}}dy \right ) \Bigg ) \\
		& \leq C |h|^{\min\{t,t_i\}} \left ( ||u||_{N^{t_{i-1},q}(B_{2^{-i}})} + |\mu|(B_{1})+ \left ( \mean{B_{1}} |u|^qdx \right)^{1/q} + \int_{\mathbb{R}^n \setminus B_{1}} \frac{|u(y)|}{|y|^{n+2s}}dy \right ) .
	\end{align*}
	Therefore, again taking into account Remark \ref{Niksmall}, for $i \in \{i_\star+2,...,t_{\widetilde i_\star}\}$ we obtain the iterative scheme of estimates
	
	\begin{equation} \label{t11}
		\begin{aligned}
			& ||u||_{N^{\min \{t,t_i\},q}(B_{2^{-i-1}})} \\
			& \leq C \left (||u||_{N^{t_{i-1},q}(B_{2^{-i}})} + |\mu|(B_1) + \left ( \mean{B_{1}} |u|^qdx \right)^{1/q} + \int_{\mathbb{R}^n \setminus B_{1}} \frac{|u(y)|}{|y|^{n+2s}}dy \right ) .
		\end{aligned}
	\end{equation}
	Iterating these estimates and finally applying \eqref{hdaffinenicem2} yields
	\begin{equation} \label{hdaffinenicem21}
		\begin{aligned}
			& ||u||_{N^{t,q}(B_{\rho})} \\ & \leq C \bigg ( ||u||_{N^{t_{\star},q}(B_{\rho_\star})} + |\mu|(B_1) + \left ( \mean{B_{1}} |u|^qdx \right)^{1/q} + \int_{\mathbb{R}^n \setminus B_1} \frac{|u(y)|}{|y|^{n+2s}}dy \bigg ) \\
			& \leq C \bigg (\left ( \mean{B_{1}} |u|^qdx \right)^{1/q} + \int_{\mathbb{R}^n \setminus B_1} \frac{|u(y)|}{|y|^{n+2s}}dy + |\mu|(B_1) \bigg ),
		\end{aligned}
	\end{equation}
	where $\rho:=\left (\frac{1}{2} \right)^{\widetilde i_\star+1}$.
	In particular, since $\widetilde i_\star$ only depends on $n,s,\Lambda,\alpha,\Gamma,t$ and $q$, the same is true for $\rho$.
\end{proof}

By scaling and covering arguments, we arrive at the following version of Theorem \ref{HD} for weak solutions instead of SOLA.
\begin{corollary} \label{cor:higherdiff} 
	Let $s \in (1/2,1)$, $\mu \in C_0^\infty(\mathbb{R}^n)$ and let $u \in W^{s,2}_{loc}(\Omega) \cap L^1_{2s}(\ern)$ be a weak solution of $L_A u=\mu$ in a bounded domain $\Omega$. In addition, assume that $A \in \mathcal{L}_0(\Lambda)$ is $C^\alpha$ in $\Omega$ with respect to some $\Gamma>0$ and some $\alpha>0$. Then for all 
	\begin{equation} \label{range1}
	0 < \gamma<\min \{\alpha,2s-1\}, \quad q \in \bigg [1,\frac{n}{n+1+\gamma-2s} \bigg ),
	\end{equation}
	we have $u \in W^{1+\gamma,q}_{loc}(\Omega)$. Moreover, for any $R>0$ and any $x_0 \in \Omega$ such that $B_R(x_0) \subset \Omega$, we have
	\begin{equation} \label{HDE1}
		\begin{aligned}
			& \left (\mean{B_{R/2}(x_0)} |\nabla u|^q dx \right )^{1/q} + R^{\gamma }\left (\mean{B_{R/2}(x_0)} \int_{B_{R/2}(x_0)} \frac{|\nabla u(x)-\nabla u(y)|^q}{|x-y|^{n+\gamma q}} dydx \right )^{1/q} \\ & \leq \frac{C}{R} \left (\left (\mean{B_{R}(x_0)} |u|^q dx \right )^{1/q} + R^{2s}\int_{\mathbb{R}^n \setminus B_R(x_0)} \frac{|u(y)|}{|x_0-y|^{n+2s}}dy + R^{2s-n} |\mu|(B_{3R/4}(x_0)) \right ),
		\end{aligned}
	\end{equation}
	where $C$ depends only on $n,s,\Lambda,\alpha,\gamma,q,\Gamma$ and $\max\{\textnormal{diam}(\Omega),1\}$.
\end{corollary}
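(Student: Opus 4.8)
\emph{Approach.} The plan is to obtain Corollary \ref{cor:higherdiff} from the scale‑invariant Nikolskii estimate of Lemma \ref{thm:higherdiffscalemeasdata} by a routine rescaling‑and‑covering argument, using the embedding $N^{t,q}\hookrightarrow W^{1+\gamma,q}$ furnished by Proposition \ref{WNrel}. First I would fix, given $\gamma$ and $q$ as in \eqref{range1}, an exponent $t$ with $1+\gamma<t<\min\{1+\alpha,2s\}$ close enough to $1+\gamma$ that one still has $q\in\big[1,\tfrac{n}{n+t-2s}\big)$; this is possible precisely because $\gamma<\min\{\alpha,2s-1\}$ and $q<\tfrac{n}{n+1+\gamma-2s}$, both quantities varying continuously with the exponents. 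Since $B_R(x_0)\subset\Omega$ forces $R\le\max\{\textnormal{diam}(\Omega),1\}$, after replacing $\Gamma$ by $\Gamma\max\{\textnormal{diam}(\Omega),1\}^\alpha$ one may rescale any ball of radius $\le R$ to the unit ball without losing control of the Hölder constant of the coefficient.

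\emph{Local estimate.} Fix a ball $B_R(x_0)\subset\Omega$ and let $\rho=\rho(n,s,\Lambda,\alpha,\Gamma,t,q)\in(0,1)$ be the radius produced by Lemma \ref{thm:higherdiffscalemeasdata}. For every $z\in\overline{B_{R/2}(x_0)}$ the function $u$ is a weak solution of $L_Au=\mu$ in $B_{R/8}(z)$, $A$ is $C^\alpha$ in $B_{R/4}(z)\subset B_{3R/4}(x_0)\subset\Omega$, and $\mu\in C_0^\infty(\mathbb{R}^n)$. Rescaling $B_{R/8}(z)$ to $B_1$ and applying Lemma \ref{thm:higherdiffscalemeasdata} followed by Proposition \ref{WNrel} (legitimate since $1+\gamma<t$), then scaling back, yields
\begin{equation*}
\begin{aligned}
&\left(\mean{B_{\rho R/8}(z)}|\nabla u|^q\,dx\right)^{1/q}+R^{\gamma}\left(\mean{B_{\rho R/8}(z)}\int_{B_{\rho R/8}(z)}\frac{|\nabla u(x)-\nabla u(y)|^q}{|x-y|^{n+\gamma q}}\,dy\,dx\right)^{1/q}\\
&\qquad\lesssim\frac{1}{R}\left(\left(\mean{B_{R/8}(z)}|u|^q\,dx\right)^{1/q}+R^{2s}\int_{\mathbb{R}^n\setminus B_{R/8}(z)}\frac{|u(y)|}{|z-y|^{n+2s}}\,dy+R^{2s-n}|\mu|(B_{R/8}(z))\right),
\end{aligned}
\end{equation*}
where the scale invariance of averages keeps the powers of $R$ transparent and the factor $R^{2s-n}|\mu|(B_{R/8}(z))$ arises from $|\widetilde\mu|(B_1)$ after the change of variables. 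By Lemma \ref{tr} the tail centered at $z$ on scale $R/8$ is bounded by the tail centered at $x_0$ on scale $R$ plus $\mean{B_R(x_0)}|u|\,dx\le(\mean{B_R(x_0)}|u|^q\,dx)^{1/q}$, while $|\mu|(B_{R/8}(z))\le|\mu|(B_{3R/4}(x_0))$ because $B_{R/8}(z)\subset B_{5R/8}(x_0)$ — this is the origin of the ball $B_{3R/4}(x_0)$ in \eqref{HDE1}. Hence the right‑hand side above is dominated, uniformly in $z$, by $R^{-1}$ times the bracket appearing on the right of \eqref{HDE1}.

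\emph{Covering and conclusion.} Next I would choose a finite set $\{z_i\}_{i=1}^N\subset\overline{B_{R/2}(x_0)}$, with $N$ bounded in terms of $n$ and $\rho$ only, such that the balls $B_{\rho R/16}(z_i)$ already cover $\overline{B_{R/2}(x_0)}$; then any $x,y\in B_{R/2}(x_0)$ with $|x-y|<\rho R/16$ lie in a common ball $B_{\rho R/8}(z_i)$. Consequently the part of the Gagliardo integral of $\nabla u$ over $B_{R/2}(x_0)$ with $|x-y|<\rho R/16$ is controlled by $\sum_i$ of the local seminorms, whereas the part with $|x-y|\ge\rho R/16$ is crudely bounded, using $|\nabla u(x)-\nabla u(y)|^q\le 2^{q+1}(|\nabla u(x)|^q+|\nabla u(y)|^q)$, by $CR^{-\gamma q}\mean{B_{R/2}(x_0)}|\nabla u|^q\,dx$; and $\mean{B_{R/2}(x_0)}|\nabla u|^q\,dx\lesssim\sum_i\mean{B_{\rho R/8}(z_i)}|\nabla u|^q\,dx$. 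Summing the $N$ local estimates and absorbing the finitely many constants into $C$ gives \eqref{HDE1}, and the membership $u\in W^{1+\gamma,q}_{loc}(\Omega)$ follows by covering an arbitrary $\Omega'\Subset\Omega$ by finitely many balls of the form $B_{R/2}(x_0)$ with $B_R(x_0)\subset\Omega$ and gluing the seminorms in the same fashion. The only point requiring genuine care is this splitting and gluing of the fractional seminorm over the cover — in particular the elementary far‑off‑diagonal bound — together with keeping the powers of $R$ straight under the rescaling; unlike in the proof of Lemma \ref{thm:higherdiffscalemeasdata}, the nonlocal tail terms cause no difficulty here because the number of balls in the cover stays bounded.
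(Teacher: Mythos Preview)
Your proposal is correct and follows essentially the same route as the paper: choose $t\in(1+\gamma,\min\{1+\alpha,2s\})$ with $q<\tfrac{n}{n+t-2s}$, rescale small balls $B_{R/8}(z)$ to the unit ball, apply Lemma~\ref{thm:higherdiffscalemeasdata} together with the embedding of Proposition~\ref{WNrel}, push the tail back to scale $R$ via Lemma~\ref{tr}, and then cover $\overline{B_{R/2}(x_0)}$ by finitely many such balls. The paper first normalizes to $R=1$, $x_0=0$ and only afterwards rescales, whereas you rescale each small ball directly, but this is a cosmetic difference; your explicit near/far-diagonal splitting of the Gagliardo seminorm in the gluing step is in fact more detailed than the paper's version, which simply sums the $W^{1+\gamma,q}$ norms over the cover.
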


\begin{proof}
	First, we prove the result in the case when $R=1$, $x_0=0$ and $\Omega=B_1$. \par 
	Let the radius $\rho=\rho(n,s,\Lambda,\alpha,\gamma,q,\Gamma) \in (0,1)$ be given by Lemma \ref{thm:higherdiffscalemeasdata}. \par 
	Now fix some $t$ and some $q$ in the ranges \eqref{range1}. Then there exists some small enough $\varepsilon >0$ such that for $t:=1+\gamma+\varepsilon>1+\gamma$, we have
	$$
		1 < t<\min \{\alpha,2s-1\}, \quad q \in \bigg [1,\frac{n}{n+t-2s} \bigg ).
	$$ In other words, $t$ and $q$ satisfy the assumptions \eqref{ranges} from Lemma \ref{thm:higherdiffscalemeasdata}.
	
For $z \in \overline B_{1/2}$, consider the scaled functions $u_z \in W^{s,2}(B_1) \cap L^1_{2s}(\ern)$ and $\mu_z \in C_0^\infty(\ern)$ given by
$$ u_z(x):=u(x/8+z), \quad \mu_z(x):=R^{2s} \mu(x/8+z)$$
and also 
$$A_z(x,y):= A(x/8+z,y/8+z).$$
We note that $u_z$ is a weak solution of $L_{A_z} u_z = \mu_z$ in $B_1$ and observe that $A_z \in \mathcal{L}_0(\Lambda)$ is $C^\alpha$ in $B_2$ with respect to $\Gamma/8$ and thus also with respect to $\Gamma$.
Thus, we can apply Lemma \ref{thm:higherdiffscalemeasdata} to $u_z$, which together with Proposition \ref{WNrel}, changing variables and Lemma \ref{tr} leads to
\begin{align*}
	||u||_{W^{1+\gamma,q}(B_{\rho/8}(z))}
	& \leq C ||u||_{N^{t,q}(B_{\rho/8}(z))} \\ & = C ||u_z||_{N^{t,q}(B_{\rho})} \\ & \leq C \left ( \left (\mean{B_1} |u_z|^q dx \right )^{1/q} + \int_{\mathbb{R}^n \setminus B_1} \frac{|u_z(y)|}{|y|^{n+2s}}dy + |\mu_z|(B_1) \right ) \\
	& \leq C \left ( \left (\mean{B_{1/8}(z)} |u|^q dx \right )^{1/q} + \int_{\mathbb{R}^n \setminus B_{1/8}(z)} \frac{|u(y)|}{|z-y|^{n+2s}}dy + |\mu|(B_{1/8}(z)) \right ) \\
	& \leq C \left ( \left (\mean{B_{1}} |u|^q dx \right )^{1/q} + \int_{\mathbb{R}^n \setminus B_{1}} \frac{|u(y)|}{|y|^{n+2s}}dy + |\mu|(B_{3/4}) \right ),
\end{align*}
where all constants depend only on $n,s,\Lambda,\alpha,\Gamma,t,q$. Since $\left \{B_{\rho /8}(z) \right \}_{z \in \overline B_{1/2}}$ is an open covering of $\overline B_{1/2}$ and $\overline B_{1/2}$ is compact, there exists a finite subcover $\left \{B_{\rho /8}(z_i) \right \}_{i=1}^l$ of $B_{1/2}$. Therefore, applying the last display to $z=z_i$ and summing the resulting estimates over $i=1,...,l$, we arrive at the estimate
\begin{align*}
	||u||_{W^{1+\gamma,q}(B_{1/2})}
	\leq C \left ( \left (\mean{B_{1}} |u|^q dx \right )^{1/q} + \int_{\mathbb{R}^n \setminus B_{1}} \frac{|u(y)|}{|y|^{n+2s}}dy + |\mu|(B_{3/4}) \right ),
\end{align*}
where $C$ depends only on $n,s,\Lambda,\alpha,\Gamma,q$.
This implies the estimate \eqref{HDE1} in the case when $R=1$, $x_0=0$ and $\Omega=B_1$. \par 
Next, let us prove the desired estimate \eqref{HDE1} for any $R \in (0,1]$, $x_0 \in \Omega$ such that $B_R(x_0) \Subset \Omega$.
Consider the scaled functions
$$ u_1(x):=u(Rx+x_0), \quad \mu_1(x):=R^{2s} \mu(Rx+x_0)$$
and the scaled coefficient
$$ A_1(x,y):=A(Rx+x_0,Ry+x_0)$$
and observe that $u_1 \in W^{s,2}(B_1) \cap L^1_{2s}(\mathbb{R}^n)$ is a weak solution of $L_{A_1} u_1=\mu_1$ in $B_1$ and that $A_1$ belongs to $\mathcal{L}_0(\Lambda)$ and is $C^\alpha$ in $B_1$ with respect to $\Gamma R^{\alpha}$ and thus also with respect to $\Gamma \max\{\textnormal{diam}(\Omega),1\}^\alpha$. Therefore, by the first part of the proof along with rescaling we deduce that
\begin{align*}
	& \left (\mean{B_{R/2}(x_0)} |\nabla u|^q dx \right )^{1/q} + R^{\gamma }\left (\mean{B_{R/2}(x_0)} \int_{B_{R/2}(x_0)} \frac{|\nabla u(x)-\nabla u(y)|^q}{|x-y|^{n+\gamma q}} dydx \right )^{1/q} \\
	& = C R^{-1} \left (\int_{B_{1/2}} |\nabla u|^q dx \right )^{1/q} + C R^{-1}\left (\int_{B_{1/2}} \int_{B_{1/2}} \frac{|\nabla u(x)-\nabla u(y)|^q}{|x-y|^{n+\gamma q}} dydx \right )^{1/q} \\
	& \leq C R^{-1} ||u||_{W^{1+\gamma,q}(B_{1/2})} \\
	& \leq \frac{C}{R} \left (\left (\int_{B_{1}} |u|^q dx \right )^{1/q} + \int_{\mathbb{R}^n \setminus B_1} \frac{|u(y)|}{|x_0-y|^{n+2s}}dy + |\mu|(B_1) \right ) \\
	& = \frac{C}{R} \left (\left (\mean{B_{R}(x_0)} |u|^q dx \right )^{1/q} + R^{2s}\int_{\mathbb{R}^n \setminus B_R(x_0)} \frac{|u(y)|}{|x_0-y|^{n+2s}}dy + R^{2s-n} |\mu|(B_{3R/4}(x_0)) \right ),
\end{align*}
where all constants depend only on $n,s,\Lambda,\alpha,\Gamma,\gamma,q$ and $\max\{\textnormal{diam}(\Omega),1\}$, so that the proof of the estimate \eqref{HDE1} is finished. \par 
Finally, since for $q$ in the range given by \eqref{range1} we always have $q<2$, we also conclude that the right-hand side of \eqref{HDE1} is always finite, so that $u \in W^{1+\gamma,q}_{loc}(\Omega)$. Thus, the proof is finished.
\end{proof}

As a consequence of the gradient estimate \eqref{HDE1}, we obtain the following comparison estimate on the gradient level.

\begin{corollary} \label{cor:gradcomp} 
	Let $s \in (1/2,1)$, $R \in (0,1]$, $x_0 \in \ern$ and assume that $A \in \mathcal{L}_0(\Lambda)$ is $C^\alpha$ in $B_{2R}(x_0)$ with respect to some $\alpha>0$ and some $\Gamma>0$.
	Moreover, let $\mu \in C_0^\infty(\mathbb{R}^n)$ and assume that $u \in W^{s,2}(B_{2R}(x_0)) \cap L^1_{2s}(\mathbb{R}^n)$ is a weak solution of $L_{A} u = \mu$ in $\Omega$. Moreover, consider the weak solution $v \in W^{s,2}(B_{2R}(x_0)) \cap L^1_{2s}(\mathbb{R}^n)$ of
	\begin{equation} \label{constcof31xz}
		\begin{cases} \normalfont
			L_{A} v = 0 & \text{ in } B_{R}(x_0) \\
			v = u & \text{ a.e. in } \mathbb{R}^n \setminus B_{R}(x_0).
		\end{cases}
	\end{equation}
	Then for any $q \in \Big [1,\frac{n}{n+1-2s} \Big )$, we have the comparison estimate
	$$
	\left (\mean{B_{R/2}(x_0)} |\nabla u-\nabla v|^q dx \right )^{1/q} \leq C R^{2s-1-n} |\mu|(B_R(x_0)),
	$$
	where $C$ depends only on $n,s,\Lambda,\alpha,\Gamma,q$.
\end{corollary}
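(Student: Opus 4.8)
The plan is to derive this from the higher-differentiability estimate \eqref{HDE1} of Corollary \ref{cor:higherdiff}, applied to the comparison function $w:=u-v$, together with the zero-order comparison estimate of Proposition \ref{meascomp}, which controls the $L^q$-term produced by \eqref{HDE1}. First I would record the elementary properties of $w$: since $u,v\in W^{s,2}(B_{2R}(x_0))\cap L^1_{2s}(\ern)$ and $v=u$ a.e.\ in $\ern\setminus B_R(x_0)$, the difference $w=u-v$ lies in $W^{s,2}(B_{2R}(x_0))\cap L^1_{2s}(\ern)$ and vanishes a.e.\ in $\ern\setminus B_R(x_0)$; in particular $w$ has compact support and its nonlocal tails are trivially finite. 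Subtracting the weak formulations of $L_Au=\mu$ and $L_Av=0$ in $B_R(x_0)$ shows that $w$ is a weak solution of $L_A w=\mu$ in $B_R(x_0)$. Moreover, the hypothesis that $A$ is $C^\alpha$ in $B_{2R}(x_0)$ with respect to $\Gamma$ forces (cf.\ Definition \ref{Calpha}, using that $\dist(\cdot,\partial B_R(x_0))\le\dist(\cdot,\partial B_{2R}(x_0))$) that $A$ is $C^\alpha$ in $B_R(x_0)$ with respect to the same $\Gamma$.

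Next, given $q\in[1,\tfrac{n}{n+1-2s})$, I would fix $\gamma\in\big(0,\min\{\alpha,\,2s-1,\,\tfrac{n}{q}-(n+1-2s)\}\big)$; this interval is nonempty exactly because $q<\tfrac{n}{n+1-2s}$, and then $q\in[1,\tfrac{n}{n+1+\gamma-2s})$, so the pair $(\gamma,q)$ satisfies \eqref{range1}. Applying Corollary \ref{cor:higherdiff} to $w$ with $\Omega:=B_R(x_0)$ — this is exactly the configuration already treated in the proof of that corollary, namely the base case with domain and solution ball coinciding; alternatively one applies \eqref{HDE1} on concentric balls $B_\varrho(x_0)\Subset B_R(x_0)$ and lets $\varrho\uparrow R$, the tail and the $L^q$-average passing to the expected limits while $\nabla w\in L^q(B_{R/2}(x_0))$ is already guaranteed by Corollary \ref{cor:higherdiff} — yields
\begin{align*}
\Big(\mean{B_{R/2}(x_0)}|\nabla w|^q dx\Big)^{1/q}
&\le \frac{C}{R}\bigg(\Big(\mean{B_R(x_0)}|w|^q dx\Big)^{1/q}\\
&\qquad +R^{2s}\int_{\ern\setminus B_R(x_0)}\frac{|w(y)|}{|x_0-y|^{n+2s}}dy +R^{2s-n}|\mu|(B_{3R/4}(x_0))\bigg).
\end{align*}
The crucial point is that the nonlocal tail term on the right vanishes because $w\equiv0$ a.e.\ outside $B_R(x_0)$, and that $|\mu|(B_{3R/4}(x_0))\le|\mu|(B_R(x_0))$.

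Finally I would invoke Proposition \ref{meascomp}: since $[1,\tfrac{n}{n+1-2s})\subset[1,\tfrac{n}{n-2s})$, our $q$ is admissible there, giving $\big(\mean{B_R(x_0)}|u-v|^q dx\big)^{1/q}\le C R^{2s-n}|\mu|(B_R(x_0))$. Substituting into the previous display and recalling $R\le1$ (so that the constant from \eqref{HDE1}, which depends on $\max\{\diam B_R(x_0),1\}\le2$, together with the choice of $\gamma$, which depends on $n,s,\alpha,q$, introduces only a dependence on $n,s,\Lambda,\alpha,\Gamma,q$) produces
$$\Big(\mean{B_{R/2}(x_0)}|\nabla u-\nabla v|^q dx\Big)^{1/q}\le C R^{2s-1-n}|\mu|(B_R(x_0)),$$
which is the claim.

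In short, this corollary is essentially a bookkeeping exercise combining two results already established. The only points requiring care are verifying that $w$ belongs to the function class for which \eqref{HDE1} is available (weak solution in an open domain, locally in $W^{s,2}$, globally in $L^1_{2s}$) and that \eqref{HDE1} may legitimately be used with the solution ball equal to the domain; I do not expect any substantial obstacle beyond matching these hypotheses and the two $q$-ranges.
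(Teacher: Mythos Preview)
Your proposal is correct and follows essentially the same approach as the paper: set $w=u-v$, note that $w$ is a weak solution of $L_A w=\mu$ in $B_R(x_0)$ vanishing outside $B_R(x_0)$, apply the higher-differentiability estimate \eqref{HDE1} from Corollary \ref{cor:higherdiff} to $w$ (the tail term vanishes), and bound the remaining $L^q$-average via Proposition \ref{meascomp}. Your additional care in selecting $\gamma$ so that $(\gamma,q)$ satisfies \eqref{range1} and in justifying the application of \eqref{HDE1} with $\Omega=B_R(x_0)$ is warranted but does not depart from the paper's argument.
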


\begin{proof}
	Observe that $w:=u-v \in W^{s,2}_0(B_R(x_0)) \subset W^{s,2}(B_{2R}(x_0)) \cap L^1_{2s}(\ern)$ is a weak solution of $L_A w = \mu$ in $B_R(x_0)$. Therefore, using the estimate \eqref{HDE1} from Corollary \ref{cor:higherdiff} with respect to $w$ and then the comparison estimate from Proposition \ref{meascomp}, we obtain
	\begin{align*}
		& \left (\mean{B_{R/2}(x_0)} |\nabla u-\nabla v|^q dx \right )^{1/q} = \left (\mean{B_{R/2}(x_0)} |\nabla w|^q dx \right )^{1/q} \\
		& \leq \frac{C}{R} \Bigg (\left (\mean{B_{R}(x_0)} |w|^q dx \right )^{1/q} + R^{2s} \underbrace{\int_{\mathbb{R}^n \setminus B_R(x_0)} \frac{|w(y)|}{|x_0-y|^{n+2s}}dy}_{=0} + R^{2s-n} |\mu|(B_{R}(x_0)) \Bigg ) \\
		& \leq C R^{2s-1-n} |\mu|(B_{R}(x_0)),
	\end{align*}
	where all constants depends only on $n,s,\Lambda,\alpha,\Gamma,q$, finishing the proof.
\end{proof}

Together with a compactness argument, Corollary \ref{cor:higherdiff} also enables us to upgrade the convergence of the approximation sequence as given in the definition of SOLA to the gradient level.
\begin{corollary}[Upgrade of convergence] \label{upgrade} 
	Let $s \in (1/2,1)$ and assume that $A \in \mathcal{L}_0(\Lambda)$ is $C^\alpha$ for some $\alpha>0$ in a domain $\Omega$.
	Moreover, let $\mu \in \mathcal{M}(\mathbb{R}^n)$ and assume that $u$ is a SOLA of $L_{A} u = \mu$ in $\Omega$.
	In addition, let $\{u_j\}_{j=1}^\infty$ be the approximating sequence from Definition \ref{SOLA}. Then for all
	$$
		0 \leq \gamma<\min \{\alpha,2s-1\}, \quad q \in \bigg [1,\frac{n}{n+1+\gamma-2s} \bigg ),
	$$
	any $R>0$ and any $x_0 \in \Omega$ such that $B_{R}(x_0) \Subset \Omega$, we have $ u \in W^{1+\gamma,q}(B_{R/2}(x_0))$ and
	\begin{equation} \label{upc}
		u_j \to u \text{ in } W^{1+\gamma,q}(B_{R/2}(x_0)).
	\end{equation}
\end{corollary}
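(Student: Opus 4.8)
The plan is to combine the a priori estimate \eqref{HDE1} from Corollary \ref{cor:higherdiff} with a compactness argument. Fix $\gamma,q$ in the stated range. Since $\gamma<\min\{\alpha,2s-1\}$ with strict inequality, while $\frac{n}{n+1+\gamma'-2s}$ depends continuously on $\gamma'$ and $q<\frac{n}{n+1+\gamma-2s}$ strictly, I can choose $\gamma'\in(\max\{\gamma,0\},\min\{\alpha,2s-1\})$ so close to $\gamma$ that $q<\frac{n}{n+1+\gamma'-2s}$; thus $(\gamma',q)$ lies in the range \eqref{range1}. (When $\gamma=0,q=1$ this only requires $\gamma'\in(0,\min\{\alpha,2s-1\})$, which is nonempty since $s>1/2$.) Recall from Definition \ref{SOLA} that each $u_j\in W^{s,2}(\ern)\cap L^1_{2s}(\ern)$ is a weak solution of $L_A u_j=\mu_j$ in $\Omega$ with $\mu_j\in C_0^\infty(\ern)$, so Corollary \ref{cor:higherdiff} applies to $u_j$ with exponents $(\gamma',q)$ on the ball $B_R(x_0)\Subset\Omega$.

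The next step is to verify that the right-hand side of \eqref{HDE1}, written for $u_j$ and $\mu_j$, is bounded uniformly in $j$. The term $\big(\mean{B_R(x_0)}|u_j|^q\,dx\big)^{1/q}$ is bounded because $u_j\to u$ in $L^q_{\loc}(\ern)$ and a convergent sequence is bounded; the same convergence controls $\|u_j\|_{L^q(B_{R/2}(x_0))}$, which I also need in order to bound the full norm $\|u_j\|_{W^{1+\gamma',q}(B_{R/2}(x_0))}$ rather than just the gradient part. For the nonlocal tail $R^{2s}\int_{\ern\setminus B_R(x_0)}|u_j(y)|/|x_0-y|^{n+2s}\,dy$ I would pick a large ball $B_\varrho(x_0)\supset\overline\Omega$: on $B_\varrho(x_0)\setminus B_R(x_0)$ the integrand is dominated by $R^{-(n+2s)}|u_j|$ and hence controlled by $\|u_j\|_{L^1(B_\varrho(x_0))}$, while on $\ern\setminus B_\varrho(x_0)$ one has $u_j=g_j$, so the remaining tail is controlled via Lemma \ref{tr} and the convergence \eqref{gapp} of the exterior data $g_j$. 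Finally, \eqref{measineq} gives $\limsup_j|\mu_j|(B_{3R/4}(x_0))\le|\mu|(\overline{B_{3R/4}(x_0)})<\infty$, so $|\mu_j|(B_{3R/4}(x_0))$ is bounded. Altogether $\{u_j\}$ is bounded in $W^{1+\gamma',q}(B_{R/2}(x_0))$.

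Since $B_{R/2}(x_0)$ is a smooth bounded domain and $\gamma'>\gamma$, the embedding $W^{1+\gamma',q}(B_{R/2}(x_0))\hookrightarrow W^{1+\gamma,q}(B_{R/2}(x_0))$ is compact. Hence every subsequence of $\{u_j\}$ has a further subsequence converging in $W^{1+\gamma,q}(B_{R/2}(x_0))$; as convergence in $W^{1+\gamma,q}$ implies convergence in $L^q$ and $u_j\to u$ in $L^q_{\loc}(\ern)$, each such limit must equal $u$. In particular $u\in W^{1+\gamma,q}(B_{R/2}(x_0))$, and by the standard subsequence characterization of convergence the whole sequence satisfies $u_j\to u$ in $W^{1+\gamma,q}(B_{R/2}(x_0))$, which is \eqref{upc}. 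The main obstacle in this scheme is the uniform bound on the tail term of $u_j$: unlike the interior $L^q$-norm it is not directly handed to us by the stated convergences, and one must genuinely exploit that $u_j$ coincides with $g_j$ outside $\Omega$ together with \eqref{gapp} and the tail comparison Lemma \ref{tr}; the remaining ingredients—applying \eqref{HDE1}, Rellich-type compactness, and identifying the limit—are routine. Note also that a direct Cauchy-sequence argument applied to $u_j-u_k$ (solving $L_A(u_j-u_k)=\mu_j-\mu_k$) does not work, since weak convergence $\mu_j-\mu_k\rightharpoonup 0$ gives no control of $|\mu_j-\mu_k|(B_{3R/4}(x_0))$, which is why the compactness route is taken.
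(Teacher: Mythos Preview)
Your proof is correct and follows essentially the same approach as the paper: choose a slightly larger exponent $\gamma'>\gamma$ (the paper writes $\gamma+\varepsilon$), apply the a~priori estimate \eqref{HDE1} from Corollary~\ref{cor:higherdiff} to each $u_j$ to obtain uniform boundedness in $W^{1+\gamma',q}(B_{R/2}(x_0))$, invoke the compact embedding $W^{1+\gamma',q}\hookrightarrow W^{1+\gamma,q}$ on the ball, and identify the limit via the $L^q_{\loc}$ convergence $u_j\to u$. Your treatment of the uniform tail bound (splitting into a bounded-kernel region inside a large ball and the exterior where $u_j=g_j$) is actually more explicit than the paper, which simply appeals to ``the convergence properties of all the sequences involved''.
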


\begin{proof}
	Consider the approximating sequences $\{\mu_j\}_{j=1}^\infty$, $\{g_j\}_{j=1}^\infty$ from Definition \ref{SOLA}. There exists some sufficiently small $\varepsilon>0$ such that
	$$ 0 \leq \gamma+\varepsilon<\min \{\alpha,2s-1\}, \quad q \in \bigg [1,\frac{n}{n+1+\gamma+\varepsilon-2s} \bigg ). $$
	Since by Definition \ref{SOLA} we have $u \in W^{h,q}(\Omega)$ for any $h \in (0,s)$ and any $q \in [1,\frac{n}{n-s})$, by the fractional Sobolev embedding we have $u \in L^q_{loc}(\Omega)$ for any $q \in \big [1,\frac{n}{n-2s} \big )$ and in particular for any $q$ in the range \eqref{rangesm}. 
	Thus, together with Corollary \ref{cor:higherdiff} applied with $\gamma + \varepsilon$ instead of $\gamma$ and the convergence properties of all the sequences involved, up to passing to subsequences we obtain
	\begin{align*}
		& ||u_j||_{W^{1+\gamma+\varepsilon,q}(B_{R/2}(x_0))} \\ & \leq C \left ( \left (\int_{B_R(x_0)} |u_j|^q dx \right )^{1/q} + \int_{\mathbb{R}^n \setminus B_R(x_0)} \frac{|u_j(y)|}{|x_0-y|^{n+2s}}dy+ |\mu_j|(B_{3R/4}(x_0)) \right ) \\
		& \leq C \left ( \left (\int_{B_{R}(x_0)} |u|^q dx \right )^{1/q} + \int_{\mathbb{R}^n \setminus B_R(x_0)} \frac{|u(y)|}{|x_0-y|^{n+2s}}dy+ |\mu|(B_R(x_0)) \right )<\infty,
	\end{align*}
	where $C$ depends on $n,s,\Lambda,\alpha,\Gamma,R,\textnormal{diam}(\Omega),\gamma,q$, but not on $j$, so that the sequence $\{u_j\}_{j=1}^\infty$ is bounded in $W^{1+\gamma+\varepsilon,q}(B_{R/2}(x_0))$. \par Therefore, since e.g.\ by \cite[Proposition 4.6]{Tr3}, the space $W^{1+\gamma+\varepsilon,q}(B_{R/2}(x_0))$ is compactly embedded into $W^{1+\gamma,q}(B_{R/2}(x_0))$, up to passing to another subsequence there exists a function $\widetilde u \in W^{1+\gamma,q}(B_{R/2}(x_0))$ such that
	$$ u_j \to \widetilde u \text { in } W^{1+\gamma,q}(B_{R/2}(x_0)).$$
	But since we already know that $$ u_j \to \widetilde u \text { in } L^1(B_{R}(x_0)),$$ by uniqueness of the limit we deduce that $\widetilde u = u$ a.e.\ in $B_{R/2}(x_0)$, which in particular yields the convergence of gradients (\ref{upc}), so that the proof is finished.
\end{proof}

Theorem \ref{HD} now follows by combining Corollary \ref{cor:higherdiff} with the approximation properties of SOLA given by Corollary \ref{upgrade} and Definition \ref{SOLA}.
\begin{proof}[Proof of Theorem \ref{HD}]
Consider the approximating sequences $\{u_j\}_{j=1}^\infty$, $\{\mu_j\}_{j=1}^\infty$, $\{g_j\}_{j=1}^\infty$ from Definition \ref{SOLA}. Since for any $j$, $u_j$ is a weak solution of $L_A u_j=\mu_j$ in $\Omega$ and $\mu_j \in C_0^\infty(\ern)$, we can apply the estimate \eqref{HDE1} from Corollary \ref{cor:higherdiff} to $u_j$. Together with the convergence properties of the approximating sequences from Corollary \ref{upgrade} and Definition \ref{SOLA}, for any $\gamma$, $q$ as in \eqref{rangesm} and all $R>0$, $x_0 \in \Omega$ with $B_R(x_0) \Subset \Omega$ we obtain
\begin{align*}
	& \left (\mean{B_{R/2}(x_0)} |\nabla u|^q dx \right )^{1/q} + R^{\gamma }\left (\mean{B_{R/2}(x_0)} \int_{B_{R/2}(x_0)} \frac{|\nabla u(x)-\nabla u(y)|^q}{|x-y|^{n+\gamma q}} dydx \right )^{1/q} \\
	& \leq \lim_{j \to \infty} \left (\left (\mean{B_{R/2}(x_0)} |\nabla u_j|^q dx \right )^{1/q} + R^{\gamma }\left (\mean{B_{R/2}(x_0)} \int_{B_{R/2}(x_0)} \frac{|\nabla u_j(x)-\nabla u_j(y)|^q}{|x-y|^{n+\gamma q}} dydx \right )^{1/q} \right ) \\
	& \leq \frac{C}{R} \lim_{j \to \infty} \left (\left (\mean{B_{R}(x_0)} |u_j|^q dx \right )^{1/q} + R^{2s}\int_{\mathbb{R}^n \setminus B_R(x_0)} \frac{|u_j(y)|}{|x_0-y|^{n+2s}}dy + R^{2s-n} |\mu_j|(B_{3R/4}(x_0)) \right ) \\
	& \leq \frac{C}{R} \left (\left (\mean{B_{R}(x_0)} |u|^q dx \right )^{1/q} + R^{2s}\int_{\mathbb{R}^n \setminus B_R(x_0)} \frac{|u(y)|}{|x_0-y|^{n+2s}}dy + R^{2s-n} |\mu|(B_{R}(x_0)) \right ),
\end{align*}
where $C$ depends only on $n,s,\Lambda,\alpha,\Gamma,\gamma,q$ and $\max\{\text{diam} (\Omega),1\}$, proving the estimate \eqref{HDE}. \par 
Finally, since by the same reasoning as in the proof of Corollary \ref{upgrade} we have $u \in L^q_{loc}(\Omega)$ for any $q$ in the range \eqref{rangesm}, in view of standard covering arguments we obtain that $u \in W^{1+\gamma,q}_{loc}(\Omega)$, so that the proof is finished.
\end{proof}

\section{Gradient potential estimates}

The following excess decay lemma can be thought of as an oscillation-type inequality for $\nabla u$ tailored to measure data problems. In addition to being a central tool in the proof of the gradient potential estimates from Theorem \ref{GPE}, it might also be interesting for its own sake.

\begin{lemma}(Excess decay) \label{thm:GradOscDec2} 
	Let $s \in (1/2,1)$, $R>0$, $x_0 \in \mathbb{R}^n$, $\mu \in C_0^\infty(\mathbb{R}^n)$ and suppose that $A \in \mathcal{L}_0(\Lambda)$ is $C^\alpha$ in $B_{8R}(x_0)$ for some $\alpha \in (0,1)$. In addition, fix some $\gamma \in (0,\min\{\alpha,2s-1\})$ and some $\widetilde \gamma \in (\gamma,\min \{\alpha,2s-1 \}]$. Then there exists some small enough radius $R_0=R_0(n,s,\Lambda,\gamma,\widetilde \gamma, \Gamma) \in (0,1)$ such that the following is true for any $R \in (0,R_0]$. Let $u \in W^{s,2}(B_{2R}(x_0)) \cap L^1_{2s}(\mathbb{R}^n)$ be a weak solution to $L_{A} u = \mu$ in $B_{2R}(x_0)$.
	Then for all integers $m \geq 1$, $N \geq 0$, we have
	\begin{equation} \label{gradoscdecay2}
		\begin{aligned}
			E_R(\nabla u,x_0,2^{-(N+m)}) & \leq C_0 2^{-\gamma m} E_R(\nabla u,x_0,2^{-N}) \\
			& \quad + C_0 2^{-\gamma m} 2^{-\widetilde \gamma N} \sum_{k=0}^{N} 2^{k(1+\widetilde \gamma -2s)} |(\nabla u)_{B_{2^{k-N}R}(x_0)}| \\
			& \quad + C_0 2^{-\gamma m} (2^{-N}R)^{2s-1} \int_{\mathbb{R}^n \setminus B_{R}(x_0)} \frac{|u  - \left ( u \right )_{B_{R}(x_0)}| }{|x_0-y|^{n+2s}}dy \\
			& \quad + C_0 2^{nm} (2^{-N}R)^{2s-1-n} |\mu|(B_{2^{-N} R}(x_0)),
		\end{aligned}
	\end{equation}
	where $C_0 \geq 1$ depends only on $n,s,\Lambda,\gamma,\widetilde \gamma,\Gamma$ and for any integer $i \geq 0$,
	\begin{align*}
	E_R(\nabla u,x_0,2^{-i}):& = \sum_{k=0}^{i} 2^{(1-2s)k} \mean{B_{2^{k-i}R}(x_0)} |\nabla u - \left ( \nabla u \right )_{B_{2^{k-i}R}(x_0)} | dx .
	\end{align*}
\end{lemma}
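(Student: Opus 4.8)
The plan is to derive the excess decay estimate by iterating the gradient oscillation decay from Corollary \ref{thm:GradOscDec1} together with the gradient comparison estimate from Corollary \ref{cor:gradcomp}, applied on the dyadic scale $2^{-(N+m)}R$ with the base scale $2^{-N}R$. First I would freeze the coefficient: on the ball $B_{2^{-N}R}(x_0)$ we rewrite the equation as $L_{A_{x_0}}u = L_{A_{x_0}-A}u + \mu$ and introduce the $A_{x_0}$-harmonic replacement $v$ of $u$ on $B_{2^{-N}R}(x_0)$, so that $w := u - v$ solves $L_A w = \mu$ there with $w=0$ outside; by Corollary \ref{cor:gradcomp} (after scaling) one gets $\bigl(\mean{B_{2^{-N-1}R}(x_0)}|\nabla u - \nabla v|^q\bigr)^{1/q} \lesssim (2^{-N}R)^{2s-1-n}|\mu|(B_{2^{-N}R}(x_0))$, which will produce the last term on the right of \eqref{gradoscdecay2} (the factor $2^{nm}$ arising from passing from scale $2^{-N-1}R$ back up to the averages entering $E_R(\nabla u,x_0,2^{-(N+m)})$ via crude volume comparisons on the $m$ intermediate dyadic scales).

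Next I would estimate the excess of $\nabla v$ at the finer scales $2^{-(N+m)}R, \dots$ using Corollary \ref{thm:GradOscDec1} for the translation invariant operator $L_{A_{x_0}}$, choosing the affine function $\ell$ to be the first-order Taylor polynomial of $v$ at $x_0$ built from $(\nabla v)_{B_{2^{-N}R}(x_0)}$ and choosing the constant $c$ appropriately, and then converting the resulting $C^\gamma$ estimate on $\nabla v$ into an oscillation decay of the form $\osc_{B_{2^{-(N+m)}R}(x_0)}\nabla v \lesssim 2^{-\gamma m}(2^{-N}R)^{-1}(\dots)$. The right-hand side $(\dots)$ there is, after expanding the affine correction and the $L^\infty$-tail via Lemma \ref{tailestaffine} and the Poincaré-type bookkeeping used to prove Lemma \ref{tailest}/\ref{tailestaffine}, controlled by exactly the combination $E_R(\nabla u,x_0,2^{-N})$, the weighted sum $\sum_{k=0}^N 2^{k(1+\widetilde\gamma-2s)}|(\nabla u)_{B_{2^{k-N}R}(x_0)}|$ with the prefactor $2^{-\widetilde\gamma N}$ (coming from the $C^{\widetilde\gamma}$ decay of the coefficient, one uses $\widetilde\gamma>\gamma$ so that the $k$-series in the frozen-coefficient error term converges), and the nonlocal tail $(2^{-N}R)^{2s-1}\int_{\ern\setminus B_R(x_0)}|u-(u)_{B_R(x_0)}|/|x_0-y|^{n+2s}\,dy$. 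Summing the dyadic excesses $\sum_{k=0}^{N+m}2^{(1-2s)k}\mean{B_{2^{k-N-m}R}}|\nabla u - (\nabla u)_{B_{2^{k-N-m}R}}|$: the scales $k \le m$ (i.e.\ balls inside $B_{2^{-N}R}$) are handled by the $\nabla v$ oscillation decay plus the comparison error, contributing $2^{-\gamma m}$ times the first three terms plus $2^{nm}$ times the measure term; the scales $k > m$ reproduce $E_R(\nabla u,x_0,2^{-N})$ shifted, which is absorbed with a geometric factor $2^{(1-2s)m}\le 2^{-\gamma m}$ since $2s-1>\gamma$.

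The main obstacle I expect is the careful bookkeeping that keeps the coefficient-decay prefactor at the precise power $2^{-\widetilde\gamma N}$ with the sharp exponent $1+\widetilde\gamma-2s$ inside the weighted gradient sum, while simultaneously producing the clean factor $2^{-\gamma m}$ (not merely $2^{-\gamma m}$ times a constant depending on $N$) in front of $E_R(\nabla u,x_0,2^{-N})$ and of the tail. This requires applying Corollary \ref{thm:GradOscDec1} at the correct shifted index $N' \approx N$ (so that the $C^\alpha$ hypothesis on $B_{8R}(x_0)$ with the rescaled $\Gamma$ is still met — this fixes the maximal radius $R_0$), expanding $|u - c|$ averages in the right-hand side of \eqref{gradoscdecay1} in terms of gradient excesses via Poincaré and telescoping over annuli exactly as in the proof of Lemma \ref{tailestaffine}, and finally passing from averages of $|\nabla u|$ to $|(\nabla u)_{B_{2^{k-N}R}}|$ plus the excess $E_R(\nabla u,x_0,2^{-N})$. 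Once this reduction is done, collecting the three groups of terms and choosing $R_0$ small enough (depending on $n,s,\Lambda,\gamma,\widetilde\gamma,\Gamma$) to make the comparison and freezing errors admissible yields \eqref{gradoscdecay2}.
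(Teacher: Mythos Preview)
Your overall plan matches the paper's proof: split $E_R(\nabla u,x_0,2^{-(N+m)})$ into scales $k\le m-4$ and $k\ge m-3$; for small scales replace $\nabla u$ by $\nabla v$ via Corollary \ref{cor:gradcomp} (producing the measure term with the $2^{nm}$ prefactor), then apply Corollary \ref{thm:GradOscDec1} to $v$ with $\mu=0$, $\ell(x)=(u)_{B_{2^{-N}R}(x_0)}+(\nabla u)_{B_{2^{-N}R}(x_0)}\cdot(x-x_0)$, $c=(u)_{B_{2^{-N}R}(x_0)}$, and $\alpha$ replaced by $\widetilde\gamma$; convert the resulting right-hand side via Lemma \ref{tailestaffine} and Poincar\'e/telescoping; and dispose of large scales using $2^{(1-2s)m}\le 2^{-\gamma m}$.

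There is, however, a genuine confusion in your setup that you should fix. The replacement $v$ must solve $L_A v=0$ in $B_{2^{-N}R}(x_0)$ with the \emph{original} coefficient $A$, not the frozen $A_{x_0}$. Only then does $w=u-v$ satisfy $L_A w=\mu$, which is what Corollary \ref{cor:gradcomp} requires; if $v$ were $A_{x_0}$-harmonic, $w$ would carry an additional term $(L_A-L_{A_{x_0}})v$ and the gradient comparison would not follow from that corollary. The freezing of the coefficient is not performed at this stage at all --- it is already encapsulated inside Corollary \ref{thm:GradOscDec1} (through Lemma \ref{compest}/Corollary \ref{compestR}), and this is precisely where the error term $2^{-\widetilde\gamma N}\sum_{k=0}^N 2^{k(\widetilde\gamma-2s)}\mean{B_{2^{k-N}R}}|u-c|$ in \eqref{gradoscdecay1} comes from. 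So you should drop the sentence about rewriting the equation as $L_{A_{x_0}}u=L_{A_{x_0}-A}u+\mu$, take $v$ to be $A$-harmonic, and apply Corollary \ref{thm:GradOscDec1} to $v$ with respect to the full coefficient $A$ (which is $C^{\widetilde\gamma}$ in $B_{8R}(x_0)$). With this correction your plan is exactly the paper's argument.
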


\begin{proof}
First of all, let $R_0=R_0(n,s,\Lambda,\gamma,\widetilde \gamma,\Gamma) \in (0,1)$ be given by Corollary \ref{thm:GradOscDec1} with $p=0$ and with $\alpha$ replaced by $\widetilde \gamma$ and fix some $R \in (0,R_0]$. Now consider the weak solution $v \in W^{s,2}(B_{2R}(x_0)) \cap L^1_{2s}(\mathbb{R}^n)$ of
\begin{equation} \label{constcof31x}
	\begin{cases} \normalfont
		L_{A} v = 0 & \text{ in } B_{2^{-N} R}(x_0) \\
		v = u & \text{ a.e. in } \mathbb{R}^n \setminus B_{2^{-N} R}(x_0).
	\end{cases}
\end{equation}
Using the comparison estimate from Corollary \ref{cor:gradcomp}, we obtain that
\begin{align*}
	& \sum_{k=0}^{m-4} 2^{(1-2s)k} \mean{B_{2^{k-m-N}R}(x_0)} |\nabla u - \left ( \nabla u \right )_{B_{2^{k-m-N}R}(x_0)}| dx \\
	& \leq \sum_{k=0}^{m-4} 2^{(1-2s)k} \mean{B_{2^{k-m-N}R}(x_0)} |\nabla v - \left ( \nabla v \right )_{B_{2^{k-m-N}R}(x_0)}| dx \\
	& \quad + 2 \sum_{k=0}^{\infty} 2^{(1-2s)k} 2^{mn} \mean{B_{2^{-N-1}R}(x_0)} |\nabla u-\nabla v|dx \\
	& \leq \sum_{k=0}^{m-4} 2^{(1-2s)k} \mean{B_{2^{k-m-N}R}(x_0)} |\nabla v - \left ( \nabla v \right )_{B_{2^{k-m-N}R}(x_0)}| dx \\
	& \quad + C_1 2^{nm} (2^{-N}R)^{2s-1-n} |\mu|(B_{2^{-N} R}(x_0)).
\end{align*}
Observe that $A$ being $C^\alpha$ with respect to $\Gamma$ in $B_{8R}(x_0)$ clearly implies that $A$ is $C^{\widetilde \gamma}$ with respect to $8 \Gamma$ in $B_{8R}(x_0)$.
Thus, for any $k \in \{0,...,m-4\}$, Corollary \ref{thm:GradOscDec1} with $\rho=2^{k-m} \in (0,1/16]$, $$\ell(x):=\left ( u \right )_{B_{2^{-N}R}(x_0)} + \left (\nabla u \right )_{B_{2^{-N}R}(x_0)} \cdot (x-x_0), \quad c:=\left ( u \right )_{B_{2^{-N}R}(x_0)}$$ and with $\alpha$ replaced by $\widetilde \gamma$ along with the comparison estimate from Proposition \ref{meascomp} yields
\begin{align*}
	& 2^{(1-2s)k} \mean{B_{2^{k-m-N}R}(x_0)} |\nabla v - \left ( \nabla v \right )_{B_{2^{k-m-N}R}(x_0)}| dx \\
	& \leq 2^{(1-2s)k} \osc_{B_{2^{k-m-N}R}(x_0)} \nabla v \\
	& \leq C_2 2^{(1-2s)k} 2^{(k-m)\gamma} (2^{-N}R)^{-1} \Bigg ( \mean{B_{2^{-N} R}(x_0)} |v_\ell| dx + (2^{-N} R)^{2s} \int_{\mathbb{R}^n \setminus B_{2^{-N} R}(x_0)} \frac{|v_\ell(y)| }{|x_0-y|^{n+2s}}dy \\
	& \quad + 2^{-\widetilde \gamma N} \sum_{i=0}^{N} 2^{k(\widetilde \gamma -2s)} \mean{B_{2^{i-N}R}(x_0)} |v-\left ( u \right )_{B_{2^{-N}R}(x_0)}|dx \\ & \quad + 2^{-2sN} R^{2s} \int_{\mathbb{R}^n \setminus B_{R}(x_0)} \frac{|v(y)-\left ( u \right )_{B_{2^{-N}R}(x_0)}|}{|x_0-y|^{n+2s}}dy \Bigg ) \\
	& \leq C_3 2^{(1+\gamma-2s)k} 2^{-\gamma m} (2^{-N}R)^{-1} \Bigg ( \mean{B_{2^{-N} R}(x_0)} |u_\ell| dx + (2^{-N} R)^{2s} \int_{\mathbb{R}^n \setminus B_{2^{-N} R}(x_0)} \frac{|u_\ell(y)| }{|x_0-y|^{n+2s}}dy \\
	& \quad + 2^{-\widetilde \gamma N} \sum_{i=0}^{N} 2^{k(\widetilde \gamma -2s)} \mean{B_{2^{i-N}R}(x_0)} |u-\left ( u \right )_{B_{2^{-N}R}(x_0)}|dx \\ & \quad + 2^{-2sN} R^{2s} \int_{\mathbb{R}^n \setminus B_{R}(x_0)} \frac{|u(y)-\left ( u \right )_{B_{2^{-N}R}(x_0)}|}{|x_0-y|^{n+2s}}dy \\
	& \quad + \underbrace{\sum_{i=0}^{\infty} 2^{i(\gamma -2s-n)}}_{<\infty} (2^{-N} R)^{2s-n} |\mu|(B_{2^{-N} R}(x_0)) \Bigg ).
\end{align*}
Let us further estimate the terms on the right-hand side. First of all, by Lemma \ref{tailestaffine} we have
\begin{equation} \label{annuli2}
\begin{aligned}
	& \mean{B_{2^{-N} R}(x_0)} |u_\ell| dx + (2^{-N} R)^{2s} \int_{\mathbb{R}^n \setminus B_{2^{-N} R}(x_0)} \frac{|u_\ell(y)| }{|x_0-y|^{n+2s}}dy \\
	& \leq C_4 2^{-N} R \Bigg ( \sum_{i=0}^{N} 2^{(1-2s)i} \mean{B_{2^{k-N}R}(x_0)} |\nabla u-(\nabla u)_{B_{2^{k-N}R}(x_0)}|dx \\ & \quad + 2^{(1-2s)N} \sum_{i=0}^{N} \mean{B_{2^{i-N}R}(x_0)} |\nabla u|dx + (2^{-N} R)^{2s-1}  \int_{\mathbb{R}^n \setminus B_R(x_0)} \frac{|u(y)-(u)_{B_R(x_0)}|} {|x_0-y|^{n+2s}} dy \Bigg ) \\
	& \leq C_4 2^{-N} R \Bigg ( E_R(\nabla u,x_0,2^{-N})  + 2^{-\gamma N} \sum_{i=0}^{N} 2^{(1+\gamma-2s)i} \mean{B_{2^{i-N}R}(x_0)} |\nabla u|dx \\ &  + 2^{(1-2s) N} R^{2s-1}  \int_{\mathbb{R}^n \setminus B_R(x_0)} \frac{|u(y)-(u)_{B_R(x_0)}|} {|x_0-y|^{n+2s}} dy \Bigg ).
\end{aligned}
\end{equation}

Next, by a similar reasoning as in the proof of Lemma \ref{tailest} along with the classical Poincar\'e inequality we obtain
\begin{align*}
	& 2^{-\widetilde \gamma N} \sum_{i=0}^{N} 2^{i(\widetilde \gamma -2s)} \mean{B_{2^{i-N}R}(x_0)} |u-\left ( u \right )_{B_{2^{-N}R}(x_0)}|dx \\ & \quad + 2^{-2sN} R^{2s} \int_{\mathbb{R}^n \setminus B_{R}(x_0)} \frac{|u(y)-\left ( u \right )_{B_{2^{-N}R}(x_0)}|}{|x_0-y|^{n+2s}}dy \\
	& \leq C_5 2^{-\widetilde \gamma N} \sum_{i=1}^{N} 2^{i(\widetilde \gamma -2s)} \mean{B_{2^{i-N}R}(x_0)} |u-\left ( u \right )_{B_{2^{i-N}R}(x_0)}|dx \\ & \quad + 2^{-2sN} R^{2s} \int_{\mathbb{R}^n \setminus B_{R}(x_0)} \frac{|u(y)-\left ( u \right )_{B_{R}(x_0)}|}{|x_0-y|^{n+2s}}dy \\
	& \leq C_6 2^{-N} R 2^{-\widetilde \gamma N} \sum_{i=0}^{N} 2^{i(1+\widetilde \gamma -2s)} \mean{B_{2^{i-N}R}(x_0)} |\nabla u|dx \\ & \quad + 2^{-N} R 2^{(1-2s) N} R^{2s} \int_{\mathbb{R}^n \setminus B_{R}(x_0)} \frac{|u(y)-\left ( u \right )_{B_{R}(x_0)}|}{|x_0-y|^{n+2s}}dy.
\end{align*}
Combining all the above observations now leads to
\begin{align*}
	& \sum_{k=0}^{m-4} 2^{(1-2s)k} \mean{B_{2^{k-m-N}R}(x_0)} |\nabla u - \left ( \nabla u \right )_{B_{2^{k-m-N}R}(x_0)}| dx \\
	& \leq C_7 \underbrace{\sum_{k=0}^{\infty} 2^{(1+\gamma-2s)k}}_{<\infty} 2^{-\gamma m} \Bigg ( E_R(\nabla u,x_0,2^{-N}) +2^{-\widetilde \gamma N} \sum_{i=0}^{N} 2^{i(1+\widetilde \gamma -2s)} \mean{B_{2^{i-N}R}(x_0)} |\nabla u|dx \\ & \quad + 2^{-2s N} R^{2s} \int_{\mathbb{R}^n \setminus B_{R}(x_0)} \frac{|u(y)-\left ( u \right )_{B_{R}(x_0)}|}{|x_0-y|^{n+2s}}dy + 2^{(n+\gamma)m} (2^{-N} R)^{2s-1-n} |\mu|(B_{2^{-N} R}(x_0)) \Bigg ).
\end{align*}
In order to proceed, observe that
\begin{align*}
	& 2^{-\widetilde \gamma N} \sum_{i=0}^{N} 2^{i(1+\widetilde \gamma -2s)} \mean{B_{2^{i-N}R}(x_0)} |\nabla u|dx \\
	& \leq \sum_{i=0}^{N} 2^{i(1 -2s)} \mean{B_{2^{i-N}R}(x_0)} |\nabla u-(\nabla u)_{B_{2^{i-N}R}(x_0)}|dx \\
	& \quad + 2^{-\widetilde \gamma N} \sum_{i=0}^{N} 2^{i(1+\widetilde \gamma -2s)} |(\nabla u)_{B_{2^{i-N}R}(x_0)}| \\
	& = E_R(\nabla u,x_0,2^{-N}) + 2^{-\widetilde \gamma N} \sum_{i=0}^{N} 2^{i(1+\widetilde \gamma -2s)} |(\nabla u)_{B_{2^{i-N}R}(x_0)}|.
\end{align*}
By combining the previous two displays, we arrive at
\begin{align*}
	& \sum_{k=0}^{m-4} 2^{(1-2s)k} \mean{B_{2^{k-m-N}R}(x_0)} |\nabla u - \left ( \nabla u \right )_{B_{2^{k-m-N}R}(x_0)}| dx \\
	& \leq C_8 2^{-\gamma m} \Bigg ( E_R(\nabla u,x_0,2^{-N}) +2^{-\widetilde \gamma N} \sum_{k=0}^{N} 2^{k(1+\widetilde \gamma -2s)} \mean{B_{2^{k-N}R}(x_0)} |\nabla u|dx \\ & \quad + 2^{(1-2s) N} R^{2s} \int_{\mathbb{R}^n \setminus B_{R}(x_0)} \frac{|u(y)-\left ( u \right )_{B_{R}(x_0)}|}{|x_0-y|^{n+2s}}dy + 2^{(n+\gamma)m} (2^{-N} R)^{2s-1-n} |\mu|(B_{2^{-N} R}(x_0)) \Bigg ).
\end{align*}

Finally, observe that
\begin{align*}
	& \sum_{k=m-3}^{N+m} 2^{(1-2s)k} \mean{B_{2^{k-m-N}R}(x_0)} |\nabla u - \left ( \nabla u \right )_{B_{2^{k-m-N}R}(x_0)}| dx \\
	& \leq C_9 \sum_{k=m}^{N+m} 2^{(1-2s)k} \mean{B_{2^{k-m-N}R}(x_0)} |\nabla u - \left ( \nabla u \right )_{B_{2^{k-m-N}R}(x_0)}| dx \\
	& = C_9 2^{(1-2s)m} \sum_{k=0}^{N} 2^{(1-2s)k} \mean{B_{2^{k-m-N}R}(x_0)} |\nabla u - \left ( \nabla u \right )_{B_{2^{k-m-N}R}(x_0)}| dx \\
	& \leq C_9 2^{-\gamma m} \sum_{k=0}^{N} 2^{(1-2s)k} \mean{B_{2^{k-m-N}R}(x_0)} |\nabla u - \left ( \nabla u \right )_{B_{2^{k-m-N}R}(x_0)}| dx \\
	& = C_9 2^{-\gamma m}E_R(\nabla u,x_0,2^{-N}).
\end{align*}
Combining the previous two displays now yields the desired estimate (\ref{gradoscdecay2}) with $C_0=C_{8}+C_9$, so that the proof is finished.

\end{proof}
We are now in the position to prove Theorem \ref{GPE}.
\begin{proof}[Proof of Theorem \ref{GPE}]
Set $$\gamma:=\min \{\alpha,2s-1\}/2>0, \quad \widetilde \gamma:=3\min \{\alpha,2s-1\}/4 \in (\gamma,\min \{\alpha,2s-1 \}]$$ and let $C_0 = C_0(n,s,\Lambda,\gamma,\widetilde \gamma,\Gamma) \geq 1$ be given by Lemma \ref{thm:GradOscDec2}. Moreover, let $R_0=R_0(n,s,\Lambda,\gamma,\widetilde \gamma,\Gamma) \in (0,1)$ be given by Lemma \ref{thm:GradOscDec2}. In addition, fix some $x_0 \in \Omega$. \par 
First, fix some $R \in (0,R_0]$ such that $B_{8R}(x_0) \subset \Omega$. \par 
Let $\{u_j\}_{j=1}^\infty$, $\{\mu_j\}_{j=1}^\infty$ and $\{g_j\}_{j=1}^\infty$ be the approximating sequences from Definition \ref{SOLA}.
Let $m \geq 1$ to be chosen large enough in a way such that $m$ only depends on $n,s,\Lambda,\alpha,\Gamma$. In particular, we require $m$ to be large enough such that
\begin{equation} \label{llarge}
C_0 2^{-\gamma m/2} \leq \frac{1}{2},
\end{equation}
observe that this in particular implies that for any integer $i \geq 0$,
\begin{equation} \label{llargec}
C_0 2^{-\gamma m(i+1)/2} \leq 2^{-(i+1)}.
\end{equation}
Moreover, for any integer $i \geq 0$, set $$
E_i:=E_R(\nabla u_j,x_0,2^{-im}).$$ By Lemma \ref{thm:GradOscDec2} with $N=im$ and (\ref{llarge}), (\ref{llargec}), for any integer $i \geq 0$ we have
\begin{equation} \label{Ejdec}
	\begin{aligned}
	E_{i+1} & \leq \frac{1}{2} E_{i} + 2^{-\gamma m/2} 2^{- (i+1)} \sum_{k=0}^{im} 2^{k(1+\widetilde \gamma -2s)} |(\nabla u_j)_{B_{2^{k-im}R}(x_0)}| \\ & \quad + 2^{- (i+1)} R^{2s-1} \int_{\mathbb{R}^n \setminus B_{R}(x_0)} \frac{|u_j  - \left ( u_j \right )_{B_{R}(x_0)}| }{|x_0-y|^{n+2s}}dy \\ & \quad + C_0 2^{nm} (2^{-im}R)^{2s-1-n} |\mu_j|(B_{2^{-im} R}(x_0)) .
	\end{aligned}
\end{equation}
For $l\in \mathbb{N}$, summing (\ref{Ejdec}) over $i \in \{0,...,l-1\}$ leads to
\begin{align*}
	\sum_{i=1}^{l} E_{i} & \leq \frac{1}{2} \sum_{i=0}^{l-1} E_{i} + 2^{-\gamma m/2} \sum_{i=0}^{l-1} 2^{- (i+1)} \sum_{k=0}^{im} 2^{k(1+\widetilde \gamma -2s)} |(\nabla u_j)_{B_{2^{k-im}R}(x_0)}| \\ & \quad + \sum_{i=0}^{l-1} 2^{- (i+1)} R^{2s-1} \int_{\mathbb{R}^n \setminus B_{R}(x_0)} \frac{|u_j(y)  - \left ( u_j \right )_{B_{R}(x_0)}| }{|x_0-y|^{n+2s}}dy \\ & \quad + C_0 2^{nm} \sum_{i=0}^{l-1} (2^{-im}R)^{2s-1-n} |\mu_j|(B_{2^{-im} R}(x_0)),
\end{align*}
so that by reabsorbing the first term on the right-hand side of the previous inequality we obtain
\begin{align*}
	\sum_{i=1}^{l} E_{i} & \leq 2 E_0 + 2^{-\gamma m/2} \sum_{i=0}^{l-1} 2^{-i} \sum_{k=0}^{im} 2^{k(1+\widetilde \gamma -2s)} |(\nabla u_j)_{B_{2^{k-im}R}(x_0)}| \\ & \quad + \sum_{i=0}^{l-1} 2^{-i} R^{2s-1} \int_{\mathbb{R}^n \setminus B_{R}(x_0)} \frac{|u_j(y) - \left ( u_j \right )_{B_{R}(x_0)}| }{|x_0-y|^{n+2s}}dy \\ & \quad + 2C_0 2^{nm} \sum_{i=0}^{l-1} (2^{-im}R)^{2s-1-n} |\mu_j|(B_{2^{-im} R}(x_0)).
\end{align*}
Thus, together with Corollary \ref{cor:higherdiff} we obtain
\begin{align*}
	& |(\nabla u_j)_{B_{2^{-(l+1)m}R}(x_0)}| \\ & \leq \sum_{i=0}^l (|(\nabla u_j)_{B_{2^{-(i+1)m}R}(x_0)}-(\nabla u_j)_{B_{2^{-im}R}(x_0)}|)+|(\nabla u_j)_{B_{R}(x_0)}| \\
	& \leq \sum_{i=0}^l \mean{B_{2^{-(i+1)m}R}(x_0)} |\nabla u_j - (\nabla u_j)_{B_{2^{-im}R}(x_0)}| dx + |(\nabla u_j)_{B_{R}(x_0)}| \\
	& \leq 2^{nm} \sum_{i=0}^l E_i + \mean{B_{R}(x_0)} |\nabla u_j| dx \\
	& \leq 2^{nm+2} \mean{B_{R}(x_0)} |\nabla u_j| dx + 2^{nm} 2^{-\gamma m/2} \sum_{i=0}^{l-1} 2^{-i} \sum_{k=0}^{im} 2^{k(1+\widetilde \gamma -2s)} |(\nabla u_j)_{B_{2^{k-im}R}(x_0)}| \\ & \quad + 2^{nm} \underbrace{\sum_{i=0}^{\infty} 2^{-i}}_{<\infty} R^{2s-1} \int_{\mathbb{R}^n \setminus B_{R}(x_0)} \frac{|u_j(y) - \left ( u_j \right )_{B_{R}(x_0)}| }{|x_0-y|^{n+2s}}dy \\ & \quad +2C_0 2^{n(m+1)} \sum_{i=0}^{l-1} (2^{-im}R)^{2s-1-n} |\mu_j|(B_{2^{-im} R}(x_0)) \\
	& \leq C_2 \Bigg (R^{-1} \mean{B_{2R}(x_0)} |u_j| dx + R^{2s-1} \int_{\mathbb{R}^n \setminus B_{2R}(x_0)} \frac{|u_j(y)| }{|x_0-y|^{n+2s}}dy \\& \quad + 2^{-\gamma m/2} \sum_{i=0}^{l-1} 2^{-i} \sum_{k=0}^{im} 2^{k(1+\widetilde \gamma -2s)} |(\nabla u_j)_{B_{2^{k-im}R}(x_0)}| \\ & \quad + \sum_{i=0}^{l-1} (2^{-im}R)^{2s-1-n} |\mu_j|(B_{2^{-im} R}(x_0)) + R^{2s-1-n} |\mu_j|(B_{2R}(x_0)) \Bigg ),
\end{align*}
where $C_2$ depends only on $n,s,\Lambda,\alpha,\Gamma$, which is in particular because $m$ only depends on the aforementioned quantities. Now in view of Definition \ref{SOLA} and Corollary \ref{upgrade}, we can pass to the limit as $j \to \infty$ to obtain
\begin{equation} \label{avest}
	\begin{aligned}
		& |(\nabla u)_{B_{2^{-(l+1)m}R}(x_0)}| \\
		& \leq C_2 \Bigg (R^{-1} \mean{B_{2R}(x_0)} |u| dx + R^{2s-1} \int_{\mathbb{R}^n \setminus B_{2R}(x_0)} \frac{|u(y)| }{|x_0-y|^{n+2s}}dy \\& \quad + 2^{-\gamma m/2} \sum_{i=0}^{l-1} 2^{-i} \sum_{k=0}^{im} 2^{k(1+\widetilde \gamma -2s)} |(\nabla u)_{B_{2^{k-im}R}(x_0)}| \\ & \quad + \sum_{i=0}^{l-1} (2^{-im}R)^{2s-1-n} |\mu|(B_{2^{-im} R}(x_0)) + R^{2s-1-n} |\mu|(\overline B_{2R}(x_0)) \Bigg ).
	\end{aligned}
\end{equation}
For the last two terms on the right-hand side of \eqref{avest}, we estimate
\begin{align*}
	& \sum_{i=0}^{l-1} (2^{-im}R)^{2s-1-n} |\mu|(\overline B_{2^{-im} R}(x_0)) + R^{2s-1-n} |\mu|(\overline B_{2R}(x_0)) \\ & \leq \sum_{i=0}^{\infty} (2^{-im}R)^{2s-1-n} |\mu|(\overline B_{2^{-im} R}(x_0)) + R^{2s-1-n} |\mu|(\overline B_{2R}(x_0)) \\
	& \leq \frac{4^{n-2s+1}}{\log 4} \int_R^{4R} \frac{|\mu|(B_t(x_0))}{t^{n-2s+1}}\frac{dt}{t} + \frac{2^{m(n-2s+1)}}{m\log 2} \sum_{i=0}^\infty \int_{2^{-(i+1)l} R}^{2^{-il} R} \frac{|\mu|(B_t(x_0))}{t^{n-2s+1}}\frac{dt}{t} \\
	& \leq C_3 I^{|\mu|}_{2s-1}(x_0,4R),
\end{align*}
where $C_3:= \frac{2^{m(n-2s+1)}}{\log 2}$ depends only on $n,s,\Lambda,\alpha,\Gamma$. Combining the previous display with \eqref{avest} now leads to
\begin{equation} \label{avest2}
	\begin{aligned}
		|(\nabla u)_{B_{2^{-(l+1)m}R}(x_0)}|
		& \leq C_4 \Bigg (R^{-1} \mean{B_{2R}(x_0)} |u| dx + R^{2s-1} \int_{\mathbb{R}^n \setminus B_{2R}(x_0)} \frac{|u(y)| }{|x_0-y|^{n+2s}}dy \\ & \quad + 2^{-\gamma m/2} \sum_{i=0}^{l-1} 2^{-i} \sum_{k=0}^{im} 2^{k(1+\widetilde \gamma -2s)} |(\nabla u)_{B_{2^{k-im}R}(x_0)}|  + I^{|\mu|}_{2s-1}(x_0,4R) \Bigg ),
	\end{aligned}
\end{equation} 
where $C_4$ also only depends on $n,s,\Lambda,\alpha,\Gamma$. \par 
In addition, note that by Theorem \ref{HD} and Lemma \ref{tr} we have
\begin{equation} \label{indstart}
\begin{aligned}
	& |(\nabla u)_{B_{2^{-m}R}(x_0)}| \\ & \leq \mean{B_{2^{-m}R}(x_0)} |\nabla u| dx \\
	& \leq C_5 \Bigg (2^{m+1} R^{-1} \mean{B_{2^{-m} 2R}(x_0)} |u| dx + (2^{-m}2R)^{2s-1} \int_{\mathbb{R}^n \setminus B_{2^{-m}2R}(x_0)} \frac{|u(y)| }{|x_0-y|^{n+2s}}dy \\ & \quad + (2^{-m}2R)^{2s-1-n} |\mu|(B_{2^{-m}2R}(x_0)) \Bigg ) \\
	& \leq C_6 \Bigg (R^{-1} \mean{B_{8R}(x_0)} |u| dx + R^{2s-1} \int_{\mathbb{R}^n \setminus B_{8R}(x_0)} \frac{|u(y)| }{|x_0-y|^{n+2s}}dy + I^{|\mu|}_{2s-1}(x_0,8R) \Bigg ),
\end{aligned}
\end{equation}
where $C_5$ and $C_6$ depend only on $n,s,\Lambda,\alpha,\Gamma$.
Next, let us prove by induction that for any integer $l \geq 0$ we have
\begin{equation} \label{indpot}
|(\nabla u)_{B_{2^{-(l+1)m}R}(x_0)}| \leq 2 C_7 M,
\end{equation}
where
$$M:=R^{-1} \mean{B_{8R}(x_0)} |u| dx + R^{2s-1} \int_{\mathbb{R}^n \setminus B_{8R}(x_0)} \frac{|u(y)| }{|x_0-y|^{n+2s}}dy + I^{|\mu|}_{2s-1}(x_0,8R)$$
and $C_7:=\max\{C_4,C_6\}.$
First of all, the case $l=0$ follows directly from \eqref{indstart}. \par 
Next, assume that for some integer $l \geq 0$, \eqref{indpot} holds for any $i \in \{0,...,l\}$ and let us prove that \eqref{indpot} holds also for $l+1$. Indeed, in view of \eqref{avest2} and the induction hypothesis, we obtain
\begin{align*}
	& |(\nabla u)_{B_{2^{-(l+2)m}R}(x_0)}| \\
	& \leq C_4 M + C_4 2^{-\gamma m/2} \sum_{i=0}^{l} 2^{-i} \sum_{k=0}^{im} 2^{k(1+\widetilde \gamma -2s)} |(\nabla u)_{B_{2^{k-im}R}(x_0)}| \\
	& \leq C_7 M + C_4 2^{-\gamma m/2} \underbrace{\sum_{i=0}^{\infty} 2^{-i} \sum_{k=0}^{\infty} 2^{k(1+\widetilde \gamma -2s)}}_{<\infty} 2C_7 M \\
	& \leq C_7 M + 2^{-\gamma m/2} C_8 M \leq 2C_7M,
\end{align*}
where $C_8:=2C_4C_7 \sum_{i=0}^{\infty} 2^{-i} \sum_{k=0}^{\infty} 2^{k(1+\gamma -2s)}$ depends only on $n,s,\Lambda,\alpha,\Gamma$ and the last inequality was obtained by requiring $m$ to be large enough such that $$ 2^{-\gamma m/2} C_8 \leq C_7.$$ Thus, by the Lebesgue differentiation theorem, for any Lebesgue point $x_0$ of $\nabla u$ we obtain
\begin{equation} \label{NGPEprelim}
|\nabla u(x_0)| = \lim_{l \to \infty} |(\nabla u)_{B_{2^{-(l+1)m}R}(x_0)}| \leq 2C_7M. 
\end{equation} 
In the case when $R \in (0,R_0]$ and $x_0 \in \Omega$ satisfy $B_{R}(x_0) \subset \Omega$, the estimate \eqref{NGPE} now follows directly by applying the estimate \eqref{NGPEprelim} with $R$ replaced by $R/8$ and taking into account Lemma \ref{tr}. \par
Next, let us consider the remaining case when $R > R_0$ and $B_R(x_0) \subset \Omega$. Then using the previous case, we deduce
\begin{align*}
	|\nabla u(x_0)| & \leq C_7 \left ( R_0^{-1} \mean{B_{R_0}(x_0)} |u| dx + R_0^{2s-1} \int_{\mathbb{R}^n \setminus B_{R_0}(x_0)} \frac{|u(y)| }{|x_0-y|^{n+2s}}dy + I^{|\mu|}_{2s-1}(x_0,R_0) \right ) \\
	& \leq C_9 \left (\frac{\text{diam} (\Omega)}{R_0} \right )^{n+1} \\ & \quad \times \left (R^{-1} \mean{B_{R}(x_0)} |u| dx + R^{2s-1} \int_{\mathbb{R}^n \setminus B_{R}(x_0)} \frac{|u(y)| }{|x_0-y|^{n+2s}}dy + I^{|\mu|}_{2s-1}(x_0,R) \right ) \\
	& \leq C_{10} \left (\frac{R}{R_0} \right )^{n+1} \left (R^{-1} \mean{B_{R}(x_0)} |u| dx + R^{2s-1} \int_{\mathbb{R}^n \setminus B_{R}(x_0)} \frac{|u(y)| }{|x_0-y|^{n+2s}}dy + I^{|\mu|}_{2s-1}(x_0,R) \right )
\end{align*}
where $C_9$ and $C_{10}$ depend only on $n,s,\Lambda,\alpha,\Gamma$ and $\max\{\text{diam} (\Omega),1\}$. Thus, the proof is finished.
\end{proof}

\bibliographystyle{alpha}
\bibliography{biblio}

\end{document}